\documentclass[10pt]{article}
\usepackage[T1]{fontenc}
\usepackage[utf8]{inputenc}
\usepackage[english]{babel}
\usepackage[autostyle, english = american]{csquotes}
\MakeOuterQuote{"}

\usepackage{amsthm}
\theoremstyle{definition}
\newtheorem{Theor}{Theorem}[subsection]
\newtheorem*{thm*}{Theorem}
\newtheorem{Lemma}[Theor]{Lemma}

\newtheorem*{prop*}{Proposition}

\newtheorem{construction}[Theor]{Construction}

\newtheorem{notation}[Theor]{Notation}
\newtheorem{observation}[Theor]{Observation}

\newtheorem{defn}[Theor]{Definition}

\newtheorem{cor}[Theor]{Corollary}
\newtheorem{prop}[Theor]{Proposition}
\newtheorem{ex}[Theor]{Example}
\newtheorem{rem}[Theor]{Remark}

\usepackage{color}

\definecolor{note_color}{rgb}{0.0,0.7,0.0}

\usepackage{mathrsfs}
\usepackage{latexsym}
\usepackage{graphicx}
\usepackage{amscd,amssymb,amsmath,amsbsy,amsfonts}
\usepackage{amsmath}
\usepackage{mathabx}
\usepackage[mathscr]{eucal}
\usepackage[all]{xy}
\usepackage{color}
\definecolor{reference}{rgb}{0.20,0.36,0.74}
\definecolor{citation}{rgb}{0,.40,.80}
\usepackage[colorlinks,linktocpage,urlcolor=blue,linkcolor=reference,citecolor=citation]{hyperref}
\usepackage{verbatim}
\usepackage{enumerate}
\usepackage{cleveref}
\crefformat{equation}{(#2#1#3)}
\usepackage{tikz}
\usepackage{graphicx}
\usepackage[left=2cm, top=2cm, right=2cm, bottom=2cm]{geometry}

\DeclareFontFamily{OT1}{pzc}{}
\DeclareFontShape{OT1}{pzc}{m}{it}{<-> s * [1.200] pzcmi7t}{}
\DeclareMathAlphabet{\mathpzc}{OT1}{pzc}{m}{it}

\usepackage{enumitem}

\newlength{\bibitemsep}\setlength{\bibitemsep}{.2\baselineskip plus .05\baselineskip minus .05\baselineskip}
\newlength{\bibparskip}\setlength{\bibparskip}{0pt}
\let\oldthebibliography\thebibliography
\renewcommand\thebibliography[1]{%
  \oldthebibliography{#1}%
  \setlength{\parskip}{\bibitemsep}%
  \setlength{\itemsep}{\bibparskip}%
}

\makeatletter
\DeclareRobustCommand{\emdef}{%
  \@nomath\em \if b\expandafter\@car\f@series\@nil
  \normalfont \else \bfseries \fi}
\makeatother

\newcommand{\fcat}{\mathscr}

\renewcommand{\phi}{\varphi}
\renewcommand{\epsilon}{\varepsilon}


\DeclareMathOperator{\Sym}{Sym}
\newcommand{\Mod}{\mathrm{Mod}}

\DeclareMathOperator{\gr}{gr}

\DeclareMathOperator{\St}{St}

\DeclareMathOperator{\Der}{Der}


\DeclareMathOperator{\Rep}{Rep}





\DeclareMathOperator{\Spec}{Spec}

\DeclareMathOperator{\Proj}{Proj}

\DeclareMathOperator{\QCoh}{QCoh}

\newcommand{\dR}{{\mathrm{dR}}}
\newcommand{\crys}{{\mathrm{crys}}}


\DeclareMathOperator{\Nm}{Nm}


\DeclareMathOperator{\Hom}{Hom}

\DeclareMathOperator{\End}{End}

\DeclareMathOperator{\Map}{Map}
\DeclareMathOperator{\Fun}{Fun}

\DeclareMathOperator{\Ind}{Ind}

\DeclareMathOperator*{\colim}{colim}
\DeclareMathOperator*{\fib}{fib}

\DeclareMathOperator*{\cofib}{cofib}

\DeclareMathOperator{\Alg}{Alg}
\DeclareMathOperator{\CAlg}{CAlg}

\DeclareMathOperator{\ev}{ev}

\DeclareMathOperator{\Tot}{Tot}

\DeclareMathOperator{\triv}{triv}

\DeclareMathOperator{\Env}{Env}
\DeclareMathOperator{\Fil}{Fil}



\mathchardef\mdef="2D

\DeclareMathOperator{\const}{const}

\usepackage{tikz-cd}
\usepackage[alphabetic]{amsrefs}
\usepackage{hyperref}
\usepackage{bbm}
\usepackage{authblk}
\numberwithin{equation}{section}

\crefalias{prop}{proposition}
\newcommand{\fg}{\mathrm{fg}}
\newcommand{\Op}{\mathrm{Op}}

\newcommand{\sur}{\mathrm{surj}}

\newcommand{\oplax}{\mathrm{oplax}} 

\newcommand{\Spt}{\mathrm{Spt}}
\newcommand{\Spc}{\mathrm{Spc}}
\newcommand{\SSeq}{\mathrm{SSeq}}

\newcommand{\free}{\mathrm{free}}

\newcommand{\LL}{\mathbb L}

\newcommand{\ins}{\mathrm{ins}}

\newcommand{\pd}{\mathrm{pd}}
\newcommand{\LOmega}{\mathrm{L\Omega}}

\newcommand{\DG}{\mathrm{DG}}

\newcommand{\spl}{\mathrm{split}}
\newcommand{\lax}{\mathrm{lax}}
\newcommand{\LGamma}{\mathrm{L\Gamma}}

\newcommand{\op}{\mathrm{op}}
\newcommand{\LLambda}{\mathrm{L\Lambda}}

\newcommand{\B}{\mathrm{B}}
\newcommand{\ds}{\mathrm{ds}}

\newcommand{\forget}{\mathrm{forget}}

\newcommand{\Gr}{\mathrm{Gr}}
\newcommand{\LSym}{\mathrm{LSym}}

\newcommand{\A}{\mathrm{A}}

\newcommand{\D}{\mathrm{D}}
\newcommand{\apoly}{\mathrm{addpoly}}
\newcommand{\epoly}{\mathrm{excpoly}}
\newcommand{\poly}{\mathrm{poly}}
\newcommand{\adic}{\mathrm{adic}}

\newcommand{\sqzero}{\mathrm{sqzero}}
\newcommand{\nonu}{\mathrm{nu}}
\newcommand{\R}{\mathrm{R}}

\newcommand{\Rees}{\mathrm{Rees}}
\newcommand{\coEnv}{\mathrm{coEnv}}

\newcommand{\DAlg}{\mathrm{DAlg}}

\newcommand{\injec}{\mathrm{inj}}

\newcommand{\LT}{\mathrm{LT}}
\newcommand{\aug}{\mathrm{aug}}

\newcommand{\C}{\mathrm{C}}
\newcommand{\add}{\mathrm{add}}
\newcommand{\T}{\mathrm{T}}

\newcommand{\gen}{\mathrm{gen}}

\newcommand{\LEq}{\mathrm{LEq}}

\newcommand{\cat}{\mathrm{Cat}}
\newcommand{\pdred}{\mathrm{pdred}}
\newcommand{\Fin}{\mathrm{Fin}}
\newcommand{\Mnd}{\mathrm{Mnd}}

\newcommand{\Adj}{\mathrm{Adj}}

\DeclareMathOperator{\Id}{Id}

\mathchardef\mdef="2D 

\setcounter{section}{0}

\begin{document}

	\title{\textbf{Divided powers and derived De Rham cohomology}}
	\date{}
	\author{Kirill Magidson}
	\affil{\small{Northwestern University}}
	\affil{\emph{\small{kirill.magidson@northwestern.edu}}}

	\maketitle
	
	\begin{abstract}
		We develop the formalism of derived divided power algebras, and revisit the theory of derived De Rham and derived crystalline cohomology in this framework. We characterize derived De Rham cohomology of a derived commutative algebra $A$ over a base $R$, together with the Hodge filtration on it, in terms of the universal property as the largest filtered divided power thickening of $A$. We show that our approach recovers the classical De Rham cohomology in the case of a smooth map $R\rightarrow A$, and therefore in general, recovers the derived De Rham cohomology in the sense of Illusie. Along the way, we develop some generalities on square-zero extensions and derivations in derived algebraic geometry and apply them to give the universal property of the first Hodge truncation of the derived De Rham cohomology. Finally, we define derived crystalline cohomology relative to a general divided power base, show that it satisfies the main properties of the crystalline cohomology and coincides with the classical crystalline cohomology in the smooth case.
	\end{abstract}

	\tableofcontents

	\section{Introduction and motivation.}

\subsection{Crystalline cohomology and De Rham cohomology.}

Assume that $A$ is a commutative algebra, smooth over some field $k$. The \textbf{De Rham cohomology} of $A$ is a sequence of $k$-vector spaces $H^{n}_{\dR}(A/k)$ defined as the cohomology of the \textbf{De Rham complex} of $A$

$$
\xymatrix{  A \ar[r]^-{d_{\dR}}& \Omega^{1}_{A/k} \ar[r]^-{d_{\dR}} &  \Omega^{2}_{A/k} \ar[r]& \;\;...   \;\;  ,}
$$
where $\Omega^{n}_{A/k}: = \Lambda^{n}_{A} (\Omega^{1}_{A/k})$ is the $n$-th exterior power of the $A$-module of Kahler differentials of $A$ relative to $k$, and $d_{dR}$ is the De Rham differential, i.e. the unique differential extending the universal $R$-linear derivation of $A$, $d_{\dR}: A \rightarrow \Omega^{1}_{A/k}$ and satisfying the graded Leibnitz rule. The importance of De Rham cohomology in algebraic geometry comes from the fact when $k=\mathbb{C}$, De Rham cohomology computes the singular cohomology of the topological space of complex points of $\Spec(A)$. However, over a field of positive characteristic, De Rham cohomology is a less satisfactory invariant, as it consists purely of torsion, and one can not recover characteristic $0$ information from it. Instead, the theory of \textbf{crystalline cohomology} comes into play.

The genesis of crystalline cohomology theory started with a deeper understanding of the De Rham cohomology. In the seminal paper, \cite{Gr} A.Grothendieck found a remarkable geometric interpretation of De Rham cohomology as the cohomology of a certain site. Assume first that $A$ is smooth over a field of characteristic $0$. The \textbf{infinitesimal site} of $\Spec(A)$ is the category of pairs $(U,Z)$ where $U$ is an open affine subscheme of $\Spec(A)$ and $Z$ is a nilpotent thickening of $U$. The infinitesimal site has a structure sheaf $\mathcal{O}$ whose value on the pair $(U,Z)$ is the algebra of functions $\Gamma(Z,\mathcal{O}_{Z})$. Grothendieck showed that over a field of characteristic zero, the De Rham cohomology is equivalent to the cohomology of the structure sheaf of the infinitesimal site.

Now assume $A$ is defined over $\mathbb{F}_{p}$. Based on the observation that in characteristic $0$ De Rham cohomology computes the cohomology of the infinitesimal site, Grothendieck defined a new theory of \textbf{crystalline cohomology} as the cohomology of the so-called \textbf{crystalline site}. The definition of the crystalline site is similar to the definition of the infinitesimal site, in the sense that both sites parametrize certain thickenings of open affine subscheme of $\Spec(A)$. However, a crucial change is implemented in the crystalline case: instead of merely nilpotent thickenings, one now considers \textbf{divided power thickenings}. Let us recall what it means. Assume $I \subset A$ is an ideal in a ring $A$. A \textbf{divided power structure} on $I$ is a sequence of maps $\gamma_{n}: I \rightarrow A$ for $n\geq 1$ satisfying a number of relations ensuring that $\gamma_{n}(x)$ behaves like $\frac{x^{n}}{n!}$ for any $x\in I$ and $n\geq 1$. Over a field of characteristic $0$, any ideal has unique divided powers. But over other base rings, there can be many different divided power structures on an ideal, or none at all. The \textbf{crystalline site} of $\Spec(A)$ is the category whose objects are triples $(U,Z,\gamma)$ where $U$ is an open affine subscheme of $\Spec(A)$, $Z$ is a nilpotent thickening of $U$ over $\mathbb{Z}_{p}$ with a divided power structure $\gamma$ on the ideal defining the closed embedding $U \subset Z$, where divided powers are required to be compatible with the divided powers on the ideal $(p) \subset \mathbb{Z}_{p}$. The scheme $Z$ is referred to as a "divided power thickening" of $U$. The crystalline site has a crystalline structure sheaf which takes any triple $(U,Z,\gamma)$ to the algebra of global functions $\Gamma(Z,\mathcal{O}_{Z})$ on $Z$. The \textbf{crystalline cohomology} $H^{n}_{\crys}(A/\mathbb{Z}_{p})$ is the cohomology of the structure sheaf of the crystalline site.

It turns out that the crystalline cohomology defined as the cohomology of the crystalline site, is closely related to the De Rham cohomology defined in terms of differential forms. A comparison theorem of Grothendieck says that after base change to $\mathbb{F}_{p}$, the crystalline cohomology groups are canonically isomorphic to the \textbf{De Rham cohomology} groups of $A$ over $\mathbb{F}_{p}$:

\begin{equation}\label{crys_mod_p}
H^{n}_{\crys}(A/\mathbb{Z}_{p}) \otimes \mathbb{F}_{p} \simeq H^{n}_{\dR}(A/\mathbb{F}_{p}).
\end{equation}

Another statement concerns the presence of a smooth lift of $\Spec(A)$ to a formally smooth $p$-adic scheme $\Spec \widetilde{A}$. In this situation, the crystalline cohomology of $A$ can be computed as $p$-adically completed De Rham cohomology of the lift:

\begin{equation}\label{crys_lift}
H^{n}_{\crys}(A/\mathbb{Z}_{p}) \simeq H^{n}_{\dR}(\widetilde{A}/\mathbb{Z})_{p}^{\widehat{ \;\;}}.
\end{equation}

These statements can seem surprising at first glance. The right side of Equations \ref{crys_mod_p} and \ref{crys_lift} are defined in very different terms than the left side. The De Rham cohomology is defined as the cohomology of a concrete commutative differential graded algebra. On the other hand, the crystalline cohomology is defined in terms of divided power algebras rather than commutative differential graded algebras. The equivalences \ref{crys_mod_p} and \ref{crys_lift} suggest that there should be a close relationship between commutative differential graded algebras and divided power algebras. This is further affirmed by a classical computation due to L.Illusie \cite{I2}. The formation of De Rham complex $$(A/R) \longmapsto (\Omega^{\bullet}_{A/R}, d_{\dR})$$ on the category of smooth $R$-algebras can be derived to all $R$-algebras to obtain the theory of \textbf{the derived De Rham complex} developed by L.Illusie in the foundational texts \cite{I1,I2}. The derived De Rham complex assigns to any commutative $R$-algebra $A$ a commutative algebra object $|\LL\Omega^{\bullet}_{A/R}|$ in the derived category $\D(R)$ of $R$-modules. Assume $R\rightarrow A$ is a surjective map and the ideal $I = \ker(R\rightarrow A)$ is generated by a regular sequence. Then it is shown in \cite{I2} that there is an equivalence

\begin{equation}\label{dR_pd}
|\LL\Omega^{\bullet}_{A/R}| \simeq \Env^{\pd}_{R}(I),
\end{equation}
where $\Env^{\pd}_{R}(I)$ is the \textbf{divided power envelope} of $R$ at the ideal $I$, i.e. the algebra obtained by freely adjoining to $R$ all divided powers of generators of the ideal $I$. The equivalence \ref{dR_pd} respect an additional piece of structure present on both sides, namely, a filtration. For a smooth map $R \rightarrow A$, the De Rham complex $\Omega_{A/R}^{\bullet}$ has a multiplicative \textbf{Hodge filtration} whose $n$-th stage is the subcomplex

$$
\Omega^{\geq n}_{A/R} : = 0 \rightarrow 0 \rightarrow ... \rightarrow 0 \rightarrow \Omega^{n}_{A/R} \rightarrow \Omega_{A/R}^{n+1} \rightarrow ...
$$
In other words, the subcomplex $\Omega^{\geq n}_{A/R} \subset \Omega^{\bullet}_{A/R}$ is spanned by differential forms $a_{0} da_{1} \wedge da_{2} \wedge ... \wedge da_{i} \in \Omega_{A/R}^{i}$ of degree $ i \geq n$. Deriving this construction, we obtain a "derived" Hodge filtration $\LL\Omega^{\geq \star}_{A/R}$ and thereby provides us a with functor $\LL\Omega_{-/R}: \CAlg_{R,\heartsuit} \rightarrow \Fil^{\geq 0}\CAlg_{R}$, where the target is the $\infty$-category of $\mathbb{E}_{\infty}$-algebra objects in the symmetric monoidal $\infty$-category $\Fil^{\geq 0}\Mod_{R}$ of non-negatively filtered $R$-modules.

The derived Hodge filtration has a counterpart on the divided power side; for a surjective map $R \rightarrow A$ with regular ideal $I$, the divided power envelope $\Env^{\pd}_{R}(I)$ carries a filtration $\Env^{\pd,\geq \star}_{R}(I)$ whose $n$-th stage is spanned by divided power monomials $\gamma_{i_{1}}(x_{1}) .... \gamma_{i_{k}}(x_{k})$ with $i_{1}+...+i_{k} \geq n$ for $x_{i} \in I$.

A classical computation of L.Illusie \cite[Corollary 1.4.4.1]{I2} states that the equivalence \ref{dR_pd} does in fact refine to a filtered equivalence

\begin{equation}\label{dR_pd_fil}
\LL\Omega^{\geq \star}_{A/R} \simeq \Env_{R}^{\pd,\geq \star}(I).
\end{equation}

This paper is an attempt to develop a formalism in which the equivalence \ref{dR_pd_fil} becomes obvious by giving a universal property of the derived De Rham cohomology in the world of divided power algebras. Such a theory ought to make various crystalline comparisons conceptually simpler. Another expectation from such a theory, is that it should make more geometrically clear the relation between derived De Rham cohomology and the stacky approach via \textbf{the De Rham stack} developed recently by Drinfeld and Bhatt-Lurie in the texts \cite{Dr20}, \cite{BL22(2)}, \cite{Bh23}.

Let us now outline our approach to derived De Rham cohomology derived via divided power algebras. Fix a base commutative ring $R$, and let $A$ be a commutative $R$-algebra. The \textbf{De Rham complex of $A$ relative to $R$} is a differential graded $R$-algebra $(\Omega^{\bullet}_{A/R},d_{\dR})$ which has the following universal property. Assume $(B^{\bullet},d)$ is another differential graded $R$-algebra endowed with a commutative $R$-algebra map $f^{0}:A \rightarrow B^{0}$. Then $f^{0}$ uniquely extends to a differential graded $R$-algebra map $f^{\bullet} : (\Omega^{\bullet}_{A/R}, d_{\dR}) \rightarrow (B^{\bullet}, d)$. In other words, the De Rham complex of $A$ relative to $R$ is the universal commutative differential graded $R$-algebra which has $A$ as its degree $0$ component. 

First, let us start by clarifying what we mean by a derived divided power algebra. Let $\DAlg_{\mathbb{Z}}$ be the $\infty$-category of all \textbf{derived rings} as defined by A.Raksit \cite{R}. The $\infty$-category $\DAlg_{\mathbb{Z}}$ is defined as the $\infty$-category of algebras over a certain monad $$\LSym_{\mathbb{Z}}: \Mod_{\mathbb{Z}} \rightarrow \Mod_{\mathbb{Z}}.$$ The monad $\LSym_{\mathbb{Z}}$ is the unique monad which has the properties that its value on a finitely generated free abelian group on $n$ generators is the polynomial ring $\mathbb{Z}[x_{1},...,x_{n}]$, and that it preserves sifted colimits and finite totalizations. Recall that the free polynomial ring on an abelian group $M$ can be defined more conceptually by the formula

$$
\Sym_{\mathbb{Z}}(M):=\bigoplus_{n\geq 0} (M^{\otimes^{n}})_{\Sigma_{n}},
$$
where $(-)_{\Sigma_{n}}$ stands for coinvariants with respect to the symmetric group, and the symmetric group acts on the tensor power $M^{\otimes^{n}}$ by permutation. 

The notion of a \textbf{divided power algebra} is obtained by replacing coinvariants with respect to symmetric groups by \emph{invariants}, i.e. the free divided power algebra on $M$ is given by the formula

$$
\Gamma_{\mathbb{Z}}(M):=\bigoplus_{n\geq 0} (M^{\otimes^{n}})^{\Sigma_{n}}.
$$
A more concrete description of the free divided power ring is as follows. Let us choose a basis $M=\mathbb{Z}^{n}$. Then $\mathbb{Z}\langle x_{1},...,x_{n}\rangle:=\Gamma_{\mathbb{Z}}(M)$ is the smallest subring of $\mathbb{Q}[x_{1},...,x_{n}]$ which contains $\mathbb{Z}[x_{1},...,x_{n}]$ and all \textbf{divided powers} of the generators $x_{i}$, i.e. elements of the form $$\frac{x_{i}^{d}}{d!}\in \mathbb{Q}[x_{1},...,x_{n}].$$

To promote the construction $M \longmapsto \Gamma_{\mathbb{Z}}(M)$ to a monad, one has to work with non-unital algebras\footnote{this has to do with the fact that outside of characteristic $0$, the unit in a unital ring never supports divided powers.}. The ring $\mathbb{Z}\langle x_{1},...,x_{n}\rangle$ is augmented, and the augmentation ideal $\mathbb{Z}\langle x_{1},...,x_{n}\rangle^{+}$ is a non-unital ring, which we call the \textbf{non-unital polynomial divided power ring} on $n$ generators. We will denote $\Gamma_{\mathbb{Z}}^{+}(\mathbb{Z}^{n}):=\mathbb{Z}\langle x_{1},...,x_{n}\rangle$. There exists the \textbf{non-unital derived divided power algebra} monad on $\Mod_{\mathbb{Z}}$, denoted $$\LGamma_{\mathbb{Z}}^{+}:\Mod_{\mathbb{Z}} \rightarrow \Mod_{\mathbb{Z}},$$ which similarly to $\LSym_{\mathbb{Z}}$, is defined as the unique sifted colimit and finite totalization preserving monad whose value on a finitely generated free abelian group on $n$ generators is $\Gamma_{\mathbb{Z}}^{+}(\mathbb{Z}^{n})$. We denote $$\DAlg_{\mathbb{Z}}^{\nonu,\pd}:=\Alg_{\LGamma^{+}_{\mathbb{Z}}}(\Mod_{\mathbb{Z}})$$ the $\infty$-category of non-unital derived divided power rings. These definitions can be extended to any ground derived ring $R$, thus giving us the $\infty$-category $\DAlg_{R}$ of derived $R$-algebras, and the $\infty$-category $\DAlg_{R}^{\nonu,\pd}$ of non-unital derived divided power $R$-algebras.

A global version of the notion of a non-unital derived divided power ring is the notion of a \textbf{derived divided power map}, or a \textbf{derived divided power Smith ideal}. Roughly, a derived divided power map $A\rightarrow B$ is the data of a map of derived rings $A\rightarrow B$ together with the data of a non-unital derived divided power $A$-algebra on $I=\fib(A \rightarrow B)$. To explain it more precisely, let us digress for a moment and delve into the notion of ideal in the derived world. Let $\DAlg_{\mathbb{Z}}^{\Delta^{1}}$ be the $\infty$-category of arrows $A \rightarrow B$ in the $\infty$-category of derived rings. $\DAlg_{\mathbb{Z}}^{\Delta^{1}}$ is monadic over the arrow category $\Mod^{\Delta^{1}}_{\mathbb{Z}}$. Consider the functor $\fib: \Mod_{\mathbb{Z}}^{\Delta^{1}} \rightarrow \Mod_{\mathbb{Z}}^{\Delta^{1}}$ sending an arrow $(X\rightarrow Y)$ to the fiber $(\fib(X \rightarrow Y) \rightarrow X$. This functor is evidently an equivalence. Therefore, the data of a derived ring map $(A\rightarrow B)$ can be equivalently characterized as the data of a derived ring $A$, together with a certain data on the map $I=\fib(A \rightarrow B) \rightarrow A$. We refer to this data as the data of a \textbf{derived Smith ideal}, and explain in more details what it amounts to in the text. A part of the data of a Smith ideal $(I \rightarrow A)$ is the structure of a non-unital derived $A$-algebra on $I$. We define a \textbf{derived divided power map} $(A\rightarrow B)$  as a map of derived rings $(A\rightarrow B)$, together with a lift of $I$ to a non-unital derived divided power $A$-algebra. We call the corresponding derived Smith ideal $(I \rightarrow A)$ a \textbf{derived divided power Smith ideal}, and sometimes simply \textbf{derived divided power algebra} for short. The notion can be extended to any ground derived ring $R$.  We denote $\DAlg^{\Delta^{1},\pd}_{R}$ the $\infty$-category of derived divided power maps over $R$, and reserve a separate notation $\DAlg^{\Delta^{1}_{\vee},\pd}_{R}$ for the $\infty$-category of derived divided power Smith ideals over $R$. We freely interchange between the two notions, as they are equivalent, but use separate notations depending on whether we want to emphasize the data of a derived Smith ideal or not.

Furthermore, the theory of derived divided power Smith ideals admits a filtered analogue. Let $R$ be a ground derived ring, and let $\Fil^{\geq 0}\Mod_{R}$ be the $\infty$-category of non-negatively filtered objects in $\Mod_{R}$, and $\Fil^{\geq 1}\Mod_{R} \subset \Fil^{\geq 0}\Mod_{R}$ the full subcategory of positive filtrations. We define an $\infty$-category of \textbf{non-negatively filtered derived $R$-algebras} $\Fil^{\geq 0}\DAlg_{R}$ as the $\infty$-category of algebras over a certain filtered version $\LSym^{\geq \star}_{R}: \Mod_{R} \rightarrow \Mod_{R}$ of the $\LSym$-monad. For any object $A^{\geq \star}\in \Fil^{\geq 0}\DAlg_{R}$, the $0$-th stage of the filtration $A^{\geq 0}$ is a derived $R$-algebra, and the "augmentation fiber" $A^{\geq 1}$ is a non-unital filtered derived $A^{\geq 0}$-algebra. Now the monad $\LGamma_{R}^{+}:\Mod_{R} \rightarrow \Mod_{R}$ admits a filtered enhancement $\LGamma_{R}^{+,\geq \star}: \Fil^{\geq 1}\Mod_{R} \rightarrow \Fil^{\geq 1}\Mod_{R}$, thereby giving as the notion of a \textbf{non-unital filtered derived divided power $R$-algebra}. We define an $\infty$-category $\Fil^{\geq 0} \DAlg^{\pd}_{R}$ of \textbf{(non-negatively) filtered derived divided power $R$-algebras} as an $\infty$-category monadic over $\Fil^{\geq 0}\Mod_{R}$, whose object consist of objects $A^{\geq \star}$ in $\Fil^{\geq 0}\DAlg_{R}$ together with a $A^{\geq 0}$-linear non-unital filtered derived divided power structure on the augmentation fiber $A^{\geq 1}$. This monad is trivial in characteristic $0$, i.e. when $R$ has characteristic $0$, the forgetful functor induces an equivalence $\Fil^{\geq 0}\DAlg_{R}^{\pd} \simeq \Fil^{\geq 0}\DAlg_{R}$. In non-zero characteristic, the data of divided powers on a discrete object $A^{\geq \star} \in \Fil^{\geq 0}\CAlg^{\pd}_{R,\heartsuit}$ roughly amounts to specifying for any element $x\in A^{\geq i}$ in filtered stage $i\geq 1$ a sequence of elements $\gamma_{n}(x) \in A^{\geq ni}$ for all $n\geq 1$, which satisfy the usual divided power relations.

There is a functor of taking $0$-th graded piece $\gr^{0}: \Fil^{\geq 0}\DAlg^{\pd}_{R} \rightarrow \DAlg_{R}$. It is not easy to see that it preserves all limits and hence admits a left adjoint.

\begin{defn}\label{main_definition_0}
The left adjoint of the functor $\gr^{0}$ $$\LOmega_{-/R}^{\geq \star}: \DAlg_{R} \xymatrix{\ar[r]&} \Fil^{\geq 0}\DAlg^{\pd}_{R} $$ sends a derived $R$-algebra $A$ to the \textbf{Hodge-filtered derived De Rham cohomology} $\LOmega^{\geq \star}_{A/R}$ of $A$ relative to $R$. 

\end{defn}

This definition requires justification. Recall that there already exists a functor $\LL\Omega_{-/R}: \CAlg_{R,\heartsuit} \rightarrow \Fil^{\geq 0}\CAlg_{R}$ with values in non-negatively filtered $\mathbb{E}_{\infty}$-algebras obtained by resolving any $R$-algebra by polynomial algebras, and applying the functor of Hodge-filtered De Rham complex termwise. Extending this to all connective derived algebras by commuting with arbitrary geometric realizations, we obtain a functor $\LL\Omega^{\geq \star}_{-/R}: \DAlg_{R,\geq 0} \rightarrow \Fil^{\geq 0}\CAlg_{R}$. To justify the Definition \ref{main_definition_0}, we need to show that the following triangle commutes

\begin{equation}\label{tr}
\xymatrix{  \DAlg_{R,\geq 0}  \ar[rrd]_-{ \LL\Omega^{\geq \star}_{-/R}  } \ar[rr]^-{\LOmega^{\geq \star}_{-/R}}&& \Fil^{\geq 0}\DAlg^{\pd}_{R} \ar[d]\\
&& \Fil^{\geq0} \CAlg_{R},   }
\end{equation}
where the right vertical functor is the forgetful functor. To prove this statement, we need a little more preliminary formalism. But already at this point, we notice that there is at least one immediate corollary of the definition.

\begin{prop}\label{first_comparison}
Let $R$ be a discrete commutative ring, and $R \rightarrow A$ be a surjective map such that the ideal $I=\ker(R \rightarrow A)$ is generated by a regular sequence, and let $\Env^{\pd,\geq \star}_{R}(I)$ be the divided power envelope of $R$ at $I$ endowed with the divided power filtration. Then there is an equivalence of filtered $R$-algebras

$$
\LOmega^{\geq \star}_{A/R} \simeq \Env^{\pd,\geq \star}_{R}(I).
$$

\end{prop}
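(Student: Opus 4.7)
The plan is to show that the classical filtered PD envelope $\Env^{\pd,\geq \star}_R(I)$, regarded as an object of $\Fil^{\geq 0}\DAlg^{\pd}_R$, represents the same functor as $\LOmega^{\geq \star}_{A/R}$. Since $\gr^0 \Env^{\pd,\geq \star}_R(I) = R/I = A$, the identity on $A$ supplies a canonical comparison map $\LOmega^{\geq \star}_{A/R} \to \Env^{\pd,\geq \star}_R(I)$ via the adjunction of Definition~\ref{main_definition_0}, and I would prove this is an equivalence by comparing the two representable functors pointwise on any $B^{\geq \star} \in \Fil^{\geq 0}\DAlg^{\pd}_R$.

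First I would unpack the left side. The defining adjunction gives
$$
\Map_{\Fil^{\geq 0}\DAlg^{\pd}_R}(\LOmega^{\geq \star}_{A/R}, B^{\geq \star}) \simeq \Map_{\DAlg_R}(A, \gr^0 B^{\geq \star}).
$$
Since $I$ is regular, the classical quotient $A = R/I$ coincides with its derived counterpart and admits the Koszul-type pushout presentation
$$
A \simeq R \otimes^{\mathbb{L}}_{\LSym_R(I)} R
$$
in $\DAlg_R$, with the two structure maps $\LSym_R(I) \rightrightarrows R$ being the augmentation $I \mapsto 0$ and the inclusion $I \hookrightarrow R$. Mapping out and applying the free--forgetful adjunction for $\LSym_R$ rewrites the right-hand side as a space of nullhomotopies of the composite $I \hookrightarrow R \to \gr^0 B^{\geq \star}$ inside $\Map_{\Mod_R}(I, \gr^0 B^{\geq \star})$. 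Through the fiber sequence $B^{\geq 1} \to B^{\geq 0} \to \gr^0 B^{\geq \star}$, this becomes the space of lifts along $B^{\geq 1} \to B^{\geq 0}$ of the canonical map $\iota\colon I \to B^{\geq 0}$, namely $\fib_{\iota}(\Map_{\Mod_R}(I, B^{\geq 1}) \to \Map_{\Mod_R}(I, B^{\geq 0}))$.

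Next I would identify this same fiber with the mapping space out of $\Env^{\pd,\geq \star}_R(I)$. The filtered PD envelope is the initial filtered derived PD $R$-algebra $B^{\geq \star}$ equipped with an $R$-module map $I \to B^{\geq 1}$ whose composite with $B^{\geq 1} \to B^{\geq 0}$ recovers $\iota$. Concretely, one can present it as a pushout involving the free non-unital filtered derived PD $R$-algebra on $I$ placed in filtration $1$, glued to $R$ in filtration $0$ along the clutching relation identifying $I$ in filtration $1$ with its image under $\iota$ in filtration $0$. Mapping out of this presentation produces exactly the fiber displayed above, so the two representable functors agree and the comparison map is the desired equivalence.

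The main obstacle will be justifying this second step: verifying that the classical object $\Env^{\pd,\geq \star}_R(I)$, viewed as a filtered derived PD $R$-algebra, really has the universal property above with respect to all filtered derived PD $R$-algebras, not merely classical ones. This uses regularity in an essential way: the classical PD envelope of a regular ideal is free as an $R$-module on the divided power monomials $\gamma_{i_1}(x_1) \cdots \gamma_{i_k}(x_k)$, hence flat over $R$, so its natural derived enhancement coincides with the free filtered derived PD $R$-algebra on $I$ placed in filtration $1$ with the prescribed compatibility in filtration $0$.
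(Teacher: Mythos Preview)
Your strategy---directly comparing representable functors---is reasonable in spirit, but the ``Koszul-type'' presentation
\[
A \;\simeq\; R \otimes^{\mathbb{L}}_{\LSym_R(I)} R
\]
is not correct when the regular sequence has length $\geq 2$. The Koszul presentation of $A=R/I$ uses a \emph{free} module $R^n$ surjecting onto $I$ via the chosen generators $t_1,\dots,t_n$, not the ideal $I$ itself; that is, $A \simeq R \otimes^{\mathbb{L}}_{\LSym_R(R^n)} R$. These differ because $I$ is not free as an $R$-module once $n\geq 2$. For instance, with $R=k[x,y]$ and $I=(x,y)$ one computes $\LSym_R(I)\simeq k[x,y,u,v]/(xv-yu)$, and the resolution of $R$ (via either augmentation) over this ring is the infinite periodic one coming from the matrix factorization of $xv-yu$; tensoring shows that $R\otimes^{\mathbb{L}}_{\LSym_R(I)}R$ has nonvanishing $\pi_2$, so it is not $A=k$. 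Consequently every subsequent occurrence of $I$ in your mapping-space calculation (lifts of $I\to B^{\geq 0}$, free filtered PD algebra on $I(1)$, etc.) must be replaced by $R^n$, and then one still has to check that the resulting derived pushout presenting the PD envelope is discrete---your claim that the envelope is ``free on the divided power monomials'' is also off, since $\gamma_1(x_i)=t_i$ already lies in $R$.

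The paper's route is different and avoids these issues. It factors $\LOmega^{\geq\star}_{A/R}$ as the composite $\adic_{\pd}\circ\Env^{\pd}$ applied to the Smith ideal $(I\to R)$ (Remark~\ref{adic_and_deRham}), identifies the associated graded as $\LGamma^{\bullet}_{A}(\LL_{A/R}[-1])$ (Theorem~\ref{properties_of_LOmega} and Proposition~\ref{gr_of_adic_1}), and then for a regular ideal uses $\LL_{A/R}\simeq I/I^2[1]$ to see that each graded piece $\LGamma^n_A(I/I^2)$ is a discrete finite projective $A$-module; induction up the fiber sequences $I^{[n+1]}\to I^{[n]}\to\Gamma^n_{A}(I/I^2)$ then forces every filtered stage to be discrete, hence classical. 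This bypasses any explicit pushout presentation of $A$ and reduces everything to the well-known computation of the cotangent complex of a regular immersion.
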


To prove that the diagram \ref{tr} commutes, by commutation with sifted colimits it suffices to show that there is a natural equivalence $\LOmega_{A/R}^{\geq \star} \simeq \Omega^{\geq \star}_{A/R}$ for any polynomial $R$-algebra $A$. To formulate this comparison statement, we need to connect our definition of derived De Rham cohomology in terms of filtered divided power algebras with the more familiar definition of the De Rham cohomology as a cochain complex. Passing to the language of cochain complexes can be done as in \cite{R} using the equivalence between the complete filtered derived category and the $\infty$-category of \textbf{dg modules}, or \textbf{coherent cochain complexes}. Namely, let $\widehat{\Fil\Mod}_{R} \subset \Fil\Mod_{R}$ be the full subcategory of complete objects. There is a functor of taking \textbf{associated graded object} $$\gr: \widehat{\Fil\Mod}_{R} \xymatrix{\ar[r]&} \Gr\Mod_{R},$$ and it turns out that it is monadic. We let $\DG_{-}\Mod_{R} $ be the resulting monadic $\infty$-category over $\Gr\Mod_{R}$, so that the functor $\gr$ lifts to an equivalence $\gr: \widehat{\Fil\Mod}_{R} \simeq \DG_{-}\Mod_{R}.$ We define an $\infty$-category $\DG_{-}\DAlg^{\pd}_{R}$ of \textbf{derived differential graded divided power algebras} by requiring that the functor $\gr$ lifts to an equivalence $\gr: \widehat{\Fil\DAlg}^{\pd}_{R} \simeq \DG_{-}\DAlg^{\pd}_{R}.$

\begin{defn}\label{main_definition_01}
The functor of \textbf{derived De Rham complex} \begin{equation}\label{main_definition_1}\LOmega^{\bullet}_{A/R}: \DAlg_{R} \xymatrix{\ar[r]&} \DG_{-}\DAlg^{\pd}_{R}\end{equation} is the left adjoint of the limit preserving functor $\ev^{0}: \DG_{-}\DAlg^{\pd}_{R} \rightarrow \DAlg_{R}$ extracting the $0$-th degree part of a differential graded derived divided power algebra. Given a derived $R$-algebra $A$, the derived De Rham complex $\LOmega^{\bullet}_{A/R}$ can be computed as the composition $\LOmega^{\bullet}_{A/R} :=\gr \; \LOmega^{\geq \star}_{A/R}$. 
\end{defn}

The $\infty$-category $\DG_{-}\Mod_{R}$ has a so-called \textbf{negative $t$-structure}, whose heart is the abelian category of cochain complexes. What we precisely show is that for a smooth map $R\rightarrow A$, the derived De Rham complex $\LOmega^{\bullet}_{A/R}$ is discrete in this $t$-structure, and the corresponding cochain complex is the classical De Rham complex (with the structure of a commutative differential graded algebra). A precise formulation of this statement uses the décalage isomorphism of Illusie, \cite[Proposition 4.3.2.1]{I2}.

\begin{prop}\label{first_comparison}
Let $R$ be a discrete commutative ring, and $A$ be a smooth $R$-algebra, and $\Omega^{\bullet}_{A/R}$ be the De Rham complex of $A$ over $R$. Then the derived De Rham complex $\LOmega^{\bullet}_{A/R}$ is discrete in the negative $t$-structure, and there is an equivalence of commutative differential graded $R$-algebras

$$
\LOmega^{\bullet}_{A/R} \simeq \Omega^{\bullet}_{A/R}.
$$

\end{prop}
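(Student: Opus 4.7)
The strategy is to first establish the filtered comparison $\LOmega^{\geq \star}_{A/R} \simeq \Omega^{\geq \star}_{A/R}$ in $\Fil^{\geq 0}\DAlg^{\pd}_R$ for smooth $A$, and then transfer to the statement about cochain complexes by applying the equivalence $\gr: \widehat{\Fil\DAlg}^{\pd}_R \simeq \DG_-\DAlg^{\pd}_R$ and invoking Illusie's décalage.

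For the filtered statement, I would reduce to the polynomial case via étale descent. Since $\LOmega^{\geq \star}_{-/R}$ is a left adjoint (hence preserves all colimits) and both sides satisfy étale descent in $A$ (for the classical side by étale invariance of Kähler differentials), and since smooth $R$-algebras are étale-locally polynomial, it suffices to treat $A = R[x_1, \ldots, x_n]$. Using that $\LOmega^{\geq \star}_{-/R}$ carries coproducts in $\DAlg_R$ to tensor products of filtered DP algebras (a general consequence of left adjoints preserving coproducts, and coproducts of algebras being their symmetric monoidal tensor products), one further reduces to $A = R[x]$.

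For $A = R[x]$, I would observe that the classical $\Omega^{\geq \star}_{R[x]/R} = R[x] \oplus R[x]\cdot dx$ (with filtration $\Omega^{\geq 0} = R[x] \oplus R[x]\cdot dx$, $\Omega^{\geq 1} = R[x]\cdot dx$, $\Omega^{\geq n} = 0$ for $n \geq 2$) lies naturally in $\Fil^{\geq 0}\DAlg^{\pd}_R$: the augmentation fiber $R[x]\cdot dx$ is square-zero since $(dx)^2 = 0$, and thus carries the unique trivial DP structure with $\gamma_n = 0$ for $n \geq 2$. The adjunction then supplies a canonical map $\LOmega^{\geq \star}_{R[x]/R} \to \Omega^{\geq \star}_{R[x]/R}$ extending the identity on $\gr^0 = R[x]$. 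To verify this is an equivalence, I would compare associated gradeds: from the monadic description of $\LOmega^{\geq \star}_{-/R}$ as a left adjoint, $\gr^n \LOmega^{\geq \star}_{A/R}$ is given by derived divided powers of the cotangent complex placed in filtration $1$ with a homological shift, and the Koszul-type identification of derived divided powers of a shifted module with exterior powers collapses this to $\Lambda^n_{R[x]}(\Omega^1_{R[x]/R})$, which vanishes for $n \geq 2$ and matches $\gr \Omega^{\geq \star}_{R[x]/R}$ in filtrations $0$ and $1$.

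Finally, applying $\gr$ to the filtered equivalence: for smooth $A$ the Hodge filtration on $\Omega^{\geq \star}_{A/R}$ is bounded by the relative dimension and hence complete, so passes cleanly through the equivalence $\widehat{\Fil\DAlg}^{\pd}_R \simeq \DG_-\DAlg^{\pd}_R$; and the connecting maps of the short exact sequences $\Omega^{\geq n+1}_{A/R} \to \Omega^{\geq n}_{A/R} \to \Omega^n_{A/R}$ recover the classical De Rham differential via Illusie's décalage \cite[Proposition 4.3.2.1]{I2}. Hence $\LOmega^{\bullet}_{A/R}$ is discrete in the negative $t$-structure with underlying commutative differential graded $R$-algebra the classical $(\Omega^{\bullet}_{A/R}, d_{\dR})$. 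The main technical hurdle is the associated-graded computation for $\LOmega^{\geq \star}_{R[x]/R}$: controlling the filtered DP monad sufficiently to identify each $\gr^n$ with derived divided powers of the shifted cotangent complex, and invoking the Koszul identification to land in classical exterior powers.
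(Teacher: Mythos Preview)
Your approach differs from the paper's and contains a circularity at the \'etale descent step.

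\textbf{The circularity.} You invoke \'etale descent for $\LOmega^{\geq \star}_{-/R}$ to reduce to polynomial algebras, but being a left adjoint gives preservation of \emph{colimits}, not the limits involved in descent. To actually prove \'etale descent for $\LOmega^{\geq \star}_{A/R}$ on smooth $A$ one would need to know (a) the associated gradeds are well-behaved quasi-coherent sheaves and (b) the filtration is finite or complete. Both of these require the associated-graded computation $\gr^n\LOmega^{\geq \star}_{A/R} \simeq \LGamma^n_A(\LL_{A/R}[-1])$ for \emph{general} smooth $A$---exactly what you are trying to reduce to the case $A = R[x]$. So the reduction presupposes its own conclusion. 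Relatedly, your construction of the comparison map at the filtered level requires exhibiting $\Omega^{\geq \star}_{A/R}$ as an object of $\Fil^{\geq 0}\DAlg^{\pd}_R$, which you only carry out for $R[x]$.

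\textbf{How the paper proceeds, and why the reduction is unnecessary.} The paper establishes the associated-graded formula $\LOmega^{\bullet}_{A/R} \simeq \LGamma^{\bullet}_A(\LL_{A/R}[-1])$ for \emph{arbitrary} $A$ in one stroke (Theorem~\ref{properties_of_LOmega}), via the factorization $\LOmega^{\geq \star} \simeq \adic_{\pd}\circ\Env^{\pd}\circ\fib$ (Remark~\ref{adic_and_deRham}) together with the computation of $\gr\circ\adic_{\pd}$ (Proposition~\ref{gr_of_adic_1}). This is no harder than the $R[x]$ case---both follow from matching right adjoints. Once you have this for general $A$, the smooth case follows immediately: d\'ecalage $\LGamma^n_A(\LL_{A/R}[-1]) \simeq \Lambda^n_A(\Omega^1_{A/R})[-n]$ gives discreteness in the negative $t$-structure (Corollary~\ref{DeRham_of_smooth}), the shear equivalence identifies the underlying graded algebra with $\Lambda^{\bullet}_A(\Omega^1_{A/R})$, and the differential is pinned down by the second part of Theorem~\ref{properties_of_LOmega}, which identifies the $\mathbb{D}_-$-action in degrees $\{0,1\}$ with the universal derivation. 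No descent or reduction to $R[x]$ is needed.
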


\begin{rem}
At this point, let us clarify how the universal property given above is different from the Raksit's. The dichotomy lies in what kind of algebras one considers as a target for the derived De Rham complex functor, and there are essentially two main points of difference.

- We have introduced an $\infty$-category $\DG_{-}\Mod_{R}$ such that the functor $\gr: \Fil\Mod_{R} \rightarrow \Gr\Mod_{R}$ lifts to an equivalence $\gr: \widehat{\Fil\Mod}_{R} \xymatrix{ \ar[r]^-{\sim}&} \DG_{-}\Mod_{R}$. In Raksit's terminology, objects of $\DG_{-}\Mod_{R}$ are called \textbf{$h_{-}$-differential graded objects}. There also exists a notion of \textbf{$h_{+}$-differential graded objects}. Namely, following \cite{R}, one defines a graded algebra $\mathbb{D}_{+}:=R\oplus R[1](1)$ and the $\infty$-category of $h_{+}$-differential graded objects as $\DG_{+}\Mod_{R}:= \Mod_{\mathbb{D}_{+}}(\Gr\Mod_{R})$. Raksit gives a simple universal property of the derived De Rham complex which is analogous to the one given in Definition \ref{main_definition_01}, but in the setting of $h_{+}$-differential graded algebras, rather than in the setting of filtered derived algebras or $h_{-}$-differential graded algebras.  

- The theory of the derived De Rham complex in the setting of $h_{+}$-differential graded algebras turns out to be very different from the setting of filtered derived algebras or $h_{-}$-differential graded algebras. In the $h_{+}$-setting, there are no divided powers. In contrast, in the filtered/$h_{-}$-setting, divided powers become crucial. The main reason why one needs divided powers, is that the left adjoint of the functor $\ev^{0}: \DG_{-}\DAlg_{R} \rightarrow \DAlg_{R}$ is \emph{not} the derived De Rham complex, but is rather related to the \emph{infinitesimal cohomology}. 

\end{rem}

One important property of (derived) De Rham cohomology is its relation to the notions of the universal derivation and the universal square-zero extension. This can be explained by means of the following two examples. Let $R$ be a discrete commutative ring, and $A$ a smooth $R$-algebra. Then the naive truncation $\Omega_{A/R}^{\{0,1\}}$ of the De Rham complex $\Omega^{\bullet}_{A/R}$ in degrees $\leq 2$ coincides with the two-step complex $d:A \rightarrow \Omega_{A/R}^{1}$, where the map $d$ is the \textbf{universal (De Rham) derivation} of the map $R \rightarrow A$. its universality is that for any other $R$-linear derivation $\delta: A \rightarrow M$ with values in an $A$-module $M$, there is a unique map $f: \Omega^{1}_{A/R} \rightarrow M$ such that the derivation $\delta$ factors as $\delta=f\circ d$. Another example is the case of a surjective map $R\rightarrow A$ whose ideal $I$ is generated by a regular sequence. As we mentioned above, there is an equivalence $\LOmega^{\geq \star}_{A/R} \simeq \Env_{R}^{\geq \star}(I)$. Truncating the latter filtered object in degrees $\leq 2$, we obtain a square-zero extension 

$$
I/I^{2} \rightarrow R/I^{2} \rightarrow R/I.
$$
This square-zero extension also has a universal property. Suppose we have a map $(I \rightarrow R) \rightarrow (J \rightarrow S)$ of algebras with an ideal, and $J^{2}=0$. Then this map factors uniquely through a map $(I/I^{2} \rightarrow R/I^{2}) \rightarrow (J\rightarrow S) $. For this reason, we might call the object $I/I^{2} \rightarrow R/I^{2} $ the \textbf{universal square-zero extension} associated to the ideal $I \subset R$. The two examples we just explained suggest two ideas: 

1) truncation of the derived De Rham cohomology (or derived De Rham complex) in degrees $\leq 2$ must have a universal property as a universal square-zero extension (respectively, the universal derivation), 

2) there should exist a canonical way of transforming the universal square-zero extension into the universal derivation, and vice versa.

The second idea is in fact an instance of one of the foundational ideas of derived algebraic geometry: square-zero extensions are controlled by derivations. The passage from square-zero extensions to derivations is done by taking associated gradeds for two-step filtration existing on any square-zero extension, and interpreting the extension data as a derivation. A reasonable expectation then is that the equivalence of square-zero extensions and derivations is a specialization of the equivalence $\widehat{\Fil\Mod}_{R} \simeq \DG_{-}\Mod_{R}$ to objects concentrated in degrees $0,1$ only. Indeed, consider subcategories $\Fil^{[0,1]}\Mod_{R} \subset \Fil^{\geq 0}\Mod_{R}$ of \textbf{two-step filtrations} and $\DG_{-}^{\{0,1\}}\Mod_{R} \subset \DG^{\geq 0}\Mod_{R}$ of \textbf{differential two-step graded modules}. We show that $\Fil^{[0,1]}\Mod_{R}$ and $\DG_{-}^{\{0,1\}}\Mod_{R}$ are symmetric monoidal $\infty$-categories and there is a symmetric monoidal equivalence $$\Fil^{[0,1]}\Mod_{R} \simeq \DG_{-}^{\{0,1\}}\Mod_{R}.$$ 

Moreover, we interpret the $\infty$-category $\Fil^{[0,1]}\DAlg_{R}:=\DAlg(\Fil^{[0,1]}\Mod_{R})$ as the $\infty$-category of \textbf{derived square-zero Smith ideals} and $\DG_{-}^{\{0,1\}}\DAlg_{R}:=\DAlg(\DG_{-}^{\{0,1\}}\Mod_{R})$ as the $\infty$-category of \textbf{derived derivations}. For instance, objects of the latter can be thought as triples $(A,M,\delta)$ where $A\in \DAlg_{R}$, $M \in \Mod_{A}$, and $\delta: A \rightarrow A\oplus M[1]$ is a derivation. There exists a symmetric monoidal functor $$\xymatrix{   \Fil^{\geq 0}\Mod_{R} \ar[r]^-{\gr}& \DG_{-}^{\geq 0}\Mod_{R} \ar[r]^-{\{0,1\}}& \DG_{-}^{\{0,1\}}\Mod_{R}   } $$ which induces a functor $\Fil^{\geq 0}\DAlg_{R} \rightarrow \DG^{\{0,1\}}_{-}\DAlg_{R}$, which produces a universal derivation for any non-negatively filtered derived algebra. We show the following:

\begin{Theor}\label{another_comparison}
Let $R\rightarrow A$ be a map of derived rings, and $\LOmega^{\geq \star}_{A/R}$ be the Hodge filtered derived De Rham cohomology of $A$ over $R$. Then the differential two-step graded object $ \LOmega^{\{0,1\}}_{A/R}$ is canonically equivalent to the universal derivation $d: A \rightarrow A \oplus \LL_{A/R}$.
\end{Theor}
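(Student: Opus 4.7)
The plan is to identify both the functor $A\mapsto\LOmega^{\{0,1\}}_{A/R}$ and the universal derivation $A\mapsto(A\to A\oplus \LL_{A/R})$ as sifted-colimit-preserving functors $\DAlg_{R}\to \DG_{-}^{\{0,1\}}\DAlg_{R}$, then compare them on polynomial $R$-algebras. First I would construct the canonical comparison map. By construction, $\LOmega^{\{0,1\}}_{A/R}$ is the image of $\LOmega^{\geq\star}_{A/R}$ under the composite
$$\Fil^{\geq 0}\DAlg^{\pd}_{R}\xrightarrow{\gr}\DG_{-}\DAlg^{\pd}_{R}\xrightarrow{\{0,1\}}\DG_{-}^{\{0,1\}}\DAlg^{\pd}_{R}\simeq \DG_{-}^{\{0,1\}}\DAlg_{R},$$
where the last equivalence uses that divided powers on a two-step filtered object are automatically trivial, since $\gamma_{n}(x)\in A^{\geq n}=0$ for $n\geq 2$. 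The composite degree-$0$ functor is canonically identified with $\gr^{0}$, so by the adjunction $\LOmega^{\geq\star}_{-/R}\dashv \gr^{0}$ of Definition \ref{main_definition_0} one has $\ev^{0}\LOmega^{\{0,1\}}_{A/R}\simeq A$, making $\LOmega^{\{0,1\}}_{A/R}$ a derived square-zero extension of $A$ by some $A$-module $N_{A}$. Any such extension is classified by a unique $A$-module map $\LL_{A/R}\to N_{A}$ via the universal property of the cotangent complex, yielding a natural transformation $\eta_{A}:(A\to A\oplus \LL_{A/R})\to \LOmega^{\{0,1\}}_{A/R}$ in $\DG_{-}^{\{0,1\}}\DAlg_{R}$.

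Next I would check both sides preserve sifted colimits. For the universal derivation this is standard, as the cotangent complex commutes with sifted colimits. For $\LOmega^{\{0,1\}}_{-/R}$, the functor $\LOmega^{\geq\star}_{-/R}$ is a left adjoint and therefore preserves all colimits, while each constituent of the composite above preserves sifted colimits since these are computed on underlying modules in the relevant monadic $\infty$-categories, and both $\gr$ and $\{0,1\}$-truncation preserve all colimits at the module level. To conclude that $\eta_{A}$ is an equivalence, I would verify it on polynomial $R$-algebras $A=R[x_{1},\ldots,x_{n}]$: the map $R\to A$ is smooth, so by Proposition \ref{first_comparison} the derived de Rham complex reduces to the classical one $\Omega^{\bullet}_{A/R}$ and its two-step truncation is the classical universal derivation $(A,\Omega^{1}_{A/R},d_{\dR})$; under the identification $\LL_{A/R}\simeq \Omega^{1}_{A/R}$ for smooth $A/R$, the comparison $\eta_{A}$ becomes the identity.

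Since polynomial $R$-algebras generate $\DAlg_{R,\geq 0}$ under sifted colimits and both sides of $\eta$ preserve these colimits, $\eta_{A}$ is an equivalence for all connective derived $R$-algebras; the non-connective case then follows once both functors are seen to be compatible with the passage to general derived rings. The main technical obstacle is the first step: one must rigorously verify that the chain of equivalences used to identify $\LOmega^{\{0,1\}}_{A/R}$ with a square-zero extension over $A$ is natural in $A$, which involves carefully unwinding the relation between two-step filtrations, square-zero extensions, and derivations at the level of derived divided power algebras, together with the unit of the adjunction defining Hodge-filtered derived de Rham cohomology. Once this identification is in place, the remainder — reduction to the smooth case and extension by sifted colimits — is formal.
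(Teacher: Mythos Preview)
Your approach would work but is considerably more laborious than the paper's, and it carries a circularity hazard in the appeal to the smooth comparison. The paper's proof is a one-liner via universal properties: $\LOmega^{\{0,1\}}_{-/R}$ is, by construction, the composite of left adjoints
\[
\DAlg_{R}\xrightarrow{\ \LOmega^{\bullet}_{-/R}\ }\DG_{-}^{\geq 0}\DAlg^{\pd}_{R}\xrightarrow{\ (-)^{\{0,1\}}\ }\DG_{-}^{\{0,1\}}\DAlg^{\pd}_{R}\simeq \DAlg_{R}\Der^{[1]},
\]
and the composite right adjoint is checked to be the functor $(A,M,\delta)\mapsto A$ that forgets the derivation. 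Uniqueness of left adjoints then forces $\LOmega^{\{0,1\}}_{-/R}$ to coincide with the universal-derivation functor $A\mapsto(A,\LL_{A/R}[-1],d)$. No reduction to generators, no sifted-colimit bookkeeping, and no separate treatment of the non-connective case is needed. Your map $\eta_{A}$ is exactly the comparison supplied by this adjunction, so it is an equivalence for purely formal reasons; you constructed the right map but did not notice that the adjoint characterization already finishes the proof.

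The detour through polynomial algebras is also risky in the paper's logical order: the identification of the $\mathbb{D}_{-}$-action on $\LOmega^{\bullet}_{A/R}$ with the classical de Rham differential in the smooth case is itself deduced \emph{from} the present theorem, so invoking the smooth comparison to establish it is circular. One could in principle verify the polynomial case by a direct computation that avoids this, but the one-line adjoint argument renders all of it unnecessary.
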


This statement is to be expected, and its proof rather tautologically follows by checking universal properties. The essential content of the discussion above is Theorem \ref{sqzero_as_der_1} which interprets the equivalence $\Fil^{[0,1]}\DAlg_{R} \simeq \DG_{-}^{\{0,1\}}\DAlg_{R}$ as an equivalence between square-zero Smith ideals and derivations. 

In the end of the text, we explain how to set-up the theory of derived crystalline cohomology relative to a general (derived) divided power base. We consider the $\infty$-category $\mathbb{Z}^{\crys}$ consisting of pairs $(A\rightarrow \overline{A},R)$ where $(A\rightarrow \overline{A})$ is a derived divided power map (to which we will refer to as  \textbf{divided power base}), and $\overline{R}$ a derived $\overline{A}$-algebra, and define a functor $\R\Gamma_{\crys}: \mathbb{Z}^{\crys} \rightarrow \DAlg_{\mathbb{Z}}^{\Delta^{1},\pd} $ with values in the $\infty$-category of derived divided power rings, whose value on a pair $(A\rightarrow \overline{A},R)$ is the \textbf{derived crystalline cohomology of $R$ relative to $(A\rightarrow \overline{A})$}, denoted $\R\Gamma_{\crys}(R/(A\rightarrow \overline{R})$. We show that it follows from our definition that derived crystalline cohomology satisfies a base change property with respect to maps $(A \rightarrow \overline{A}) \rightarrow (B \rightarrow \overline{B})$ of divided power bases. This property implies the following theorem, which summarizes the main properties of derived crystalline cohomology. To formulate it, we notice that for a given divided power base $(A \rightarrow \overline{A})$, the derived divided power ring $\R\Gamma_{\crys}(R/(A \rightarrow \overline{A}))$ is a derived divided power algebra under $(A\rightarrow \overline{A})$, and hence, n particular, the underlying derived ring of $\R\Gamma_{\crys}(R/(A\rightarrow \overline{A}))$ is a derived $A$-algebra.

\begin{Theor}

Let $(A\rightarrow \overline{A})$ be a derived divided power base. 

\begin{enumerate}

\item Let $R$ be a derived $\overline{A}$-algebra. There is a natural equivalence $$\R\Gamma_{\crys}(R/(A\rightarrow \overline{A})) \underset{A}{\otimes} \overline{A} \simeq \LOmega_{R/\overline{A}}. $$

\item Let $\overline{R}$ be a derive $\overline{A}$-algebra, and $R$ a derived $A$-algebra endowed with an equivalence $R\otimes_{A} \overline{A} \simeq \overline{R}$. Then there is a natural equivalence

$$
\LOmega_{R/A} \simeq \R\Gamma_{\crys}(\overline{R}/(A \rightarrow \overline{A})).
$$

\item Let $(\mathcal{A} \rightarrow \overline{A})$ be the derived divided power envelope of $(A\rightarrow \overline{A})$. Recall that $\mathcal{A} \simeq \LOmega_{\overline{A}/A}$. The divided power structure on $(A\rightarrow \overline{A})$ gives a natural map $(\mathcal{A} \rightarrow \overline{A}) \rightarrow (A\rightarrow \overline{A})$. Let $R$ be a derived $\overline{A}$-algebra, and consider its derived De Rham cohomology $\LOmega_{R/A}$ with the natural $\mathcal{A}$-linear structure. There is a natural equivalence

$$
\LOmega_{R/A} \underset{\mathcal{A}}{\otimes} A \simeq \R\Gamma_{\crys}(R/(A\rightarrow \overline{A})).
$$

\end{enumerate}

\end{Theor}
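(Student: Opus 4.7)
The plan is to deduce all three parts from the base change property for $\R\Gamma_{\crys}$ along maps of divided power bases together with the identification of crystalline cohomology over a trivial divided power base with derived De Rham cohomology. Concretely, I will use base change in the form
\[
\R\Gamma_{\crys}(S/(B\rightarrow \overline{B}))\underset{B}{\otimes}A \;\simeq\; \R\Gamma_{\crys}(S\underset{\overline{B}}{\otimes}\overline{A}/(A\rightarrow \overline{A}))
\]
for a map of divided power bases $(B\rightarrow \overline{B})\rightarrow (A\rightarrow \overline{A})$ and a derived $\overline{B}$-algebra $S$; this is the base change property flagged just before the theorem statement. The other preliminary observation is that for the trivial divided power base $(A=A)$ with zero ideal, the compatibility condition on divided powers is vacuous, so the universal property defining $\R\Gamma_{\crys}$ degenerates into that defining $\LOmega_{R/A}$, giving $\R\Gamma_{\crys}(R/(A=A))\simeq \LOmega_{R/A}$.

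Parts (1) and (2) are then immediate. For (1), apply base change along the map $(A\rightarrow \overline{A})\rightarrow (\overline{A}=\overline{A})$ given by the quotient $A\rightarrow \overline{A}$ on the first component and the identity on the second; this yields
\[
\R\Gamma_{\crys}(R/(A\rightarrow \overline{A}))\underset{A}{\otimes}\overline{A}\;\simeq\; \R\Gamma_{\crys}(R/(\overline{A}=\overline{A}))\;\simeq\; \LOmega_{R/\overline{A}}.
\]
For (2), apply base change along $(A=A)\rightarrow (A\rightarrow \overline{A})$, the identity on $A$ paired with the quotient to $\overline{A}$; using the hypothesis $R\otimes_{A}\overline{A}\simeq \overline{R}$, this yields
\[
\LOmega_{R/A}\;\simeq\; \R\Gamma_{\crys}(R/(A=A))\underset{A}{\otimes}A\;\simeq\; \R\Gamma_{\crys}(\overline{R}/(A\rightarrow \overline{A})).
\]

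For part (3), the plan is to apply base change along the map $(\mathcal{A}\rightarrow \overline{A})\rightarrow (A\rightarrow \overline{A})$ induced by $\mathcal{A}\rightarrow A$ (coming from the divided power structure of $(A\rightarrow \overline{A})$) and the identity on $\overline{A}$. This produces
\[
\R\Gamma_{\crys}(R/(\mathcal{A}\rightarrow \overline{A}))\underset{\mathcal{A}}{\otimes}A\;\simeq\; \R\Gamma_{\crys}(R/(A\rightarrow \overline{A})),
\]
so the remaining task is to identify $\R\Gamma_{\crys}(R/(\mathcal{A}\rightarrow \overline{A}))$ with $\LOmega_{R/A}$ as $\mathcal{A}$-algebras. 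This is the hard part, and I would approach it by a direct comparison of universal properties. Any filtered divided power thickening of $R$ under $A$ has $0$-th graded piece $R$, which is an $\overline{A}$-algebra; by the universal property of $\mathcal{A}=\LOmega_{\overline{A}/A}$ as the universal filtered divided power thickening of $\overline{A}$ under $A$, the map $\overline{A}\rightarrow R$ promotes uniquely to a map of filtered divided power algebras from $\mathcal{A}$ into that thickening. In particular $\LOmega^{\geq\star}_{R/A}$ acquires the canonical structure of a filtered divided power thickening of $R$ compatible with $(\mathcal{A}\rightarrow \overline{A})$, and one then checks that it realizes the universal property defining $\R\Gamma_{\crys}(R/(\mathcal{A}\rightarrow \overline{A}))$ as a left adjoint on $\mathbb{Z}^{\crys}$. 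Carrying out this universal-property comparison carefully — ensuring that the lifts are unique up to contractible choice and that the two divided power structures on the relevant ideal match after the base change — is the technical heart of the proof of (3).
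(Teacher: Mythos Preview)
Your approach is essentially the paper's: all three parts are applications of the base-change proposition for $\R\Gamma_{\crys}$ together with the identification $\R\Gamma_{\crys}(-/(\Id:A\rightarrow A))\simeq \LOmega_{-/A}$. Parts (1) and (2) match the paper's argument exactly (base change along $(A\rightarrow\overline{A})\rightarrow(\overline{A}=\overline{A})$ and along $(A=A)\rightarrow(A\rightarrow\overline{A})$, respectively).

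For part (3) you also choose the same map $(\mathcal{A}\rightarrow\overline{A})\rightarrow(A\rightarrow\overline{A})$ as the paper, correctly reducing to the identification $\R\Gamma_{\crys}(R/(\mathcal{A}\rightarrow\overline{A}))\simeq \LOmega_{R/A}$. However, you overcomplicate this last step. It is not a ``hard part'' requiring filtered divided power thickenings; it is essentially a tautology from the definition $(\mathcal{A}\rightarrow\overline{A})=\Env^{\pd}(A\rightarrow\overline{A})$. The envelope adjunction says that divided power maps under $(\mathcal{A}\rightarrow\overline{A})$ are the same as divided power maps $(B\rightarrow\overline{B})$ equipped with a map of \emph{plain} arrows $(A\rightarrow\overline{A})\rightarrow(B\rightarrow\overline{B})$. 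Under this identification, $\ev^{0}$ still returns $\overline{B}\in\DAlg_{\overline{A}}$, so its left adjoint at $R$ is the initial divided power map receiving a plain arrow map from $(A\rightarrow R)$, i.e.\ the divided power envelope of $(A\rightarrow R)$, which is $(\LOmega_{R/A}\rightarrow R)$. This is the content the paper leaves implicit when it just says ``use the map $(\mathcal{A}\rightarrow\overline{A})\rightarrow(A\rightarrow\overline{A})$''. Your filtered argument would also work, but it routes through the Hodge-filtered picture unnecessarily, since the crystalline functor in this section is defined directly on unfiltered divided power maps.
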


The equivalences listed above are formulated as equivalences of derived algebras. In the relevant section, we explain how to promote it to an equivalence of derived divided power algebras.

The first two properties are the main properties expected of crystalline cohomology. The first merely states, that derived crystalline cohomogy is a natural lift of derived De Rham cohomology. The second says that we can compute derived crystalline cohomology as derived De Rham cohomology of a lift. The third property is a generalization of the idea that one can compute the crystalline cohomology of an $\mathbb{F}_{p}$-algebra $R$ as the derived De Rham cohomology of $R$ relative to $\mathbb{Z}$, based changed to $\mathbb{Z}_{p}$ along the natural map $\LOmega_{\mathbb{F}_{p}/\mathbb{Z}} \rightarrow \mathbb{Z}_{p}$. This observation is to the best of our knowledge, due to B.Bhatt, and appears in the text \cite{Bh12(2)}.

Let us finish this introduction by mentioning the comparison with the stacky and site-theoretic approach, which will be proven in a subsequent paper \cite{Magidson2}. In the work of \cite{Dr20} of Drinfeld, and the works \cite{BL22(2)} and \cite{Bh23} of B.Bhargav and J.Lurie there was developed a stacky approach to derived De Rham cohomology. Let $X$ be a derived stack over a base ring $R$. We can define a new prestack $X_{\dR}$, called \textbf{De Rham stack} of $X$, understood merely as a fuctor $\DAlg_{R,\geq 0} \rightarrow \Spc$ which is defined by the formula

$$
X_{\dR}(R) := X(R/R^{\sharp}),
$$
where $R/R^{\sharp}$ is the quotient of $R$ by a certain generalized ideal $R^{\sharp} \rightarrow R$ playing the role of nil-radical in the divided power setting. One can show (see \cite{Bh23}, Theorem 2.5.6) that for a smooth affine scheme $X$, there is an equivalence of $\mathbb{E}_{\infty}$ $R$-algebras:

$$
\Gamma(X_{\dR},\mathcal{O}_{X_{\dR}}) \simeq |\Omega^{\bullet}_{A/R}|. 
$$
More generally, it follows from \cite[Theorem 7.20]{BL22(2)} and the crystalline comparison for prismatic cohomology that the statement holds true for the derived crystalline cohomology either when the algebra satisfies some finiteness assumption, or is quasi-lci. It is expected that the result should be true for derived De Rham cohomology for any $A$, if we replace the right-hand side by Hodge-completed derived De Rham cohomology. At present we are not aware of a reference for this statement proven in this generality. In the paper \cite{Magidson2}, we characterize the construction $X \longmapsto \Gamma(X_{\dR},\mathcal{O}_{X_{\dR}})$ in terms of a universal property which makes manifest the fact that for any affine scheme $X$, the $\mathbb{E}_{\infty}$-algebra $\Gamma(X_{\dR},\mathcal{O}_{X_{\dR}}) $ lifts to a filtered derived divided power algebra in $\Mod_{R}$. Let $\Fil^{\geq \star} \Gamma(X_{\dR}, \mathcal{O}_{X_{\dR}})$ be the object of $\Fil^{\geq 0}\DAlg^{\pd}_{R}$ arising from this construction. In \cite{Magidson2} we will prove the following:

\begin{Theor}

Let $A$ be a connective derived commutative $R$-algebra, and $X=\Spec A$. There is a natural filtered map 
$$\xymatrix{    \LOmega_{A/R}^{\geq \star} \ar[rr]&&  \Fil^{\geq \star} \Gamma(X_{\dR}, \mathcal{O}_{X_{\dR}})   }$$
which induces an equivalence on Hodge completions.
\end{Theor}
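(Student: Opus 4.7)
The plan is to construct the filtered comparison map via the universal property of $\LOmega^{\geq \star}_{A/R}$ given in Definition \ref{main_definition_0}, and then reduce the statement that it is an equivalence on Hodge completions to the smooth affine case, where both sides recover the classical Hodge-filtered De Rham complex.

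First, to produce the map. By Definition \ref{main_definition_0}, the functor $\LOmega^{\geq \star}_{-/R}$ is left adjoint to $\gr^0 : \Fil^{\geq 0}\DAlg^{\pd}_R \rightarrow \DAlg_R$. So a filtered map $\LOmega^{\geq \star}_{A/R} \rightarrow \Fil^{\geq \star}\Gamma(X_{\dR},\mathcal{O}_{X_{\dR}})$ in $\Fil^{\geq 0}\DAlg^{\pd}_R$ is the same datum as a map of derived $R$-algebras $A \rightarrow \gr^0 \Fil^{\geq \star}\Gamma(X_{\dR},\mathcal{O}_{X_{\dR}})$. The filtration on the stacky De Rham cohomology constructed in \cite{Magidson2} is arranged so that the natural morphism $X \rightarrow X_{\dR}$ identifies the zeroth graded piece with $A$ itself; one takes the identity under this identification. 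The construction is manifestly natural in $A$.

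Second, reduce to polynomial algebras. The functor $A \mapsto \LOmega^{\geq \star}_{A/R}$ is a left adjoint, hence commutes with sifted colimits; therefore its Hodge completion is the left Kan extension from polynomial $R$-algebras followed by Hodge completion. The essential point in the proof is to show that the Hodge-completed right-hand side enjoys the same property. Any connective derived $R$-algebra $A$ admits a simplicial resolution $P_\bullet \rightarrow A$ by polynomial $R$-algebras; applying $\Spec(-)_{\dR}$ and taking global sections produces a cosimplicial filtered object whose Hodge-completed totalization ought to compute $\widehat{\Fil}^{\geq \star}\Gamma(X_{\dR},\mathcal{O}_{X_{\dR}})$. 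This reduces the comparison to the polynomial case, where the Hodge filtration is finite in each weight and Hodge completion is vacuous.

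Third, the verification on polynomials. For a smooth $R$-algebra $A$, Proposition \ref{first_comparison} identifies $\LOmega^{\geq \star}_{A/R}$ with the classical Hodge-filtered De Rham complex $\Omega^{\geq \star}_{A/R}$. The Bhatt--Lurie--Drinfeld results recalled in the introduction (see \cite{BL22(2)} and \cite[Theorem 2.5.6]{Bh23}) identify $\Gamma(X_{\dR},\mathcal{O}_{X_{\dR}})$ with $|\Omega^\bullet_{A/R}|$ for smooth affine $X$, and the filtration constructed in \cite{Magidson2} refines this to the Hodge filtration on $\Omega^{\geq \star}_{A/R}$. Since both maps agree with the identity on $\gr^0$ and both sides are freely generated as filtered derived divided power algebras by their $\gr^0$ together with the universal derivation $\LL_{A/R}[-1](1)$ in filtration weight one, the comparison is automatically an equivalence in the smooth case.

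The main obstacle is the descent/colimit argument of the second step: the functor $A \mapsto \Gamma(X_{\dR},\mathcal{O}_{X_{\dR}})$ is not obviously compatible with sifted colimits, since $X_{\dR}$ is built via a (pro-)limiting procedure involving the generalized ideal $R^\sharp \rightarrow R$. One has to show that after Hodge completion the relevant totalization commutes with the passage from $P_\bullet$ to $A$. In practice this requires the universal characterization of the Hodge-completed stacky De Rham cohomology that will be developed in \cite{Magidson2}, which exhibits $\widehat{\Fil}^{\geq \star}\Gamma((-)_{\dR},\mathcal{O})$ as a left adjoint on the Hodge-completed side, thereby reducing the theorem to matching two left adjoints that coincide on polynomial algebras.
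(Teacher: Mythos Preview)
The paper does not contain a proof of this theorem; it is explicitly announced as a result of the sequel \cite{Magidson2}. What the paper does supply is an indication of the intended method: Section~4.2 develops cosimplicial techniques culminating in Proposition~\ref{totalisaion_formula}, which expresses $\widehat{\LOmega}^{\geq \star}_{A/R}$ as the totalization of completed divided power envelopes of the diagonal ideals $I_n \subset A^{\otimes_R n}$, and the paper states that these methods ``will be used for the theorem of comparison of the Hodge completed derived De Rham cohomology with the Hodge completed stacky De Rham cohomology'' in \cite{Magidson2}. This \v{C}ech-nerve-of-the-diagonal presentation is well adapted to the stacky side, since the De Rham stack is built from the pd-radical/pd-reduction and the terms of the totalization are exactly the infinitesimal neighbourhoods that appear when computing $\Gamma(X_{\dR},\mathcal{O})$.

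Your outline takes a different reduction: resolve $A$ simplicially by polynomial algebras and invoke the smooth case. You correctly flag the genuine gap yourself: the functor $A \mapsto \Gamma((\Spec A)_{\dR},\mathcal{O})$ has no a priori reason to be left Kan extended from polynomials, even after Hodge completion, and establishing this is tantamount to the universal characterization promised in \cite{Magidson2}. So your argument is circular as written: the step you label ``the essential point'' is precisely the content of the sequel paper, not something you can extract from the present one. The paper's \v{C}ech-nerve approach sidesteps this by working with a resolution on which both sides can be identified term-by-term with divided power envelopes, rather than relying on abstract colimit-preservation of the stacky functor.
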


As we mentioned above, the isomorphism above holds for derived De Rham cohomology without Hodge completion for any map $R \rightarrow A$ satisfying either a simple finiteness condition, or a certain quasi-lci condition. We do not know whether the statement is true without Hodge completion for any map $R\rightarrow A$, but if a counter-example exists, it has to be some infinitely-generated and not quasi-lci algebra.

\subsection{Notations and terminology.}

We use the language of $\infty$-categories and higher algebra developed by J.Lurie in \cite{HTT} and \cite{HA}. We freely use the notions of an \emph{$\infty$-category}, a \emph{functor between $\infty$-categories}, an \emph{$\infty$-categorical adjunction} etc. The amount of higher categorical machinery used in the text, is limited to a very basic level. Essentially the only technical statements we use are the \emph{adjoint functor theorem} \cite[Corollary 5.5.2.9]{HTT}, and the \emph{Barr-Beck-Lurie monadicity theorem} formulated in \cite[Theorem 4.7.3.5]{HA}. In general, most of our constructions are formulated in a homotopy invariant way. In particular, all limits and colimits are in general assumed to be in the homotopy sense. For better exposition, we often restrict our attention to some discrete computations, in which case we specify that we are working with discrete cochain complexes, algebras etc. 

We let $\Spc$ be the $\infty$-category of \emph{spaces}, or \emph{$\infty$-groupoids}, and $\Spt$ the $\infty$-category of \emph{spectra}. For a ring spectrum $A$, we denote $\Mod_{A}$ the $\infty$-category of \emph{$A$-module spectra}. For example, our notation for the derived $\infty$-category of abelian groups is $\Mod_{\mathbb{Z}}$. In a stable $\infty$-category $\C$, we denote $[1]$ and $[-1]$ suspension and loop functors respectively. 

We often encounter pull-back diagrams of $\infty$-categories. Unless specified otherwise, these are taken in the $\infty$-category of large $\infty$-categories. 

For a pair of $\infty$-categories $\C$ and $\D$, we let $\Fun(\C, \D)$ be the $\infty$-category of functors from $\C$ to $\D$. The $\infty$-category $\Fun(\C, \C)$ has a monoidal structure given by composition of functors. A \textbf{monad on $\C$} is an associative algebra object in this monoidal $\infty$-category.

Most $\infty$-categories we will encounter in the text will be presentable. We let $\Pr^{L}$ to be the $\infty$-category of presentable $\infty$-categories and continuous functors. This $\infty$-category has a symmetric monoidal structure constructed by J.Lurie.

A \textbf{presentable symmetric monoidal $\infty$-category} is a commutative algebra object in $\Pr^{L}$.

We use the $\bullet$-symbol for the grading on a graded object $X^{\bullet}$, and the $\star$-symbol to indicate the filtration on a filtered object $X^{\geq \star}$.

We use homological notation in the context of $t$-structures on a stable $\infty$-category. For instance, for a stable $\infty$-category $\C$ with a $t$-structure, $\C_{\geq 0}$ is the full subcategory of connective objects. For the full subcategory of discrete objects, we use the symbol $\C_{\heartsuit}$.

Given a presentable $\infty$-category $\C$, we denote $\C^{\omega} \subset \C$ the full $\infty$-subcategory consisting of compact objects in $\C$.

Given a symmetric monoidal $\infty$-category $\C$, one can talk about $\mathbb{E}_{\infty}$-algebra objects in $\C$. For us, $\C$ will typically be a stable symmetric monoidal $\infty$-category of the form $\Mod_{R}$ for $R$ a derived commutative ring. In this setting, we can also talk about \textbf{derived commutative $R$-algebras}.

In the context of derived algebraic geometry, we follow some basic terminology of the texts \cite{GaitsRozI}, \cite{GR2}. For instance, we use the notions of \textbf{prestack} and \textbf{quasicoherent sheaves on prestacks}. A prestack over a derived ring $R$ is a functor $X: \DAlg_{R,\geq 0} \rightarrow \Spc$, where $\DAlg_{R,\geq 0}$ is the $\infty$-category of connective commutative $R$-algebras over the base ring $R$. For an $A\in \DAlg_{R,\geq 0}$, there is a representable prestack $\Spec(A)$, i.e. \textbf{a derived affine scheme}. Any prestack $X$ can be written as a colimit of derived affine schemes

$$
X=\underset{\Spec(A) \rightarrow X}{\colim}\;\; \Spec(A)
$$

The $\infty$-category of quasi-coherent sheaves on $X$ is the limit

$$\QCoh(X) \simeq \underset{\Spec(A) \rightarrow X}{\lim}\;\; \Mod_{A}$$
taken along the diagram of all derived affine schemes mapping to $X$ using base change functors $-\underset{A} {\otimes} B: \Mod_{A} \rightarrow \Mod_{B}$ for any map $\Spec(B) \rightarrow \Spec(A)$ over $X$.

\section*{Acknowledgements.} I am grateful to Ben Antieau, Lukas Brantner, Chris Brav, Rune Haugseng, Adam Holeman, Grigory Kondyrev, Zhouhang Mao, Akhil Mathew, Joost Nuiten, Artem Prikhodko, Noah Riggenbach and Nick Rozenblyum for discussions on the topics of this text. Many ideas of this text were developed by A.Raksit in \cite{R} and Zhouhang Mao in \cite{Mao21} in a slightly different language, and we acknowledge their important contribution to the field. The author was supported by by NSF grants DMS-2152235 and DMS-21002010, and the grant 21-7-2-30-1 of the "BASIS" foundation.

\section{Preliminaries.}

\subsection{Higher category theory.}

Here we will review some higher categorical facts used in the text. We will often work with various $\infty$-categories built as pull-backs and/or lax equalizers of other symmetric monoidal $\infty$-categories. We recall the following construction from Nikolaus-Scholze \cite[Definition II.1.4]{NS}.

\begin{defn}
Assume given two $\infty$-categoties $\C$ and $\D$ and two functors $F,G: \C \rightarrow \D$. The \textbf{Lax equalizer} $\infty$-category $\LEq(F,G) $ is defined as the pull-back 

\begin{equation}\label{Lax_equalizer}
\xymatrix{ \LEq(F,G) \ar[d] \ar[r]& \D^{\Delta^{1}}\ar[d]^-{(\partial^{0}, \partial^{1})} \\
\C \ar[r]_-{(F,G)} & \D \times \D }
\end{equation}
\end{defn}

We will often study the case $\D=\C$, $F=\Id$, and $G$ is some endofunctor of $\C$. Moreover, we will use the fact that if $\C$ is a symmetric monoidal $\infty$-category and $G$ is a lax symmetric monoidal functor, then $\LEq(\Id,G)$ gets a symmetric monoidal structure. Moreover, commutative algebra objects in $\LEq(\Id,G)$ are precisely pairs $(A,f)$ where $A$ is a commutative algebra object in $\C$, and $f: A \rightarrow G(A)$ is a map of commutative algebra objects. To explain this, let us recall some basic terminology regarding symmetric monoidal $\infty$-categories. Let $\Fin_{*}$ be the category of finite pointed sets. In the approach of J.Lurie, an \textbf{$\infty$-operad} is a functor $p: \mathcal{O}_{\infty} \rightarrow \Fin_{*}$ satisfying conditions listed in \cite[Definition 2.1.1.10]{HA}. A \textbf{symmetric monoidal $\infty$-category} is an $\infty$-category $\C^{\otimes}$ equipped with a coCartesian fibtration $\C^{\otimes} \rightarrow \Fin_{*}$ of $\infty$-operads. A \textbf{lax symmetric monoidal functor} $F: \C^{\otimes} \rightarrow \D^{\otimes}$ is simply an $\infty$-operad map. Let $\CAlg(\cat)_{\lax}$ be the $\infty$-category of symmetric monoidal $\infty$-categories and lax symmetric monoidal functors. There is a fully faithful embedding $\CAlg(\cat)_{\lax} \subset \Op_{\infty}$, where $\Op_{\infty}$ is the $\infty$-category of operads defined as in \cite[Definition 2.1.4.1]{HA}. Given a symmetric monoidal $\infty$-category $p: \C^{\otimes} \rightarrow \Fin_{*}$, we let $\C=\C_{\langle 1 \rangle}:=p^{-1}(\langle 1 \rangle )$. The functor $\CAlg(\cat)_{\lax} \rightarrow \cat $,  $(p: \C^{\otimes} \rightarrow \Fin_{*}) \longmapsto  \C$ is conservative and preserves all limits\footnote{Caution: the inclusion $\CAlg(\cat)_{\lax} \hookrightarrow \Op_{\infty}$ is, however, not closed under all limits.}.

In the future, we will often write the data of a symmetric monoidal $\infty$-category $p: \C^{\otimes} \rightarrow \Fin_{*}$ simply as $\C$ keeping symmetric monoidal structure in mind. The tautological coCartesian fibration $\Fin_{*} \rightarrow \Fin_{*}$ is the \textbf{commutative operad}. For a given symmetric monoidal $\infty$-category $\C:=(p: \C^{\otimes} \rightarrow \Fin_{*})$, the collection of all operad maps $\Fin_{*} \rightarrow \C^{\otimes} $ organize into an $\infty$-category of \textbf{commutative algebra objects} in $\C$, denoted $\CAlg(\C)$. The following proposition follows immediately from definitions.

\begin{prop}\label{limits_in_Cat_lax}

Assume we are given

\begin{equation}\label{pullback}
\xymatrix{ \C \ar[r]^-{\sim}& \underset{i}{\lim}\; \C_{i}  }
\end{equation}
a limit diagram in $\CAlg(\cat)_{\lax}$. Then the natural map in $\cat$

$$
\xymatrix{ \CAlg(\C) \ar[r]^-{\sim}& \underset{i}{\lim} \; \C_{i}. }
$$
is an equivalence.

\end{prop}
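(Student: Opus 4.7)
The plan is to prove this by unwinding the definition of commutative algebra objects and using the universal property of the limit in $\CAlg(\cat)_{\lax}$. The key input is that $\CAlg(\D) \simeq \Map_{\Op_{\infty}}(\Fin_{*}, \D^{\otimes})$, i.e.\ a commutative algebra in $\D$ is precisely an $\infty$-operad map out of the commutative operad, and that any lax symmetric monoidal functor $F: \D \to \E$ induces $F_{*}: \CAlg(\D) \to \CAlg(\E)$ by post-composition. Given a limit diagram $\C \simeq \lim_{i} \C_{i}$ in $\CAlg(\cat)_{\lax}$ with structure functors $F_{ij}: \C_{i}\to \C_{j}$, these induce a diagram of $\infty$-categories $i \mapsto \CAlg(\C_{i})$, and the projections $\C \to \C_{i}$ (themselves lax symmetric monoidal) assemble into a canonical comparison functor $\Phi: \CAlg(\C) \to \lim_{i}\CAlg(\C_{i})$.

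Next, I would show $\Phi$ is an equivalence by a direct verification on objects and morphisms. Since the forgetful functor $\CAlg(\cat)_{\lax} \to \cat$ is conservative and limit-preserving, the underlying $\infty$-category of $\C$ is $\lim_{i} \C_{i}$, and the symmetric monoidal structure on $\C$ is determined pointwise by the tensor products on the $\C_{i}$ together with the coherence data provided by the lax monoidal functors $F_{ij}$. Unwinding what it means to specify a commutative algebra structure on an object $A = (A_{i}) \in \C$: one must give a multiplication $A \otimes_{\C} A \to A$ together with unit and associativity/commutativity coherences; but since $\otimes_{\C}$ is computed termwise, this reduces to specifying a compatible family of commutative algebra structures on each $A_{i} \in \C_{i}$, together with identifications $F_{ij,*}(A_{i}) \simeq A_{j}$ in $\CAlg(\C_{j})$ satisfying higher coherence. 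This is exactly an object of $\lim_{i}\CAlg(\C_{i})$. A parallel check for mapping $\infty$-categories — a morphism of algebras is a morphism in $\C$ respecting multiplication, equivalently a family of algebra maps in each $\C_{i}$ — gives the equivalence on mapping spaces as well.

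The main obstacle is making precise the pointwise description of the symmetric monoidal structure on a limit in $\CAlg(\cat)_{\lax}$, in light of the footnote that $\CAlg(\cat)_{\lax} \hookrightarrow \Op_{\infty}$ is not closed under all limits: one cannot simply invoke the limit in $\Op_{\infty}$. I would sidestep this by working intrinsically in $\CAlg(\cat)_{\lax}$ and using its universal property directly. Concretely, the limit $\C$ is characterized by the equivalence $\Map_{\CAlg(\cat)_{\lax}}(\mathcal{X},\C) \simeq \lim_{i} \Map_{\CAlg(\cat)_{\lax}}(\mathcal{X}, \C_{i})$ for all $\mathcal{X}$; taking $\mathcal{X}$ to be the terminal symmetric monoidal $\infty$-category (so that lax maps out of it to $\D$ compute $\CAlg(\D)$ as a space) yields the statement on underlying groupoids. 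Enriching to an equivalence of $\infty$-categories is then a formal consequence upon varying $\mathcal{X}$ through the cotensors with $\Delta^{n}$, which is how the proof will be completed.
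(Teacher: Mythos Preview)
Your proposal is correct, and the final argument (corepresentability of $\CAlg(-)$ by the terminal symmetric monoidal $\infty$-category $*$, then upgrading from spaces to $\infty$-categories via cotensors with $\Delta^n$) is exactly what makes the statement precise. The paper itself gives no argument beyond ``follows immediately from definitions''; your proof is the natural unpacking of that phrase, and in particular you are right to flag the footnote about $\CAlg(\cat)_{\lax}\hookrightarrow\Op_\infty$ not being closed under limits as the only genuine subtlety.

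Two small comments. First, the heuristic in your second paragraph that ``$\otimes_{\C}$ is computed termwise'' is not literally valid in general: some projection legs out of a limit in $\CAlg(\cat)_{\lax}$ may only be lax monoidal (already for the limit of a single arrow $\C\xrightarrow{\text{lax}}\D$, the projection to $\D$ is the given lax functor). You correctly identify this as an obstacle and abandon that line, so it does no harm, but it should not be presented as a parallel argument. Second, your cotensoring step silently uses that $\Fun(K,\D)$ with the pointwise monoidal structure is the cotensor of $\D$ by $K$ in $\CAlg(\cat)_{\lax}$, and that in a $\cat$-enriched category with cotensors every conical limit is automatically an enriched limit; both facts are standard, but it would strengthen the write-up to state them explicitly.
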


\begin{prop}\label{Lax_equalizer_equivalence}
Assume $\C$ and $\D$ are symmetric monoidal $\infty$-categories, $F$ is a symmetric monoidal functor, and $G$ is lax symmetric monoidal. The $\infty$-category $\LEq(F,G)$ has a symmetric monoidal structure, and there is an equivalence

$$
\CAlg(\LEq(F,G)) \simeq \LEq(F,G: \CAlg(\C) \rightarrow \CAlg(\D)).
$$
\end{prop}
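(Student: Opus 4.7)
The plan is to realize the defining pullback \eqref{Lax_equalizer} of $\LEq(F,G)$ as a pullback in $\CAlg(\cat)_{\lax}$, which will simultaneously endow $\LEq(F,G)$ with a symmetric monoidal structure and let us compute its commutative algebras via \Cref{limits_in_Cat_lax}. First I would equip $\D^{\Delta^1}$ with the pointwise (objectwise) symmetric monoidal structure, under which the evaluation functor $(\partial^{0},\partial^{1}): \D^{\Delta^1} \rightarrow \D \times \D$ is strong symmetric monoidal. Next, observe that $(F,G): \C \rightarrow \D \times \D$ is a lax symmetric monoidal functor: by hypothesis $F$ is strong and $G$ is lax, and $\D \times \D$ carries the product symmetric monoidal structure, so the pair $(F,G)$ is lax (being strong in the first component and lax in the second). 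Both legs of the cospan therefore live in $\CAlg(\cat)_{\lax}$.

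Since the forgetful functor $\CAlg(\cat)_{\lax} \rightarrow \cat$ is limit-preserving (as recalled before \Cref{limits_in_Cat_lax}), the pullback of this cospan computed in $\CAlg(\cat)_{\lax}$ has underlying $\infty$-category precisely $\LEq(F,G)$. This promotes $\LEq(F,G)$ to an object of $\CAlg(\cat)_{\lax}$, proving the first assertion. Then I would apply \Cref{limits_in_Cat_lax} to get
$$
\CAlg(\LEq(F,G)) \simeq \CAlg(\C) \underset{\CAlg(\D)\times\CAlg(\D)}{\times} \CAlg(\D^{\Delta^1}),
$$
where the maps are $\CAlg(F,G) = (\CAlg(F),\CAlg(G))$ and $\CAlg(\partial^{0},\partial^{1})$.

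The final step is the identification $\CAlg(\D^{\Delta^1}) \simeq \CAlg(\D)^{\Delta^1}$. Because the symmetric monoidal structure on $\D^{\Delta^1}$ is pointwise, a commutative algebra object is the same datum as a $\Delta^1$-family of commutative algebra objects in $\D$, i.e.\ a morphism in $\CAlg(\D)$; formally, this is an instance of $\CAlg$ commuting with cotensors by small $\infty$-categories, which in turn follows from $\CAlg$ commuting with all limits plus the identification $\D^{\Delta^1} = \lim_{\Delta^1}\D$ (as a cotensor in $\cat$ promoted via the pointwise symmetric monoidal structure). Plugging this identification into the pullback and comparing with the definition \eqref{Lax_equalizer} applied to $\CAlg(F)$ and $\CAlg(G)$ yields the desired equivalence.

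The main obstacle I anticipate is making precise the pointwise symmetric monoidal structure on $\D^{\Delta^1}$ together with the claim that evaluation at the endpoints is strong symmetric monoidal: this requires lifting the $\infty$-operad $\D^{\otimes} \rightarrow \Fin_{*}$ to $(\D^{\Delta^1})^{\otimes} \rightarrow \Fin_{*}$ in such a way that the two evaluations land in $\D^{\otimes}$ as maps of $\infty$-operads, and that the resulting diagram indeed gives a pullback in $\CAlg(\cat)_{\lax}$. This is a standard but slightly tedious verification, which can either be carried out directly using the cotensor description $\D^{\Delta^1} = \Fun(\Delta^1,\D)$ in $\CAlg(\cat)_{\lax}$, or deduced from the fact that cotensors in $\CAlg(\cat)_{\lax}$ by $\Delta^1$ are computed pointwise. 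Once this bookkeeping is in place, the remainder of the proof is a formal diagram chase.
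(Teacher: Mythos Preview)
Your proposal is correct and follows essentially the same route as the paper: construct the symmetric monoidal structure on $\LEq(F,G)$ by realizing the defining square as a pullback in $\CAlg(\cat)_{\lax}$, apply \Cref{limits_in_Cat_lax}, and use $\CAlg(\D^{\Delta^{1}}) \simeq \CAlg(\D)^{\Delta^{1}}$. The paper outsources the construction of the symmetric monoidal structure (the step you flag as the main obstacle) to \cite[Construction IV.2.1 (ii)]{NS}, which is precisely the verification that this particular pullback exists in $\CAlg(\cat)_{\lax}$ when one leg is strong symmetric monoidal; be aware that this is genuinely the crux, since $\CAlg(\cat)_{\lax}$ does not admit arbitrary pullbacks along lax functors (cf.\ the remark following the proposition in the paper).
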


\begin{proof}
Using \cite[Construction IV.2.1 (ii)]{NS}, there is a symmetric monoidal $\infty$-category $\LEq(F,G)^{\otimes}$ defined as a pull-back diagram of symmetric monoidal $\infty$-categories and lax symmetric monoidal functors, whose underlying $\infty$-category is $\LEq(F,G)$. Taking the $\infty$-category of commutative algebra objects and using \ref{limits_in_Cat_lax}, we get a pull-back square of $\infty$-categories

$$
\xymatrix{\CAlg( \LEq(F,G)) \ar[d] \ar[r]& \CAlg(\D^{\Delta^{1}}) \simeq \CAlg(\D)^{\Delta^{1}} \ar[d]^-{(ev^{0}, \ev^{1})} \\
\CAlg(\C) \ar[r]_-{(F,G)} & \CAlg(\D \times \D) \simeq \CAlg(\D) \times \CAlg(\D), }
$$
and the second statement follows.
\end{proof}

\begin{rem}
Note that if $F$ is only lax symmetric monoidal, but not strictly symmetric monoidal, there is no natural symmetric monoidal structure on $\LEq(F,G)$. This is an example demonstrating that in general the $\infty$-category $\CAlg(\cat)_{\lax}$ is not closed under all limits inside $\Op_{\infty}$. It is crucial that when $F$ is strictly symmetric monoidal, one can construct a symmetric monoidal structure on $\LEq(F,G)$ making the square \ref{Lax_equalizer} a square of symmetric monoidal $\infty$-categories and lax symmetric monoidal functors. In other words, the commutative square \ref{pullback} needs to be \emph{given}, and in general, we \emph{can not define} $\C$ as a pull-back of symmetric monoidal $\infty$-categories and lax symmetric monoidal functors.
\end{rem}

Over a ring of non-zero characteristic, in addition to $\mathbb{E}_{\infty}$-algebras, there are also theories of derived commutative algebras as well as derived divided power algebras. We will review some basics of derived commutative algebras following \cite{R} in the next subsection, and the theory of derived divided power algebras in Section 2.3 via the approach of \cite{BCN21}. We will also formulate and prove a version of Proposition \ref{limits_in_Cat_lax} in the setting of derived algebras in Section 2.3. To this end, we will introduce the abstract framework in which the theories of derived commutative algebras and derived divided power algebras fit.

Recall that given an $\infty$-category $\C$, a \textbf{monad} on $\C$ is an algebra object in the monoidal $\infty$-category $\Fun(\C, \C)$ with respect to the composition product. To formulate and prove a version of Proposition \ref{limits_in_Cat_lax} in the setting of algebras over a monad, we ought to have, given a pair of $\infty$-categories $(\C,\T_{\C})$ and $(\D,\T_{\D})$ endowed with monads, the notion of a \textbf{lax equivariant} functor $F: (\C,\T_{\C}) \rightarrow (\D, \T_{\D})$ with respect to these monads action. In order to speak about such lax equivariant functors, we use some basic $(\infty,2)$-categorical formalism. We generally follow the approach of \cite{GaitsRozI} to $(\infty,2)$-categories, and use some elements of the formalism developed by R.Haugseng in the text \cite{H}. We will give references for all definitions and statements, but will not present a detailed overview of the formalism and restrict to simply explaining what various constructions mean on the naive level of 2-categories.

\begin{notation}
We denote $\cat_{(\infty,2)}$ the $(\infty,1)$-category of $(\infty,2)$-categories. There exist several approaches to $(\infty,2)$-categories in the literature. For instance, one can think of objects of $\cat_{(\infty,2)}$ as the $\infty$-categories of $\infty$-categories enriched in the symmetric monoidal $\infty$-category $\cat$. This means that an $(\infty,2)$-category $\textbf{C}$ is characterized by the data of a collection of objects $X \in \textbf{C}$ and for any pair $X,Y \in \textbf{C}$, an $(\infty,1)$-category $\Hom(X,Y)$ together with composition functors $$\Hom(X,Y) \times \Hom(Y,Z) \rightarrow \Hom(X,Z) $$ with a hierarchy of higher coherences\footnote{See \cite[Remark 2.14]{H} for discussion and references to this approach. We do not have to choose a specific model, because most our statements and constructions already exist in the literature, and we only need to reference them and apply to concrete problems.}. The elements of $\Hom(X,Y)$ are usually called \textbf{1-morphisms} in $\textbf{C}$, and morphisms of $\Hom(X,Y)$ are usually called \textbf{2-morphisms} in $\textbf{C}$. We use bold letters like $\textbf{C},\textbf{D},...$ for objects of $\cat_{(\infty,2)}$. Given an $(\infty,2)$-category $\textbf{C}$, we denote $\imath_{1}\textbf{C}$ the underlying $(\infty,1)$-category obtained by discarding all non-invertible $2$-morphisms. We denote $\textbf{Cat}$ the $(\infty,2)$-category of $(\infty,1)$-categories\footnote{We usually call $(\infty,1)$-categories simply "$\infty$-categories", unless there is any possibility of confusion, or unless we want to be extra pedantic.}. The construction of $\textbf{Cat}$ uses the natural enhancement of the usual $(\infty,1)$-category of $(\infty,1)$-categories by letting the $\Hom$-category between two objects $\C,\D \in \cat$ to be the $\infty$-category $\Fun(\C,\D)$ of all functors and not necessarily invertible natural transformations. Given two $(\infty,1)$-categories $\C,\D$, there is an $(\infty,1)$-category $\Fun(\C,\D)$ whose objects are functors $F: \C \rightarrow \D$, and morphisms are not necessarily invertible natural transformations $F \Rightarrow G$. 
\end{notation}

\begin{construction}
Given a pair of $(\infty,2)$-categoies $\textbf{C},\textbf{D}$, there is an $\infty$-category $\Fun(\textbf{C}, \textbf{D})_{\lax}$ of 2-functors $\textbf{C} \rightarrow \textbf{D}$ and \textbf{lax natural transformations}. A lax natural transformation $\eta: F \Rightarrow G$ between 2-functors $F,G: \textbf{C} \rightarrow \textbf{D}$ is the data assigning to any 1-morphism $f: X \rightarrow Y$ in $\textbf{C}$ a lax square

$$
\xymatrix{  F(X)\ar[d]_-{F(f)} \ar[r]^-{\eta_{X}} &  G(X) \ar@{=>}[ld] \ar[d]^-{G(f)}\\
F(Y) \ar[r]_-{\eta_{Y}} & G(Y)  ,   }
$$
i.e. a not necessarily invertible map $\eta_{f}: G(f) \circ \eta_{X} \rightarrow \eta_{Y} \circ F(f)$. Dually, there is also a notion of an \textbf{oplax natural transformation} for two functors $F,G: \textbf{C} \rightarrow \textbf{D}$. 

See \cite[Definition 2.9]{H} for a precise definition of $ \Fun(\textbf{C}, \textbf{D})_{(\op)\lax}$ using Gray tensor product. We will give a very short overview here. There exists a bifunctor $\otimes^{(\op)\lax}: \cat_{(\infty,2)} \times \cat_{(\infty,2)} \rightarrow \cat_{(\infty,2)} $ called \textbf{(op)lax Gray tensor product} which preserves colimits in each variable, and such that for any triple of $(\infty,2)$-categories $\textbf{C},\textbf{D}, \textbf{E}$, we have

$$
\Map_{\cat_{(\infty,2)}}(\textbf{C}, \Fun(\textbf{D}, \textbf{E})_{\op\lax}) \simeq \Map_{\cat_{(\infty,2)}}(\textbf{D}\otimes^{\op\lax} \textbf{C}, \textbf{E})$$

$$ \simeq \Map_{\cat_{(\infty,2)}}(\textbf{C}\otimes^{\lax} \textbf{D}, \textbf{E} ) \simeq \Map_{\cat_{(\infty,2)}}(\textbf{D}, \Fun(\textbf{C}, \textbf{E})_{\lax}).
$$
Unwinding the definition, one obtains that an (op)lax natural transformation between functors $F,G: \textbf{C} \rightarrow \textbf{D}$ is a functor of $(\infty,2)$-categories $\textbf{C} \otimes^{(\op)\lax} \Delta^{1} \rightarrow \textbf{D} $ whose restriction to $0,1 \in \Delta^{1}$ is $F$ and $G$ respectively.

\end{construction}

\begin{ex}
Let $\mathbb{N}=\{0,1,2,...\}$ be the monoid of natural numbers by addition. We will apply the previous construction to the case $\textbf{C}:=\B\mathbb{N}$, which is an ordinary category with one object and a unique non-identity morphism, and $\textbf{D}:=\textbf{Cat}$. The data of a functor $\B\mathbb{N} \rightarrow \textbf{Cat}$ is the same as the data of a pair $(\C,\T_{C})$ where $\C$ is an $\infty$-category, and $\T: \C \rightarrow \C$ an endofunctor. Giving a lax natural transformation $F: (\C,\T_{\C}) \rightarrow (\D,\T_{\D})$ is equivalent to giving a functor $F: \C \rightarrow \D$ and for any $X\in \C$ a lax equivariant structure map $\eta_{X}: \T_{\D} \circ F(X) \rightarrow F \circ \T_{\C}(X)$.
\end{ex}

\begin{construction}\label{monads_adjunctions}

This construction follows \cite[Definition 4.2, Definition 5.2]{H} and the references therein.

\begin{enumerate}

\item There exist a universal $2$-category $\textbf{mnd}$ containing a monad. The universal property of $\textbf{mnd}$ is that giving a functor $\textbf{mnd} \rightarrow \textbf{Cat}$ is the same as giving an $\infty$-category $\C$ endowed with a monad $\T: \C \rightarrow \C$. The $2$-category \textbf{mnd} contains a single object with $\hom$-category being the augmented simplex category, i.e. $\textbf{mnd}\simeq \B\Delta_{a}$. We define the \textbf{$(\infty,2)$-category of monads and (op)lax-equivariant 1-morphisms} in $\textbf{C}$ as

$$
\Mnd(\textbf{cat})_{(\op)\lax}:=\Fun(\textbf{mnd},\textbf{Cat})_{(\op)\lax}.
$$
To unravel the definition above, objects of $\Mnd(\cat)_{(\op)\lax}$ are pairs $(\C,\T_{\C})$ consisting of an $\infty$-category $\C$ endowed with a monad $\T_{\C}$ and maps are functors $F: \C \rightarrow \D$ which are (op)lax-equivariant with respect to the monads $\T_{\C}$ and $\T_{\D}$. We also denote $\Mnd(\cat)_{(\op)\lax}:=\imath_{1}\Fun(\textbf{mnd},\textbf{Cat})_{(\op)\lax}$ for simplicity. 

More generally, the notion of a monad makes sense internally to any $(\infty,2)$-category. Given $\textbf{C}\in \cat_{(\infty,2)}$, we define the $(\infty,2)$-category of monads and (op)lax equivariant $1$-morphisms in $\textbf{C}$ as the $(\infty,2)$-category of functors $\Mnd(\textbf{C})_{(\op)\lax}:= \Fun(\textbf{mnd}, \textbf{C})_{(\op)\lax}.$

\item  There exits a universal $2$-category $\textbf{adj}$ containing an adjunction with the universal property that the data of a functor $\textbf{adj} \rightarrow \textbf{Cat}$ is equivalent to the data of a pair of $\infty$-categories $\C,\D$ endowed with an adjunction $\xymatrix{ \D   \ar@<-0.5ex>[r]_-{G} & \ar@<-0.5ex>[l]_-{F} \C  }$.  We define the \textbf{$(\infty,2)$-category of adjunctions and (op)lax-equivariant functors} as $$ \Adj(\textbf{Cat})_{(\op)\lax}:=\Fun(\textbf{adj},\textbf{Cat})_{(\op)\lax},$$

and use the simplified notation $\Adj(\cat)_{(\op)\lax}:=\imath_{1}\Fun(\textbf{adj},\textbf{Cat})_{(\op)\lax} $ for the underlying $(\infty,1)$-category. There exists a forgetful functor $\Adj(\textbf{Cat})_{\lax} \rightarrow \Mnd(\textbf{Cat})_{\lax} $ sending any adjunction $\xymatrix{ \D   \ar@<-0.5ex>[r]_-{G} & \ar@<-0.5ex>[l]_-{F} \C  }$ to the monad $(\C, G\circ F) $. 

The notion of an adjunction is well-defined internally to any $(\infty,2)$-category. Given $\textbf{C} \in \cat_{(\infty,2)}$, the $\infty$-category of adjunctions and (op)lax equivariant $1$-morphisms in $\textbf{C}$ is by definition, the $(\infty,2)$-category of functors $\Adj(\textbf{C}):= \Fun(\textbf{adj}, \textbf{C})_{(\op)\lax}.$

\item

According to \cite[Corollary 5.8]{H}, the forgetful functor $\Adj(\textbf{Cat})_{\lax} \rightarrow \Mnd(\textbf{Cat})_{\lax}$ admits a fully faithful right adjoint $\Mnd(\textbf{Cat})_{\lax} \rightarrow \Adj(\textbf{Cat})_{\lax} $ sending an object $(\C,\T) \in \Mnd(\textbf{Cat})_{\lax} $ to the adjunction $$\xymatrix{ \Alg_{\T}(\C)   \ar@<-0.5ex>[r]_-{U_{\T}} & \ar@<-0.5ex>[l]_-{F_{\T}} \C,   }$$

where $\Alg_{\T}(\C)$ is the $\infty$-category of $\T$-algebras in $\C$, $F_{\T}$ is the free algebra functor, and $U_{\T}$ is the forgetful functor. The claim that $\Alg_{\T}(\C)$ is the conventional $\infty$-category of $\T$-algebras in $\C$ as defined in \cite{HA} is established in \cite[Proposition 8.10]{H}.

\end{enumerate}
\end{construction}

\begin{prop}\label{limits_in_Mod_A_lax}

Assume $(\C,\T_{\C}) \xymatrix{ \ar[r]^-{\sim}&} \underset{i}{\lim} (\C_{i}, \T_{\C_{i}})$ is a limit diagram in $\Mnd(\cat)_{\lax}$. In this situation, the natural functor induces an equivalence:

$$\xymatrix{   \Alg_{\T_{\C}}(\C) \ar[r]^-{\sim}& \underset{i}{\lim} \Alg_{\T_{\C_{i}}}(\C_{i})   } $$

\end{prop}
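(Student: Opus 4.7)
The strategy is to transport the limit from $\Mnd(\cat)_{\lax}$ to $\Adj(\cat)_{\lax}$ and then extract the algebras component via an evaluation functor. By Construction~\ref{monads_adjunctions}(3), the forgetful functor $\Adj(\textbf{Cat})_{\lax} \to \Mnd(\textbf{Cat})_{\lax}$ admits a fully faithful right adjoint $R$ sending a monad $(\C,\T)$ to the free-forgetful adjunction $F_{\T} \dashv U_{\T}$ between $\C$ and $\Alg_{\T}(\C)$. Since $R$ is a right adjoint, it preserves limits, so the hypothesis $(\C,\T_\C) \simeq \lim_i (\C_i,\T_{\C_i})$ yields a limit diagram $R(\C,\T_\C) \simeq \lim_i R(\C_i,\T_{\C_i})$ in $\Adj(\cat)_{\lax}$.

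Writing $\Adj(\cat)_{\lax} = \imath_1\Fun(\textbf{adj},\textbf{Cat})_{\lax}$, let $\mathrm{ev}_1:\Adj(\cat)_{\lax}\to\cat$ denote evaluation at the object of $\textbf{adj}$ which indexes the codomain of the left adjoint in an adjunction. By the description of $R$, the composite $\mathrm{ev}_1\circ R$ sends $(\C,\T)$ to $\Alg_{\T}(\C)$, and the natural comparison map of the proposition is exactly the image under $\mathrm{ev}_1$ of the limit identification above. Granted that $\mathrm{ev}_1$ preserves limits, chaining equivalences gives
$$\Alg_{\T_\C}(\C) \simeq \mathrm{ev}_1\bigl(R(\C,\T_\C)\bigr) \simeq \mathrm{ev}_1\Bigl(\lim_i R(\C_i,\T_{\C_i})\Bigr) \simeq \lim_i \mathrm{ev}_1\bigl(R(\C_i,\T_{\C_i})\bigr) \simeq \lim_i \Alg_{\T_{\C_i}}(\C_i),$$
which is the asserted equivalence.

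The main step requiring justification is that evaluation at an object of a lax functor $(\infty,2)$-category preserves limits. I would argue this via the defining adjunction of the lax Gray tensor product: a morphism in $\Fun(\textbf{K},\textbf{Cat})_{\lax}$ is encoded by a $2$-functor $\textbf{K}\otimes^{\lax}\Delta^1 \to \textbf{Cat}$, and since $\otimes^{\lax}$ preserves colimits in each variable, cones in $\Fun(\textbf{K},\textbf{Cat})_{\lax}$ are detected by their components at each object $x\in \textbf{K}$; hence each $\mathrm{ev}_x$ preserves limits. Equivalently, this pointwise formula for limits in lax functor categories is available within Haugseng's formalism \cite{H} that underlies Construction~\ref{monads_adjunctions}. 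Everything else is a formal consequence of $R$ being a right adjoint; this pointwise-limits claim for lax functor categories is the only step requiring genuine work.
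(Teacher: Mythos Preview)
Your proof is correct and follows essentially the same route as the paper: apply the right adjoint $R$ from Construction~\ref{monads_adjunctions}(3) to carry the limit into $\Adj(\cat)_{\lax}$, then evaluate at the object of $\textbf{adj}$ corresponding to the source of the right adjoint to extract $\Alg_{\T}(\C)$. The paper simply asserts that this evaluation functor preserves limits, whereas you supply an argument via the defining adjunction of the lax Gray tensor product; otherwise the two proofs are identical.
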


\begin{proof}
This follows immediately from part 3 of Construction \ref{monads_adjunctions} as right adjoints commute with limits, and the functor $\Adj(\cat)_{\lax} \rightarrow \cat$ sending an adjunction $\xymatrix{ \D   \ar@<-0.5ex>[r]_-{G} & \ar@<-0.5ex>[l]_-{F} \C  }$ to the $\infty$-category $\D$ (the source of the right adjoint) commutes with limits as well.
\end{proof}

\begin{construction}\label{construction_of_monads}
Let $\A$ be a monoidal $\infty$-category, $\textbf{BA}$ the corresponding $(\infty,2)$-category which has a single object with $\hom$-category being $\A$. We let $$\Mod_{\A}(\textbf{Cat})_{(\op)\lax}:=\Fun(\textbf{BA},\textbf{Cat})_{(\op)\lax}$$ be the $(\infty,2)$-category of \textbf{$(\infty,1)$-categories with an $\A$-action and (op)lax $\A$-equivariant functors between them}, and following our usual convention let $\Mod_{\A}(\cat)_{(\op)\lax}:=\imath_{1} \Mod_{\A}(\textbf{Cat})_{(\op)\lax}$. Giving an algebra object $\T \in \Alg(\A)$ is equivalent to giving a functor of $(\infty,2)$-categories $\textbf{mnd} \rightarrow \textbf{BA}$. Consequently, given a monad $\T$ in $\Alg(\A)$, we get a limit-preserving restriction functor

\begin{equation}\label{getting_monads}
\Mod_{\A} (\cat)_{\lax} \xymatrix{\ar[r]&} \Mnd(\cat)_{\lax}
\end{equation}
which takes any $\infty$-category $\C$ with $\A$-action to the monad $(\C,\T_{\C})$, where $\T_{\C}$ is the image of $\T \in \Alg(\A)$ under the monoidal functor $\A \rightarrow \Fun(\C,\C)$.

We shall now recall the construction of passing to adjoints in the $(\infty,2)$-categorical setting, and how it interchanges oplax and lax natural transformations. Given a pair of $(\infty,2)$-categories $\textbf{C},\textbf{D}$, let $\Fun(\textbf{C},\textbf{D})_{\oplax,L}$ be  the subcategory of $\Fun(\textbf{C},\textbf{D})_{\oplax}$ with all maps being left adjoint functors, and similarly $\Fun(\textbf{C},\textbf{D})_{\lax,R}$ the subcategory of $\Fun(\textbf{C},\textbf{D})_{\lax}$ with all maps being right adjoint functors. \cite[Ch.12, Corollary 3.1.9]{GR2} states that taking adjoints gives a natural equivalence

$$
\Fun(\textbf{C},\textbf{D})_{\oplax,L}\simeq \Fun(\textbf{C}^{(1,2)-\op} ,\textbf{D})_{\lax,R},
$$
where the superscript ${(1,2)-\op}  $ stands for inverting all $1$ and $2$-morphisms.

Applying this general fact to the construction \ref{getting_monads}, we get a commutative diagram of $\infty$-categories

$$
\xymatrix{  (\Mod_{\A}(\cat)_{L})^{\op} \ar[d] \ar[r]& (\Mnd(\cat)_{L})^{\op} \ar[d]\\
\Mod_{\A}(\cat)_{\lax,R} \ar[r]& \Mnd(\cat)_{\lax,R} ,  }
$$
where the vertical functors are taking right adjoints of $1$-morphisms.

\end{construction}

We will use the following Proposition in several crucial places.

\begin{prop}\label{main_proposition_on_monads}
Assume $\A$ is a monoidal $\infty$-category with a fixed algebra object $\T \in \Alg(\C)$. Suppose that we are given a diagram $\{\C_{i}\}_{i\in I}$ in $\Mod_{\A}(\cat)_{L}$ such that the corresponding diagram in $\Mod_{\A}(\cat)_{\lax}$ obtained by passing to right adjoints, has a limit $\C \simeq \underset{i}{\lim}\; \C_{i}$ in $\Mod_{\A}(\cat)_{\lax}$. Then Construction \ref{construction_of_monads} produces an equivalence

$$
\xymatrix{    \Alg_{\T_{\C}}(\C) \ar[r]^-{\sim}& \underset{i}{\lim} \Alg_{\T_{\C_{i}}}(\C_{i}).   }
$$
\end{prop}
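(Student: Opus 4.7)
The plan is to combine the two general facts established immediately above: Construction \ref{construction_of_monads} provides a limit-preserving restriction functor from $\Mod_{\A}(\cat)_{\lax}$ to $\Mnd(\cat)_{\lax}$, and Proposition \ref{limits_in_Mod_A_lax} says that the assignment $(\C,\T) \mapsto \Alg_{\T}(\C)$ carries limits in $\Mnd(\cat)_{\lax}$ to limits in $\cat$.

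First I would feed the algebra object $\T \in \Alg(\A)$ into Construction \ref{construction_of_monads} to obtain the restriction functor
$$
\Mod_{\A}(\cat)_{\lax} \xymatrix{\ar[r]&} \Mnd(\cat)_{\lax}, \qquad \C \longmapsto (\C,\T_{\C}),
$$
where $\T_{\C}$ is the image of $\T$ under the monoidal action functor $\A \rightarrow \Fun(\C,\C)$. This functor preserves limits, since it is obtained by restriction along the $(\infty,2)$-functor $\textbf{mnd} \rightarrow \textbf{BA}$ classifying $\T$, and restriction functors between $(\infty,2)$-functor categories with lax natural transformations preserve limits.

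Next, by hypothesis the diagram $\{\C_i\}$ acquires a limit $\C \simeq \underset{i}{\lim}\;\C_i$ in $\Mod_{\A}(\cat)_{\lax}$ after passing to right adjoints. Applying the restriction functor above to this diagram yields a limit
$$
(\C,\T_{\C}) \xymatrix{\ar[r]^-{\sim}&} \underset{i}{\lim}\;(\C_i,\T_{\C_i})
$$
in $\Mnd(\cat)_{\lax}$. Invoking Proposition \ref{limits_in_Mod_A_lax} on this monad limit then gives the desired equivalence $\Alg_{\T_{\C}}(\C) \simeq \underset{i}{\lim}\;\Alg_{\T_{\C_i}}(\C_i)$.

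I do not anticipate any substantial obstacle: both ingredients are already in place, and the argument is essentially a two-step unwinding. The one conceptual point worth underlining is that the hypothesis is stated in terms of a limit in $\Mod_{\A}(\cat)_{\lax}$ rather than in $\Mod_{\A}(\cat)_L$; this matches the output of the right-adjoint equivalence recorded at the end of Construction \ref{construction_of_monads}, which is precisely the form in which the monad-algebra machinery of Proposition \ref{limits_in_Mod_A_lax} applies.
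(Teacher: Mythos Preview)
Your proposal is correct and follows exactly the route the paper intends: the proposition is stated without proof precisely because it is an immediate combination of the limit-preserving restriction functor \ref{getting_monads} from Construction \ref{construction_of_monads} with Proposition \ref{limits_in_Mod_A_lax}. Your write-up makes this explicit and adds nothing extraneous.
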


\subsection{Non-abelian derived functors.}

In this section we will review some basic ideas from the theory of derived commutative rings. Many ideas of this subsection are due to L.Brantner and A.Mathew and were first developed in the paper \cite{BM19} in the case $\C=\Mod_{k}$, $k$ a field. We will closely follow the terminology of the paper of A.Raksit \cite{R}.

\begin{defn}
Let $\CAlg^{\poly}_{\mathbb{Z}}$ be the category of all finitely generated polynomial rings. The $\infty$-category of \textbf{connective derived commutative rings} $\DAlg_{\mathbb{Z},\geq 0}$ is obtained by freely adjoining all sifted colimits to $\CAlg^{\poly}_{\mathbb{Z}}$, $$\DAlg_{\mathbb{Z},\geq 0} :=\mathcal{P}_{\Sigma}(\CAlg^{\poly}_{\mathbb{Z}}).$$

Given an object $R$, we let $\DAlg_{R,\geq 0}:= (\DAlg_{\mathbb{Z},\geq 0})_{R/}$ be the under category of \textbf{connective derived commutative $R$-algebras}.
\end{defn}

\begin{rem}
The strategy of defining $R$-algebras used above will be used very frequently throughout the text. In many cases, we will define some $\infty$-category of derived rings over $\mathbb{Z}$ first, and then define $R$-linear objects as an under-category. Consequently, many constructions with derived rings of different types can be done in the case $R$ is a discrete ring, or even $R=\mathbb{Z}$, and then generalized to under-objects.
\end{rem}

\begin{rem}
Let $R$ be a discrete commutative ring. The $\infty$-category $\Mod_{R}$ of left $R$-modules is a stable presentable symmetric monoidal $\infty$-category. There is a $t$-structure $(\Mod_{R,\geq 0},\Mod_{R,\leq 0})$ characterized by the property that $M\in \Mod_{R}$ is connective if the underlying spectrum is connective. This $t$-structure is compatible with the symmetric monoidal structure and is right and left complete. In addition, there is a full subcategory $\Mod_{R,0} \subset \Mod_{R,\geq 0}$ of finitely generated free modules which is closed under direct sums and tensor powers. This $\infty$-category compactly and projectively generates $\Mod_{R,\geq 0}$, i.e. $\Mod_{R,\geq 0} \simeq \mathcal{P}_{\Sigma}(\Mod_{R,0})$.
\end{rem}

We will assume that $R$ is discrete throughout this subsection, unless explicitly stated.

The $\infty$-category $\DAlg_{R,\geq 0}$ is monadic over $\Mod_{R,\geq 0}$. It turns out that it is possible to extend this monad to a monad on $\Mod_{R}$, so that one gets the free \textbf{derived commutative algebra} monad $\LSym_{\R} : \Mod_{R} \rightarrow \Mod_{R}$. The paper \cite{R} performs this construction by using the natural filtration on the free symmetric algebra. We will briefly review the Raksit's construction below.

\begin{ex}\label{Sym_excisive}
Let $X\in \Mod_{R}$. The symmetric power of $X$ is defined by the formula $$\Sym_{\C}(X) :=(\otimes^{n} X)_{h \Sigma_{n}},$$ where the subscript $(-)_{h \Sigma_{n}}$ denotes homotopy orbits with respect to the symmetric group action. The functor $\Sym^{\leq n}=\oplus_{i=1}^{n} \Sym^{i}_{\R}: \Mod_{R} \rightarrow \Mod_{R}$ is excisively polynomial of degree $n$. The (discrete) symmetric power of $X\in \Mod_{R,\heartsuit}$ is defined as $$\Sym_{R,\heartsuit}(X) :=(\otimes^{n} X)_{\Sigma_{n}},$$ where the coinvariants are taken in the naive sense. The functors $\Sym^{\leq n}_{R,\heartsuit} :=\oplus_{i=1}^{n} \Sym^{i}_{R,\heartsuit}: \Mod_{R,\heartsuit} \rightarrow \Mod_{R,\heartsuit}$ are additively polynomial of degree $n$.
\end{ex}

\begin{construction}\label{derived_functors_Cartier}
Let $\T: \Mod_{R,0} \rightarrow \Mod_{R} $ a functor. We define \textbf{the right-left Kan extension }of $\T$, $\T^{RL}: \Mod_{R} \rightarrow \Mod_{R}$ as the right Kan extension along the inclusion $\Mod_{R,0} \subset \Mod^{\omega}_{R,\leq 0}$, followed by the left Kan extension to all of $\Mod_{R}$. 
\end{construction}

\begin{defn}

Let $\D$ be an $\infty$-category containing all totalizations and geometric realizations. We say that a functor $G: \Mod_{R,\leq 0}^{\omega} \rightarrow \D$ preserves \emph{finite coconnective geometric realizations} if for any simplicial object $A_{\ast} $ in $ \Mod_{R,\leq 0}^{\omega}$ which is $n$-skeletal for some $n$ such that the geometric realization $|A_{\ast}|$ belongs to $\Mod_{R,\leq 0}^{\omega}$, the colimit  of $G(A_{\ast})$ exists in $\D$ and the natural map $|G(A_{\ast})| \rightarrow G(|A_{\ast}|)$ is an equivalence. A functor $G: \Mod_{R}^{\free,\fg} \rightarrow \D$ is \textbf{right-left extendable} if the right Kan extension $R G: \Mod_{R,\leq 0}^{\omega} \rightarrow \D$ preserves finite coconnective geometric realizations.

\end{defn}

\begin{defn} This definition concerns the two notions of polynomiality for functors of $\infty$-categories.

\begin{itemize}

\item Let $\C$ be an additive $\infty$-category, and $\D$ an $\infty$-category containing small colimits. An additively polynomial functor $F: \C \rightarrow \D$ of degree $0$ is a constant functor. Assume a polynomial functor of degree $n-1$ has been defined. A functor $F: \C \rightarrow \D$ is \textbf{additively polynomial of degree $n$} if the derivative $DF_{X}(Y) = \fib(F(X\oplus Y) \rightarrow F(X))$ is of degree $n-1$. A functor $F$ is \textbf{additively polynomial} if it is additively polynomial of some degree.

\item Let $\C$ be an $\infty$-category containing finite colimits, and $\D$ an $\infty$-category containing all limits. A functor $F: \C \rightarrow \D$ is \textbf{excisively polynomial} if it is $n$-excisive of some degree $n$ in the sense of Goodwillie calculus (see \cite[Definition 4.2.7]{R}).

\end{itemize}

We let $\Fun_{\apoly}(\Mod_{R,0} ,\Mod_{R})$ be the $\infty$-category of additively polynomial functors $\Mod_{R,0} \rightarrow \Mod_{R}$ and $\Fun_{\epoly}^{\Sigma}(\Mod_{R},\Mod_{R})$ be the $\infty$-category of excisively polynomial sifted colimit preserving functors $\Mod_{R} \rightarrow \Mod_{R}$. 

\end{defn}

For future reference, we record the following result proven in \cite[Proposition 3.34, Theorem 3.35]{BM19} in the case $R$ is a field, and generalized to an arbitrary derived algebraic context in \cite[Proposition 4.2.14, Proposition 4.2.15]{R}.

\begin{prop}\label{addpoly_and_excpoly}
Let $R$ be a connective derived commutative ring. The restriction functor $$\xymatrix{\Fun^{\Sigma}_{\epoly}(\Mod_{R},\Mod_{R}) \ar[r]& \Fun_{\apoly}(\Mod_{R,0},\Mod_{R})}$$ is an equivalence of $\infty$-categories with inverse given by right-left extension.
\end{prop}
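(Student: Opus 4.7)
The plan is to verify that the restriction functor $\mathrm{res}: \Fun^{\Sigma}_{\epoly}(\Mod_R, \Mod_R) \to \Fun_{\apoly}(\Mod_{R,0}, \Mod_R)$ admits the right-left Kan extension construction $(-)^{RL}$ as a two-sided inverse. This breaks into three steps: (i) showing $\mathrm{res}$ really lands in additively polynomial functors, (ii) showing $(-)^{RL}$ really lands in excisively polynomial sifted-colimit-preserving functors, and (iii) showing that the two composites are canonically equivalent to the identity.

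For step (i), suppose $G: \Mod_R \to \Mod_R$ is $n$-excisive and preserves sifted colimits. On the additive $\infty$-category $\Mod_{R,0}$, every strongly cocartesian $(n+1)$-cube is equivalent to a biproduct cube, so Goodwillie's total-fiber characterization of $n$-excisiveness translates directly into vanishing of the $(n+1)$-st cross-effect of $G|_{\Mod_{R,0}}$. This is precisely the condition defining additive polynomiality of degree $n$, so this step is fairly immediate.

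For step (ii), let $F: \Mod_{R,0} \to \Mod_R$ be additively polynomial of degree $n$. First right Kan extend along $\Mod_{R,0} \subset \Mod_{R,\leq 0}^{\omega}$ to obtain an intermediate functor $RF$. The key technical assertion is that $F$ is right-left extendable, meaning $RF$ commutes with finite coconnective geometric realizations; I would prove this by induction on the polynomial degree, using the finite cross-effect filtration of $F$ to reduce to the linear case, where compatibility with finite totalizations and realizations is automatic from exactness. Next, left Kan extend $RF$ along $\Mod_{R,\leq 0}^{\omega} \subset \Mod_R$ to obtain $F^{RL}$, which preserves sifted colimits by construction. To see that $F^{RL}$ is $n$-excisive, I would use that strongly cocartesian $(n+1)$-cubes of connective modules can be resolved as sifted colimits of strongly cocartesian cubes of free modules in $\Mod_{R,0}$ via the presentation $\Mod_{R,\geq 0} \simeq \mathcal{P}_\Sigma(\Mod_{R,0})$, and that every cube in $\Mod_R$ is, up to a suspension shift, accessible from cubes of coconnective compact modules. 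Combined with sifted-colimit preservation, these reductions propagate the cross-effect vanishing of $F$ into Goodwillie excisiveness of $F^{RL}$.

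Finally, for step (iii), the composite $\mathrm{res} \circ (-)^{RL}$ is the identity essentially by definition, since both right and left Kan extension along a fully faithful inclusion restrict to the identity on the source subcategory. For the composite $(-)^{RL} \circ \mathrm{res}$, one must show that any sifted-colimit-preserving $n$-excisive functor $G: \Mod_R \to \Mod_R$ is canonically the right-left Kan extension of its restriction to $\Mod_{R,0}$; this follows because sifted-colimit preservation and $n$-excisiveness together rigidify $G$ so that it is determined by its values on $\Mod_{R,0}$ via exactly the right-left procedure. The main obstacle I expect is the right-left extendability argument in step (ii): the compatibility of additive polynomiality with finite coconnective totalizations requires a careful cross-effect analysis, and is the real heart of the entire proposition.
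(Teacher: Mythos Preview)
The paper does not supply its own proof of this proposition: it merely records the statement and cites \cite[Proposition~3.34, Theorem~3.35]{BM19} for the field case and \cite[Proposition~4.2.14, Proposition~4.2.15]{R} for the general derived algebraic context. So there is no ``paper's proof'' to compare against beyond those references.

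Your outline follows the same architecture as those cited proofs. A few comments on where your sketch is vague relative to what actually needs to be done. In step~(ii), the right-left extendability is indeed the technical heart, and your reduction via the cross-effect filtration is the correct idea; the precise mechanism is that the multilinear cross-effects commute with finite geometric realizations in each variable separately. In step~(iii), the direction $(-)^{RL}\circ\mathrm{res}\simeq\Id$ is where the real work lies, and your phrase ``sifted-colimit preservation and $n$-excisiveness together rigidify $G$'' hides the key input: one must know that an $n$-excisive functor between stable $\infty$-categories commutes with the specific finite totalizations that present objects of $\Mod_{R,\leq 0}^{\omega}$ as limits of objects in $\Mod_{R,0}$. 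This is not formal and is established in the cited references; without it, you cannot conclude that $G|_{\Mod_{R,\leq 0}^{\omega}}$ agrees with the right Kan extension of $G|_{\Mod_{R,0}}$. Your proposal correctly identifies this as the crux but does not indicate how to prove it.
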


\begin{rem}
For any right-left extendable functor $\T: \Mod_{R,0} \rightarrow \D$, the restriction $\T^{RL}|_{\Mod_{R,\geq 0}}$ of its right-left extension to the full subcategory of connective objects coincides with the left Kan extension of $F$ along the inclusion $\Mod_{R,0} \subset \Mod_{R,\geq 0}$. In this case, we will use the notation $\LT: \Mod_{R} \rightarrow \D$ for the derived functor defined on all $\Mod_{R}$.
\end{rem}

\begin{defn}\label{filtered_monad}
Let $\mathbb{Z}_{\geq 0}^{\times}$ be the category of non-negative integers considered with monoidal structure given by multiplication. A \textbf{filtered (sifted colimit preserving) monad} is a lax monoidal functor

 \begin{equation}\label{filtered_monad}
\xymatrix{\T^{\leq \star}: \mathbb{Z}^{\times}_{\geq 0} \ar[r]&      \End_{\Sigma}(\Mod_{R}) . }
\end{equation}

\end{defn}

A.Raksit shows in \cite{R} that for a filtered monad $\T^{\leq \star}$, the colimit $\T:=\underset{\mathbb{Z}_{\geq 0}}{\colim} \;\T^{\leq \star}$ is a monad on $\Mod_{R}$. The advantage of using filtered monads is that many monads are not excisively polynomial on the nose, but have filtrations whose stages are excisively polynomial functors.

\begin{ex}\label{Sym_rl}

The functor

 \begin{equation}\label{filtered_Sym}
\xymatrix{\Sym_{R}^{\leq \star}: \mathbb{Z}^{\times}_{\geq 0} \ar[r]&      \End_{\Sigma}(\Mod_{R})  }
\end{equation}
$$
\xymatrix{ n \ar@{|->}[r] & (X \longmapsto \oplus_{i=1}^{n} \Sym^{i}_{R}(X)) }
$$
is a filtered monad such that $\underset{\mathbb{Z}_{\geq 0}}{\colim} \Sym_{R}^{\leq \star} \simeq \Sym_{R} $. Moreover, the stages of the filtration are sifted colimit preserving excisively polynomial functors. It follows that the functor $\Sym_{R}^{\leq \star}$ lands in the full subcategory $\End_{\Sigma}^{\epoly}(\Mod_{R}) \subset \End(\Mod_{R}), $ which is equivalent to $\Fun^{\apoly}(\Mod_{R,0}, \Mod_{R}) $ by Proposition \ref{addpoly_and_excpoly}. In other words, the filtered monad $\Sym_{\R}^{\leq \star}$ is right-left extended from $\Mod_{R,0} \subset \Mod_{R}$. As it preserves the connective subcategory $\Mod_{R,\geq 0}\subset \Mod_{R}$, the monad $\Sym_{R}^{\leq \star}$ is in particular, uniquely determined by the filtered monad $\Sym^{\leq \star}_{\Mod_{R,\geq 0}}$ on $\Mod_{R,\geq 0}$.
\end{ex}

\begin{construction}\label{LSym_monad}
Let $R$ be a \emph{discrete} commutative ring. Let $\Sym: \Mod_{R,0} \rightarrow \Mod_{R}$ be the free commutative algebra functor, $\Sym_{R}(M):=\bigoplus_{n\geq 0} (M^{\otimes^{n}})_{\Sigma_{n}}$. Construction \ref{derived_functors_Cartier} supplies a derived functor $\LSym_{R} : \Mod_{R} \rightarrow \Mod_{R}$ which we will refer to as the \textbf{free derived commutative algebra} in $\Mod_{R}$. Then \cite[Construction 4.2.19]{R} shows that the functor $\LSym_{R} \in \End^{\Sigma}(\Mod_{R})$ has a monad structure. This is achieved via a version of Example \ref{Sym_rl} in the derived setting. Let $\End^{\Sigma}_{0}(\Mod_{R,\geq 0})$ be the full subcategory of $\End^{\Sigma}(\Mod_{R})$ consisting of functors which preserve the full subcategory $\Mod_{R,0} \subset \Mod_{R,\geq 0}$, and $\End^{\Sigma}_{1}(\Mod_{R,\geq 0})$ the full subcategory of functors satisfying $\pi_{0}F(X) \in \Mod_{R,0}$ for any $X\in \Mod_{R,0}$. Raksit shows in \cite[Remark 4.2.18]{R} that the inclusion $\End_{0}^{\Sigma}(\Mod_{R,\geq 0}) \subset \End_{1}^{\Sigma}(\Mod_{R,\geq 0})$ admits a \emph{monoidal} left adjoint $\tau: \End^{\Sigma}_{1}(\Mod_{R,\geq 0}) \rightarrow \End^{\Sigma}_{0}(\Mod_{R,\geq 0})$. As we observed in Example \ref{Sym_rl}, the filtered monad $\Sym^{\leq \star}_{R}$ is uniquely determined by its restriction to the connective subcategory $\Mod_{R,\geq 0} \subset \Mod_{R}$. Moreover, for any $n$, the functor $\Sym^{\leq n}_{R}$ lies in $\End_{1}^{\Sigma}(\Mod_{R})$. We can define a new filtered monad $\LSym^{\leq \star}_{R}$ by the formula $\LSym^{\leq \star}_{R} := \tau(\Sym_{R}^{\leq \star})$. Moreover, it is an excisively polynomial filtered monad, therefore it extends to a filtered monad on all of $\Mod_{R}$. Defining $$\LSym_{R}:=\underset{\mathbb{Z}_{\geq 0} }{\colim}\;\LSym_{R}^{\leq \star},$$ we get a monad on $\Mod_{R}$. 
\end{construction}

\begin{defn}
Let us apply the Construction \ref{LSym_monad} to the initial ring $R=\mathbb{Z}$. \textbf{A derived commutative ring} is an algebra over the monad $\LSym_{\mathbb{Z}}: \Mod_{\mathbb{Z}} \rightarrow \Mod_{\mathbb{Z}}$ from Construction \ref{LSym_monad}. We let $\DAlg_{\mathbb{Z}}:=\Alg_{\LSym_{\mathbb{Z}}}(\Mod_{\mathbb{Z}})$ to be the $\infty$-category of derived commutative rings. The forgetful functor $\DAlg_{\mathbb{Z}} \rightarrow \CAlg_{\mathbb{Z}}$ commutes with all limits and colimits. Given any other (at this point, not necessarily discrete) object $R\in \DAlg_{\mathbb{Z}}$, we define the $\infty$-category of \textbf{derived commutative $R$-algebras} as the under category $$\DAlg_{R}:=(\DAlg_{\mathbb{Z}})_{R/}.$$

The $\infty$-category $\DAlg_{R}$ is monadic over $\Mod_{R}$ with the monad $\LSym_{R}: \Mod_{R} \rightarrow \Mod_{R}$ which satisfies the formula $$\LSym_{R}(V \otimes R) \simeq \LSym_{\mathbb{Z}}(V) \otimes R$$
for any induced $R$-module $V\otimes R$.
\end{defn}

\subsection{Symmetric sequences and derived pd operads.}

Construction \ref{LSym_monad} produced the monad $\LSym_{R}$ from the monad of free $\mathbb{E}_{\infty}$-algebra on $\Mod_{R}$ for $R$ a discrete commutative ring. There exists another monad on the abelian symmetric monoidal category $\Mod_{R,\heartsuit}$ called \textbf{the free non-unital divided power algebra} monad $\Gamma^{+}_{R,\heartsuit}$ whose underlying functor is

\begin{equation}\label{free_pd}
\Gamma^{+}_{R,\heartsuit}(X) = \bigoplus_{n=1}^{\infty} (X^{\otimes^{n}})^{\Sigma_{n}}.
\end{equation}

This monad can be defined on the $\infty$-category $\Mod_{R}$ for any connective derived commutative ring by means of the theory of \textbf{derived symmetric sequences}. Let us recall the following classical definition.

\begin{defn}
Let $\Spt^{\B\Sigma_{n}}$ be the $\infty$-category of representations of the symmetric group $\Sigma_{n}$ in spectra. We let the \textbf{$\infty$-category of symmetric sequences} to be the product:  $$\SSeq := \underset{n\geq 0}{\prod} \Spt^{\B\Sigma_{n}} .$$

\end{defn}

Below we will recall the universal property of the $\infty$-category of symmetric sequences which we learnt from \cite[Section 1.1]{GaitsRozI}.

\begin{rem}

The $\infty$-category $\SSeq$ is the free stable presentable symmetric monoidal $\infty$-category on the unit object $\Spt \in \Pr^{L}$. The tensor product of two symmetric sequences $\{P(n) \}$ and $\{Q(n)\}$ is given by the formula 

$$
(P \otimes Q)(n) = \bigoplus_{i+j=n} \Ind_{\Sigma_{i} \times \Sigma_{j}}^{\Sigma_{n}} P(i) \otimes Q(j).
$$

The universal property of being free on one generator implies that there is an equivalence 

\begin{equation}\label{SSeq}
\SSeq \simeq \Fun_{\cat} (*, \SSeq) \simeq \Fun_{\Pr^{L}}(\Spt, \SSeq) \simeq \End_{\CAlg(\Pr^{L})}(\SSeq).
\end{equation}

The equivalence \ref{SSeq} gives a monoidal structure on the $\infty$-category $\SSeq$, called the \textbf{composition product}. For two objects $\{P(n)\}, \{Q(n)\} \in \SSeq$, there is a formula $$P\circ Q\simeq \bigoplus_{n=0}^{\infty} (P(n) \otimes Q^{\otimes^{n}})_{h \Sigma_{n}},$$
where the tensor product is taken with respect to the symmetric monoidal structure in $\SSeq$. An \textbf{operad (in spectra)} is an algebra object in $\SSeq$ with respect to the composition product symmetric monoidal structure.

\end{rem}

 \begin{construction}
 
 The forgetful functor $\CAlg(\Pr^{L}) \rightarrow \cat$ is represented by $\SSeq$. Consequently, the monoidal $\infty$-category $\SSeq$ acts on the underlying $\infty$-category of any symmetric monoidal stable presentable $\infty$-category $\C$. The action functor $-\circ : \SSeq \rightarrow \End(\C)$ is given by the formula $$P\circ X = \bigoplus_{n=0}^{\infty} (P(n) \otimes X^{\otimes^{n}})_{h \Sigma_{n}}. $$
 
 Notice that this action preserves sifted colimits. Given an operad $\mathcal{O} \in \Alg(\SSeq)$, we obtain a sifted colimit preserving monad $\T_{\mathcal{O}}:=\mathcal{O}\circ -: \C \rightarrow \C$ acting on $\C$.
 
 \end{construction}

It was observed by B.Fresse \cite{Fr} and later in the $\infty$-categorical context by Francis and Gaitsgory \cite{FrG} that there is another monoidal structure on $\SSeq$ which acts naturally on any stable presentable symmetric monoidal $\infty$-category. The product in this monoidal structure is given by the formula $$P \overline{\circ} Q= \bigoplus_{n=0}^{\infty} (P(n) \otimes Q^{\otimes^{n}})^{h \Sigma_{n}} ,$$ and the action on any symmetric monoidal stable presentable $\infty$-category $\C $ is given by $$P\overline{\circ} X=(\bigoplus_{n=0}^{\infty} P \otimes X^{\otimes^{n}})^{h \Sigma_{n}} .$$ The rigorous construction of this monoidal structure and the action on any symmetric monoidal stable presentable $\infty$-category are not essential to the current text. We would only like to remark that in the particular case of the object $(0,\mathbb{S},\mathbb{S},...)$, the monad $\bigoplus_{n\geq 1} (X^{\otimes^{n}})^{h \Sigma_{n}}$ can be called the monad of "free non-unital $\mathbb{E}_{\infty}$-divided power algebra". One crucial difference with the free $\mathbb{E}_{\infty}$-algebra monad is that the free $\mathbb{E}_{\infty}$-divided power algebra monad does not preserve connective objects, and therefore can not be treated in the same way as in Construction \ref{LSym_monad}. Instead, we will construct a natural action of the monoidal category of discrete symmetric sequences on $\Mod_{P,\heartsuit}$ for any polynomial ring $P$, and produce an action of \textbf{derived symmetric sequences} on $\Mod_{R}$ for any connective derived commutative ring $R$ using the formalism of derived functors. We should note that such construction is essentially contained in \cite{BCN21}, and our goal here is to merely review it and fix the relevant notation.

\begin{construction}\label{SSeq_gen}
Here we recall the construction of derived symmetric sequences given in \cite[Notation 3.52]{BCN21}. Let $\mathbb{Z}[\mathcal{O}_{\Sigma_{n}}] \subset \Mod_{\mathbb{Z}[\Sigma_{n}], \heartsuit}$ be the full additive subcategory in the abelian category of $\Sigma_{n}$-representations spanned by $\mathbb{Z}$-linearizations of finite $\Sigma_{n}$-sets. Let $\mathbb{Z}[\mathcal{O}_{\Sigma}] = \bigoplus_{n\geq 0} \mathbb{Z}[\mathcal{O}_{\Sigma_{n}}]$, understood as direct sum of additive categories. An object of $\mathbb{Z}[\mathcal{O}_{\Sigma}]$ consists of a \emph{finite} collection $\{P(i)\}$ of $\Sigma_{i}$-representations in $\mathbb{Z}[\mathcal{O}_{\Sigma}]$, i.e. $P(n)=0$ for any large enough $n$. We define  the $\infty$-category of \textbf{derived symmetric sequences} as the $\infty$-category of left modules over the additive category $\mathbb{Z}[\mathcal{O}_{\Sigma}]$, i.e. the $\infty$-category of additive functors $$\SSeq^{\gen}:=\Mod_{\mathbb{Z}[\mathcal{O}_{\Sigma}]}=\Fun^{\oplus}(\mathbb{Z}[\mathcal{O}]^{\op}, \Spt).$$ 

The additive category $\mathbb{Z}[\mathcal{O}_{\Sigma}] $, understood as a subcategory $\SSeq_{\heartsuit} \subset \SSeq$ of the $\infty$-category of symmetric sequences in spectra, is closed under tensor product and composition defined in Construction \ref{SSeq} (for explanation, see \cite[Construction 3.58]{BCN21}). Moreover, the duality equivalence $(-)^{\vee}: \mathbb{Z}[\mathcal{O}_{\Sigma}]^{\op} \simeq \mathbb{Z}[\mathcal{O}_{\Sigma}]$ allows us to define a new monoidal structure on $\mathbb{Z}[\mathcal{O}_{\Sigma}]$ by the formula: $$P\overline{\circ} Q := (P^{\vee} \circ Q^{\vee})^{\vee}=\bigoplus_{n=0}^{\infty} (P(n) \otimes Q^{\otimes^{n}})^{\Sigma_{n}}.$$

For any pair of objects $P, Q \in \mathbb{Z}[\Sigma]$, there is a Norm map $$\Nm:  (P(n) \otimes Q^{\otimes^{n}})_{\Sigma_{n}} \rightarrow  (P(n) \otimes Q^{\otimes^{n}})^{\Sigma_{n}}, $$ and this induces a lax monoidal functor $(\mathbb{Z}[\mathcal{O}_{\Sigma}], \circ ) \rightarrow (\mathbb{Z}[\mathcal{O}_{\Sigma}], \overline{\circ})$.
 
 \cite[Proposition 3.72]{BCN21} shows that the composition product $\overline{\circ}$ right-left extends to a monoidal structure on $\SSeq^{\gen}$, and restriction by the Norm $(\mathbb{Z}[\mathcal{O}_{\Sigma}], \circ ) \rightarrow (\mathbb{Z}[\mathcal{O}_{\Sigma}], \overline{\circ}) $ extends to a right lax monoidal functor $\Nm^{*}: (\SSeq^{\gen},\circ ) \rightarrow (\SSeq^{\gen}, \overline{\circ})  $ which is identical on objects. 
 
 \end{construction}
 
 \begin{notation}
 Let us fix some notation related to Construction \ref{SSeq_gen}.
 
 \begin{enumerate}
 \item $\SSeq^{\gen}_{0}$ is the monoidal category $(\mathbb{Z}[\mathcal{O}_{\Sigma}],\circ)$.
 \item $\SSeq^{\gen,\pd}_{0}$ is the monoidal category $(\mathbb{Z}[\mathcal{O}_{\Sigma}],\overline{\circ})$.
 \item $\SSeq^{\gen}$ is the presentable monoidal $\infty$-category $(\SSeq^{\gen},\circ)$. 
 \item $\SSeq^{\gen,\pd}$ is the presentable monoidal $\infty$-category $(\SSeq^{\gen},\overline{\circ})$. 
 \end{enumerate}
 \end{notation}
 
 \begin{rem}
To construct a compatible action of $\SSeq^{\gen}$ and $\SSeq^{\gen,\pd}$ on some stable $\infty$-category, it is sufficient to construct an action of the latter $\SSeq^{\gen,\pd}$ as the action of the former $\SSeq^{\gen}$ can then be obtained by restricting along the lax monoidal functor $\Nm^{*}: \SSeq^{\gen} \rightarrow \SSeq^{\gen,\pd}$.
 \end{rem}
 
Following observation of \cite[Proposition 3.72 (4)]{BCN21}, we record the following proposition. We say that a derived symmetric sequence $P \in \SSeq^{\gen}$ is \textbf{concentrated in arities $\geq 1$} if $P(0)\simeq 0$. 
 
 \begin{prop}\label{Norm_arity_1}
The functor $\Nm^{*}$ is a \emph{monoidal} equivalence on the full subcategories of derived symmetric sequences concentrated in arities $\geq 1$.

\end{prop}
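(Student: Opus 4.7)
The functor $\Nm^{*}$ is by construction the identity on underlying derived symmetric sequences, so being a monoidal equivalence amounts to showing that its lax structure map---which on summands is the classical norm $(P(k) \otimes Q^{\otimes k})_{\Sigma_{k}} \to (P(k) \otimes Q^{\otimes k})^{\Sigma_{k}}$---is an equivalence whenever $P, Q$ are concentrated in arities $\geq 1$. The sum defining both $P\circ Q$ and $P\overline{\circ} Q$ ranges only over $k\geq 1$ once $P(0)\simeq 0$, and the vanishing of the $k=0$ contribution simultaneously verifies that the arity-$\geq 1$ subcategory is closed under both monoidal products. The task therefore reduces to proving that the norm is an equivalence on each such summand.

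The key observation is that $Q^{\otimes k}$ is a free $\Sigma_{k}$-module in each arity whenever $Q$ is concentrated in arities $\geq 1$. By the Day convolution formula,
\[
(Q^{\otimes k})(n) \simeq \bigoplus_{(A_{1},\ldots,A_{k})} Q(A_{1}) \otimes \cdots \otimes Q(A_{k}),
\]
with the sum over ordered $k$-tuples of pairwise disjoint non-empty subsets $A_{1}, \ldots, A_{k} \subseteq \{1,\ldots,n\}$, the group $\Sigma_{k}$ acts by permuting such tuples. Since non-emptiness combined with pairwise disjointness forces the $A_{i}$ to be pairwise distinct, this permutation action on the indexing set is free. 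Hence the summands partition into free $\Sigma_{k}$-orbits, and tensoring with the $\Sigma_{k}$-module $P(k)$ preserves this freeness.

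For a $\Sigma_{k}$-free module $M$ the norm $M_{\Sigma_{k}} \to M^{\Sigma_{k}}$ is an equivalence---on the discrete generators $\mathbb{Z}[\mathcal{O}_{\Sigma}]$ this is immediate from permutation module theory (the Tate construction vanishes on free modules), and the statement then propagates to the derived setting through the right-left Kan extension defining the monoidal structures on $\SSeq^{\gen}$. The main subtlety will be transporting the equivalence established on discrete generators up through this Kan extension while keeping track of the lax monoidal structure, ensuring that the norm really assembles into a monoidal natural transformation on $\SSeq^{\gen}$ rather than only a pointwise equivalence of functors. This is essentially the content of \cite[Proposition 3.72 (4)]{BCN21}, which I would invoke explicitly; the reformulation here is that freeness of the $\Sigma_{k}$-action on the indexing set of Day convolutions is the precise reason the arity-$\geq 1$ restriction enforces vanishing of the Tate obstruction.
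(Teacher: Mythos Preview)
Your proof is correct and takes essentially the same approach as the paper: both arguments establish that $Q^{\otimes k}(r)$ is a free (equivalently, induced) $\Sigma_{k}$-module whenever $Q$ vanishes in arity $0$, by observing that the $\Sigma_{k}$-action on the Day convolution indexing set is free since the non-empty blocks of an ordered partition are pairwise distinct, and then conclude via Tate-vanishing after tensoring with $P(k)$. Your formulation in terms of ordered tuples of disjoint subsets is marginally cleaner than the paper's phrasing via compositions $i_{1}+\cdots+i_{n}=r$, but the key observation and the use of the $\otimes$-ideal property are identical.
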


\begin{proof}

Take two objects $P, Q \in R[\Sigma]$ which are trivial in arity $0$\footnote{In fact, in the subsequent argument it suffices to assume that only $Q$ is trivial in arity $0$.}, and consider the symmetric sequence $P(n) \otimes Q^{\otimes^{n}}$. We wish to prove that the map $\Nm_{\Sigma_{n}}: (P(n) \otimes Q^{\otimes^{n}})_{\Sigma_{n}} \rightarrow (P(n) \otimes Q^{\otimes^{n}})^{\Sigma_{n}}$ is an equivalence. To see this, we look at the formula for the arity $r$ module of the symmetric sequence $ Q^{\otimes^{n}}$:
 
 \begin{equation}\label{unwinding_symmetric_sequence}
  Q^{\otimes^{n}} (r) = \bigoplus_{i_{1}+...+i_{n}=r}   \Ind_{\Sigma_{i_{1}}\times ... \times \Sigma_{i_{n}}}^{\Sigma_{r}} Q(i_{1}) \otimes ... \otimes Q(i_{n})  .
 \end{equation}
 
By assumption, $Q(i) = 0$ unless $i\geq 1$. It implies that the right hand side in Equation \ref{unwinding_symmetric_sequence} is $0$ unless $n\leq r$, because if $n>r$, then in order for the sum $i_{1}+...+i_{n} $ to be equal $r$, at least one of the indices must be $0$, which forces the tensor product to vanish. In the case $n\geq r$, we claim that the right hand side of Equation \ref{unwinding_symmetric_sequence} is an induced $\Sigma_{n}$-representation. Indeed, consider a set of indices $i_{1},...,i_{n} \geq 1$ such that $i_{1}+...+i_{n}=r$. The sum in the formula \ref{unwinding_symmetric_sequence} contains as a summand the $\Sigma_{n}$-submodule

 \begin{equation}\label{induces_summand}
  \bigoplus_{\sigma \in \Sigma_{n}} \Ind_{\Sigma_{i_{\sigma(1)}} \times ... \times \Sigma_{i_{\sigma(n)}} }^{\Sigma_{r} } (  Q(i_{\sigma(1)})\otimes ... \otimes Q(i_{\sigma(n)})   ),   \end{equation}
 where the sum is taken with respect to all permuations of the indices. The $\Sigma_{n}$-submodule in Equation \ref{induces_summand} is $\Sigma_{n}$-induced. As the sum in Equation \ref{unwinding_symmetric_sequence} splits into the sum of $\Sigma_{n}$- submodules of the form \ref{induces_summand} for various collections $i_{1},...,i_{n}$, it follows that the right hand side of \ref{unwinding_symmetric_sequence} is an induced $\Sigma_{n}$-module, and the Norm map induces an isomorphism between its $\Sigma_{n}$-orbits and $\Sigma_{n}$-fixed points, or equivalently, its Tate fixed points module vanishes. But the full subcategory of $\Sigma_{n}$-representations whose Tate fixed points vanishes, is a $\otimes$-ideal in the category of all $\Sigma_{n}$-representations. Therefore, the Tate fixed points module of $X(n) \otimes Q^{\otimes^{n}}(r)$ also vanishes for $n\geq r$, and the Norm map $(P(n) \otimes Q^{\otimes^{n}}(r))_{\Sigma_{n}} \rightarrow (P(n) \otimes Q^{\otimes^{n}}(r))^{\Sigma_{n}} $ is an equivalence, as desired.
 \end{proof}

 \begin{defn}
 A \textbf{derived (pd) operad} is an algebra object in the monoidal $\infty$-category $\SSeq^{\gen}$ (respectively, in the monoidal $\infty$-category $\SSeq^{\gen,\pd}$).
 \end{defn}
 
 \begin{rem}\label{how_to_obtain_pd_operads}
 Proposition \ref{Norm_arity_1} implies that given a derived operad $O$ concentrated in arities $\geq 1$ gives rise to a pd operad. As an example, consider derived symmetric sequence $\mathcal{O}:=(0,\mathbb{Z},\mathbb{Z},...)$. We compute that
 
 $$
 \mathcal{O}\circ \mathcal{O}(k)\simeq \bigoplus_{n\geq 1}\bigl( \bigoplus_{ i_{1}+...+i_{k}=n} \mathbb{Z} \bigr).
 $$
The map $\mathcal{O}\circ \mathcal{O} \rightarrow \mathcal{O}$ given by the natural map $\bigoplus_{n\geq 1}\bigl( \bigoplus_{ i_{1}+...+i_{k}=n} \mathbb{Z} \bigr) \rightarrow \mathbb{Z}$ in every arity $k$, endows $\mathcal{O}$ with the structure of a derived operad. This derived operad is the \textbf{non-unital derived commutative operad}. As it vanishes in arity $0$, there is a corresponding deried pd operad which is called \textbf{the non-unital pd commutative operad}. 
 \end{rem}

Below we will construct an action of the monoidal $\infty$-category $\SSeq^{\gen,\pd}$ on $\Mod_{R}$ for a discrete commutative ring $R$.

\begin{construction}\label{pd_derived_action}

Let $R$ be a discrete commutative ring, and consider the $\infty$-category $\Mod_{R}$ and the full subcategory of finitely generated free modules inside all connective $R$-modules $\Mod_{R,0} \subset \Mod_{R,\geq 0}$.

The subcategory $\Mod_{R,0}$ has the following properties.

- For any object $X\in \Mod_{R,0}$, the dual $X^{\vee}$ is in $\Mod_{R,0}$ again;

- For any $X\in \fcat \Mod_{R,0}$ and any finitely generated free $\Sigma_{n}$-module $V$, the object $(V\otimes X^{\otimes{n}})_{\Sigma_{n}} \in \Mod_{R,\heartsuit}$ is in $\Mod_{R,0}$ again.

Together, these two conditions imply that for any finitely generated free $\Sigma_{n}$-module $V$, the functor $X \longmapsto (V \otimes X^{\otimes^{n}})^{\Sigma_{n}}$ on $\Mod_{R,\heartsuit}$ preserves the full subcategory $\Mod_{R,0} \subset \Mod_{R,\heartsuit}$. 

Therefore, there is a functor 

$$ \mathbb{Z}[\mathcal{O}_{\Sigma_{n}}] \xymatrix{\ar[r]&} \End(\Mod_{R,0})$$
sending $$V \longmapsto \bigl( X \longmapsto  (V(n) \otimes X^{\otimes^{n}})^{\Sigma_{n}} \bigr).   $$
Note that we switched invariants and coinvariants in the formula above by using the isomorphism

$$
 (V(n) \otimes X^{\otimes^{n}})^{\Sigma_{n}} \simeq \bigl( (V^{\vee}(n) \otimes (X^{\vee})^{\otimes^{n}})_{\Sigma_{n}}\bigr)^{\vee}
$$
together with two conditions listed above. Moreover, for each $n$, the functor $X \longmapsto  (V \otimes X^{\otimes^{n}})_{\Sigma_{n}}$ is additively polynomial of degree $n$, and it follows by duality that the functor $X \longmapsto (V \otimes X^{\otimes^{n}})^{\Sigma_{n}} $ is additively polynomial of degree $n$ again. Since each object $\{P(i)\}$ in $\mathbb{Z}[\mathcal{O}_{\Sigma}]$ is non-zero for only finitely many $i$, it follows $\SSeq^{\gen,\pd}$ acts on $\Mod_{R,0}$ by additively polynomial functors. To show that this action extends to an action of $\SSeq^{\gen,\pd}$ on $\Mod_{R}$, we shall use \cite[Proposition 3.48]{BCN21} which states that the functor $\Mod: \cat^{\add} \rightarrow \cat^{\St,\Sigma}$ from the $\infty$-category of small additive $\infty$-categories and additive functors $\cat^{\add}$ to the $\infty$-category $\cat^{\St,\Sigma}$ of presentable stable $\infty$-categories and sifted colimit preserving functors, refines to a symmetric monoidal functor $\Mod: \cat^{\add,\poly} \rightarrow \cat^{\St,\Sigma} $, where $\cat^{\add,\poly}$ is the $\infty$-category of small additive $\infty$-categories and additively polynomial functors between them.

\end{construction}

\begin{construction}\label{pd_derived_action_on_deriveds}
The preceeding discussion of the action of $\SSeq^{\gen,\pd}$ on $\Mod_{R}$ for any discrete commutative ring $R$ generalizes to any connective derived commutative ring. To this end, we have constructed a functor $$\xymatrix{\SSeq^{\gen,\pd}  \curvearrowright \Mod_{(-)} : \CAlg_{\mathbb{Z}}^{ \poly} \ar[r]& \Mod_{\SSeq^{\gen,\pd}} (\cat^{\St,\Sigma})}$$ which sends a polynomial ring $R$ to the $\infty$-category $\Mod_{R}$ endowed with the sifted colimit preserving action of $\SSeq^{\gen,\pd}$ on $\Mod_{R}$ constructed in Construction \ref{pd_derived_action}. Left Kan extending along the inclusion $\CAlg^{\poly}_{\mathbb{Z}} \subset \DAlg_{\mathbb{Z},\geq 0}$, we obtain a sifted colimit preserving functor $$\xymatrix{\SSeq^{\gen,\pd}  \curvearrowright \Mod_{(-)} : \DAlg_{\mathbb{Z},\geq 0} \ar[r]& \Mod_{\SSeq^{\gen,\pd}} (\cat^{\St,\Sigma})}$$ such that the composition with the forgetful functor $\Mod_{\SSeq^{\gen,\pd}}(\cat^{\St,\Sigma}) \rightarrow \cat^{\St,\Sigma} $ is equivalent to the functor of left modules $R \longmapsto \Mod_{R}$. The latter follows from the fact that the forgetful functor preserves sifted colimits and the functor $\Mod_{(-)}: \DAlg_{\mathbb{Z},\geq 0} \rightarrow \cat^{\St,\Sigma}$ is itself left Kan extended from $\CAlg^{\poly}_{\mathbb{Z}}$.
\end{construction}

\begin{defn}\label{LGamma}
Let $R$ be a connective derived ring. Using Remark \ref{how_to_obtain_pd_operads}, Construction \ref{pd_derived_action_on_deriveds} gives rise to a monad $\LGamma^{+}_{\R}\in \End^{\Sigma}(\Mod_{R}) $ on $\Mod_{R}$ whose underlying functor satisfies $\LGamma^{+}_{R} = \bigoplus_{n \geq 1} (X^{\otimes^{n}})^{\Sigma_{n}} $ for any $X\in \Mod_{R,0}$. We refer to this as \textbf{the free non-unital divided power (pd) algebra monad}, and algebras over this monad are called \textbf{non-unital derived divided power (pd) algebras}. We denote $$\DAlg_{R}^{\nonu,\pd}:=\Alg_{\LGamma_{R}^{+}}(\Mod_{R}). $$
\end{defn}

A map $R \rightarrow S$ of connective derived commutative rings gives a base change functor $-\otimes_{R} S: \DAlg^{\nonu,\pd}_{R} \rightarrow \DAlg^{\nonu,\pd}_{S}$, as well as restriction $\DAlg_{S}^{\nonu,\pd} \rightarrow \DAlg^{\nonu,\pd}_{R}.$

\begin{construction}\label{derived_syms_from_geometry}
Let $X$ be a derived prestack, i.e. a functor $X: \DAlg_{\mathbb{Z},\geq 0} \rightarrow \Spc$. Construction \ref{pd_derived_action_on_deriveds} implies that the $\infty$-category $\QCoh(X)$ of quasi-coherent sheaves on $X$ has an action of $\SSeq^{\gen,\pd}$. Indeed, this follows by writing the $\infty$-category $\QCoh(X)$ as the limit

$$
\QCoh(X) = \underset{\Spec R \rightarrow X}{\lim} \Mod_{R}
$$
along pull-back functors for all affines $\Spec A \rightarrow X$ mapping to $X$, and using the fact that the forgetful functor $\Mod_{\SSeq^{\gen,\pd}}(\cat^{\St,\Sigma}) \rightarrow \cat^{\St,\Sigma}$ preserves limits. Consequently, we can simply \emph{define} the $\infty$-category of derived commutative (respectively, derived non-unital divided power) algebra objects in $\QCoh(X)$ as the limit $$\DAlg^{(\nonu,\pd)}(\QCoh(X)) \simeq \underset{\Spec R \rightarrow X}{\lim} \DAlg^{(\nonu,\pd)}_{R}. $$
\end{construction}

In some cases we have to work with a symmetric monoidal $\infty$-category $\C$ which is not (or at least, not known to be) of the form $\QCoh(X)$ for a derived prestack $X$. In such a case, one approach to constructing an action of $\SSeq^{\gen,\pd}$ is to specify a $t$-structure $(\C_{\geq 0},\C_{\leq 0})$ and a choice of a subcategory $\C^{0}$ of compact generators in $\C$ satisfying certain properties which will be listed below (a derived algebraic context in the sense of \cite{R}). For such a symmetric monoidal $\infty$-category $\C$, we will be able to construct a sifted colimit preserving action of $\SSeq^{\gen,\pd}$ on $\C$. This should be compared with the construction of derived algebras in a derived algebraic context in \cite{R}, albeit the types of $\infty$-categories we consider below are slightly different from derived algebraic contexts of Raksit.

\begin{construction}\label{derived_alg_contexts}
Let $\C$ be a symmetric monoidal $\infty$-category equipped with a compatible $t$-structure $(\C_{\geq 0}, \C_{\leq 0})$ and a full subcategory $\C_{0}\subset \C_{\heartsuit} \subset \C_{\geq 0}$ consisting of compact projective generators of $\C_{\geq 0}$. We require that the $t$-structure is left complete, and the full subcategory $\C_{0}$ is closed in $\C_{\heartsuit}$ under the action of $\SSeq^{\gen,\pd}_{0}$, \emph{and} the action of $\SSeq^{\gen}$. Using the strategy of Construction \ref{pd_derived_action}, there is a sifted colimit preserving action of the monoidal $\infty$-category $\SSeq^{\gen, \pd}$ on $\C$. Moreover, this construction is functorial in $\C$ in the following sense. Notice that for $\C$ as above, there is an equivalence $\C\simeq \Mod(\C_{0})$. Consequently, using \cite[Proposition 3.48]{BCN21} again as in Construction \ref{pd_derived_action}, we obtain a functor 

$$\Mod_{\SSeq^{\gen,\pd}_{0} }(\cat^{\add,\poly}) \xymatrix{\ar[r]&}  \Mod_{\SSeq^{\gen,\pd}}(\cat^{\St,\Sigma}).   $$

Therefore, the action of $\SSeq^{\gen,\pd}$ on $\C$ is functorial in symmetric monoidal right $t$-exact functors $F: \C \rightarrow \D$ such that $F(\C_{0}) \subset \D_{0}$. The same construction applies to the action of $\SSeq^{\gen}$. Note that if we drop the condition that $\C_{0}$ is closed under the action of $\SSeq^{\gen}_{0}$, we still get a $\SSeq^{\gen}$-action on $\C$ by restricting along $\Nm^{*}: \SSeq^{\gen} \rightarrow \SSeq^{\gen,\pd}$, however, the naturality of this action could be more subtle without the aforementioned condition.
\end{construction}

\begin{observation}\label{right_left_extended_monads}
In studying the $\LGamma^{+}$-monad defined in Definition \ref{LGamma} in such a context as in Construction \ref{derived_alg_contexts}, restriction $\LGamma^{+}|_{\C_{0}}$ to $\C_{0} \subset \C$ does not yield a monad. However, as each $\LGamma^{n}$ preserves $\C_{0}$, it follows that $\LGamma^{+}$ preserves the full subcategory $\Ind(\C_{0}) \subset \C$, and hence $\LGamma^{+}|_{\Ind(\C_{0})}: \Ind(\C_{0}) \rightarrow \Ind(\C_{0})$ yields a monad. Moreover, the original monad $\LGamma^{+}$ on $\C$ is uniquely determined by $\LGamma^{+}|_{\Ind(\C_{0})} $ in the following sense. Let $\End_{\Sigma, RL}^{\Ind(\C_{0})}(\C) \subset \End_{\Sigma}(\C) $ be the subcategory of endofunctors preserving the full subcategory $\Ind(\C_{0}) \subset \C$ which are right-left extended from $\C_{0} \subset \C$. Then the restriction functor

$$
\xymatrix{\End^{\Ind(\C_{0})}_{\Sigma, RL}(\C)      \ar[r]^-{\sim}& \End_{\omega}(\Ind(\C_{0}))  }
$$
is an equivalence. We will use this observation for constructing maps of monads right-left extended from $\C_{0}$. 

The same applies to the monad $\LSym: \C \rightarrow \C$, as each $\LSym^{n}: \C \rightarrow \C$ preserves the full subcategory $\C_{0}$.
\end{observation}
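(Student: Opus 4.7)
My plan is to identify both sides of the claimed equivalence with the functor $\infty$-category $\Fun(\C_{0}, \Ind(\C_{0}))$ via restriction to $\C_{0}$, and then verify that these identifications are compatible with the restriction-to-$\Ind(\C_{0})$ functor in the statement.

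For the right-hand side, I first observe that in the setting of Construction \ref{derived_alg_contexts} we have $\Ind(\C_{0}) \simeq \C_{\geq 0}$ with $\C_{0}$ a set of compact generators. Consequently, the restriction functor
\[ \End_{\omega}(\Ind(\C_{0})) \xrightarrow{(-)|_{\C_{0}}} \Fun(\C_{0}, \Ind(\C_{0})) \]
is an equivalence, with inverse given by left Kan extension along $\C_{0} \hookrightarrow \Ind(\C_{0})$, realized concretely as filtered colimits in $\Ind(\C_{0})$.

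For the left-hand side, the plan is as follows. By the definition of right-left extension, any $F \in \End_{\Sigma, RL}^{\Ind(\C_{0})}(\C)$ is reconstructed from its restriction $F|_{\C_{0}} \colon \C_{0} \to \C$, and the hypothesis that $F$ preserves $\Ind(\C_{0})$ forces this restriction to factor through $\Ind(\C_{0})$. Conversely, starting from an arbitrary $F \in \Fun(\C_{0}, \Ind(\C_{0}))$, I claim that $F^{RL} \colon \C \to \C$ automatically preserves $\Ind(\C_{0})$. The key input is that $F^{RL}$ is the unique sifted-colimit-preserving extension of $F$ to $\C$ among right-left extendable functors; since $\Ind(\C_{0}) \simeq \mathcal{P}_{\Sigma}(\C_{0})$ is closed under colimits in $\C$, sifted colimits in $\Ind(\C_{0})$ agree with those computed in $\C$, and $F^{RL}|_{\Ind(\C_{0})}$ is itself a sifted-colimit-preserving extension of $F$ to $\Ind(\C_{0})$. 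By the universal property of $\mathcal{P}_{\Sigma}(\C_{0})$ it must coincide with $\mathrm{LKE}_{\C_{0} \hookrightarrow \Ind(\C_{0})} F$, and this left Kan extension lands in $\Ind(\C_{0})$ since the target is closed under filtered colimits in $\C$. This yields the identification $\End_{\Sigma, RL}^{\Ind(\C_{0})}(\C) \simeq \Fun(\C_{0}, \Ind(\C_{0}))$ by restriction to $\C_{0}$.

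To conclude, the two identifications are manifestly compatible: the composite
\[ \End_{\Sigma, RL}^{\Ind(\C_{0})}(\C) \xrightarrow{(-)|_{\Ind(\C_{0})}} \End_{\omega}(\Ind(\C_{0})) \xrightarrow{(-)|_{\C_{0}}} \Fun(\C_{0}, \Ind(\C_{0})) \]
is simply restriction to $\C_{0}$, hence the middle arrow $(-)|_{\Ind(\C_{0})}$ is an equivalence. The main potential difficulty is the identification $F^{RL}|_{\Ind(\C_{0})} \simeq \mathrm{LKE}_{\C_{0} \hookrightarrow \Ind(\C_{0})} F$; beyond unwinding definitions of right-left extension, this relies on the comparison of sifted colimits in $\C$ with those in $\C_{\geq 0}$, which uses the left completeness of the $t$-structure imposed in Construction \ref{derived_alg_contexts}. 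The analogous claim for $\LSym$ then follows by literally the same argument, using that each $\LSym^{n}$ preserves $\C_{0} \subset \C_{\heartsuit}$.
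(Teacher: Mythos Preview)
The paper states this as an observation without a separate proof, so there is no paper argument to compare against. Your overall strategy---reduce both sides to $\Fun(\C_0,\Ind(\C_0))$ via restriction to $\C_0$---is the natural one, but the execution contains a genuine error.

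You assert $\Ind(\C_0)\simeq \C_{\geq 0}$ and later $\Ind(\C_0)\simeq \mathcal{P}_\Sigma(\C_0)$. Both are false in the setting of Construction~\ref{derived_alg_contexts}. There $\C_0$ is an additive $1$-category sitting inside $\C_\heartsuit$, so its Ind-completion $\Ind(\C_0)$ is again a $1$-category: concretely it is the full subcategory of $\C_\heartsuit$ spanned by filtered colimits of objects of $\C_0$ (for $\C=\Mod_R$ this is the category of flat $R$-modules, as the paper spells out just after Remark~\ref{right_left_extended_LSym_on_fils}). By contrast $\mathcal{P}_\Sigma(\C_0)\simeq \C_{\geq 0}$ is the whole connective part, a genuine $\infty$-category. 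These are different objects, and $\Ind(\C_0)$ does not admit all sifted colimits, only filtered ones.

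This invalidates your appeal to the universal property of $\mathcal{P}_\Sigma(\C_0)$ in the second paragraph. The repair is straightforward: replace ``sifted'' by ``filtered'' throughout that step. Given $F\colon \C_0\to\Ind(\C_0)$, the right-left extension $F^{RL}$ restricted to $\C_{\geq 0}\simeq\mathcal{P}_\Sigma(\C_0)$ is the sifted-colimit-preserving extension of $F$, hence in particular preserves filtered colimits; since $\Ind(\C_0)\subset\C_{\geq 0}$ is closed under filtered colimits and $F$ lands in $\Ind(\C_0)$, it follows that $F^{RL}$ preserves $\Ind(\C_0)$ and that $F^{RL}|_{\Ind(\C_0)}$ agrees with the filtered-colimit extension of $F$, i.e.\ with $\mathrm{LKE}_{\C_0\hookrightarrow\Ind(\C_0)}F$. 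With this correction your compatibility triangle goes through.

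There is one further gap: for essential surjectivity you implicitly assume that any $G\in\End_\omega(\Ind(\C_0))$ has $G|_{\C_0}$ right-left extendable in the sense of the paper's definition (the right Kan extension to $\C^\omega_{\leq 0}$ must preserve finite coconnective geometric realizations). This is not automatic and you do not address it. In practice the paper only invokes the fully faithful direction---constructing maps between monads already known to be right-left extended---so for the applications it suffices to establish full faithfulness, which your corrected argument does.
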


Let us finish this section with some observations on how lax symmetric monoidal functors are incorporated in this setting.

\begin{construction}
Let $\A_{0}$ be a small additive $\infty$-category, consider the $\infty$-category $$\Mod_{\A^{0}}(\cat^{\add,\poly})_{\lax}:=\Fun(\B\A_{0}, \cat^{\add,\poly})_{\lax}.$$ Composing with the monoidal functor $\Mod: \cat^{\add,\poly} \rightarrow \cat^{\St,\Sigma}$, we obtain a functor $$\Fun(\B\A_{0}, \cat^{\add,\poly})_{\lax} \rightarrow \Fun(\B\A_{0}, \cat^{\St,\Sigma})_{\lax}. $$ Notice that for any $\C \in \cat^{\St,\Sigma} $, the endomorphism object $\Fun_{\Sigma}(\C,\C)$ is a presentable stable monoidal $\infty$-category. Therefore, any monoidal functor $\A_{0} \rightarrow \Fun_{\Sigma}(\C,\C) $ extends to a continuous monoidal functor $\A:=\Mod(\A_{0}) \rightarrow \Fun_{\Sigma}(\C,\C) $, and we obtain a functor $$\Mod_{\A_{0}}(\cat^{\add,\poly})_{\lax} \rightarrow \Mod_{\A}(\cat^{\St,\Sigma})_{\lax}, $$

where $\Mod_{\A_{0}}(\cat^{\add,\poly})_{\lax}:= \Fun(\textbf{BA},\cat^{\St,\Sigma})_{\lax}$.
\end{construction}

A consequence of this construction is the following example.

\begin{ex}\label{obtaining_diagrams}
Assume $$I \rightarrow \Mod_{\A_{0}}(\cat^{\add,\poly})_{\lax}$$ is a diagram. Then applying $\Mod: \cat^{\add,\poly} \rightarrow \cat^{\St,\Sigma}$, we obtain a diagram $$I \rightarrow \Mod_{\A}(\cat^{\St,\Sigma})_{\lax}.$$
\end{ex}

We will sometimes apply this example in the setting $\A_{0}:=\SSeq^{\gen,\pd}_{0}$ and $\A:=\SSeq^{\gen,\pd}$.

\subsection{Recollection on gradings and filtrations.}

\begin{defn}

We let $\mathbb{Z}^{\ds}$ be the discrete category whose objects are integers $n\in \mathbb{Z}$ and all morphisms are identities, and $\mathbb{Z}_{\leq *}$ be the nerve of the partially ordered set of integers. Note that both $\mathbb{Z}^{\ds}$ and $\mathbb{Z}_{\leq *}$ are symmetric monoidal categories with respect to the operation of addition. In what follows, we will work over a connective derived commutative ring $R$.

\begin{enumerate}

\item We define the $\infty$-category of \textbf{graded objects} in $\Mod_{R}$, denoted $\Gr\Mod_{R}$ to be the $\infty$-category of functors $$\Gr\Mod_{R}:=\Fun((\mathbb{Z}^{\ds})^{\op},\Mod_{R}).$$ An object $X \in \Gr\Mod_{R}$ is the same data as a collection of objects $\{X^{n}\}_{n\in \mathbb{Z}}$ in $\Mod_{R}$ parametrized by all integers $n$. We will often denote a graded object $X$ as $X^{\bullet}$ to emphasize the grading. The $\infty$-category $\Gr\Mod_{R}$ carries a \emph{Day convolution symmetric monoidal structure} with tensor product given by the formula

\begin{equation}\label{graded_tensor_product}
(X^{\bullet}\otimes Y^{\bullet})^{n} := \bigoplus_{p+q=n} X^{p}\otimes Y^{q}.
\end{equation}

\item W let the $\infty$-category of \textbf{filtered objects}\footnote{To be more precise, we are dealing with \emph{decreasing filtrations} here.} $\Fil\Mod_{R}$ to be the $\infty$-category of functors $$ \Fil\Mod_{R}:=\Fun((\mathbb{Z}_{\leq *})^{\op},\Mod_{R}).$$ An object $X\in \Fil\Mod_{R}$ is the data of a collection of objects $\{X^{\geq n}\}_{n \in \mathbb{Z}}$ in $\Mod_{R}$, and maps $X^{\geq m} \rightarrow X^{\geq n}$ for any pair of integers $m$ and $n$ such that $m\geq n$. We can depict an object $X \in \Fil\Mod_{R}$ as a diagram

$$
\{ ... \rightarrow X^{\geq 2} \rightarrow X^{\geq 1} \rightarrow X^{\geq 0} \rightarrow X^{\geq -1} \rightarrow X^{\geq -2} \rightarrow ...\}
$$
in $\Mod_{R}$. We will often denote the data of a filtered object by the symbol $X^{\geq \star}$ to emphasize the filtration. The $\infty$-category $\Fil\Mod_{R}$ has a \emph{Day convolution symmetric monoidal structure} with tensor product given by the formula

\begin{equation}\label{filtered_tensor_product}
(X^{\geq \star}\otimes Y^{\geq \star})^{n} :=\colim_{p+q\geq n} X^{\geq p} \otimes Y^{\geq q}.
\end{equation}

\item The \textbf{associated graded object} functor $\gr: \Fil\Mod_{R} \rightarrow \Gr\Mod_{R}$ is the functor sending a filtered object $X^{\geq \star} \in \Fil\Mod_{R}$ to a graded object $\gr^{\bullet}(X)$ with the $n$-th graded piece defined by the formula $$\gr(X)^{n} := X^{n}/X^{n+1}=\cofib(X^{n} \rightarrow X^{n+1}).$$ By \cite[Proposition 3.2.1]{L}, this functor is symmetric monoidal (alternatively, see Remark \ref{functors_geometrically}).

\item We define a \textbf{forgetful functor} $(\Fil \rightarrow \Gr): \Fil\Mod_{R} \rightarrow \Mod_{R}$ which sends a filtered object $X:=X^{\geq \star} $ to the underlying graded object whose weighted pieces are $(\Fil \rightarrow \Gr)(X)^{n}:= X^{\geq n}. $ This functor is right lax symmetric monoidal (see Remark \ref{functors_geometrically} for explanation)

\item  We let $\ins^{n}: \Mod_{R} \rightarrow \Gr\Mod_{R}$ be the \textbf{insertion functor} sending an object $X \in \Mod_{R}$ to the graded object $\ins^{n}(X)^{\bullet}$ which has $\ins^{n}(X)^{i}=X$ when $i=n$, and $\ins^{n}(X)^{i}=0$ if $i\neq 0$. It has a right adjoint \textbf{evaluation functor} $\ev^{n}: \Gr\Mod_{R} \rightarrow \Mod_{R}$ sending a graded object $X^{\bullet} $ to its $n$-th graded piece $X^{n} \in \Mod_{R}$. For $n=0$, the functor $\ins^{0}$ is symmatric monoidal, and the functor $\ev^{0}$ is \emph{right-lax symmetric monoidal}.

\item Similarly, we let $\ins^{n} : \Mod_{R} \rightarrow \Fil\Mod_{R}$ be the insertion functor sending an $R$-module $M$ to the filtered object which has $M$ in degree $n$, and is $0$ otherwise. It has a left adjoint $\gr^{n}: \Fil\Mod_{R} \rightarrow \Mod_{R}$ and a right adjoint $\ev^{n}$. The functor $\ins^{0}: \Mod_{R} \rightarrow \Fil\Mod_{R}$ is symmetric monoidal, and its right adjoint $\ev^{0}: \Fil\Mod_{R} \rightarrow \Mod_{R}$ is lax symmetric monoidal.

\item Let $X=X^{\bullet} \in \Gr\Mod_{R}$. We define a new graded object $X(i)$, called the \textbf{$i$-th twist} of $X$, by the formula $X(i)^{n} = X^{n-i}$. For $X\in \Mod_{R}$, we will also often denote $X(i):=\ins^{i}(X)$.

\item There exists a \textbf{constant filtration} functor $\const: \Mod_{R} \rightarrow \Fil\Mod_{R} $ which sends any $R$-module $M$ to the filtered object 

$$
\xymatrix{ \const(M):= \{... \ar[r]^-{\Id} &M \ar[r]^-{\Id}  &  M \ar[r]^-{\Id} & M \ar[r]^-{\Id} & ... \} .}
$$
It has a left adjoint functor sending a filtered object $X^{\geq \star} $ to the colimit $\underset{\mathbb{Z}^{\op}_{\leq *}}{\colim} \:X^{\geq \star} \in \Mod_{R}$. We will often use the notation $|X|= \underset{\mathbb{Z}^{\op}_{\leq *}}{\colim}\: X^{\geq \star}$.

\item The $\infty$-category $\Gr\Mod_{R}$ contains a full symmetric monoidal subcategory $\Gr^{\geq 0} \Mod_{R}\subset \Gr\Mod_{R}$ consisting of \textbf{non-negatively graded objects}, i.e. graded objects $X^{\bullet}$ with $X^{n}=0$ for $n<0$.

\item A filtered object $X^{\geq \star}$ is \textbf{non-negatively filtered} if all maps $X^{n} \rightarrow X^{n-1}$ are equivalences for $n \leq 0$. There is a symmetric monoidal subcategory $\Fil^{\geq 0} \Mod_{R} \subset \Fil \Mod_{R}$ spanned by \textbf{non-negatively filtered objects} in all filtered objects.

\end{enumerate}

In the next sections, we will mainly work with non-negative filtrations/gradings. Therefore, by a slight abuse of notation, we will often speak of non-negatively filtered/graded objects as simply filtered/graded objects.

\end{defn}

 \begin{construction}\label{filtrations_geometrically}
 Let us work in the case $R=\mathbb{Z}$. Let $\mathbb{Z}[t]$ be the free commutative ring on one generator, and assign the element $t$ degree $-1$. By \cite[Proposition 3.1.6]{L}, there is a symmetric monoidal \textbf{Rees equivalence} (or \textbf{Rees construction}),
 
 \begin{equation}\label{Fil_as_t}
\Rees: \Fil \Mod_{\mathbb{Z}} \xymatrix{\ar[r]^-{\sim}&} \Mod_{\mathbb{Z}[t]}(\Gr \Mod_{\mathbb{Z}}).
\end{equation}
This equivalence sends a filtered object $X^{\geq \star}$ to the underlying graded object with a graded action of the graded algebra $\mathbb{Z}[t]$ induced by the filtration. 

The perspective of the Rees construction is closely related with the stacky approach to filtrations. Let $\mathbb{G}_{m}$ be the multiplicative group scheme, and $\B\mathbb{G}_{m}$ the classifying stack of $\mathbb{G}_{m}$. Then there is an equivalence of symmetric monoidal $\infty$-categories (see \cite[Theorem 4.1]{Maul21})

$$
\Gr\Mod_{\mathbb{Z}} \simeq \QCoh(\B\mathbb{G}_{m}).
$$

Also, let $\mathbb{A}^{1}/\mathbb{G}_{m}$ be the quotient of affine line by the standard $\mathbb{G}_{m}$-action. There is an equivalence of symmetric monoidal categories:
 
 \begin{equation}\label{A/G_as_R[t]}
 \QCoh(\mathbb{A}^{1}/\mathbb{G}_{m}) \simeq \Mod_{\mathbb{Z}[t]} (\Gr\Mod_{\mathbb{Z}}).
 \end{equation}
 
Combining equivalences \ref{Fil_as_t} and \ref{A/G_as_R[t]}, we obtain an equivalence
 $$
\xymatrix{\Fil\Mod_{\mathbb{Z}} \simeq  \QCoh(\mathbb{A}^{1}/\mathbb{G}_{m}) } 
 $$
 giving a description of the $\infty$-category of filtered objects as quasi-coherent sheaves on a stack.
 
For a connective derived commutative algebra $R$, we have $$\Fil\Mod_{R} \simeq \Fil\Mod_{\mathbb{Z}} \underset{\Mod_{\mathbb{Z}}}{\otimes} \Mod_{R} \simeq \QCoh(\mathbb{A}^{1}/\mathbb{G}_{m} \times \Spec R). $$
 
 It follows from Construction \ref{derived_syms_from_geometry} that $\Fil\Mod_{R}$ supports the notion of derived commutative (pd) algebras therein.
 
 \end{construction}
 
 \begin{rem}\label{functors_geometrically}
From the point of view of Construction \ref{filtrations_geometrically}, the functor $\gr: \Fil\Mod_{R} \rightarrow \Gr\Mod_{R}$ is the pullback of quasi-coherent sheaves along the inclusion of the $0$-section $\B\mathbb{G}_{m} \rightarrow \mathbb{A}^{1}/\mathbb{G}_{m}$. The functor $(\Fil \rightarrow \Gr): \Fil\Mod_{R} \rightarrow \Gr\Mod_{R}$ can be interpreted as the push-forward of quasi-coherent sheaves along the map $\mathbb{A}^{1}/\mathbb{G}_{m} \rightarrow \B\mathbb{G}_{m}. $ It follows from this interpretation that $\gr$ is symmetric monoidal, and $(\Fil \rightarrow \Gr)$ is right lax symmetric monoidal.
 \end{rem}
 
 \begin{construction}\label{t_structures}
For a general connective derived commutative ring $R$, we can construct several different $t$-structures on $\Gr\Mod_{R}$. We will record the construction of two of them.

\begin{enumerate}

\item The \textbf{neutral} $t$-structure on $\Gr\Mod_{R}$ has connective part  $\Gr\Mod_{R,\geq 0}$ spanned by graded objects $X^{\bullet}$ for which $X^{n} \in \Mod_{R,\geq 0}$  for all $n$. The coconnective part of the neutral $t$-structure consists of objects $X^{\bullet}$ for which $X^{n} \in \Mod_{R,\leq 0}$ for all $n$. The neutral $t$-structure on $\Gr\Mod_{R}$ has the heart $\Gr\Mod_{R,\heartsuit}$ consisting of graded objects in the abelian category of $t$-discrete objects in $\Mod_{R}$.

\item Another important $t$-structure on $\Gr\Mod_{R}$ is the \textbf{negative} $t$-structure. It has the connective part $\Gr\Mod_{R,\geq 0^{-}}$ spanned by graded objects $X^{\bullet}$ which have $X^{n} \in \Mod_{R,\geq -n}$ for any $n$, and the coconnective part spanned by objects with $X^{n} \in \Mod_{R,\leq -n}$ for any $n$. The heart of this $t$-structure is equivalent to the $\infty$-category of graded objects in $\Mod_{R,\heartsuit}$ again, albeit in a non-tautological way: a typical $t$-discrete object $X^{\bullet}$ in the negative $t$-structure has each $X^{n} \in \Mod_{R},[-n,-n],$ which is a discrete $R$-module concentrated in degree $-n$. 

\end{enumerate}

\end{construction}

\begin{rem}\label{Koszul_sign_rule}

The negative $t$-structure is compatible with the symmetric monoidal structure. Indeed, given two objects $X^{\bullet} , Y^{\bullet} \in \fcat \Gr\Mod_{R,\geq 0^{-}}$, we have $X^{p} \otimes Y^{q} \in \Mod_{R,\leq -p-q} $ since $X^{p} \in \Mod_{R,\leq -p}$ and $Y^{q} \in \Mod_{R,\leq -q} $, and the formula \ref{graded_tensor_product} implies that $X^{\bullet}\otimes Y^{\bullet}  $ is connective in the negative $t$-structure. In particular, the heart $\Gr\Mod_{R,\heartsuit^{-}}$ becomes a symmetric monoidal $\infty$-category. 
\end{rem}

Recall that to define a discrete additive symmetric monoidal category $\C$, we need to give a bilinear functor $-\otimes -: \C \times \C \rightarrow \C$, and a natural $\Sigma_{2}$-action on the tensor product $X \otimes Y$ of any pair of elements (a "braiding"), which is subject to the constraints listed, for instance, in \cite[Chapter XI, 1]{ML}. The derived $\infty$-category $\D(\C):=\Fun^{\oplus}(\C^{\op}, \Spt )$ then acquires a symmetric monoidal structure via right-left extension.

\begin{construction}
In this construction, we will define the so-called \textbf{Koszul sign rule} symmetric monoidal structure on $\Gr\Mod_{\mathbb{Z}}$. First, we define a tensor product on the abelian category $\Gr\Mod_{\mathbb{Z},\heartsuit}$ such that for two objects $X, Y \in \Gr\Mod_{\mathbb{Z}}$, the tensor product of $X$ and $Y$ coincides with the usual tensor product $X\otimes Y$ of graded modules, but the action of $\Sigma_{2}$ is twisted by the sign permutation, i.e. for an element $x\otimes y \in X\otimes Y$, the generator $\sigma \in \Sigma_{2}$ acts by $\sigma(x \otimes y) = -y\otimes x$. This defines a symmetric monoidal structure on $\Gr\Mod_{\mathbb{Z},\heartsuit}$ which restricts to a symmetric monoidal structure on the additive category $\Proj^{\omega}(\Gr\Mod_{\mathbb{Z},\heartsuit})$ of compact projective objects in $\Gr\Mod_{\mathbb{Z},\heartsuit}$, and right-left extends to a symmetric monoidal structure on $\Gr\Mod_{\mathbb{Z}}$.
\end{construction}

It turns out that the heart of the negative $t$-structure is equivalent to $\Gr\Mod_{R,\heartsuit}$ with Koszul sign rule symmetric monoidal structure via the following construction.

\begin{construction}\label{shear}
The $\infty$-category $\Gr\Mod_{R}$ carries a \textbf{shear} auto-equivalence $\xymatrix{ [\bullet] :\Gr\Mod_{R} \ar[r]^-{\sim} & \Gr\Mod_{R} }$ defined by the formula $(X[\bullet])^{i} = X^{i}[i] $ for any graded object $X:=X^{\bullet}$. 

\end{construction}

Below will establish the main properties of the shear equivalence. These statements are well-known. For instance, Proposition \ref{décalage_LSym} is essentially an equivalent way of formulating property 2) of Proposition \ref{shear_1} below, and it was already established by L.Illusie in the text \cite{I1}.

\begin{prop}\label{shear_1}

The shear construction has the following properties:

\begin{enumerate}

\item It is left and right $t$-exact with respect to the standard $t$-structure on the target, and the \emph{negative} $t$-structure on the source.  

\item It intertwines the  \emph{Koszul sign rule} symmetric monoidal structure on $\Gr\Mod_{\mathbb{Z}}$ with the standard symmetric monoidal structure. In particular, it induces an equivalence of abelian symmetric monoidal categories $\Gr\Mod_{\mathbb{Z},\heartsuit} \simeq \Gr\Mod_{\mathbb{Z},\heartsuit^{-}}$, where the left hand side is endowed with the Koszul sign rule tensor product.

\end{enumerate}
\end{prop}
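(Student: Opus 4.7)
The first assertion is essentially tautological once the definitions are unwound. By construction of the negative $t$-structure, an object $X^{\bullet}\in \Gr\Mod_R$ lies in the connective part $\Gr\Mod_{R,\geq 0^-}$ iff $X^n\in\Mod_{R,\geq -n}$ for every $n$, which by the very definition of the shift functor $[n]$ on $\Mod_R$ is equivalent to $(X[\bullet])^n=X^n[n]\in\Mod_{R,\geq 0}$ for every $n$, i.e.\ to $X[\bullet]$ being connective in the standard degreewise $t$-structure on $\Gr\Mod_R$. Replacing $\geq$ by $\leq$ throughout gives right $t$-exactness, and the inverse shear $[-\bullet]$ gives the reverse inclusions, so the shear is a $t$-equivalence. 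In particular, it restricts to an equivalence on hearts.

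For the second assertion, I would first exhibit the underlying monoidal structure on the shear. The canonical shift isomorphisms $M[p]\otimes N[q]\simeq (M\otimes N)[p+q]$ in $\Mod_{\mathbb{Z}}$ assemble, via the formula \eqref{graded_tensor_product}, into a natural equivalence
$$
\mu_{X,Y}: X[\bullet]\otimes Y[\bullet]\;\xrightarrow{\sim}\;(X\otimes Y)[\bullet],
$$
whose degree-$n$ component is the sum over $p+q=n$ of pointwise shift isomorphisms. Associativity and unit coherence reduce to the corresponding coherences for the shift isomorphisms in $\Mod_{\mathbb{Z}}$, which are standard. Thus the shear and $\mu$ define a monoidal equivalence between $\Gr\Mod_{\mathbb{Z}}$ (source) and $\Gr\Mod_{\mathbb{Z}}$ (target) with the underlying Day-convolution tensor products.

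The substance of the claim is that under $\mu$, the symmetry of the standard symmetric monoidal structure on the target corresponds to the Koszul-sign-twisted symmetry on the source. This reduces to the following sign computation: the standard swap $X^p[p]\otimes Y^q[q]\to Y^q[q]\otimes X^p[p]$ in $\Mod_{\mathbb{Z}}$, transported through the shift isomorphism to $(X^p\otimes Y^q)[p+q]\simeq(Y^q\otimes X^p)[p+q]$, differs from the $[p+q]$-shift of the unshifted swap $X^p\otimes Y^q\to Y^q\otimes X^p$ precisely by the factor $(-1)^{pq}$, since commuting the shift operators $[p]$ and $[q]$ past each other in a symmetric monoidal stable context absorbs exactly this Koszul sign.

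The main obstacle is making this sign computation rigorous in the $\infty$-categorical setting, where Koszul signs must be interpreted as genuine $\Sigma_2$-equivariant data rather than point-set signs. A clean route is to work first with the additive subcategory $\Proj^{\omega}(\Gr\Mod_{\mathbb{Z},\heartsuit})$ of compact projective generators: at the level of classical graded abelian groups the Koszul sign is the usual elementary identity, and the shear restricts to an equivalence of additive symmetric monoidal categories from the Koszul-sign-twisted projective heart to the standard projective heart of the negative $t$-structure. One then right-left extends as in Construction \ref{derived_functors_Cartier}, using that both symmetric monoidal structures on $\Gr\Mod_{\mathbb{Z}}$ are right-left extended from these additive subcategories, to upgrade the shear to a symmetric monoidal equivalence of the full $\infty$-categories.
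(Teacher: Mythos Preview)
Your proposal is correct and follows essentially the same approach as the paper: part (1) is dispatched by unwinding the definitions of the negative $t$-structure and the shear, and part (2) is reduced to the compact projective generators in the heart where the Koszul sign computation is elementary, then propagated to the full $\infty$-category by right-left extension. The paper is slightly more terse---it jumps directly to the reduction to hearts and phrases the final step as ``the $\Sigma_2$-action on $(X\otimes Y)[\bullet]$ is determined by its action on homotopy groups, which satisfies the Koszul sign rule''---but your more explicit construction of the monoidal constraint $\mu_{X,Y}$ and your candid discussion of where the $\infty$-categorical difficulty lies are welcome expansions rather than a different argument.
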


\begin{proof}
Identification of $t$-structures follows immediately from definition of the negative $t$-structure and the formula for the shear equivalence. 

Since both symmetric monoidal structures are right-left extended the symmetric monoidal structures on the hearts, it suffices to show the statement in the following setting. Given two objects $X, Y \in \Proj^{\omega}(\Gr\Mod_{\mathbb{Z},\heartsuit})$, the Koszul sign rule tensor product $X\otimes Y$ carries a $\Sigma_{2}$-action twisted by sign permutation; we need to prove that the $\Sigma_{2}$-action on the graded object $(X\otimes Y)[\bullet]$ induced by functoriality, is equivalent to the usual $\Sigma_{2}$-action. In this situation, the $\Sigma_{2}$-action on $(X\otimes Y)[\bullet]$ is completely determined by the $\Sigma_{2}$-action on its homotopy groups. The statement then follows from the fact that the induced braiding on the homotopy groups of the tensor product of two objects, satisfies the Koszul sign rule.
\end{proof}

\begin{prop}\label{décalage_LSym}
Let $R$ be a connective derived commutative ring. There are equivalences 

$$
\LSym^{n}_{R}(X[1]) \simeq \LLambda^{n}_{R}(X)[n],
$$

$$
\LGamma^{n}_{R}(X[-1]) \simeq \LLambda^{n}_{R}(X)[-n].
$$
for any $n \geq 1$ and $X\in \Mod_{R}$.
\end{prop}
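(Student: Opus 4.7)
The strategy is to lift the computation to graded modules and exploit the symmetric monoidal shear equivalence established in Proposition \ref{shear_1}(2). By the functoriality of the constructions of Section 2.3 in symmetric monoidal right $t$-exact functors, both $\LSym^{n}_{R}$ and $\LGamma^{n}_{R}$ admit graded enhancements on $\Gr\Mod_{R}$, and in fact give rise to monads with respect to each of the two symmetric monoidal structures: the standard one, and the Koszul sign rule one. The shear $[\bullet]: \Gr\Mod_{R} \xrightarrow{\sim} \Gr\Mod_{R}$, being a symmetric monoidal equivalence between these two structures, intertwines the Koszul-signed monads with the standard ones.

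I plan to analyze both graded monads on the insertion object $\ins^{1}(X) \in \Gr\Mod_{R}$. Because $\LSym^{n}$ is a polynomial functor of degree $n$ which is weight-homogeneous under the standard monoidal structure, one has $\LSym^{n}_{\mathrm{std}}(\ins^{1}(X)) \simeq \ins^{n}(\LSym^{n}_{R}(X))$. For the Koszul-signed monoidal structure, on a finitely generated free $X \in \Mod_{R,0}$ the sign-twisted $\Sigma_{n}$-action on $X^{\otimes n}$ makes its coinvariants isomorphic to $\Lambda^{n} X$; because both the Koszul-signed graded $\LSym^{n}$ and the functor $X \mapsto \ins^{n}(\LLambda^{n}_{R}(X))$ are right-left extended from $\Mod_{R,0}$ in the sense of Observation \ref{right_left_extended_monads}, this identification upgrades to a natural equivalence $\LSym^{n}_{\mathrm{Kos}}(\ins^{1}(X)) \simeq \ins^{n}(\LLambda^{n}_{R}(X))$ for arbitrary $X$.

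Applying the shear to this second identity, and observing that shear carries $\ins^{1}(X)$ to $\ins^{1}(X[1])$ and $\ins^{n}(\LLambda^{n}_{R}(X))$ to $\ins^{n}(\LLambda^{n}_{R}(X)[n])$, we deduce an equivalence
\begin{equation*}
\ins^{n}(\LSym^{n}_{R}(X[1])) \simeq \LSym^{n}_{\mathrm{std}}(\ins^{1}(X[1])) \simeq \ins^{n}(\LLambda^{n}_{R}(X)[n])
\end{equation*}
in $\Gr\Mod_{R}$; extracting the $n$-th graded piece gives the first equivalence. The second equivalence is proved by the dual argument applied to $\ins^{-1}(X)$: on the standard side one has $\LGamma^{n}_{\mathrm{std}}(\ins^{-1}(X)) \simeq \ins^{-n}(\LGamma^{n}_{R}(X))$, while on the Koszul side the sign-twisted \emph{invariants} of $X^{\otimes n}$ on a free $X$ compute alternating tensors, giving $\LGamma^{n}_{\mathrm{Kos}}(\ins^{-1}(X)) \simeq \ins^{-n}(\LLambda^{n}_{R}(X))$; applying shear and extracting the $(-n)$-th graded piece yields the desired formula.

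The main obstacle is promoting the classical identities "sign-twisted symmetric power of an odd-degree free module is the exterior power" and "sign-twisted divided power of an odd-degree free module is the exterior power" from $\Mod_{R,0}$ to functorial equivalences of the derived monads on all of $\Mod_{R}$. This requires being careful with the two different monoidal structures on $\Gr\Mod_{R}$ — specifically, verifying that the Koszul-signed derived $\LSym^{n}$ and $\LGamma^{n}$ preserve the subcategory $\Mod_{R,0}$ placed in grade $\pm 1$ and are right-left extended from it — and then invoking the uniqueness portion of Observation \ref{right_left_extended_monads} to identify them, after an appropriate grading shift, with $\ins^{\pm n}\circ \LLambda^{n}_{R}$.
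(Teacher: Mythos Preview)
Your proposal is correct and rests on the same key ingredient as the paper, namely Proposition~\ref{shear_1}(2): the shear equivalence intertwines the Koszul sign rule symmetric monoidal structure with the standard one, so that symmetric (resp.\ divided) powers of an odd-degree generator become exterior powers.

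The paper's proof is more economical in execution. Rather than carrying the full graded apparatus, it first reduces to the case of a polynomial ring $R$ by sifted colimits, then by base change to $R=\mathbb{Z}$, and finally to $X=\mathbb{Z}^{d}$ finitely generated free; at that point the first identity is read off directly from Proposition~\ref{shear_1}(2). For the $\LGamma$ statement the paper does not run a parallel argument with $\ins^{-1}(X)$ but instead deduces it from the $\LSym$ statement by duality, using that $\Gamma^{n}(X)\simeq(\Sym^{n}(X^{\vee}))^{\vee}$ on free modules and that exterior powers commute with taking duals. Your route---working entirely inside $\Gr\Mod_{R}$, tracking the standard and Koszul monads separately, and invoking right-left extension to globalize the identification on $\Mod_{R,0}$---unpacks the same content more explicitly and has the minor advantage of treating $\LSym$ and $\LGamma$ symmetrically; the paper's route is shorter and avoids setting up the Koszul-signed monad on $\Gr\Mod_{R}$ as a separate object.
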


\begin{proof}
It is sufficient to prove either statement in the case $R$ is a polynomial ring. In this case, since all functors involved are derived from the full subcategory of finitely generated free modules, using base change from $\mathbb{Z}$, it suffices to prove that $\LSym_{\mathbb{Z}}^{n}(\mathbb{Z}^{d}[1]) \simeq \Lambda_{\mathbb{Z}}^{n}(\mathbb{Z}^{d})[n] $ for any $d$. This follows from property 2 of Proposition \ref{shear_1}. The statement for derived divided power algebra follows by duality, as exterior powers commute with taking duals.
\end{proof}

Given an $R$-module $X$, let $\LGamma_{R}(X):=R\oplus \LGamma^{+}_{R}(X)$. This is the unital augmented algebra corresponding to the non-unital algebra $\LGamma_{R}^{+}(X)$. Also, let $\LLambda_{R}(X)$ be the free derived exterior algebra on $X$. 

\begin{cor}\label{décalage_for_algebras}
Let $X\in \Mod_{R}$. There is an equivalence of derived graded $R$-algebras

$$
\LGamma_{R}(X[-1])[\bullet] \simeq \LLambda_{R}(X),
$$
where the right hand side is understood as a derived graded algebra with respect to the Koszul sign rule symmetric monoidal structure.
\end{cor}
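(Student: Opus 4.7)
The plan is to combine the shear auto-equivalence of Construction \ref{shear} with the universal property of the free derived exterior algebra. The key observations are that $\LGamma_R(X[-1])$ naturally carries a derived commutative algebra structure, since any derived divided power algebra has an underlying derived commutative algebra, and that by part $(2)$ of Proposition \ref{shear_1} the shear intertwines the standard symmetric monoidal structure on $\Gr\Mod_R$ with the Koszul sign rule symmetric monoidal structure.

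First, I would endow $\LGamma_R(X[-1])$ with its canonical weight grading, placing $\LGamma^n_R(X[-1])$ in weight $n$, so that it becomes a derived commutative algebra object in $\Gr\Mod_R$ equipped with the standard symmetric monoidal structure. Applying the shear $[\bullet]$, which by Proposition \ref{shear_1}(2) is a symmetric monoidal equivalence from the standard to the Koszul sign rule symmetric monoidal structure on $\Gr\Mod_R$, produces a derived commutative algebra $\LGamma_R(X[-1])[\bullet]$ in the Koszul setting.

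Next, $\LLambda_R(X)$ is by construction the free derived commutative $R$-algebra (with respect to the Koszul sign rule) on $X(1)$, i.e.\ $X$ placed in weight $1$. Producing the desired map therefore amounts to producing a map $X(1) \to \LGamma_R(X[-1])[\bullet]$ in $\Gr\Mod_R$. The weight $1$ component of the target is $\LGamma^1_R(X[-1])[1] = X[-1][1] \simeq X$, and taking the identity yields via the universal property a canonical comparison map
\[
\LLambda_R(X) \longrightarrow \LGamma_R(X[-1])[\bullet]
\]
of derived commutative algebras in the Koszul sign rule symmetric monoidal structure on $\Gr\Mod_R$.

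Finally, checking that this comparison map is an equivalence amounts, weight by weight, to identifying $\LLambda^n_R(X)$ with $\LGamma^n_R(X[-1])[n]$, which is exactly the content of Proposition \ref{décalage_LSym}. The main obstacle is to verify that the comparison map produced by the universal property in fact agrees with the equivalence supplied by Proposition \ref{décalage_LSym}, and does not differ from it by some automorphism of $\LLambda^n_R(X)$. I would handle this by reducing, via right--left extension and base change from $\mathbb{Z}$, to the case $R = \mathbb{Z}$ and $X$ a finitely generated free module; there both constructions can be traced on explicit generators, and the uniqueness of the algebra extension from the weight $1$ data forces agreement in all weights.
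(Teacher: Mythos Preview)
Your approach is broadly sound, but there is a subtle gap in the claim that $\LLambda_R(X)$ is ``by construction the free derived commutative $R$-algebra (with respect to the Koszul sign rule) on $X(1)$.'' For $M$ a finitely generated free module, the classical exterior power $\Lambda^n(M)$ is the module of sign-twisted \emph{invariants} $(M^{\otimes n}\otimes\mathrm{sgn})^{\Sigma_n}$, not the sign-twisted coinvariants; the coinvariants pick up extra $2$-torsion already over $\mathbb{Z}$ (for instance $(\mathrm{sgn})_{\Sigma_2}\simeq\mathbb{Z}/2$). Thus $\LLambda_R(X)$ is naturally the free Koszul \emph{divided power} algebra on $X(1)$, i.e.\ free for the $\LGamma^+$-monad in the Koszul structure, not the free $\LSym$-algebra. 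Said differently: if you transport the standard $\LSym$-monad through the shear, the free algebra on $X(1)$ has weight-$n$ piece $\LSym^n_R(X[-1])[n]$, and this is \emph{not} $\LLambda^n_R(X)$ in general---if it were, combining with the second formula of Proposition~\ref{décalage_LSym} would force $\LSym^n\simeq\LGamma^n$ as endofunctors, which is false (they are already distinguished over $\mathbb{F}_p$ by their effect on the fold map $\mathbb{F}_p^2\to\mathbb{F}_p$ when $n=p$). So the derived universal property you invoke to produce the comparison map does not hold as stated.

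The paper gives no explicit proof, but the implicit argument avoids this issue entirely: the shear is a symmetric monoidal equivalence, hence intertwines the $\SSeq^{\gen,\pd}$-actions, and therefore carries the free standard-pd algebra $\LGamma_R(X[-1])$ on $X[-1](1)$ to the free Koszul-pd algebra on $X(1)$, which one then identifies with $\LLambda_R(X)$ via $\Lambda^n=\text{Koszul-}\Gamma^n$ on free modules. Your fallback reduction to $X$ free---where both sides are discrete supercommutative graded algebras and one may invoke the \emph{classical} (1-categorical) universal property of $\Lambda(X)$---does in fact work and yields the same conclusion after deriving; just be careful not to phrase it as a derived free-$\LSym$ universal property for $\LLambda$, which it does not possess.
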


Assume $X^{\geq \star}$ is a filtered object in $\Mod_{R}$. The the graded object $\gr^{\bullet}(X)$ can be endowed with an additional structure. Namely, for any $n$, there is a cofiber sequence

$$
X^{\geq n+1} \rightarrow X^{\geq n} \rightarrow \gr^{n}(X)
$$
which produces a boundary map $\gr^{n}(X) \rightarrow X^{\geq n+1}[1]$. Composing with the quotient map $X^{\geq n+1}[1] \rightarrow \gr^{n+1}(X)[1]$, we obtain a map $\gr^{n}(X) \rightarrow \gr^{n+1}(X)[1]$ or equivalently, a map $\gr^{n}(X)[-1] \rightarrow \gr^{n+1}(X)$. In other words, the associated graded object of a filtered object inherits a graded map $\gr(X)(1)[-1] \rightarrow \gr(X)$ encoding successive extensions in the filtration. This structure can be understood in terms of an action of a certain algebra, whose definition we will give below.
 
 \begin{defn}\label{D_-}
Let $R$ be a connective derived commutative ring. Following \cite{R}, we define a square-zero extension algebra $$\mathbb{D}_{-} : = R \oplus R[-1](1).$$

\end{defn}

\begin{rem}\label{LGamma_D_-}
Vanishing of higher exterior powers of a free module of rank $1$ together with Proposition \ref{décalage_for_algebras} imply that there is an equivalence $$\LGamma_{R}\bigl(R[-1](1)\bigr) \simeq \bigl(\LLambda_{R}(R) \bigr)[-\bullet] \simeq \bigl(R\oplus R(1) \bigr)[-\bullet] \simeq R \oplus R[-1](1).$$
The diagonal map $R \rightarrow R\oplus R$ induces a map $$\mathbb{D}_{-}=\LGamma_{R}\bigl(R[-1](1)\bigr) \rightarrow \LGamma_{R}\bigl(R[-1](1)\bigr) \otimes \LGamma_{R}\bigl(R[-1](1)\bigr) \simeq \mathbb{D}_{-}\otimes \mathbb{D}_{-},$$ which endows $\mathbb{D}_{-}$ with the structure of a \emph{cocommutative Hopf algebra} in $\Gr\Mod_{R}$. 
 \end{rem}
 
 \begin{defn}\label{DG_-_modules}
 For a connective derived commutative ring $R$, we define the $\infty$-category $\DG_{-}\Mod_{R}$ of \textbf{dg objects in $\Mod_{R}$}, or \textbf{dg $R$-modules},  as the $\infty$-category of graded modules over the graded algebra $\mathbb{D}_{-}$:
 
 $$
 \DG_{-}\Mod_{R} := \Mod_{\mathbb{D}_{-}}(\Gr \Mod_{R}).
 $$
 
Using the cocommutative Hopf algebra structure on $\mathbb{D}_{-}$ observed in Remark \ref{LGamma_D_-}, the $\infty$-category gets a symmetric monoidal structure making the forgetful functor $\DG_{-}\Mod_{R} \rightarrow \Gr\Mod_{R}$ symmetric monoidal. 
 \end{defn}
 
 \begin{rem}\label{gr_lifts}
 The functor $\gr: \Fil\Mod_{R} \rightarrow \Gr\Mod_{R}$ lifts to a symmetric monoidal functor $\gr: \Fil\Mod_{R} \rightarrow \DG_{-}\Mod_{R}$. To explain this, recall that in terms of the Rees equivalence, the functor $\gr: \Fil\Mod_{R} \rightarrow \Gr\Mod_{R}$ is equivalent to the base change $-\underset{R[t]}{\otimes} R: \Gr\Mod_{R[t]} \rightarrow \Gr\Mod_{R}$. Consequently, the associated graded object $\gr(X)$ of any filtered object $X$ carries a natural action of the algebra $\underline{\Hom}_{R[t]}(R,R)$, where the notation $\underline{\Hom}$ means the internal graded hom-object. The existence of a natural $\mathbb{D}_{-}$-action then follows from the equivalence of graded $R$-algebras
 
$$
 \underline{\Hom}_{R[t]}(R, R) \simeq \underline{\Hom}_{R}(R\underset{R[t]}{\otimes} R, R) \simeq \underline{\Hom}_{R}\bigl( \LSym_{R}(R(-1)[1]), R\bigr) \simeq \LGamma_{R}\bigl(R(1)[-1]\bigr) \simeq \mathbb{D}_{-},
$$
 \end{rem}

We can think of dg modules as representations of a derived algebraic group scheme whose algebra of distributions is $\mathbb{D}_{-}$. In representation theory, there are functors of \emph{homology} and \emph{cohomology}, or speaking in a more higher algebraic language, \emph{homotopy orbits} and \emph{homotopy fixed points} of a representation.
 
 \begin{construction}\label{homotopy_orbits_and_fixed_points}
 Let $\triv: \Gr\Mod_{R} \rightarrow \DG_{-}\Mod_{R}$ be the functor endowing a graded module with a trivial $\mathbb{D}_{-}$-action. In other words, it is restricting along the augmentation map $\mathbb{D}_{-} \rightarrow R$. There is an adjunction
 
$$
\xymatrix{ \Gr\Mod_{R} \ar[rrr]^-{\triv } &&& \ar@/_1.1pc/[lll]_{(-)_{h \mathbb{D}_{-} }  }  \ar@/^1.1pc/[lll]^{(-)^{ h \mathbb{D}_{-} }}  \DG_{-}\Mod_{R}. }
$$

For $X \in \DG_{-}\Mod_{R}$, we call $X_{h \mathbb{D}_{-}}$ \textbf{the $\mathbb{D}_{-}$-homotopy orbits object} of $X$, and $X^{ h \mathbb{D}_{-}}$ \textbf{homotopy fixed points object}.

 \end{construction}

 \begin{rem}
 Note that as the functor $\triv$ is symmetric monoidal, the right adjoint $(-)^{h \mathbb{D}_{-}}$ is right-lax symmetric monoidal. It is given explicitly by the formula $$X^{h \mathbb{D}_{-}}:= \underline{\Hom}_{\mathbb{D}_{-}}(R, X) ,$$ where the $\underline{\Hom}$ is the graded internal hom-object. Similarly as in Remark \ref{gr_lifts}, we can compute that there is an equivalence $$ R^{h \mathbb{D}_{-}} = \underline{\Hom}_{\mathbb{D}_{-}}(R,R) = R[t]$$ of graded algebras, where $t$ has degree $-1$. In particular, the graded object $X^{h \mathbb{D}_{-}}$ carries an operation of degree $-1$, which is the same data as a filtration. We denote $$(-)^{\geq \star}: \DG_{-}\Mod_{R} \xymatrix{\ar[r]&} \Fil \Mod_{R}$$ the resulting functor lifting $(-)^{h \mathbb{D}_{-}}$.
 \end{rem}

We record the following proposition without proof, which can be found in \cite[Theorem 3.2.14]{R}. Let $\widehat{\Fil\Mod}_{R} \subset \Fil\Mod_{R}$ be the $\infty$-category of \textbf{complete filtered objects} in $\Mod_{R}$, i.e. the full subcategory of $\Fil\Mod_{R}$ spanned by the objects $X^{\geq \star}$ which satisfy $\underset{n}{\lim} \: X^{\geq n}\simeq 0$.
 
 \begin{Theor}\label{FilMod_and_DGMod}
Let $R$ be a derived commutative ring. The functors $\gr$ and $(-)^{h \mathbb{D}_{-}}$ form a mutually inverse equivalence of symmetric monoidal $\infty$-categories
 
$$
\xymatrix{ \widehat{\Fil\Mod}_{R}   \ar@/^1.1pc/[rrr]^{\gr }  &&& \ar@/^1.1pc/[lll]^{(-)^{\geq \star}}  \DG_{-}\Mod_{R}. }
$$
 \end{Theor}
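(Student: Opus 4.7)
The plan is to realize the statement as Koszul duality between $R[t]$-modules and $\mathbb{D}_{-}$-modules in graded $R$-modules. First, I would apply the Rees equivalence of Construction~\ref{filtrations_geometrically} to identify $\Fil\Mod_{R} \simeq \Mod_{R[t]}(\Gr\Mod_{R})$, under which $\widehat{\Fil\Mod}_{R}$ corresponds to the full subcategory of $t$-complete objects and the associated graded functor $\gr$ corresponds to base change $-\otimes^{L}_{R[t]} R$. As recorded in Remark~\ref{gr_lifts} (via the calculation $\underline{\Hom}_{R[t]}(R,R)\simeq \mathbb{D}_{-}$), every object in the image of this base change carries a residual action of $R\otimes^{L}_{R[t]} R \simeq \mathbb{D}_{-}$, producing the lift $\gr\colon \Fil\Mod_{R} \to \DG_{-}\Mod_{R}$; its right adjoint is precisely the filtered enhancement of $\underline{\Hom}_{\mathbb{D}_{-}}(R,-)$ coming from the degree $-1$ action recorded by the identification $\underline{\Hom}_{\mathbb{D}_{-}}(R,R)\simeq R[t]$, i.e., the functor denoted $(-)^{\geq \star}$.

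Next, to show that the adjunction $\gr \dashv (-)^{\geq \star}$ restricts to the claimed equivalence, I would verify in turn that the counit is an equivalence on all of $\DG_{-}\Mod_{R}$ and that the unit is an equivalence on complete filtered objects. Both functors preserve the relevant small colimits, and $\DG_{-}\Mod_{R}=\Mod_{\mathbb{D}_{-}}(\Gr\Mod_{R})$ is generated under colimits and shifts by the free $\mathbb{D}_{-}$-modules $\mathbb{D}_{-}\otimes V(n)$ for $V\in \Mod_{R}$ and $n\in \mathbb{Z}$; a direct computation identifies $\bigl(\mathbb{D}_{-}\otimes V(n)\bigr)^{\geq \star}$ with a bounded two-step filtration whose associated graded returns $\mathbb{D}_{-}\otimes V(n)$, so the counit is an equivalence on generators, hence everywhere. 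Since these two-step filtrations are bounded and thus complete, the essential image of $(-)^{\geq \star}$ lies in $\widehat{\Fil\Mod}_{R}$ once colimits are formed in the complete category. For the unit I would then run a derived Nakayama argument: if $X\in \widehat{\Fil\Mod}_{R}$ satisfies $\gr(X)\simeq 0$, then each transition map $X^{\geq n+1}\to X^{\geq n}$ is an equivalence, so by completeness $X^{\geq n}\simeq \lim_{m\geq n} X^{\geq m}\simeq 0$; this gives conservativity of $\gr$ on complete objects, which, combined with the counit being an equivalence, forces the unit $X\to (\gr X)^{\geq \star}$ to be an equivalence for every complete $X$.

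Finally, the equivalence is automatically symmetric monoidal: $\gr$ is already symmetric monoidal (Remark~\ref{gr_lifts} and Remark~\ref{functors_geometrically}), and any inverse to a symmetric monoidal equivalence inherits a canonical symmetric monoidal structure. The main obstacle is the derived Nakayama step, which requires a careful treatment of how completeness interacts with the shift functors and with colimits, together with the precise identification of the essential image of $(-)^{\geq \star}$ inside $\widehat{\Fil\Mod}_{R}$; once these are in hand, the remainder of the proof is a formal check on free generators.
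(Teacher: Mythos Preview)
The paper does not actually supply a proof of this theorem: it records the statement and defers to \cite[Theorem 3.2.14]{R}. So there is no in-paper argument to compare against; your proposal is being measured only against the bare citation.

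Your Koszul-duality strategy (Rees equivalence, the identification $\underline{\Hom}_{R[t]}(R,R)\simeq \mathbb{D}_{-}$, derived Nakayama for conservativity of $\gr$ on complete objects, and symmetric monoidality of $\gr$ to upgrade the equivalence) is the standard route and is essentially how such statements are proved in the literature. The computation that $(\mathbb{D}_{-}\otimes V(n))^{\geq \star}$ is a bounded (indeed one-step) complete filtration with $\gr$ recovering $\mathbb{D}_{-}\otimes V(n)$ is correct, and your deduction that the unit is an equivalence from conservativity of $\gr$ plus the counit being an equivalence is a clean formal step.

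There is, however, a genuine gap in the counit argument. You assert that ``both functors preserve the relevant small colimits'' and use this to pass from free $\mathbb{D}_{-}$-modules to all of $\DG_{-}\Mod_{R}$. While $\gr$ is a left adjoint, the right adjoint $(-)^{\geq \star}=\underline{\Hom}_{\mathbb{D}_{-}}(R,-)$ does not obviously preserve even filtered colimits: the trivial $\mathbb{D}_{-}$-module $R$ has an \emph{infinite} free resolution $\cdots \to \mathbb{D}_{-}(2)[-2]\to \mathbb{D}_{-}(1)[-1]\to \mathbb{D}_{-}\to R$, so in each fixed weight the functor $(M)^{\geq k}$ is computed by a genuinely infinite inverse limit. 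Your claim that the counit is an equivalence ``on generators, hence everywhere'' therefore needs more justification than you give; at present you only get the counit on the thick subcategory of perfect $\mathbb{D}_{-}$-modules.

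This is fixable without changing the overall architecture. One clean route is Barr--Beck: the functor $\gr\colon \widehat{\Fil\Mod}_{R}\to \Gr\Mod_{R}$ is conservative (your Nakayama argument) and has the right adjoint $\iota_{0}$ sending a graded object to the complete filtered object with zero transition maps; checking that $\gr$ preserves $\gr$-split totalizations then yields comonadicity, and one identifies the resulting comonad $\gr\circ\iota_{0}\colon M\mapsto M\oplus M(-1)[1]$ with the coalgebra dual to $\mathbb{D}_{-}$, giving $\widehat{\Fil\Mod}_{R}\simeq \Mod_{\mathbb{D}_{-}}(\Gr\Mod_{R})$. Alternatively, one can verify the counit directly in each weight by analyzing the cofiber $\gr^{k}((M)^{\geq \star})$ of the transition map between the two relevant towers; because this cofiber only sees a finite piece of the resolution, it does commute with colimits even though the individual $(M)^{\geq k}$ may not.
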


\begin{rem}
If $R$ is connective, then the algebra $\mathbb{D}_{-}$ is connective in the negative $t$-structure, therefore the negative $t$-structure on $\Gr\Mod_{R}$ extends to a $t$-structure on $\DG_{-}\Mod_{R}$ which we call \textbf{negative $t$-structure} as well. The heart of this $t$-structure is equivalent to the \emph{abelian category of cochain complexes} of $\pi_{0}(R)$-modules. Note that when $R$ is discrete, the algebra $\mathbb{D}_{-}$ is discrete in the negative $t$-structure too.
\end{rem}

\begin{rem}
Let us work over $R=\mathbb{Z}$. The structure of a $\mathbb{D}_{-}$-action can be understood in stacky terms as follows. Following A.Preygel \cite{Pr}, let $\Omega\mathbb{A}^{1}$ be the based loop space of $\mathbb{A}^{1}$ defined as a pull-back square in derived affine schemes
 
 $$
\Omega \mathbb{A}^{1}:= \{0\} \underset{\mathbb{A}^{1}} {\times} \{0\},
$$
where both maps $\{0\}=\Spec(\mathbb{Z}) \rightarrow \mathbb{A}^{1}$ are inclusions of the zero section. $\Omega\mathbb{A}^{1}$ becomes a group object in derived schemes with respect to composition of loops. The zero section in $\mathbb{A}^{1}$ is fixed by the action of $\mathbb{G}_{m}$, and induces an action on $\Omega\mathbb{A}^{1}$. Therefore, we can consider the quotient stack $\Omega \mathbb{A}^{1}/\mathbb{G}_{m}$ which is a derived group stack over $\mathbb{G}_{m}$. Let $\Rep(\Omega \mathbb{A}^{1}/\mathbb{G}_{m})$ be the category of graded $\Omega \mathbb{A}^{1}$-representations (understood as, say, comodules over the algebra of functions on $\Omega\mathbb{A}^{1}$). In these terms, the functor $\xymatrix{\gr: \Fil\Mod_{R} \ar[r] & \Gr\Mod_{\mathbb{D}_{-}}  } $ is given by the pull-back $$\xymatrix{- \underset{\mathbb{A}^{1}/\mathbb{G}_{m}}{\times} \B\mathbb{G}_{m}: \QCoh(\mathbb{A}^{1}/\mathbb{G}_{m}) \ar[r]& \Rep(\Omega \mathbb{A}^{1}/\mathbb{G}_{m})} $$ along the inclusion of zero section $i: \B\mathbb{G}_{m}\rightarrow \mathbb{A}^{1}/\mathbb{G}_{m}. $ 
 
\end{rem}

\begin{defn}\label{derived_filtered_algebras}
Let $R$ be a connective derived ring. Using Remark \ref{derived_syms_from_geometry} and Construction \ref{filtrations_geometrically}, there is a notion of derived commutative algebras in $\Fil\Mod_{R}$, $\Gr\Mod_{R}$, $\Fil^{\geq 0}\Mod_{R}$ and $\Gr^{\geq 0}\Mod_{R}$.

We let $$\DAlg(\Gr \Mod_{R}), \DAlg(\Fil \Mod_{R}), \DAlg(\Gr^{\geq 0} \Mod_{R}), \DAlg(\Fil^{\geq 0} \Mod_{R})$$ to be the respective $\infty$-categories of derived commutative algebras. We will often refer to them as \textbf{graded, filtered, non-negatively graded and non-negatively filtered}\footnote{Note that there is a potential abuse of terminology here as according to standard conventions, a "graded/filtered derived commutative algebra" could mean a graded/filtered object in the $\infty$-category of derived commutative algebras. However, in the case of algebras, this seems to be a standard terminology, so there is no risk of confusion. If we did need to speak about filtered/graded objects in derived commutative rings, we would call them by their longer name. } \textbf{derived commutative algebras} in $\Mod_{R}$ respectively, and we use respective notations $$\Gr\DAlg_{R}, \Fil\DAlg_{R}, \Gr^{\geq 0}\DAlg_{R}, \Fil^{\geq 0}\DAlg_{R}.$$ 

\end{defn}

Below we will give an equivalent, more explicit, description of the $\infty$-categories of derived algebras given in Definition \ref{derived_filtered_algebras} in the case when $R$ is discrete.

\begin{construction}\label{LSym_on_fils_from_t_structure}
Assume $R$ is a discrete ring. Let us unwind the natural $t$-structure on $\Fil\Mod_{R}$ obtained from the equivalence $\Fil\Mod_{R}\simeq \QCoh(\mathbb{A}^{1}/\mathbb{G}_{m})$. Using the affine map $p:\mathbb{A}^{1}/\mathbb{G}_{m} \rightarrow \B\mathbb{G}_{m}, $ an object $F\in \QCoh(\mathbb{A}^{1}/\mathbb{G}_{m})$ is connective iff its push-forward $p_{*}F$ is connective as a quasi-coherent sheaf on $\B\mathbb{G}_{m}$. Denote the graded components of $p_{*}F$ as $\{F^{i}\}_{i\in \mathbb{Z}}$. Pulling-back along the map $i: \Spec(R) \rightarrow \B \mathbb{G}_{m}$, we obtain that $i^{*}p_{*}F$ is connective as an $R$-module. But we can identify $i^{*}p_{*}F \simeq F^{0}$, and $F^{0}$ is a connective $R$-module. Similarly, by shifting the grading, we obtain that all $F^{i}$'s must be connective $R$-modules. Therefore, the natural $t$-structure on $\Fil\Mod_{R} \simeq \QCoh(\mathbb{A}^{1}/\mathbb{G}_{m})$ has the connective part consisting of filtered objects $F^{\geq \star}$ which are connective in all degrees. This $t$-structure is compatible with the symmetric monoidal structure, and is left and right complete. 

Let $\Fil\Mod_{R,0} \subset \Fil\Mod_{R} $ be the subcategory spanned by direct sums of objects of the form $R(i)$ for all $i \in \mathbb{Z}$. This is an additive subcategory of $\Fil\Mod_{R,\heartsuit} $ which is closed under the action of $\SSeq^{\gen}_{0}$. Consequently, using Construction \ref{derived_alg_contexts}, we obtain an action of $\SSeq^{\gen}$ on $\Fil\Mod_{R}$. This action has the property that for any map $f:\Spec A \rightarrow \mathbb{A}^{1}/\mathbb{G}_{m}$, the pull-back $f^{*}:\Fil\Mod_{R}\simeq  \QCoh(\mathbb{A}^{1}/\mathbb{G}_{m}) \rightarrow \Mod_{A}$ is $\SSeq^{\gen}$-linear. In the same way, we construct an action of $\SSeq^{\gen,\pd}$ on $\Fil\Mod_{R}$. Therefore, we obtain two monads on $\Fil\Mod_{R}$:

\begin{enumerate}
\item The \textbf{free filtered derived algebra monad} $\LSym^{\geq \star}_{R}: \Fil\Mod_{R} \rightarrow \Fil\Mod_{R} $;

\item The \textbf{free non-unital derived divided power algebra monad} $\LGamma^{+,\geq \star}_{R}: \Fil\Mod_{R} \rightarrow \Fil\Mod_{R} $.

\end{enumerate}

\end{construction}

\begin{rem}\label{right_left_extended_LSym_on_fils}
The monad $\LSym^{\geq \star}_{R}: \Fil\Mod_{R} \rightarrow \Fil\Mod_{R}$ restricts to a filtered-colimit preserving endofunctor $\Sym^{\geq \star}_{R,\heartsuit}: \Ind(\Fil\Mod_{R,0}) \rightarrow \Ind(\Fil\Mod_{R,0})$ and is uniquely determined by it according to Observation \ref{right_left_extended_monads}. The same applies to the monad $\LGamma^{+,\geq \star}_{R}$.
\end{rem}

Let us mention a concrete description of the category $\Ind(\Fil^{\geq 0}\Mod_{R,0})$.

\begin{prop}
The category $\Ind(\Fil^{\geq 0}\Mod_{R,0})$ is spanned by objects $F^{\geq \star} \in \Fil^{\geq 0}\Mod_{R} $ where $F^{0}$ is a flat $R$-module, and each map $F^{\geq i+1} \hookrightarrow F^{i}$ is injective with flat cokernel.
\end{prop}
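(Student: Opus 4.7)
The plan is to establish the two inclusions between $\Ind(\Fil^{\geq 0}\Mod_{R,0})$ and the full subcategory of $\Fil^{\geq 0}\Mod_R$ cut out by the stated flatness conditions, interpreting $R(i)$ as the filtered module with $R$ in filtration degrees $\leq i$ and $0$ in degrees $>i$ (equivalently, the graded-free $R[t]$-module $R[t](-i)$ under the Rees equivalence, which is the compact projective generator of the relevant part of $\Fil^{\geq 0}\Mod_R$).

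First, I verify that every $R(i)$ satisfies the conditions: $R(i)^{\geq 0}=R$ is flat, every transition map $R(i)^{\geq j+1}\to R(i)^{\geq j}$ is either $\mathrm{id}_R$ (cokernel $0$) or $0\hookrightarrow R$ (cokernel $R$), and the cokernel is flat in either case. Finite direct sums inherit these properties degreewise. Since filtered colimits in $\Mod_{R,\heartsuit}$ are exact and preserve flat modules by the classical Lazard theorem, they preserve injectivity of transition maps, commute with the formation of cokernels, and preserve flatness in each filtration degree and in each successive quotient. Hence every object of $\Ind(\Fil^{\geq 0}\Mod_{R,0})$ satisfies the stated conditions.

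For the converse, I would pass through the Rees equivalence $\Rees\colon \Fil\Mod_R\simeq \Mod_{R[t]}(\Gr\Mod_R)$ of Construction \ref{filtrations_geometrically} (with $|t|=-1$). Under this equivalence the filtered object $F^{\geq\star}$ corresponds to the graded $R[t]$-module $M=\bigoplus_n F^{\geq n}$ on which $t$ acts by the transition maps. Injectivity of every transition map is exactly that $t$ is a non-zero-divisor on $M$; combined with flatness of $F^{\geq 0}$ and of each successive quotient $F^{\geq i}/F^{\geq i+1}$, a straightforward induction on the short exact sequences $0\to F^{\geq i+1}\to F^{\geq i}\to F^{\geq i}/F^{\geq i+1}\to 0$ shows that each $F^{\geq i}$ is $R$-flat. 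By the standard criterion of flatness over a polynomial ring in one variable (flat over $R$ plus $t$-regular implies $R[t]$-flat), $M$ is then graded-flat over $R[t]$. Under $\Rees$, the basic object $R(i)$ corresponds to the graded-free module $R[t](-i)$, so finite direct sums of $R(i)$'s correspond to finitely generated graded-free $R[t]$-modules.

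The proposition therefore reduces to a graded version of the Govorov--Lazard theorem: every graded-flat $R[t]$-module is a filtered colimit of finitely generated graded-free $R[t]$-modules. The proof is a direct transcription of Lazard's classical argument into the graded setting (every map from a finitely generated graded-free module to $M$ factors through another finitely generated graded-free module mapping to $M$, and the resulting system is filtered with colimit $M$). The principal obstacle is this graded Govorov--Lazard statement; once it is granted, the translation between the filtration-theoretic formulation of the proposition and the graded-module formulation is formal via the Rees equivalence.
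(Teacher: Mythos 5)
The paper records this proposition without proof, so your argument can only be judged on its own terms. Your "easy" direction is fine, the reduction through the Rees equivalence is a sensible route, the induction showing each $F^{\geq i}$ is $R$-flat is correct, and the graded Govorov--Lazard theorem you appeal to is indeed a standard transcription of Lazard's argument. The genuine gap is the pivotal flatness step: the criterion you invoke, "flat over $R$ plus $t$-regular implies $R[t]$-flat," is false, even for graded modules. Take $R=k[u]$ and $M=(u,t)\subset R[t]$, a graded ideal (with $\deg u=0$, $\deg t=-1$): every graded piece of $M$ is a free $R$-module, $t$ is a non-zero-divisor on $M$, yet $M$ is not flat over $k[u,t]$, since $\Tor_1^{k[u,t]}\bigl((u,t),k\bigr)\neq 0$. (And the ungraded example $k[t]/(t-1)$ shows that even adding "$M/tM$ flat over $R$" is not enough without the grading.) Note that this $M$ satisfies exactly the hypotheses you use at that step --- you invoke only $R$-flatness of $M$ and $t$-regularity --- whereas the hypothesis that actually carries the content of the proposition, flatness of the associated graded $M/tM$, is used in your write-up only to deduce the strictly weaker statement that each $F^{\geq i}$ is $R$-flat. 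So what you need, and neither state nor prove, is a graded local criterion of flatness: for a graded $R[t]$-module with $t$ regular, $M/tM$ flat over $R$, and (in the non-negatively filtered normalization) $M_0$ flat, the module $M$ is flat over $R[t]$. This is true but is precisely the heart of the converse direction, and it genuinely uses both the grading and the flatness of the quotient.

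The repair can be done along your lines by a dévissage rather than a citation. Write the Rees module as the increasing union of the $t$-stable truncations $\bigoplus_{n\leq k}F^{\geq n}$; each of these again satisfies the hypotheses and has bounded width, so it suffices to treat filtrations with $F^{\geq n}=0$ for $n>k$. There, the $R[t]$-submodule generated by the top piece is $F^{\geq k}\otimes_R R[t]$ (suitably twisted), which is flat because $F^{\geq k}$ is $R$-flat; the quotient is again a filtration satisfying the hypotheses of smaller width (injectivity of the induced transition maps and flatness of $F^{0}/F^{\geq k}$ follow from the original hypotheses), so one concludes by induction, closure of flats under extensions, and filtered colimits. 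With flatness of the Rees module in hand, your graded Lazard step goes through, modulo one bookkeeping point you wave at: the graded-free modules produced a priori have generators in arbitrary degrees, and one must observe that a map from a graded-free module generated in a negative degree factors through one generated in degree $0$ (using that the transition maps of a non-negatively filtered object are equivalences in degrees $\leq 0$), so that the colimit diagram can be taken inside $\Fil^{\geq 0}\Mod_{R,0}$.
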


The notion of filtered/graded derived $R$-algebra can be extended to the non-connective case as follows.

\begin{rem}
Let $R$ be a derived ring, and consider $R$ as a filtered (graded) derived ring inserted in degree $0$, extended by $0$ in positive degrees, and in a constant way in negative degrees. Then we can define $$\Fil\DAlg_{R}:= (\Fil\DAlg_{\mathbb{Z}})_{R/}, \;\; \Gr\DAlg_{R}:= (\Gr\DAlg_{\mathbb{Z}})_{R/}. $$

This recovers the previous definition when $R$ is connective, and extends it immediately to the not necessarily connective case.
\end{rem}

\section{Filtrations and divided power algebras.}

\subsection{Smith ideals.}

\begin{construction}\label{Day_arrows}

Below we will introduce two constructions which feature prominently in the text.

\begin{enumerate}

\item Let $\Delta^{1}$ be the $1$-simplex considered as a category with two objects $0,1$ and a single non-trivial arrow $0\rightarrow 1$. There is a symmetric monoidal structure $\ast: \Delta^{1}\times \Delta^{1}\rightarrow \Delta^{1} $ on $\Delta^{1}$ defined as:

\begin{equation}\label{join_product}
0\ast 0 = 0, 0\ast 1 = 1, 1\ast 0 = 1, 1\ast 1 = 1
\end{equation}

 Fix a (connective, derived) commutative ring $R$. Then the $\infty$-category $\Fun((\Delta^{1})^{\op},\Mod_{R})$ is a symmetric monoidal $\infty$-category with respect to the \emph{Day convolution symmetric monoidal structure}. The formula for the tensor product of two objects $X^{1}\rightarrow X^{0}$ and $Y^{1}\rightarrow Y^{0}$ is as follows:

$$
(X^{1}\rightarrow X^{0}) \otimes (Y^{1}\rightarrow Y^{0}) \simeq X^{0}\otimes Y^{1} \bigsqcup_{X^{1}\otimes X^{1}} X^{1}\otimes Y^{0} \rightarrow X^{0}\otimes Y^{0}.
$$

We use the notation $$\Mod^{\Delta^{1}_{\vee}}_{R} :=\Fun((\Delta^{1})^{\op},\Mod_{R})$$ considered with Day convolution symmetric monoidal structure satisfying the formula above. 

\item Let $\{0,1\}$ be the discrete 2 point category endowed with the symmetric monoidal structure defined on objects by the same formulas as \ref{join_product}. The $\infty$-category $\Fun(\{0,1\}^{\op}, \Mod_{R})$ gets a Day convolution symmetric monoidal structure with tensor product

$$
(X^{1}, X^{0}) \otimes (Y^{1}, Y^{0}) \simeq (X^{0} \otimes Y^{1} \oplus X^{1} \otimes Y^{1} \oplus X^{1} \otimes Y^{0}, X^{0}\otimes Y^{0}).
$$

We denote $$\Mod^{\{0,1\},\vee}_{R}:=\Fun(\{0,1\}^{\op}, \Mod_{R}) $$ considered with Day convolution symmetric monoidal structure.

\end{enumerate}

We will refer to both symmetric monoidal structures defined in the construction above as \textbf{"push-out symmetric monoidal structures"}.
\end{construction}

\begin{defn} Let $R$ be a (connective, derived) commutative ring.
\begin{enumerate}
\item Let $$\CAlg_{R}^{\Delta^{1}_{\vee}}:= \CAlg(\Mod^{\Delta^{1}_{\vee}}_{R})$$ be the $\infty$-category of $\mathbb{E}_{\infty}$-algebras in $\Mod^{\Delta^{1}_{\vee}}_{R}$.

\item Similarly, let $$\CAlg_{R}^{\{0,1\},\vee}:= \CAlg(\Mod_{R}^{\{0,1\},\vee}) $$ be the $\infty$-category of commutative algebra objects in $\Mod_{R}^{\{0,1\},\vee}$.

\end{enumerate}
\end{defn}

In what follows we will clarify the meanings of these two $\infty$-categories. As a warm-up, let us give a description of discrete objects in $ \CAlg_{R}^{\Delta^{1}_{\vee}}$ and $ \CAlg_{R}^{\{0,1\},\vee}$ respectively.

\begin{defn}
Recall the classical notion of a Smith ideal\footnote{This notion was introduced by Jess Smith and was reviewed and used recently in the texts \cite{BI22}, \cite{Mao21}, \cite{Dr20} and others.}. Let $A$ be a discrete commutative ring. A \textbf{Smith ideal} in $\Mod_{A}$ is an $A$-module $M$ endowed with an $A$-linear map $i: I\rightarrow A$ such that $i(x)y=xi(y)$. The map $I\otimes_{A} I \rightarrow I$ given by sending $(x,y) \longmapsto i(x)y$ defines a \textbf{non-unital $A$-algebra} structure on $I$, and the quotient $A/I : = \cofib(I\rightarrow A)$ has a structure of a simplicial commutative ring, which is in general non-discrete unless $I \rightarrow A$ is injective.

\end{defn}

\begin{prop}\label{pairs}
Let $R$ be a discrete commutative ring. Let $\CAlg_{R,\heartsuit}^{\Delta^{1}_{\vee}}$ and $\CAlg_{R,\heartsuit}^{\{0,1\},\vee}$ be the subcategories of $\CAlg_{R}^{\Delta^{1}_{\vee}}$ and $\CAlg_{R}^{\{0,1\},\vee}$ respectively, consisting of discrete objects with respect to the canonical $t$-structure on $\Mod_{R}$.

\begin{enumerate}

\item The category $\CAlg_{R,\heartsuit}^{\Delta^{1}_{\vee}}$ is equivalent to the category of pairs $(I \rightarrow A)$ where $A$ is a commutative $R$-algebra, and $i: I\rightarrow A$ is a Smith ideal in $A$.

\item The category $\CAlg_{R,\heartsuit}^{\{0,1\},\vee}$ is equivalent to the category of pairs $(A,B)$ where $A$ is a commutative $R$-algebra, and $B$ is a non-unital $A$-algebra.

\end{enumerate}
\end{prop}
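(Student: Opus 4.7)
The plan is to unwind both statements by direct computation with the Day convolution symmetric monoidal structures, and then match the resulting algebra data against the classical descriptions on the right hand side. Since $R$ is discrete and we restrict to discrete objects, everything takes place in an abelian 1-category and no higher coherences have to be checked. The symmetric monoidal structure on $\Mod_R^{\Delta^1_\vee}$ has unit $(0 \to R)$ and tensor product given by the pushout formula listed in \cref{Day_arrows}, and the one on $\Mod_R^{\{0,1\},\vee}$ has unit $(0, R)$ and tensor product given by the direct-sum formula there.

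For part (1), I would unwind what it means for $(I \xrightarrow{i} A)$ to carry a commutative algebra structure with respect to the pushout Day convolution. Restricting the multiplication $(I \to A) \otimes (I \to A) \to (I \to A)$ to the target component gives a commutative $R$-algebra structure on $A$. Restricting to the source component gives a map from the pushout
\[
A \otimes I \sqcup_{I \otimes I} I \otimes A \;\longrightarrow\; I
\]
which, by the universal property of the pushout, is the data of two $A$-linear actions $A \otimes I \to I$ and $I \otimes A \to I$ that agree after composing with the two maps from $I \otimes I$ induced by $i \otimes \mathrm{id}$ and $\mathrm{id} \otimes i$. The $\mathbb{E}_\infty$-commutativity forces the two actions to coincide, so $I$ is an $A$-module; the compatibility with the morphism $(I \to A)$ of the algebra structure (checked by tracing through the multiplicative structure map) shows that $i$ is $A$-linear; and the agreement of the two paths out of $I \otimes I$ becomes the Smith identity $i(x)y = x i(y)$ for $x,y \in I$. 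Conversely, given any Smith ideal $(I \to A)$ in the classical sense, the two $A$-actions glue to a well-defined map out of the pushout and give an algebra structure. Functoriality of the construction in both directions is immediate.

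For part (2), the argument is entirely analogous but simpler, since the symmetric monoidal structure on $\Mod_R^{\{0,1\},\vee}$ uses a direct sum rather than a pushout (there is no morphism linking $X^1$ and $X^0$). The target component of the multiplication gives $A := X^0$ the structure of a commutative $R$-algebra. The three summands in the source component of the tensor product give: from $X^0 \otimes X^1$ and $X^1 \otimes X^0$, two $A$-actions on $B := X^1$ which coincide by commutativity; from $X^1 \otimes X^1$, a commutative associative multiplication $B \otimes B \to B$ whose compatibility with the two $A$-actions (forced by associativity of the original algebra) exactly says it descends to $B \otimes_A B \to B$. This is precisely the data of a non-unital commutative $A$-algebra structure on $B$. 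The inverse construction is tautological, and functoriality is clear.

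The only real technical point is keeping track of the compatibilities imposed by associativity and commutativity: they must be unwound carefully to see that they translate into the bilinearity, module, and Smith identity conditions. Since the computation is entirely within the heart of the $t$-structure, each equation just needs to be checked on elements, so no sifted-colimit or higher-categorical input is required. The proof therefore reduces to direct verification of the universal properties of pushout and direct sum inside the Day convolution formulas, together with the classical bookkeeping relating an $\mathbb{E}_\infty$-algebra structure in an abelian symmetric monoidal category to the usual axioms of a commutative ring.
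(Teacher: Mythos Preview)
Your proposal is correct and follows essentially the same approach as the paper: both proofs unwind the data of a commutative algebra object in the Day convolution structure, identifying the target component as a commutative $R$-algebra and the source component (via the universal property of the pushout, resp.\ the direct sum) as the module/Smith ideal data, resp.\ the non-unital algebra data. Your write-up is in fact slightly more explicit about the converse direction and about why working in the heart reduces everything to elementwise checks.
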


\begin{proof}
\begin{enumerate}
\item 
A unital commutative algebra structure on an object $i:A^{1}\rightarrow A^{0}$ amounts to giving a unit map $\epsilon: (0\rightarrow R) \rightarrow (A^{1}\rightarrow A^{0})$, and a multiplication map

$$
\xymatrix{A^{1} \otimes A^{0}  \underset{A^{1}\otimes A^{1}}{\bigsqcup} A^{0}\otimes A^{1} \ar[d]_-{m^{1}} \ar[r]& A^{0} \otimes A^{0} \ar[d]^-{m^{0}} \\
A^{1} \ar[r]& A^{0}  }
$$
which satisfy associativity and commutativity conditions. The maps $m^{0}$ and $\epsilon^{0}$ give a unital, commutative and associative multiplication on $A^{0}$. The map $$m^{1}: A^{1}\otimes A^{0}  \bigsqcup_{A^{1}\otimes A^{1}} A^{0}\otimes A^{1} \rightarrow A^{1}$$ is equivalent to a pair of maps $A^{0} \otimes A^{1} \rightarrow A^{1}$ and $A^{1}\otimes A^{0}\rightarrow A^{1}$ which agree on $A^{1}\otimes A^{1}$. This defines a left and right actions of $A^{0}$ on $A^{1}$ satisfying the condition $i(x)y=xi(y)$. Commutativity and associativity constraints say that $A^{1}$ is symmetric $A^{0}$-bimodule, and $i: A^{1}\rightarrow A^{0}$ is an $A^{0}$-module map. This is the same as a Smith ideal.

\item The proof of this part goes the same way as in part 1 by unwinding the definition of a commutative algebra object in tensor product defined in Construction \ref{Day_arrows}, 2.

\end{enumerate}

\end{proof}

\begin{construction}
Let $R$ be a derived commutative ring, and we will consider the $\infty$-category of arrows $\Mod_{R}^{\Delta^{1}}$ witht the \emph{pointwise} symmetric monoidal structure. The functor $\cofib: \Mod^{\Delta^{1}_{\vee}}_{R} \rightarrow \Mod_{R}^{\Delta^{1}}$ sending $ (X \rightarrow Y) \longmapsto (Y \rightarrow \cofib(X\rightarrow Y) ) $ admits an oplax symmetric monoidal structure. To see this, we first note that the functor $\ev^{0}:  \Mod^{\Delta^{1}_{\vee}}_{R} \rightarrow \Mod_{R}, (X\rightarrow Y) \longmapsto Y$ is symmetric monoidal. Then it remains to construct an oplax symmetric monoidal structure on the functor $\Mod^{\Delta^{1}_{\vee}}_{R}\rightarrow \Mod_{R}, (X\rightarrow Y) \longmapsto \cofib(X\rightarrow Y). $ But this functor is the left adjoint of the symmetric monoidal inclusion $\Mod_{R} \rightarrow  \Mod^{\Delta^{1}_{\vee}}_{R}$ which sends $X\in \Mod_{R}$ to $(0\rightarrow X) \in \Mod^{\Delta^{1}_{\vee}}_{R}$, and therefore admits an oplax symmetric monoidal structure.
\end{construction}

\begin{prop}
The oplax symmetric monoidal structure on the functor $\cofib: \Mod^{\Delta^{1}_{\vee}}_{R} \rightarrow \Mod_{R}^{\Delta^{1}}$ is strict.
\end{prop}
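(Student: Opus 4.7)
The plan is to verify directly that the oplax coherence maps of $\cofib$ are equivalences, by identifying them with a standard total-cofiber computation. Fix objects $A = (A_1 \to A_0)$ and $B = (B_1 \to B_0)$ in $\Mod_R^{\Delta^1_\vee}$; the coherence map is a morphism
\[
\mu_{A,B}: \cofib(A \otimes B) \longrightarrow \cofib(A) \otimes \cofib(B)
\]
in $\Mod_R^{\Delta^1}$. I would first observe that the component of $\mu_{A,B}$ on the source of the arrows is tautologically the identity on $A_0 \otimes B_0$, since $\ev^0$ is strictly symmetric monoidal with respect to the Day convolution (indeed the formula gives $\ev^0(A \otimes B) = A_0 \otimes B_0$). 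So the content is to show that the target component, which is a map in $\Mod_R$, is an equivalence.

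Next I would unwind both sides. Day convolution gives $A \otimes B = (P_{A,B} \to A_0 \otimes B_0)$ with
\[
P_{A,B} \;=\; A_0 \otimes B_1 \bigsqcup_{A_1 \otimes B_1} A_1 \otimes B_0,
\]
so the target of $\cofib(A \otimes B)$ is $\cofib(P_{A,B} \to A_0 \otimes B_0)$. The target of $\cofib(A) \otimes \cofib(B)$ is, by the pointwise tensor structure on $\Mod_R^{\Delta^1}$, simply $\cofib(A_1 \to A_0) \otimes \cofib(B_1 \to B_0)$. I would then identify both of these with the total cofiber of the commutative square
\[
\begin{tikzcd}
A_1 \otimes B_1 \ar[r] \ar[d] & A_0 \otimes B_1 \ar[d] \\
A_1 \otimes B_0 \ar[r] & A_0 \otimes B_0.
\end{tikzcd}
\]
One identification uses that the pushout of the three corners other than the bottom-right is precisely $P_{A,B}$, so the total cofiber equals $\cofib(P_{A,B} \to A_0 \otimes B_0)$. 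The other uses that the tensor product in the stable $\infty$-category $\Mod_R$ is bi-exact, so computing the total cofiber by first taking cofibers along rows and then along the resulting column yields $\cofib(A_1 \to A_0) \otimes \cofib(B_1 \to B_0)$.

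The main substance of the argument is therefore the standard bilinearity of the tensor product in a stable presentable symmetric monoidal $\infty$-category. The one step that needs slight care is checking that the equivalence between the two presentations of the total cofiber coincides with the canonical oplax coherence map $\mu_{A,B}$ induced by the adjunction $\cofib \dashv (M \mapsto (0 \to M))$: this is a formal unwinding, since both maps are characterized by the same universal property, namely they are the unique maps under $A_0 \otimes B_0$ exhibiting the target as the cofiber of the Day convolution source. I do not anticipate any serious obstacle beyond this compatibility; once the oplax coherence maps are equivalences, the unit and associator axioms transport automatically, so $\cofib$ inherits a strict symmetric monoidal structure.
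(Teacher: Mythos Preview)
Your proposal is correct and is essentially the same argument as the paper's, just packaged more cleanly. Both proofs reduce to showing that the target component of $\mu_{A,B}$ is an equivalence by exploiting bi-exactness of the tensor product in $\Mod_R$; you phrase this as identifying both sides with the total cofiber of the square $A_1\otimes B_1 \to A_0\otimes B_1 \to A_1\otimes B_0 \to A_0\otimes B_0$, while the paper reformulates the claim as a certain square involving $(Y\otimes Y')/(X\otimes X')$, $Y/X\otimes Y'$, $Y\otimes Y'/X'$ being a pushout and then verifies this by comparing horizontal cofibers. Your total-cofiber formulation is more transparent; the paper's diagram chase is the same computation unwound by hand.
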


\begin{proof}
We need to show the following. For two arrows $(X\rightarrow Y)$ and $(X'\rightarrow Y')$ the natural map 

$$
\xymatrix{ \cofib(X\otimes Y' \underset{X\otimes X'}{\bigsqcup} Y\otimes X' \ar[r]& Y\otimes Y') \rightarrow Y/X \otimes Y'/X'    }
$$
is an equivalence. This is equivalent to showing that the diagram

$$
\xymatrix{(Y\otimes Y') /(X\otimes X')     \ar[d] \ar[r]& Y/X \otimes Y' \ar[d]\\
Y \otimes Y'/X' \ar[r]& Y/X \otimes Y'/X'         }
$$
is a push-out square. This is in turn equivalent to showing that the map induced on horizontal cofibers is an equivalence. The cofiber of the lower horizontal arrow is equivalent to $X\otimes Y'/X' [1]$. The cofiber of the upper hozitontal arrow can be read of from the diagram

$$
\xymatrix{  X\otimes X' \ar[d]_-{f\otimes f'} \ar[r]^-{\Id\otimes f'} & X\otimes Y' \ar[d]^-{f\otimes \Id} \ar[r]& X\otimes Y'/X'\ar[d]\\
Y \otimes Y' \ar[d] \ar[r]_-{=} & Y \otimes Y'  \ar[d] \ar[r]& 0 \ar[d]\\
(Y\otimes Y')/(X\otimes X') \ar[r]& Y/X \otimes Y' \ar[r] &X\otimes Y'/X' [1].     }
$$
Therefore, the natural map $\cofib ( (Y\otimes Y')/(X\otimes X') \rightarrow Y/X \otimes Y' ) \rightarrow \cofib(Y\otimes Y'/X' \rightarrow Y/X \otimes Y'/X' )  $ is an equivalence, and this finishes the proof.
\end{proof}

\begin{prop}\label{push_out_pointwise_eq}
The functor $\cofib: \Mod^{\Delta^{1}_{\vee}}_{R} \rightarrow \Mod_{R}^{\Delta^{1}}$ is an equivalence of symmetric monoidal $\infty$-categories where the source is endowed with the push-out symmetric monoidal structure, and the target with the pointwise symmetric monoidal structure.
\end{prop}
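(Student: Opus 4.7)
The previous proposition has already done most of the work: it shows that the oplax symmetric monoidal structure on $\cofib$ is strictly symmetric monoidal. Consequently, what remains is purely to verify that the underlying functor of $\infty$-categories
$$
\cofib : \Fun((\Delta^{1})^{\op}, \Mod_{R}) \xymatrix{\ar[r]&} \Fun(\Delta^{1}, \Mod_{R}), \quad (X\rightarrow Y) \longmapsto (Y \rightarrow Y/X)
$$
is an equivalence of underlying $\infty$-categories. Once this is done, combining with the previous proposition immediately yields the symmetric monoidal equivalence.

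To prove the underlying equivalence, the plan is to write down an explicit inverse using stability of $\Mod_{R}$. Define
$$
\fib : \Fun(\Delta^{1}, \Mod_{R}) \xymatrix{\ar[r]&} \Fun((\Delta^{1})^{\op}, \Mod_{R}), \quad (A \rightarrow B) \longmapsto (\fib(A\rightarrow B) \rightarrow A).
$$
I would then construct natural transformations $\Id \Rightarrow \fib \circ \cofib$ and $\cofib \circ \fib \Rightarrow \Id$ as follows. Given $(X\rightarrow Y)$, the composite $\fib\circ \cofib$ produces $(\fib(Y \rightarrow Y/X) \rightarrow Y)$, and the canonical factorization $X \rightarrow \fib(Y \rightarrow Y/X) \rightarrow Y$ provides a natural comparison map. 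Dually, for $(A\rightarrow B)$, the composite $\cofib \circ \fib$ produces $(A \rightarrow A/\fib(A\rightarrow B))$, with a canonical map to $(A \rightarrow B)$. By the defining property of a stable $\infty$-category, namely that fiber and cofiber sequences agree, both natural transformations are pointwise equivalences. Hence $\fib$ and $\cofib$ are mutually inverse equivalences of $\infty$-categories.

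The only real subtlety is functoriality: one must check that $\fib$ is actually a functor $\Fun(\Delta^{1}, \Mod_{R}) \rightarrow \Fun((\Delta^{1})^{\op}, \Mod_{R})$ and that the unit and counit assemble into genuine natural transformations. This can be handled by realizing both $\cofib$ and $\fib$ as particular (co)limit functors along the Kan-extension-style cofiber/fiber square diagrams in $\Fun(\Delta^{1}\times \Delta^{1}, \Mod_{R})$, cut out by requiring one corner to vanish; the two resulting full subcategories of "pushout squares with zero in the bottom-left" and "pullback squares with zero in the top-right" coincide by stability, and both restrict equivalently to $\Fun(\Delta^{1}, \Mod_{R})$ by evaluating at either the north or south edge. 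This is a standard unwinding, and I do not anticipate it as the main obstacle — indeed, the main obstacle (the strictness of the oplax structure) has already been discharged in the preceding proposition.

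Combining the strict symmetric monoidal structure from the previous proposition with the equivalence of underlying $\infty$-categories established above yields the claim.
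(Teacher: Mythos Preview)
Your proposal is correct and follows essentially the same approach as the paper: the paper's proof is the single sentence ``The inverse functor sends a map $X\rightarrow Y$ to $\fib(X\rightarrow Y) \rightarrow X$,'' and your argument is a careful unpacking of exactly this, together with the (correct) observation that the symmetric monoidal part was already handled by the preceding proposition.
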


\begin{proof}
The inverse functor sends a map $X\rightarrow Y$ to $\fib(X\rightarrow Y) \rightarrow X$.
\end{proof}

The following corollary is immediate from Proposition \ref{push_out_pointwise_eq}:

\begin{cor}
There is an equivalence of $\infty$-categories:

$$
\xymatrix{\cofib: \CAlg_{R}^{\Delta^{1}_{\vee}} \ar[r]^-{\sim} & \CAlg_{R}^{\Delta^{1}} .}
$$
\end{cor}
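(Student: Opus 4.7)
The plan is to deduce this corollary directly from Proposition \ref{push_out_pointwise_eq}, which already provides the main content: an equivalence of \emph{symmetric monoidal} $\infty$-categories
$$
\cofib: \Mod^{\Delta^{1}_{\vee}}_{R} \xrightarrow{\sim} \Mod^{\Delta^{1}}_{R},
$$
where the source carries the push-out symmetric monoidal structure and the target the pointwise one. The only remaining step is to observe that passing to commutative algebra objects is functorial in symmetric monoidal equivalences.

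More precisely, I would invoke that $\CAlg(-): \CAlg(\cat)_{\lax} \to \cat$ preserves all limits (as recalled in the excerpt before Proposition \ref{limits_in_Cat_lax}), and in particular sends symmetric monoidal equivalences to equivalences of $\infty$-categories. Applying $\CAlg(-)$ to the equivalence of Proposition \ref{push_out_pointwise_eq} yields the desired equivalence
$$
\cofib: \CAlg_{R}^{\Delta^{1}_{\vee}} = \CAlg(\Mod^{\Delta^{1}_{\vee}}_{R}) \xrightarrow{\sim} \CAlg(\Mod^{\Delta^{1}}_{R}) = \CAlg_{R}^{\Delta^{1}}.
$$
The inverse is induced by the $\fib$ functor identified in the proof of Proposition \ref{push_out_pointwise_eq}, which sends an arrow $(X \to Y)$ of $\mathbb{E}_{\infty}$-algebras to the associated arrow $(\fib(X \to Y) \to X)$ with its inherited $\mathbb{E}_{\infty}$-algebra structure in the push-out symmetric monoidal sense.

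There is no real obstacle here: the substantive work is already done in Proposition \ref{push_out_pointwise_eq}, and this corollary is a formal consequence. The only thing to be careful about is that the functor being an equivalence at the level of underlying $\infty$-categories is not enough on its own — what makes the corollary work is that the equivalence upgrades to a symmetric monoidal equivalence, which in turn ensures that the corresponding $\infty$-categories of commutative algebra objects are identified.
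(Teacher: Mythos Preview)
Your proposal is correct and matches the paper's approach: the paper states that the corollary is immediate from Proposition \ref{push_out_pointwise_eq}, and your argument spells out precisely why, namely that applying $\CAlg(-)$ to a symmetric monoidal equivalence yields an equivalence of $\infty$-categories of commutative algebra objects.
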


Let us now clarify the meaning of the $\infty$-category $\CAlg_{R}^{\{0,1\},\vee}$. As a preparation for the next construction, observe that given an object of $(A^{1},A^{0}) \in \CAlg^{\{0,1\},\vee}_{R,\heartsuit}$, we can define a Smith ideal by the formula $(A^{1} \rightarrow A^{0} \oplus A^{1})$. On the other hand, any Smith ideal $(I \rightarrow A)$ gives rise to a pair $(A,I)\in \CAlg^{\{0,1\},\vee}_{R,\heartsuit}$ where $I$ is considered merely as a non-unital $A$-algebra. Below we will construct an adjunction of symmetric monoidal $\infty$-categories which gives rise to these constructions on the $\infty$-categories of commutative algebra objects.

\begin{construction}\label{ideals_nonunitals}
The tautological inclusion functor $\{0,1\} \rightarrow \Delta^{1}$ is strictly symmetric monoidal with respect to the symmetric monoidal structure described in Construction \ref{Day_arrows}, and consequently induces an adjunction

$$
\xymatrix{  \Mod^{\{0,1\},\vee}_{R}  \ar@/^1.1pc/[rrr]^-{\aug    } &&& \ar@/^1.1pc/[lll]^-{(\Fil \rightarrow \Gr)}  \Mod^{\Delta^{1}_{\vee}}_{R} ,}
$$
where the left adjoint $\aug$ is strictly symmetric monoidal, and the right adjoint $(\Fil \rightarrow \Gr)$ is lax symmetric monoidal.

Unwinding the definition, the functor $(\Fil \rightarrow \Gr)$ sends an arrow $(Y \rightarrow X)$ to the pair $(Y, X)$. The left adjoint $\aug$ sends a pair $(X^{1}, X^{0})$ to the arrow $(X^{1} \rightarrow X^{1} \oplus X^{0})$. 
\end{construction}

\begin{rem}\label{i_!_monadic}
The functor $\aug$ preserves all colimits being a left adjoint. It also preserves all limits, as follows from the formula. Moreover, it is conservative: if a map $(X^{1},X^{0}) \rightarrow (Y^{1},Y^{0})$ induces an equivalence $(X^{1} \rightarrow X^{1}\oplus X^{0}) \simeq (Y^{1} \rightarrow Y^{1}\oplus Y^{0})$, then we immediately get that $X^{1} \simeq Y^{1}$ is an equivalence, and taking cofibers that $X^{0} \simeq Y^{0}$ too. It follows that $\aug$ is monadic and comonadic. 
\end{rem}

\begin{rem}\label{adjunction_refines}

It follows from proposition above that the adjunction of Construction \ref{ideals_nonunitals} refines to an adjunction

$$
\xymatrix{  \CAlg^{\{0,1\},\vee}_{R}  \ar@/^1.1pc/[rrr]^-{\aug} &&& \ar@/^1.1pc/[lll]^-{(\Fil \rightarrow \Gr)}  \CAlg^{\Delta^{1}_{\vee}}_{R} .}
$$

Based on Remark \ref{i_!_monadic}, the $\infty$-category $ \CAlg^{\{0,1\},\vee}_{R}$ should have a description as an $\infty$-category of objects of $ \CAlg^{\Delta^{1}_{\vee}}_{R} $ endowed with some additional data. Such a description will be given below.

\end{rem}

\begin{construction}
Let $\Mod_{R}^{\{0,1\}}$ be the $\infty$-category of functors $\Fun(\{0,1\},\Mod_{R})$ endowed with the \emph{pointwise} symmetric monoidal structure, i.e. the tensor product of two objects $(X^{0},X^{1})$ and $(Y^{0},Y^{1})$ is given by the formula

$$
(X^{0},X^{1}) \otimes (Y^{0},Y^{1}) \simeq (X^{0}\otimes Y^{0}, X^{1} \otimes Y^{1}),
$$
and the unit object is the pair $(R,R)\in \Mod_{R}^{\{0,1\}}$.

The symmetric monoidal $\infty$-category $\Mod_{R}^{\{0,1\}}$ carries a symmetric monoidal equivalence $\epsilon: \Mod_{R}^{\{0,1\}} \simeq \Mod_{R}^{\{0,1\}}$ defined by the formula $\epsilon(X^{1},X^{0}) :=(X^{0},X^{1})$. We define a stable $\infty$-category of \textbf{augmented arrows} as the pull-back

\begin{equation}\label{split_arrows}
\xymatrix{  \Mod_{R}^{\Delta^{1}, \aug}     \ar[d] \ar[rr]&& \Mod_{R}^{\Delta^{1}} \ar[d]^-{(\Fil \rightarrow \Gr)} \\
\Mod_{R}^{\Delta^{1}} \ar[rr]_-{\epsilon \circ (\Fil \rightarrow \Gr)}&& \Mod_{R}^{\{0,1\}}.  }
\end{equation}

Objects of $\Mod_{R}^{\Delta^{1}, \aug}$ can be thought as arrows $p: Y \rightarrow X$ endowed with a section $e: X \rightarrow Y$ such that $p\circ e \simeq \Id_{Y}$. We can endow $\Mod_{R}^{\Delta^{1},\aug}$ with \emph{pointwise} symmetric monoidal structure, so that the diagram \ref{split_arrows} is a pull-back diagram in $\CAlg(\cat)_{\lax}$. There exists a symmetric monoidal functor 

\begin{equation}\label{functor}
\xymatrix{ \Mod_{R}^{\{0,1\},\vee} \ar[r]& \Mod_{R}^{\Delta^{1},\aug} ,\\
 \;\;\;\;\;(X^{1},X^{0}) \longmapsto ( X^{0}\oplus X^{1}   \ar@<0.5ex>[r]^-{p} & \ar@<0.5ex>[l]^-{e} X^{0} ) }
\end{equation} 
where the arrow $p:(X^{1} \oplus X^{0} \rightarrow X^{0})$ is the natural projection, endowed with the $0$-section $e: X^{0} \hookrightarrow X^{1}\oplus X^{0}$.
\end{construction}

\begin{prop}
The functor \ref{functor} induces a symmetric monoidal equivalence

$$\xymatrix{   \Mod_{R}^{\{0,1\},\vee} \ar[r]^-{\sim}& \Mod_{R}^{\Delta^{1}, \aug}.    } $$
\end{prop}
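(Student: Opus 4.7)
My plan is to construct an explicit inverse functor and verify it is symmetric monoidal by direct computation.

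\medskip

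First I would use stability of $\Mod_R$ to construct the inverse. Given an augmented arrow $(p\colon Y\to X, e\colon X\to Y)$ with $p\circ e\simeq \Id_X$, the retraction splits canonically in the stable $\infty$-category $\Mod_R$, yielding a natural equivalence $Y\simeq X\oplus \fib(p)$ compatible with $p$ (projection) and $e$ (inclusion of the first summand). I would therefore define
$$\Phi\colon \Mod_R^{\Delta^1,\aug}\xymatrix{\ar[r]&}\Mod_R^{\{0,1\},\vee},\qquad (Y\to X,e)\longmapsto (\fib(p),X).$$
Verifying on underlying $\infty$-categories that $\Phi$ and the functor \ref{functor} are mutually inverse is then just the unwinding: starting from $(X^1,X^0)$, the map $X^1\oplus X^0\to X^0$ has fiber canonically equivalent to $X^1$, and starting from $(p,e)$ the splitting reproduces the augmented arrow from $(\fib(p), X)$. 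Both equivalences are natural, so we obtain an equivalence of underlying $\infty$-categories.

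\medskip

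The main point, and in my view the only nontrivial step, is to upgrade this to a symmetric monoidal equivalence. Rather than chasing coherences, I would exhibit the functor \ref{functor} as symmetric monoidal by direct comparison of the tensor product formulas, and then promote the inverse to a symmetric monoidal functor automatically (a left/right adjoint of a symmetric monoidal equivalence is canonically symmetric monoidal). Concretely, for two objects $(X^1,X^0)$ and $(Y^1,Y^0)$ of $\Mod_R^{\{0,1\},\vee}$, the pointwise tensor product of their images is
$$\bigl((X^0\oplus X^1)\otimes(Y^0\oplus Y^1)\xymatrix{\ar[r]&}X^0\otimes Y^0\bigr)$$
equipped with the zero section. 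Expanding the tensor product and identifying the split summand over $X^0\otimes Y^0$ gives
$$X^0\otimes Y^1\;\oplus\; X^1\otimes Y^0\;\oplus\; X^1\otimes Y^1$$
as the fiber of the augmentation, which is exactly the first component of the push-out tensor product $(X^1,X^0)\otimes (Y^1,Y^0)$ in $\Mod_R^{\{0,1\},\vee}$. The match on units (sending $(0,R)$ to $(R\to R)$ with identity section) is immediate.

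\medskip

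The remaining bit of bookkeeping is to promote this pointwise symmetric-monoidal data to a morphism in $\CAlg(\cat)_{\lax}$ rather than just checking binary tensor products. This should follow formally by noticing that both symmetric monoidal structures are Day-convolution-type: the push-out monoidal structure on $\Mod_R^{\{0,1\},\vee}$ is Day convolution along the monoid $(\{0,1\},\ast)$ defined in Construction \ref{Day_arrows}, while the pointwise monoidal structure on $\Mod_R^{\Delta^1,\aug}$ arises from the pull-back \ref{split_arrows} taken in $\CAlg(\cat)_{\lax}$, which by Proposition \ref{limits_in_Cat_lax} controls commutative algebra data. Alternatively, and more concretely, I would note that our functor factors as the symmetric monoidal equivalence $\cofib\colon \Mod_R^{\Delta^1_\vee}\simeq \Mod_R^{\Delta^1}$ of Proposition \ref{push_out_pointwise_eq} precomposed with the symmetric monoidal inclusion $\aug$ of Construction \ref{ideals_nonunitals} and postcomposed with the splitting data, so the symmetric monoidal structure is inherited. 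The main obstacle is organizing this coherence argument cleanly rather than any genuine mathematical difficulty; the underlying computation is forced by stability and the explicit tensor product formulas.
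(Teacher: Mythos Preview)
Your approach is correct and matches the paper's: the inverse sends a split arrow $Z\to X$ to the pair $(\fib(Z\to X),X)$, exactly as you propose. The paper's proof is a single sentence stating this inverse; it does not revisit the symmetric monoidal structure because the functor \ref{functor} is already declared symmetric monoidal in the construction preceding the proposition, and a symmetric monoidal functor that is an equivalence on underlying $\infty$-categories is automatically a symmetric monoidal equivalence. Your extra work verifying the tensor formulas and coherence is therefore not needed for the proof as stated, though it does supply justification for the symmetric monoidality claim that the paper simply asserts.
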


\begin{proof}
The inverse functor sends a split arrow $Z \rightarrow X$ to the pair $(Y,X)$, where $Y:=\fib(Z\rightarrow X)$. 
\end{proof}

\begin{cor}\label{0,1_nonunitals}
There is an equivalence of $\infty$-categories 

$$
\CAlg_{R}^{\{0,1\},\vee} \xymatrix{\ar[r]^-{\sim}&} \CAlg^{\Delta^{1}_{\vee},\aug}_{R} := \CAlg(\Mod^{\Delta^{1}_{\vee},\aug}_{R})
$$

\end{cor}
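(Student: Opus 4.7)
The plan is to deduce the corollary directly from the preceding proposition, which already establishes the nontrivial content: a symmetric monoidal equivalence $\Mod_R^{\{0,1\},\vee} \simeq \Mod_R^{\Delta^1_\vee,\aug}$ (with the push-out symmetric monoidal structure on the left and the pointwise structure on the right, restricted to augmented arrows). Since forming commutative algebra objects is functorial in symmetric monoidal $\infty$-categories and symmetric monoidal functors, applying $\CAlg(-)$ to this equivalence immediately yields an equivalence
\[
\CAlg_R^{\{0,1\},\vee} = \CAlg(\Mod_R^{\{0,1\},\vee}) \xrightarrow{\ \sim\ } \CAlg(\Mod_R^{\Delta^1_\vee,\aug}) = \CAlg_R^{\Delta^1_\vee,\aug}.
\]

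To make this completely rigorous, I would note that by construction $\Mod_R^{\Delta^1_\vee,\aug}$ was defined as a pull-back in $\CAlg(\cat)_{\lax}$, so by Proposition \ref{limits_in_Cat_lax} we have
\[
\CAlg(\Mod_R^{\Delta^1_\vee,\aug}) \simeq \CAlg(\Mod_R^{\Delta^1}) \underset{\CAlg(\Mod_R^{\{0,1\}})}{\times} \CAlg(\Mod_R^{\Delta^1}),
\]
where the fiber product is taken along the lax symmetric monoidal functors $(\Fil\rightarrow\Gr)$ and $\epsilon\circ (\Fil\rightarrow\Gr)$. The equivalence then identifies $\CAlg_R^{\{0,1\},\vee}$ with the $\infty$-category of commutative algebras in pointwise arrows equipped with a section, i.e. augmented $\mathbb{E}_\infty$-algebras in $\Mod_R^{\Delta^1}$.

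Unwinding, an object of $\CAlg_R^{\{0,1\},\vee}$ consists of a pair $(A^1, A^0)$ with $A^0$ an $\mathbb{E}_\infty$-$R$-algebra and $A^1$ a non-unital $A^0$-algebra; under the equivalence this corresponds to the split augmented arrow $A^0 \oplus A^1 \xrightarrow{p} A^0$ with section the inclusion of $A^0$. Conversely an augmented arrow $Z \to X$ with section recovers the pair by setting $A^0 := X$ and $A^1 := \fib(Z \to X)$, with its induced non-unital $A^0$-algebra structure (the Smith ideal structure being trivialized by the splitting). Since no step beyond the preceding proposition requires new work, there is no real obstacle; the only care needed is to invoke Proposition \ref{limits_in_Cat_lax} to pass commutative algebra objects through the defining pull-back diagram.
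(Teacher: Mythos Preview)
Your proposal is correct and follows exactly the intended argument: the paper states the corollary without proof because it is immediate from applying $\CAlg(-)$ to the symmetric monoidal equivalence $\Mod_R^{\{0,1\},\vee} \simeq \Mod_R^{\Delta^1,\aug}$ established in the preceding proposition. Your additional invocation of Proposition~\ref{limits_in_Cat_lax} to unwind the pull-back description is exactly what the paper does in the remark immediately following the corollary, so you have simply folded that observation into the proof itself.
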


\begin{rem}
Given the description of $\Mod_{R}^{\Delta^{1},\aug}$ as a pull-back diagram $\ref{split_arrows}$ in $\CAlg(\cat)_{\lax}$, Corollary \ref{0,1_nonunitals} together with Proposition \ref{limits_in_Cat_lax} imply that the $\infty$-category $\CAlg^{\{0,1\},\vee}_{R}:= \CAlg(\Mod_{R}^{\{0,1\},\vee})$ is equivalent to the $\infty$-category of arrows $p: B \rightarrow A$ in $\CAlg_{R}$ endowed with a section $i: A \rightarrow B$, or equivalently, pairs $(A,A^{+})$ where $A\in \CAlg_{R}$ and $A^{+}\in \CAlg^{\nonu}_{A}$.
\end{rem}

We will now move to the study of Smith ideals in the derived world.

\begin{construction}\label{standard_arrows}

We define a derived algebra object $\Mod_{R}^{\Delta^{1}}$ as an arrow $A \rightarrow B$ in the $\infty$-category $\DAlg_{R}$. The resulting $\infty$-category $\Fun(\Delta^{1}, \DAlg_{R})$ is monadic over $\Mod_{R}^{\Delta^{1}}$, and when $R$ is discrete, we can describe the monad as the right-left extension of the monad of free commutative algebra defined on the subcategory $(\Mod_{R,0})^{\Delta^{1}} \subset \Mod_{R}^{\Delta^{1}}$ using the standard (pointwise) $t$-structure on $\Mod_{R}^{\Delta^{1}}$. We denote this monad as $(\LSym_{R})^{\Delta^{1}}: \Mod_{R}^{\Delta^{1}} \rightarrow \Mod_{R}^{\Delta^{1}}$. It is uniquely determined by its restriction to the full subcategory $\Ind(\Mod_{R,0}^{\Delta^{1}}) \subset \Mod_{R}^{\Delta^{1}}$ consisting of pairs $(f: X \rightarrow Z)$ with both $X$ and $Z$ being flat $R$-modules.

\end{construction}

Below we will give another construction of the $\infty$-category of derived algebras in $\Mod_{R}^{\Delta^{1}}$, which is based on a different $t$-structure on $\Mod_{R}^{\Delta^{1}}$ and another choice of a subcategory $\Mod_{R,0}^{\Delta^{1}} \subset \Mod_{R}^{\Delta^{1}}$.

\begin{construction}
Let $R$ be a discrete ring. The $\infty$-category $\Mod_{R}^{\Delta^{1}}$ has a \textbf{geometric $t$-structure}, whose connective part $\Mod_{R, \geq 0}^{\Delta^{1}}$ consists of arrows $(f: X\rightarrow Z)$ with both $X$ and $Z$ being connective, and the map $f$ being surjective on $\pi_{0}$. Equivalently, an arrow $(f: X\rightarrow Z)$ is in $\Mod_{R, \geq 0}^{\Delta^{1}} $ if $X$ and $\fib(f)$ are connective. Consequently, an arrow $(f: X\rightarrow Z)$ is coconnective in geometric $t$-structure if $X$ and $\fib(f)$ are coconnective. Below we will list the main observations regarding this $t$-structure.

\begin{itemize}

\item The geometric $t$-structure is compatible with the pointwise symmetric monoidal structure, and is left and right complete. 

\item The geometric $t$-structure is obtained by transferring the standard (pointwise) $t$-structure on $\Mod_{R}^{\Delta^{1}}$ by means of the self-equivalence $\cofib: \Mod_{R}^{\Delta^{1}} \simeq \Mod_{R}^{\Delta^{1}}$. 

\item The heart $\Mod_{R,\heartsuit}^{\Delta^{1}}$ consists of arrows $(f: X\twoheadrightarrow Z)$ for which $X$ and $\fib(f)$ are discrete $R$-modules. Note that $Z$ itself might not be discrete, as long as $f: X \rightarrow Z$ is surjective on $\pi_{0}$. 

\item There is an additive symmetric monoidal subcategory $\Mod_{R, \heartsuit }^{\Delta^{1}, \sur}  \subset \Mod_{R,\heartsuit}^{\Delta^{1}}$ consisting of surjective maps $(f: X \twoheadrightarrow Z)$ where both $X$ and $Z$ are discrete. 

\item Let $\Mod_{R,0}^{\Delta^{1}} \subset \Mod_{R, \geq 0}^{\Delta^{1}}$ be the subcategory spanned by direct sums of objects $R \rightarrow 0$ and $\Id: R \rightarrow R$. Then $\Mod_{R,0}^{\Delta^{1}} $ is an additive subcategory of $\Mod_{R, \heartsuit}^{\Delta^{1}} $, and it is closed under the action of $\SSeq^{\gen}_{0}$ and $\SSeq^{\gen,\pd}_{0}$. Moreover, the objects $R \rightarrow 0$ and $\Id: R \rightarrow R$ are compact and projective generators of $\Mod_{R,\geq 0}$. Consequently, using Construction \ref{derived_alg_contexts}, we obtain an action of $\SSeq^{\gen}$ on $\Mod_{R}^{\Delta^{1}}$. In particular, we get an $\LSym$ monad $\LSym_{R}^{\Delta^{1}}: \Mod^{\Delta^{1}}_{R} \rightarrow \Mod_{R}^{\Delta^{1}}$. We denote the $\infty$-category of $\LSym^{\Delta^{1}}_{R}$-algebras as $\DAlg_{R}^{\Delta^{1}}$ and refer to its objects \textbf{derived ($R$-linear) arrows}. A justification for this terminology will be given below when we show that this is indeed the same $\infty$-category as the one defined in Construction \ref{standard_arrows}.

\item The additive subcategory $\Ind(\Mod_{R,0}^{\Delta^{1}}) \subset \Mod_{R}^{\Delta^{1}}$ is spanned by objects of the form $(f: X \twoheadrightarrow Z)$ where $X$ and $Z$ are flat $R$-modules. The monad $\LSym^{\Delta^{1}}_{R}$ is uniquely determined by its restriction to $\Ind(\Mod_{R,0}^{\Delta^{1}}) $.

\item There exists a tautological forgetful functor $\DAlg_{R}^{\Delta^{1}} \rightarrow \Fun(\Delta^{1},\DAlg_{R})$. To construct it, recall that in Construction \ref{standard_arrows} we let $(\Mod_{R,0})^{\Delta^{1}}$ to be the category of all arrows of finitely generated free $R$-modules; there is an inclusion $\Mod_{R,0}^{\Delta^{1}} \subset (\Mod_{R,0})^{\Delta^{1}}$, which induces a colimit-preserving fully faithful functor $\Ind(\Mod_{R,0}^{\Delta^{1}}) \hookrightarrow \Ind((\Mod_{R,0})^{\Delta^{1}})$ making commutative the following diagram:

$$
\xymatrix{   \Ind(\Mod_{R,0}^{\Delta^{1}}) \ar[d]_-{\LSym_{R}^{\Delta^{1}} } \ar[r] & \Ind((\Mod_{R,0})^{\Delta^{1}}) \ar[d]^-{(\LSym_{R})^{\Delta^{1} }} \\
  \Ind(\Mod_{R,0}^{\Delta^{1}})  \ar[r] & \Ind((\Mod_{R,0})^{\Delta^{1}})  .   }
$$

Consequently, by right-left extension we obtain a map of monads $\LSym^{\Delta^{1}}_{R} \rightarrow (\LSym_{R})^{\Delta^{1}}$, and a functor $\DAlg^{\Delta^{1}}_{R} \rightarrow \Fun(\Delta^{1},\DAlg_{R})$.

\end{itemize}
\end{construction}

\begin{prop}\label{Z[[x]]}
The functor $\DAlg_{R}^{\Delta^{1}} \rightarrow (\DAlg_{R})^{\Delta^{1}}$ is an equivalence.
\end{prop}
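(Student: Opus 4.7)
The plan is to reduce the claim to a comparison of monads on $\Mod_{R}^{\Delta^{1}}$ and then check it on a small generating subcategory. Both forgetful functors $\DAlg_{R}^{\Delta^{1}} \to \Mod_{R}^{\Delta^{1}}$ and $(\DAlg_{R})^{\Delta^{1}} \to \Mod_{R}^{\Delta^{1}}$ are conservative and preserve limits and sifted colimits (for the second, pointwise in $\Delta^{1}$). Hence by Barr--Beck--Lurie (see Construction \ref{monads_adjunctions}) both are monadic over $\Mod_{R}^{\Delta^{1}}$, with sifted-colimit-preserving monads $\LSym_{R}^{\Delta^{1}}$ and $(\LSym_{R})^{\Delta^{1}}$ respectively. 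The functor in question arises over $\Mod_{R}^{\Delta^{1}}$ from the natural morphism of monads $\alpha : \LSym_{R}^{\Delta^{1}} \to (\LSym_{R})^{\Delta^{1}}$ constructed just above the proposition; showing that $\alpha$ is an equivalence implies the claim.

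By Observation \ref{right_left_extended_monads}, the monad $\LSym_{R}^{\Delta^{1}}$ lies in $\End^{\Ind(\Mod_{R,0}^{\Delta^{1}})}_{\Sigma,RL}(\Mod_{R}^{\Delta^{1}})$ and is uniquely determined by its restriction to $\Mod_{R,0}^{\Delta^{1}}$. I would next argue that $(\LSym_{R})^{\Delta^{1}}$ lies in this same subcategory. For preservation of $\Ind(\Mod_{R,0}^{\Delta^{1}})$: a surjection of flat $R$-modules with flat kernel splits as a projection $X \oplus K \twoheadrightarrow X$, and applying pointwise $\LSym_{R}$ gives $\LSym_{R}(X) \otimes \LSym_{R}(K) \twoheadrightarrow \LSym_{R}(X)$, which is again a surjection of flat modules with flat kernel. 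For right-left extendability, one uses that $(\LSym_{R})^{\Delta^{1}}$ is the colimit of its filtered pieces $(\LSym_{R}^{\leq n})^{\Delta^{1}}$, each of which is pointwise excisively polynomial of degree $\leq n$, hence (by Proposition \ref{addpoly_and_excpoly} applied componentwise on $\Mod_{R,0}^{\Delta^{1}}$, whose objects are sums of $(R\to 0)$ and $(\mathrm{id}_{R}: R\to R)$) is the right-left extension of its restriction to $\Mod_{R,0}^{\Delta^{1}}$.

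Finally, I would verify that $\alpha$ is an equivalence on $\Mod_{R,0}^{\Delta^{1}}$. An object here is a finite sum of copies of $(R\to 0)$ and $(\mathrm{id}_{R}: R\to R)$. The $\SSeq^{\gen,\pd}$-action on $\Mod_{R}^{\Delta^{1}}$ used in the construction of $\LSym_{R}^{\Delta^{1}}$ is built from the \emph{pointwise} symmetric monoidal structure on $\Mod_{R}^{\Delta^{1}}$, so the tensor power $(X \to Z)^{\otimes n}$ and the resulting coinvariants agree with the pointwise application of $\Sym^{n}_{R}$. Thus on $\Mod_{R,0}^{\Delta^{1}}$ both monads compute $(\Sym_{R}(X) \to \Sym_{R}(Z))$, and the comparison map $\alpha$ restricts to an equivalence.

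The main obstacle is verifying the right-left extension property for $(\LSym_{R})^{\Delta^{1}}$; preservation of the Ind-category is straightforward, but the key point is that the pointwise construction factors through the smaller subcategory $\Mod_{R,0}^{\Delta^{1}}$ (surjections of free modules) rather than only through the larger $(\Mod_{R,0})^{\Delta^{1}}$ (arbitrary arrows of free modules). Once this is in place, the two monads are both sifted-colimit-preserving, preserve $\Ind(\Mod_{R,0}^{\Delta^{1}})$, are right-left extended from $\Mod_{R,0}^{\Delta^{1}}$, and agree there; by the uniqueness statement of Observation \ref{right_left_extended_monads}, $\alpha$ is an equivalence, and so $\DAlg_{R}^{\Delta^{1}} \simeq (\DAlg_{R})^{\Delta^{1}}$.
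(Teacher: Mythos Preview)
Your approach is correct and genuinely different from the paper's. Both proofs reduce to showing the monad map $\alpha : \LSym_{R}^{\Delta^{1}} \to (\LSym_{R})^{\Delta^{1}}$ is an equivalence. The paper checks this on the \emph{standard} generators $(R\to 0)$ and $(0\to R)$; since $(0\to R)$ is not connective in the geometric $t$-structure, the paper resolves it as the totalization $(0\to R)\simeq \Tot_{n\in\Delta}\,(0\to R[1]^{\oplus n})$ and exploits that each $\LSym^{\Delta^{1},\leq i}_{R}$ commutes with finite totalizations to reduce to the geometrically connective objects $(0\to R[1]^{\oplus n})$, where the comparison is tautological. You instead argue that $(\LSym_{R}^{\leq n})^{\Delta^{1}}$ is itself $n$-excisive (pushouts and pullbacks in $\Mod_{R}^{\Delta^{1}}$ being pointwise), hence by the derived-algebraic-context version of Proposition~\ref{addpoly_and_excpoly} applied to $(\Mod_{R}^{\Delta^{1}},\text{geometric }t,\Mod_{R,0}^{\Delta^{1}})$ it is already the right-left extension of its restriction to the \emph{geometric} generators $\Mod_{R,0}^{\Delta^{1}}$, where $\alpha$ is an equivalence by construction. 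Your route is more conceptual and sidesteps the explicit totalization; the paper's is more hands-on and makes the passage through non-geometrically-connective objects visible.

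One wording issue: ``Proposition~\ref{addpoly_and_excpoly} applied componentwise'' is not quite what you mean. That proposition as stated is for $\Mod_{R}$, and a literal componentwise application would only identify right-left extensions from $(\Mod_{R,0})^{\Delta^{1}}$, not from the smaller $\Mod_{R,0}^{\Delta^{1}}$. What you actually need (and what the paper records is available, in \cite[Prop.~4.2.14--4.2.15]{R}) is the same statement for the derived algebraic context $(\Mod_{R}^{\Delta^{1}},\text{geometric }t,\Mod_{R,0}^{\Delta^{1}})$: any sifted-colimit-preserving $n$-excisive endofunctor of $\Mod_{R}^{\Delta^{1}}$ is the right-left extension of its restriction to $\Mod_{R,0}^{\Delta^{1}}$. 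The componentwise check establishes $n$-excisivity of $(\LSym_{R}^{\leq n})^{\Delta^{1}}$, but the conclusion about the geometric generators comes from the general machinery, not from a componentwise argument. With that citation sharpened, your proof goes through.
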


\begin{proof}
The forgetful functor is a functor of monadic $\infty$-categories over $\Mod_{R}^{\Delta^{1}}$. Both monads preserve sifted colimits and are filtered by subfunctors which preserve finite totalizations, the monad $(\LSym_{R})^{\Delta^{1}}$ is determined by its values on the objects $0 \rightarrow R$ and $R\rightarrow 0$, and we have an equivalence $\LSym^{\Delta^{1}}_{R}(R \rightarrow 0) \simeq (\LSym_{R})^{\Delta^{1}}(R \rightarrow 0) $ as $R \rightarrow 0$ belongs to the connective part of the geometric $t$-structure. Therefore, it only remains to show that the natural map \begin{equation}\label{map_of_LSyms}\LSym^{\Delta^{1}}(0 \rightarrow R) \xymatrix{\ar[r]&} (\LSym_{R})^{\Delta^{1}}(0 \rightarrow R) \end{equation} is an equivalence.

The object $(0\rightarrow R) \in \Mod_{R}^{\Delta^{1}}$ is connective in the standard $t$-structure, and the free $\LSym^{\Delta^{1}}_{R}$-algebra on it is the unit map $(R \rightarrow R[x]). $ On the other hand, $(0 \rightarrow R)$ is not connective in the geometric $t$-structure. To compute $\LSym^{\Delta^{1}}_{R}(0 \rightarrow R)$, we can present $(0 \rightarrow R)$ as a totalization \begin{equation}\label{tot_formula}(0 \rightarrow R)\simeq (0 \rightarrow R[1])[-1]  \simeq \underset{n \in \Delta}{\Tot}( 0 \rightarrow R[1]^{\oplus^{n}}      ),\end{equation}  and use the colimit formula of Construction \ref{LSym_monad} \begin{equation}\label{LSym_filtration}\LSym^{\Delta^{1}}_{R} := \underset{i}{\colim}\;\LSym^{\Delta^{1}, \leq i}_{R},\end{equation}
where each filtered layer $\LSym^{\Delta^{1}, \leq i}_{R} $ commutes with finite totalizations being an $i$-excisive functor.

Since $(0 \rightarrow R[1]^{\oplus^{n}})$ is connective in geometric $t$-structure, we have by definition of the $\LSym_{R}^{\Delta^{1}}$ functor: $$\LSym_{R}^{\Delta^{1}} (0 \rightarrow R[1]^{\oplus^{n}}) \simeq (R \rightarrow \LSym_{R}( R[1]^{\oplus^{n}}) . $$ Consequently, combining formulas \ref{tot_formula} and \ref{LSym_filtration}, we obtain $$\LSym^{\Delta^{1}}_{R} (0 \rightarrow R) \simeq  \underset{i}{\colim}\; \underset{n \in \Delta}{\Tot}\Bigl(  R \rightarrow  \LSym^{\leq i}_{R}(R^{\oplus^{n}}[1])  \Bigr) .$$
As the map \ref{map_of_LSyms} is compatible with filtrations, and the same totalization formula works for the standard $(\LSym_{R})^{\Delta^{1},\leq n}$-functors, we conclude that it is an equivalence.
\end{proof}

\begin{ex}
Let $R=\mathbb{Z}$. We compute in $\DAlg_{\mathbb{Z}}^{\Delta^{1}}$: $$\LSym^{\Delta^{1}}_{\mathbb{Z}    }( \Id:  \mathbb{Z} \rightarrow \mathbb{Z}  )\simeq (\Id: \mathbb{Z}[x] \rightarrow \mathbb{Z}[x]);$$ $$\LSym^{\Delta^{1}}_{\mathbb{Z}} (\mathbb{Z} \rightarrow 0 ) \simeq (\mathbb{Z}[x] \rightarrow \mathbb{Z}, x \longmapsto 0).$$ The corresponding Smith ideals in the polynomial ring on one generator are $(0) \subset \mathbb{Z}[x]$ and $(x) \subset \mathbb{Z}[x]$ respectively. 
\end{ex}

\begin{rem}\label{R_nondiscrete}
When $R$ is a discrete commutative ring, the initial object in the $\infty$-category $\DAlg_{R}^{\Delta^{1}}$ is the arrow $(\Id: R \rightarrow R)$. Therefore, when $R$ is a not necessarily discrete derived algebra, we can define the $\infty$-category of geometric $R$-linear arrows as the under-category $$\DAlg^{\Delta^{1}}_{R}:=(\DAlg_{\mathbb{Z}}^{\Delta^{1}})_{R/} ,$$ where $R$ is considered as a tautological arrow $(\Id: R \rightarrow R)$ in $\DAlg_{\mathbb{Z}}^{\Delta^{1}}$.
\end{rem}

\begin{defn}\label{derived_ideals}
Let $R$ be a connective derived ring, and $\DAlg_{R}^{\Delta^{1}}$ be the $\infty$-category of derived $R$-linear arrows, and $\DAlg_{R}^{\Delta^{1},\aug}$ be the $\infty$-category of objects of $(f: A \rightarrow B) \in \DAlg^{\Delta^{1}}_{R}$ endowed with a splitting $(s: B \rightarrow A)$ in $\DAlg_{R}$.  

\begin{enumerate}

\item We define the $\infty$-category of \textbf{derived Smith ideals} as the pull-back of $\infty$-categories along the bottom horizontal equivalence:

$$
\xymatrix{   \DAlg_{R}^{\Delta^{1}_{\vee}} \ar[d] \ar[r]^-{\cofib}_-{\sim} & \DAlg_{R}^{\Delta^{1}}    \ar[d]\\
\Mod^{\Delta^{1}_{\vee}}_{R} \ar[r]_-{\cofib}^-{\sim} & \Mod_{R}^{\Delta^{1}}.   }
$$
By construction, the forgetful functor $\DAlg_{R}^{\Delta^{1}_{\vee}} \rightarrow \Mod^{\Delta^{1}_{\vee}}_{R}$ is monadic. We denote $$\LSym_{R}^{\Delta^{1}_{\vee}}: \Mod^{\Delta^{1}_{\vee}}_{R} \rightarrow\DAlg_{R}^{\Delta^{1}_{\vee}}$$ the left adjoint of the forgetful functor. 

\item We also define an $\infty$-category $\DAlg_{R}^{\{0,1\},\vee}$, monadic over $\Mod_{R}^{\{0,1\},\vee}$, by means of the pull-back diagram

$$
\xymatrix{   \DAlg_{R}^{\{0,1\},\vee}\ar[d] \ar[r]^-{\sim}& \DAlg_{R}^{\Delta^{1}, \aug} \ar[d]\\
\CAlg^{\{0,1\},\vee}_{R} \ar[r]_-{\sim}& \CAlg^{\Delta^{1}, \aug}_{R}   ,}
$$
where the lower horizontal equivalence is the equivalence from Proposition \ref{0,1_nonunitals}, and let $$\LSym_{R}^{\{0,1\},\vee}: \Mod_{R}^{\{0,1\},\vee} \rightarrow \DAlg^{\{0,1\},\vee}_{R} $$ be the free algebra functor.
\end{enumerate}

\end{defn}

\begin{rem}\label{from_ideals_to_nonunitals}
There is an adjunction 

$$
\xymatrix{  \DAlg^{\{0,1\},\vee}_{R}  \ar@/^1.1pc/[rrr]^-{\aug} &&& \ar@/^1.1pc/[lll]^-{(\Fil \rightarrow \Gr)}  \DAlg^{\Delta^{1}_{\vee}}_{R} }
$$
lifting the adjunction noted in Remark \ref{adjunction_refines}.
\end{rem}

\begin{rem}\label{{0,1}_as_nonunitals}
Given a map $B\rightarrow A$ endowed with a section $A \rightarrow B$ is the same as endowing $B$ with the structure of an augmented $A$-algebra. We have $B \simeq A\oplus A^{+}$ where $A^{+}:=\fib(B \rightarrow A)$, and $A^{+}$ gets the structure of a non-unital $A$-algebra. Therefore, we can think of the $\infty$-category $\DAlg^{\{0,1\},\vee}_{R}$ as the $\infty$-category of pairs $(A,A^{+})$ where $A\in \DAlg_{R}$ and $A^{+}$ is a non-unital derived $A$-algebra.
\end{rem}

For a connective derived ring $R$, we will construct a $t$-exact lax symmetric monoidal functor $\Fil^{\geq 0}\Mod_{R} \rightarrow \Mod_{R}^{\Delta^{1}_{\vee}}$ and extend it to a functor $\Fil^{\geq 0}\DAlg_{R} \rightarrow \DAlg^{\Delta^{1}_{\vee}}_{R}. $

\begin{construction}\label{ev[0,1]_for_derived_rings}
Let $ \Delta^{1}\hookrightarrow \mathbb{Z}_{\geq 0}$ be the inclusion of $0$ and $1$. Notice that it is a lax symmetric monoidal functor with respect to the symmetric monoidal structure on $\Delta^{1}$ described in Construction \ref{Day_arrows}. It induces a limit preserving lax symmetric monoidal functor on presheaf categories $ \ev^{[0,1]}: \Fil^{\geq 0}\Mod_{R} \rightarrow \Mod_{R}^{\Delta^{1}_{\vee}}$ which sends an object $F^{\geq \star}$ to the arrow $F^{\geq 1} \rightarrow F^{\geq 0}$. In fact, the lax symmetric monoidal structure is strict. Indeed, composing the functor $\ev^{[0,1]}$ with the symmetric monoidal equivalence $\cofib: \Mod_{R}^{\Delta^{1}_{\vee}} \simeq \Mod_{R}^{\Delta^{1}}$, we get the functor sending a filtered object $F^{\geq \star}$ to the arrow $F^{\geq 0} \rightarrow \gr_{F}^{0}$, which is manifestly symmetric monoidal. Therefore, so is the functor $\ev^{[0,1]}$.

\begin{itemize}
\item Assume $R$ is a discrete ring. The functor $\ev^{[0,1]}$ is continuous and sends $\Fil^{\geq 0}\Mod_{R,0} \subset \Fil^{\geq 0}\Mod_{R}$ to $\Mod^{\Delta^{1}_{\vee}}_{R,0} \subset \Mod_{R}^{\Delta^{1}_{\vee}}$. Consequently, we get a commutative square

$$
\xymatrix{  \Ind(\Fil\Mod_{R,0}) \ar[d]_-{\LSym_{R}^{\geq \star}} \ar[r]^-{\ev^{[0,1]}}& \Ind(\Mod^{\Delta^{1}_{\vee}}_{R,0}) \ar[d]^-{\LSym^{\Delta^{1}_{\vee}}_{R}}\\
\Ind(\Fil\Mod_{R,0})  \ar[r]_-{\ev^{[0,1]}}& \Ind(\Mod^{\Delta^{1}_{\vee}}_{R,0}) . }
$$             
The functor $\ev^{[0,1]}$ is $t$-exact with respect to the geometric $t$-structure $\Mod_{R}^{\Delta^{1}_{\vee}}$. Consequently, using Observation \ref{right_left_extended_monads} and the fact that both monads involved are right-left extended, we obtain a functor $$\ev^{[0,1]}: \Fil^{\geq 0}\DAlg_{R} \xymatrix{\ar[r]&} \DAlg^{\Delta^{1}_{\vee}}_{R}. $$
 
 \item If $R$ is a not necessarily discrete connective derived ring, we consider $R$ as a filtered ring with $R$ inserted in degree $0$ and zero in all other degrees, and notice that $$\Fil^{\geq 0}\DAlg_{R} \simeq (\Fil^{\geq 0}\DAlg_{\mathbb{Z}})_{R/} , \;\;\;\;\DAlg_{R}^{\Delta^{1}_{\vee}}\simeq (\DAlg^{\Delta^{1}_{\vee}}_{\mathbb{Z}})_{R/}.$$

In this case, the functor $\ev^{[0,1]}: \Fil^{\geq 0}\DAlg_{R} \rightarrow \DAlg^{\Delta^{1}_{\vee}}_{R}$ is obtained from the previous part of the construction as an induced functor on objects under $R$. 
 \end{itemize}
                                                                                                                                                                                                                                                                                                                                                                                                                                                                                                                                                                                                                                                                                                                                                                                                                                                                                                                                                                                               
\end{construction}

\begin{rem}
Using Definition \ref{derived_ideals}, let us describe limits and sifted colimits in the $\infty$-category $\DAlg_{R}^{\Delta^{1}_{\vee}}$. In the $\infty$-category $\DAlg_{R}^{\Delta^{1}} $ limits and colimits are taken pointwise. Let $S \rightarrow \DAlg_{R}^{\Delta^{1}_{\vee}}, i \longmapsto (A_{i},I_{i})$ be a diagram in $\DAlg_{R}^{\Delta^{1}_{\vee}}$. Applying the functor $\cofib: \DAlg_{R}^{\Delta^{1}_{\vee}} \rightarrow \DAlg_{R}^{\Delta^{1}}$, we get a diagram $\{A_{i}\rightarrow A_{i}/I_{i}\}$ in $\DAlg_{R}^{\Delta^{1}}$. Let $(A\rightarrow A/I):= \lim_{i} (A_{i} \rightarrow A_{i}/I_{i})$. Applying the functor $\fib$, we get that the colimit of the diagram $\{(A_{i},I_{i})\}$ is given by $(I,A)$, where $A=\lim_{i} A_{i}$ and $$I=\fib(  \lim_{i} A   \rightarrow \lim_{i} A_{i}/I_{i})  ) \simeq \lim_{i} \fib( A_{i} \rightarrow A_{i}/I_{i}) \simeq \lim_{i} I_{i},$$ where each term $I_{i}$ is endowed with the $A$-module structure obtained by restricting via the natural map $A\rightarrow A_{i}$.

Similarly, let $(A',I') = \colim_{i} (A_{i},I_{i})$. Then $A'=\colim_{i} A_{i}$, and $I'$ is computed as $$I'=\fib( \colim_{i} A_{i} \rightarrow \colim_{i} A_{i}/I_{i}).$$
Assume $S$ is a sifted diagram. Then as the forgetful functor $\DAlg_{R}^{\Delta^{1}_{\vee}} \rightarrow \Mod_{R}^{\Delta^{1}} $ commutes with sifted colimits, and in the stable $\infty$-category $\Mod_{R}$ finite limits commute with sifted colimits, we obtain that $$I'= \colim_{i} (I_{i} \otimes_{A_{i}} A') $$ as an $A'$-module.
\end{rem}

\begin{rem}
Consider the category $\CAlg_{\mathbb{Z},\heartsuit}^{\Delta^{1}_{\vee},\injec}$ of pairs $(I \rightarrow A)$ where $A$ is a discrete commutative ring, and $I\hookrightarrow A$ is a usual ideal. One can show that $\CAlg_{\mathbb{Z},\heartsuit}^{\Delta^{1}_{\vee},\injec}$ has all colimits, but the forgetful functor $\CAlg_{\mathbb{Z},\heartsuit}^{\Delta^{1}_{\vee},\injec} \rightarrow \CAlg_{\mathbb{Z}, \heartsuit}, (I \rightarrow A) \longmapsto A$ does not commute with colimits. An explicit example is worked out in \cite[Example 3.33]{Mao21}. The computation of Zhouhang Mao is motivated by \cite[Remark 23.3.5]{Stacks}, where the same example is brought to demonstrate the discrepancy between coproducts in the category of divided power algebras and tensor products of underlying algebras. In the setting of derived Smith ideals the same problem does not arise, even in the divided power case.
\end{rem}

We will finish this subsection with the following construction which will be useful in the sequel.

\begin{construction}\label{insertion_adjunction}
Let $\ins^{0}: \DAlg_{R} \rightarrow \DAlg_{R}^{\Delta^{1}_{\vee}}$ be the functor sending $A$ the trivial Smith ideal $0 \rightarrow A$ of $A$. There is an adjunction

$$
\xymatrix{ \DAlg_{R} \ar[rrr]^-{\ins^{0} } &&& \ar@/_1.1pc/[lll]_{\gr^{0} }    \ar@/^1.1pc/[lll]^{\ev^{0}}  \DAlg_{R}^{\Delta^{1}_{\vee}},}
$$
where $\gr^{0}(I \rightarrow A) = A/I$ and $\ev^{0} (I \rightarrow A) = A$.

\end{construction}

\subsection{Divided power algebras.}

Based on Definition \ref{derived_ideals}, given a map $A\rightarrow B$, we can construct a unique derived Smith ideal $I \rightarrow A$ in $A$, and formulate various properties or additional structures on the map $A\rightarrow B$ in terms of the Smith ideal $I$. In this text we will define \textbf{divided power structure} on a map of derived commutative rings as the data of a non-unutal derived divided power structure on the underlying non-unital derived commutative algebra of $I$. Below we shall explain our reasons for introducing divided power algebras.

There are two ways to think about De Rham cohomology theory, a more well-known one is in terms of differential graded commutative algebras, and another one is in terms of divided power algebras. Assume $A$ is a discrete commutative algebra. Then there is a classical universal property of the De Rham complex of $A$: for any map $A \rightarrow B^{0}$ into the $0$-th graded piece of a differential graded commutative algebra $B^{\bullet}$, there is a unique map of differential graded commutative algebras $\Omega^{\bullet}_{A} \rightarrow B^{\bullet}$. Moreover, we have an equivalence of graded algebras $\Omega^{\bullet}_{A} \simeq \Lambda^{\bullet}_{A} (\Omega^{1}_{A})$ with the differential extended from the universal derivation $d: A \rightarrow \Omega_{A}^{1}$ via the graded Leibnitz rule. A.Raksit \cite{R} generalized this universal property to the derived setting. In this text we will develop another approach in terms of derived \emph{divided power} algebras, rather than differential graded commutative algebras. It can be briefly summarized as follows. One defines an $\infty$-category $\Fil^{\geq 0} \DAlg^{\pd}_{R}$ of \textbf{non-negatively filtered derived divided power algebras}, and considers the functor $\gr^{0}: \Fil^{\geq 0} \DAlg^{\pd}_{R} \rightarrow \DAlg_{R}$. The derived De Rham complex $\LOmega^{\geq \star}_{-/R}$ is defined as the left adjoint of this functor. The main reason why this recovers the classical De Rham complex in the case of a smooth map $R\rightarrow A$, is essentially due to the décalage isomorphism

$$
\LGamma^{n}(M[-1]) \simeq \LLambda^{n}(M)[-n]
$$
demonstrated in Proposition \ref{décalage_for_algebras}. Namely, the associated graded object of $\LOmega^{\geq \star}_{A/R}$ is given by $$\LOmega^{\bullet}_{A/R} \simeq \LGamma^{\bullet}_{A}(\LL_{A}[-1]) \simeq \LLambda^{\bullet}_{A} (\LL_{A})[-n],$$ where $\LL_{A/R}$ is the cotangent complex of the map $R \rightarrow A$, and in the case when $\LL_{A/R}$ is a projective $A$-module, the divided power structure on $\LOmega_{A/R}$ is equivalent to the data of a differential graded commutative algebra. The main reason why we prefer to use think about De Rham cohomology in terms of derived divided power algebras, is because various constructions are more natural and easier to define in this setting. This particularly applies to stacky approaches to derived De Rham cohomology, relation with crystalline and prismatic cohomology. We will show that the relative derived De Rham cohomology $\LOmega_{A/B}$ of a map $A\rightarrow B$ can be understood as the \textbf{derived pd filtration} of the \textbf{derived pd envelope} of $A$ with respect to the Smith ideal $I=\fib(A\rightarrow B)$. In the case when the map $A\rightarrow B$ is a surjective map of discrete commutative algebras whose ideal $I$ is generated by a regular sequence, this recovers the classical divided power envelope. On the other hand, when the map $A\rightarrow B$ is a smooth map of discrete commutative rings, the negative $t$-structure together with décalage isomorphism enables us to think about the relative De Rham complex $\Omega_{A/B}$ in terms of the exterior algebra on the module of relative Kahler differentials $\Omega^{1}_{B/A}$.

We will finish this introduction with a historical remark. Divided power algebras featured prominently in algebraic topology, i.e. in the context of homology of loop spaces, and were brought to world of algebraic geometry by A.Grothendieck \cite{Gr} in the context of crystalline cohomology. The basic idea of divided power algebras is that outside of characteristic $0$, we can not form $\frac{x^{n}}{n!}$ of an element in an arbitrary algebra $A$, and a divided power structure on $A$ is an additional structure which axiomatizes the existence of such operations on some ideal. Initially, the definition of a divided power algebra was given in terms of the data of operations $\gamma_{n}(x)$ for all $n$ satisfying a number of conditions reflecting the idea that $\gamma_{n}(x)$ should behave like $\frac{x^{n}}{n!}$. B.Fresse in the text \cite{Fr} explained an operadic approach to divided power algebras. The manuscript of Brantner, Campos and Nuiten \cite{BCN21} developed the notion of derived pd operads in a very general setting, which we follow in this text.

\subsubsection{Non-unital divided power algebras.}

Recall the following classical definition.

\begin{defn}\label{pd_algebras_via_operations}
A \textbf{(discrete) non-unital divided power ring} is a non-unital commutative ring $A$ endowed with operations $\gamma_{n}: A \rightarrow A$ satisfying the following relations:

\begin{enumerate}
\item $\gamma_{0}(x) = 1, \gamma_{1}(x) = x $ for any $x\in A$;

\item $\gamma_{n}(x+y) = \sum_{i+j=n} \gamma_{i}(x) \gamma_{j}(y)$ for any $x, y \in A$;

\item $\gamma(ax) = a^{n} \gamma(x)$ for any $a, x \in A$;

\item $\gamma_{m}(x) \gamma_{n}(x) = \frac{(m+n)!}{m!n!} \gamma_{m+n}(x)$;

\item $\gamma_{n}(\gamma_{m}(x)) = \frac{(mn)!}{(m!)^{n} n!} \gamma_{mn}(x)$.
\end{enumerate}

We let $\CAlg^{\pd}_{\mathbb{Z},\heartsuit}$ to be the category of discrete \emph{non-unital}\footnote{There is no notion of a "unital" divided power algebra, because the unit never supports divided powers unless we are in characteristic $0$} divided power rings.

\end{defn}

\begin{rem}
It was observed by B.Fresse in \cite{Fr} that this category is monadic over $\Mod_{\mathbb{Z},\heartsuit}$ with the monad given by the formula

$$
\Gamma^{+}(M) = \bigoplus_{n\geq 1} \Gamma^{n}(M) = \bigoplus_{n\geq 1} (M^{\otimes^{n}} )^{\Sigma_{n}}.
$$

There is a \textbf{norm map} $\Sym(M) \rightarrow \Gamma^{+}(M)$ given in every degree by the map $\Sym^{n}(M) \rightarrow \Gamma^{n}(M)$ induced by the $\Sigma_{n}$-norm map $$\Nm_{\Sigma_{n}}: M^{\otimes^{n}} \rightarrow M^{\otimes^{n}}, x_{1}\otimes...\otimes x_{n} \longmapsto \sum_{\sigma \in \Sigma_{n}} x_{\sigma(1)} \otimes ... \otimes x_{\sigma(n)}  $$

One can think of the structure of a non-unital divided power algerba $A$ as a non-unital commutative algebra endowed with a factorization of the multiplication maps $m_{n}: A^{\otimes^{n}} \rightarrow A$ through the norm as follows

$$
\xymatrix{  A^{\otimes^{n}}   \ar[d]_-{\Nm_{\Sigma_{n}}} \ar[r]^-{m_{n}}& A\\
A^{\otimes^{n}} \ar[ru]_-{\gamma_{n}}  . }
$$
for any $n$, and these factorizations satisfy some additional associativity conditions. 

\end{rem}

We call $\Gamma^{+}(M)$ the \textbf{free non-unital divided power algebra} on $M$. There is a subcategory $\CAlg^{\nonu,\pd-\poly}_{\mathbb{Z}} \subset \CAlg^{\nonu,\pd}_{\mathbb{Z},\heartsuit}$ of  compact projective generators consisting of free non-unital divided power algebras on finitely many variables. One can then define an $\infty$-category $$\DAlg^{\nonu,\pd}_{\mathbb{Z},\geq 0}:=\mathcal{P}_{\Sigma}( \CAlg^{\nonu,\pd-\poly}_{\mathbb{Z}} ) $$ which is monadic over $\Mod_{\mathbb{Z},\geq 0}$, and one can show that it is also comonadic over $\DAlg_{\mathbb{Z},\geq 0}$ (see \ref{pd_nonu_lims_colims} for precise argument).  

\begin{ex}\label{free_nonunital_pd}
The free non-untal divided power algebra on one generator $\Gamma^{+}(\mathbb{Z}) \simeq  \mathbb{Z} \langle x \rangle^{+} $ can be described as the smallest non-unital subalgebra of $\mathbb{Q}[x]$ containing all elements of the form $\frac{x^{n}}{n!}. \in \mathbb{Q}[x]$ .
\end{ex}

\begin{rem}\label{pd_nonunital}

Let $(-)^{\sharp}: \DAlg^{\nonu}_{\mathbb{Z},\geq 0} \rightarrow \DAlg^{\nonu,\pd}_{\mathbb{Z},\geq 0}$ be the right adjoint of the forgetful functor. For any connective $A \in \DAlg^{\nonu}_{\mathbb{Z},\geq 0}$, there is an equivalence of spaces

$$
A^{\sharp} \simeq \Map_{\DAlg^{\nonu}_{\mathbb{Z},\geq 0}} (\mathbb{Z} \langle x \rangle^{+} , A).
$$
For a discrete $A$, the latter can be understood as the set of \textbf{divided power sequences} in $A$, i.e. the subset of $\prod_{n=1}^{\infty} A$ consisting of collections of elements $(x_{1},x_{2},...)$ satisfying $$x_{n}x_{m} = \frac{(m+n)!}{m!\:n!} x_{n+m}.$$
\end{rem}

\begin{defn}\label{pd_nonunital_definition}
Let $R$ be a connective derived commutative ring. We define the $\infty$-category of \textbf{(non-unital) derived divided power algebras} $$\DAlg^{\nonu,\pd}_{R} :=\Alg_{\LGamma^{+}_{R}}(\Mod_{R})$$ as the $\infty$-category of algebras over the non-unital derived divided power algebra monad $\LGamma^{+}_{R}$ provided by Construction \ref{pd_derived_action_on_deriveds}.
\end{defn}

\begin{prop}\label{pd_nonu_lims_colims}
Let $R$ be a connective derived commutative ring, and $\DAlg_{R}^{\nonu}$ the $\infty$-category of non-unital derived commutative $R$-algebras. The forgetful functor $\DAlg^{\nonu,\pd}_{R} \rightarrow \DAlg^{\nonu}_{R}$ commutes with limits and colimits.
\end{prop}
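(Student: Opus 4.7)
The plan is to establish preservation of limits and preservation of colimits separately. For limits, I would invoke the standard argument using monadicity: both forgetful functors $U_\Gamma \colon \DAlg^{\nonu,\pd}_{R} \to \Mod_{R}$ (monadic by Definition \ref{pd_nonunital_definition}) and $U_\Sigma \colon \DAlg^{\nonu}_{R} \to \Mod_{R}$ (monadic with monad constructed analogously to $\LGamma^{+}_{R}$ via Construction \ref{pd_derived_action_on_deriveds} applied to the non-unital commutative operad with its $\circ$-structure) preserve limits, and $U_\Sigma$ is conservative. Since $U_\Sigma \circ F = U_\Gamma$, the forgetful functor $F \colon \DAlg^{\nonu,\pd}_{R} \to \DAlg^{\nonu}_{R}$ preserves limits.

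For colimits, I would first handle sifted colimits. By construction in Construction \ref{pd_derived_action_on_deriveds}, both monads $\LGamma^{+}_{R}$ and its non-unital commutative counterpart preserve sifted colimits, so $U_\Gamma$ and $U_\Sigma$ create sifted colimits; consequently $F$ preserves them.

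The main remaining step is preservation of binary coproducts. Since every object of $\DAlg^{\nonu,\pd}_{R}$ is a sifted colimit of free divided power algebras $\LGamma_{R}^{+}(M)$ and sifted colimits commute with finite colimits in both variables, the problem reduces to verifying the identification
\[
F\bigl(\LGamma_{R}^{+}(M \oplus N)\bigr) \simeq F\bigl(\LGamma_{R}^{+}(M)\bigr) \sqcup^{\nonu} F\bigl(\LGamma_{R}^{+}(N)\bigr)
\]
in $\DAlg^{\nonu}_{R}$, where the left-hand side uses that $\LGamma_{R}^{+}$ is a left adjoint. For $M, N \in \Mod_{R,0}$, the classical Leibniz-type formula $\Gamma^{n}(M \oplus N) \simeq \bigoplus_{i+j=n} \Gamma^{i}(M) \otimes_{R} \Gamma^{j}(N)$ gives the module-level decomposition
\[
\LGamma_{R}^{+}(M \oplus N) \simeq \LGamma_{R}^{+}(M) \oplus \LGamma_{R}^{+}(N) \oplus \bigl(\LGamma_{R}^{+}(M) \otimes_{R} \LGamma_{R}^{+}(N)\bigr),
\]
which matches the underlying-module formula $A \sqcup^{\nonu} B \simeq A \oplus B \oplus (A \otimes_{R} B)$ for non-unital coproducts, obtained by unitizing, tensoring, and extracting the augmentation ideal.

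The main obstacle will be upgrading this module-level identification to an identification of non-unital commutative algebras: one must verify that the multiplication on $\LGamma_{R}^{+}(M \oplus N)$ induced by the divided power structure agrees with the coproduct multiplication on $\LGamma_{R}^{+}(M) \sqcup^{\nonu} \LGamma_{R}^{+}(N)$. I would check this at the discrete level on $\Mod_{R,0}$, where it follows directly from the classical divided power axioms of Definition \ref{pd_algebras_via_operations} (in particular the Leibniz rule for $\gamma_n(x+y)$ together with the product formulas), and then invoke the right-left extension property of Observation \ref{right_left_extended_monads}: since both monads and their multiplication structure maps are determined by their restrictions to $\mathrm{Ind}(\Mod_{R,0})$, the multiplicative identification propagates uniquely to all of $\Mod_R$, completing the argument.
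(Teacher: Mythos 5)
Your proposal is correct and follows essentially the same route as the paper's proof (which argues as in Raksit's Proposition 4.2.27): limits via monadicity, sifted colimits via the sifted-colimit-preserving monads, and binary coproducts by reducing to the exponential equivalence $\LGamma^{+}_{R}(M)\sqcup^{\nonu}\LGamma^{+}_{R}(N)\simeq\LGamma^{+}_{R}(M\oplus N)$ on finitely generated free modules. The only cosmetic difference is that the paper settles that key equivalence by base change from the case $R=\mathbb{Z}$, whereas you verify it via the classical divided power identities and make the right-left extension step explicit.
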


\begin{proof}
We argue as in the proof in \cite[ Proposition 4.2.27]{R}. Commutation with limits follows from the fact that both sides are $\infty$-categories of algebras over a monad, and the forgetful functor is restriction along a monad map. Commutation with sifted colimits follows from commutation of the monad $\LGamma^{+}_{R}: \Mod_{R} \rightarrow \Mod_{R}$ with sifted colimits. For commutation with finite coproducts, one reduces to the statement that for two objects $X,Y \in \Mod^{\free,\fg}_{R}$, the natural map $\LGamma^{+}_{R}(X) \otimes \LGamma^{+}_{R}(Y) \rightarrow \LGamma^{+}_{R} (X\oplus Y) $ is an equivalence, which follows from the case $R=\mathbb{Z}$ by base change.
\end{proof}

\begin{defn}
We let $\Env^{\pd}: \DAlg^{\nonu}_{R} \rightarrow \DAlg^{\pd}_{R}$ and $\coEnv^{\pd}: \DAlg^{\nonu}_{R} \rightarrow \DAlg^{\pd}_{R}$ be the left adjoint and the right adjoint of the forgetful functor, respectively. We call them \textbf{divided power (pd) envelope} and \textbf{divided power (pd) coenvelope}. 
\end{defn}

\begin{construction}\label{connective_cover_of_pd}
Let $\DAlg^{\pd}_{R,\geq 0}\subset \DAlg^{\pd}_{R}$ be the full subcategory of objects whose underlying $\Mod_{R}$-object is connective. As the free derived divided power algebra monad $\LGamma^{+}_{R}: \Mod_{R} \rightarrow \Mod_{R}$ preserves connective objects, it follows that the functor $\LGamma^{+}_{R}: \Mod_{R,\geq 0} \rightarrow \DAlg^{\pd}_{R,\geq 0}$, left adjoint of the forgetful functor $\DAlg^{\pd}_{R,\geq 0} \rightarrow \Mod_{R,\geq 0} $, is well-defined, and commutes with the inclusion $\DAlg^{\pd}_{R,\geq 0} \hookrightarrow \DAlg^{\pd}_{R}$. Since the inclusion commutes with all colimits, it follows that there exists a right adjoint \textbf{connective cover} functor $\tau_{\geq 0}: \DAlg^{\pd}_{R} \rightarrow \DAlg^{\pd}_{R,\geq 0}$ which fits into a commutative diagram 

$$
\xymatrix{  \DAlg^{\pd}_{R}\ar[d] \ar[r]^-{\tau_{\geq 0}}& \DAlg^{\pd}_{R,\geq 0}\ar[d]\\
\DAlg^{\nonu}_{R} \ar[r]_{\tau_{\geq 0}}& \DAlg^{\nonu}_{R},   }
$$
where the vertical functors are forgetful functors.

\end{construction}

In what follows we will also need to work with divided power algebras over not necessarily connective derived rings. To explain how to do it, consider how this is done in the context of ordinary (without divided powers) derived rings. First, we have the $\infty$-category $\DAlg_{\mathbb{Z}}$ of all derived rings. Then given a not necessarily connective $R\in \DAlg_{\mathbb{Z}}$, we can consider the $\infty$-category $\DAlg_{R}:=(\DAlg_{\mathbb{Z}})_{R/}$ of under objects. For connective $R$, this is equivalent to the $\infty$-category of algebras over the monad $\LSym_{R}$. Over a not necessarily connective $R$, this $\infty$-category is monadic over $\Mod_{R}$, and we can consider this to be the definition of the monad $\LSym_{R}$ for not necessarily connective $R$. We can work similarly in the setting of non-unital derived divided power algebras.

\begin{defn}\label{pd_over_nonconnective}
Recall that the $\infty$-category $\DAlg_{\mathbb{Z}}^{\nonu}$ of non-unital derived rings is equivalent to the $\infty$-category of augmented derived rings. Composing the forgetful functor $\DAlg_{\mathbb{Z}}^{\nonu,\pd} \rightarrow \DAlg^{\nonu}_{\mathbb{Z}}$ with the equivalence $\DAlg^{\nonu}_{\mathbb{Z}} \simeq \DAlg^{\aug}_{\mathbb{Z}} $, we obtain a forgetful functor $\DAlg_{\mathbb{Z}}^{\pd,\nonu} \rightarrow \DAlg_{\mathbb{Z}}^{\aug}$ sending $A^{+}\in \DAlg_{\mathbb{Z}}^{\pd,\nonu}$ to the pair $(\mathbb{Z}\oplus A^{+}, A^{+})$. Let $R$ be a derived ring, not necessarily connective. Consider it as an object $(R,0)$ of $\DAlg_{\mathbb{Z}}^{\aug}$ with $0$ augmentation ideal. Let $(\DAlg^{\aug}_{\mathbb{Z}})_{R/}$ the $\infty$-category of derived pairs receiving a map from $(R,0)$. Then the $\infty$-category of \textbf{derived divided power non-unital $R$-algebras} can be defined as the $\infty$-category of derived non-unital divided power $\mathbb{Z}$-algebras whose underlying augmented derived ring $(\mathbb{Z}\oplus A^{+}, A^{+})$ receives a map from $(R,0)$:

$$
\DAlg_{R}^{\pd,\nonu}:=\DAlg_{\mathbb{Z}}^{\pd,\nonu}\underset{\DAlg^{\aug}_{\mathbb{Z}}}{\times}(\DAlg^{\aug}_{\mathbb{Z}})_{R/}.
$$

There is a forgetful functor $\DAlg_{R}^{\pd,\nonu} \rightarrow \Mod_{R}$ which is monadic. We denote $\LGamma_{R}^{+}: \Mod_{R} \rightarrow \Mod_{R}$ the free algebra monad.  
\end{defn}

\begin{rem}
If $R$ is connective, then Definition \ref{pd_over_nonconnective} is equivalent to Definition \ref{pd_nonunital_definition}. To see this, we can assume that $R$ is discrete, and then it is sufficient to describe the category of discrete divided power $R$-algebras.  Then any object of the category defined as in Definition \ref{pd_over_nonconnective} gives rise to a divided power $\mathbb{Z}$-algerba $A^{+}$ together with an action of $R$ on $A^{+}$ turning $A^{+}$ commuting with divided power operations in the usual sense. The slogan is that a divided power $R$-algebra is the same as a divided power ring together with a $R$-linear structure on the underlying non-unital ring. This will be often used throughout the text.
\end{rem}

\subsubsection{Divided power Smith ideals.}

A global version of the notion of a non-unital divided power algebra is the notion of a divided power pair, or a divided power Smith ideal. To define this notion, recall that for any Smith ideal $I$ in $A$, the $A$-module $I=\fib(A \rightarrow A/I)$ gets a natural structure of a non-unital derived commutative algebra in $A$-modules. A homotopy coherent version of this construction is provided by the functor $\DAlg_{R}^{\Delta^{1}_{\vee}} \rightarrow \DAlg_{R}^{\{0,1\},\vee}$ induced by the lax symmetric monoidal functor $\Mod_{R}^{\Delta^{1}_{\vee}} \rightarrow \Mod_{R}^{\{0,1\},\vee}$ from Construction \ref{ideals_nonunitals}. As we observed in Remark \ref{{0,1}_as_nonunitals}, the $\infty$-category $\DAlg_{R}^{\{0,1\},\vee}$ is equivalent to the $\infty$-category of augmented derived $R$-algebras, i.e. pairs $(A,A^{+})$ where $A\in \DAlg_{R}$ and $A^{+}\in \DAlg_{A}^{\nonu}$. In Definition \ref{pd_over_nonconnective} we explained how to define non-unital derived divided power algebras over a not necessarily connective base ring. This enables us to give the following definition.

\begin{defn}\label{pd_nonunitals_fibered}
We define an $\infty$-category $\DAlg^{\{0,1\},\vee, \pd}_{R}$ as the $\infty$-category of pairs $(A,A^{+})$ where $A^{+}$ has the structure of a non-unital divided power $A$-algebra in the sense of Definition \ref{pd_over_nonconnective}. There is a non-conservative forgetful functor $\DAlg^{\{0,1\},\vee, \pd}_{R} \rightarrow \DAlg_{R}$ (projection on the first component), and a conservative forgetful functor $\forget_{\pd}: \DAlg_{R}^{\{0,1\},\vee,\pd} \rightarrow \DAlg_{R}^{\{0,1\},\vee}$, forgetting the divided power structure on the second component. The left adjoint of the latter $\Env^{\pd}: \DAlg_{R}^{\{0,1\},\vee} \rightarrow  \DAlg_{R}^{\{0,1\},\vee,\pd} $ sends a pair $(A,A^{+})$ to $(A, \Env^{\pd}(A^{+}))$, where $ \Env^{\pd}(A^{+}$ is the non-unital pd envelope of the non-unital $A$-algebra $A^{+}$. The forgetful functor $\forget_{\pd}$ is monadic, and so is the forgetful functor $\DAlg^{\{0,1\},\vee,\pd}_{R} \rightarrow \Mod_{R}^{\{0,1\},\vee} $. We denote $(\LSym, \LGamma^{+}): \Mod_{R}^{\{0,1\},\vee} \rightarrow \Mod_{R}^{\{0,1\},\vee} $ the corresponding monad, which satisfies the formula $$(\LSym, \LGamma^{+}) (X,Y) \simeq (\LSym_{R}(X), \LGamma^{+}_{\LSym_{R}(X)}(Y \otimes \LSym_{R}(X))) .$$

If $R$ is discrete, this monad is right-left extended from its restriction to the full subcategory $\Ind(\Mod^{\{0,1\},\vee}_{R,0}) $, where $\Mod_{R,0}^{\{0,1\},\vee}$ is the additive subcategory of $\Mod_{R}^{\{0,1\},\vee}$ consisting of objects of the form $(X,Y) $, where both $X$ and $Y$ are finitely generated free $R$-modules.
\end{defn}

\begin{defn}\label{divided_power_pairs}

Let $(I \rightarrow A) \in\DAlg_{R}^{\Delta^{1}_{\vee}}$. A \textbf{divided power (pd) structure} on $I$ is a lift of the non-unital derived commutative $A$-algebra $I$ to a non-unital derived divided power $A$-algebra. In this situation, we call $(I \rightarrow A)$ a \textbf{divided power (pd) pair}, and $I$ a \textbf{divided power Smith ideal}. They form an $\infty$-category $\DAlg_{R}^{\Delta^{1}_{\vee},\pd}$ defined as the pull-back

$$
\xymatrix{\DAlg_{R}^{\Delta^{1}_{\vee},\pd} \ar[d] \ar[r]& \DAlg_{R}^{\Delta^{1}_{\vee}}\ar[d]^-{(\Fil \rightarrow \Gr)}\\
\DAlg_{R}^{\{0,1\},\vee,\pd} \ar[r]_-{\forget_{\pd}}&  \DAlg_{R}^{\{0,1\},\vee},}
$$
where both functors involved in the diagram are forgetful functors.

We also say that an arrow $(A\rightarrow B) \in \DAlg_{R}^{\Delta^{1}}$ has the structure of a \textbf{divided power map} if the fiber $\fib(A\rightarrow B)$ is given the structure of a non-uinital divided power $A$-algebra. We let $\DAlg_{R}^{\Delta^{1},\pd}$ be the $\infty$-category of divided power maps. By construction, there is an equivalence

$$
\xymatrix{  \cofib:  \DAlg_{R}^{\Delta^{1}_{\vee},\pd} \ar[r]^-{\sim} & \DAlg_{R}^{\Delta^{1}, \pd}.  }
$$

\end{defn}

\begin{prop}\label{envelopes_coenvelopes_prop}
The $\infty$-category $\DAlg_{R}^{\Delta^{1}_{\vee},\pd}$ is presentable, and the forgetful fuctor $\DAlg_{R}^{\Delta^{1}_{\vee},\pd} \rightarrow \DAlg_{R}^{\Delta^{1}_{\vee}}$ preserves limits and sifted colimits.
\end{prop}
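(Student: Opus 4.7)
The strategy is to exploit the defining pullback square
$$\DAlg_R^{\Delta^1_\vee,\pd} \simeq \DAlg_R^{\Delta^1_\vee} \underset{\DAlg_R^{\{0,1\},\vee}}{\times} \DAlg_R^{\{0,1\},\vee,\pd}$$
and reduce both claims to properties of the two functors $\forget_{\pd}$ and $(\Fil \to \Gr)$ entering the square. Specifically, if both are accessible and preserve limits and sifted colimits, then standard results on pullbacks in $\Pr^L$ (e.g.\ \cite[Proposition 5.5.3.12]{HTT}) give presentability of the pullback, and, by the general fact that in such a pullback limits and sifted colimits are computed componentwise, these (co)limits will also be preserved by the projection $\DAlg_R^{\Delta^1_\vee,\pd} \to \DAlg_R^{\Delta^1_\vee}$.

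For $\forget_{\pd}\colon \DAlg_R^{\{0,1\},\vee,\pd} \to \DAlg_R^{\{0,1\},\vee}$, both source and target are monadic over $\Mod_R^{\{0,1\},\vee}$ via accessible, sifted-colimit-preserving monads ($(\LSym, \LSym)$ and $(\LSym, \LGamma^+)$, cf.\ Definition \ref{pd_nonunitals_fibered}), and $\forget_{\pd}$ is restriction along a map of monads. Restriction along a monad map preserves limits, and sifted colimits are created from $\Mod_R^{\{0,1\},\vee}$ on both sides, so $\forget_{\pd}$ preserves them too; the underlying sifted-colimit preservation for $\LGamma^+$ is Proposition \ref{pd_nonu_lims_colims}. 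For $(\Fil \to \Gr)\colon \DAlg_R^{\Delta^1_\vee} \to \DAlg_R^{\{0,1\},\vee}$, limit-preservation is immediate since this is the right adjoint in the adjunction $\aug \dashv (\Fil \to \Gr)$ of Remark \ref{adjunction_refines}. For sifted colimits, observe that on underlying modules the corresponding functor $\Mod_R^{\Delta^1_\vee} \to \Mod_R^{\{0,1\},\vee}$, $(Y\to X) \mapsto (Y, X)$, is computed pointwise and manifestly preserves all colimits; combined with the fact that sifted colimits in both $\DAlg_R^{\Delta^1_\vee}$ and $\DAlg_R^{\{0,1\},\vee}$ are created from the module categories, this gives sifted-colimit preservation of $(\Fil\to\Gr)$.

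The main technical subtlety is the sifted-colimit claim for $\forget_{\pd}$: for a sifted diagram $\{(A_i, A_i^+)\}$, the divided-power structures live over the varying bases $A_i$, and one must check that formation of the colimit $\colim_i A_i^+$ as a non-unital divided-power algebra over $\colim_i A_i$ is indeed the colimit in $\DAlg_R^{\{0,1\},\vee,\pd}$ and agrees under the forgetful functor with the colimit in $\DAlg_R^{\{0,1\},\vee}$. This is handled uniformly by working with the single monad $(\LSym, \LGamma^+)$ on $\Mod_R^{\{0,1\},\vee}$, whose formula in Definition \ref{pd_nonunitals_fibered} encodes base change automatically; once this is in place, both presentability and (co)limit preservation follow formally from the pullback description.
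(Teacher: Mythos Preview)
Your proof is correct and takes essentially the same approach as the paper: both exploit the defining pullback square and the fact that the two functors $(\Fil\to\Gr)$ and $\forget_{\pd}$ preserve limits and sifted colimits. The paper's proof is slightly more hands-on, forming the limit (or sifted colimit) in $\DAlg_R^{\Delta^1_\vee}$, observing it inherits a divided power structure, and then verifying the universal property directly via the pullback square of mapping spaces; your version packages this same argument by invoking the general fact that such (co)limits in a pullback of presentable $\infty$-categories are computed componentwise.
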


\begin{proof}
The $\infty$-category $\DAlg_{R}^{\Delta^{1}_{\vee},\pd}$ is presentable being a pull-back of presentable $\infty$-categories along accessible functors. Let $\mathcal{D} \rightarrow\DAlg_{R}^{\Delta^{1}_{\vee},\pd}, (i\in \mathcal{D}) \longmapsto (A_{i}, I_{i}) $ be a diagram. Let $(I \rightarrow A):= \underset{i\in \mathcal{D}}{\lim} (A_{i}, I_{i})$ be the limit taken in $\DAlg_{R}^{\Delta^{1}_{\vee}}$. The Smith ideal $I = \lim  I_{i}$ has the structure of a non-unital divided power $A$-algebra as the forgetful functor $\DAlg^{\pd}_{A} \rightarrow \DAlg_{A}^{\nonu}$ creates limits. Therefore, $(I \rightarrow A) \in \DAlg_{R}^{\Delta^{1}_{\vee},\pd}$. Let us show that the object $(I \rightarrow A)$ satisfies the universal property. Assume $(B,J) \rightarrow (A_{i}, I_{i})$ is a collection of maps into the diagram $\mathcal{D}$. We have a pull-back diagram

 $$\xymatrix{ \Map_{\DAlg_{R}^{\Delta^{1}_{\vee},\pd}} ((B,J) , (I \rightarrow A)) \ar[d]      \ar[r]     & \Map_{\DAlg_{R}^{\Delta^{1}_{\vee}},\inf}( (B,J), (I \rightarrow A))\ar[d] \\
 \Map_{\DAlg_{R}^{\{0,1\},\vee, \pd}  }( (B,J) , (I \rightarrow A)) \ar[r]    &   \Map_{\DAlg_{R}^{\{0,1\},\vee}}( (B,J), (I \rightarrow A) ) ,  }$$
 each term of which is equivalent to the limit $\underset{\mathcal{D}}{\lim} \Map((B,J), (A_{i},I_{i}))$ computed in the corresponding $\infty$-category. Therefore, we have $$\Map_{\DAlg_{R}^{\Delta^{1}_{\vee},\pd}}((B,J), (I \rightarrow A)) \simeq \underset{\mathcal{D}}{\lim}  \Map_{\DAlg_{R}^{\Delta^{1}_{\vee},\pd}} ((B,J) , (A_{i},I_{i})) $$
 as well, and $(I \rightarrow A)$ is the limit of the diagram in $\DAlg_{R}^{\Delta^{1}_{\vee},\pd}$, as desired. The same argument shows that the forgetful functor also commutes with sifted colimits. 

\end{proof}

\begin{defn}
We define the functor of \textbf{derived divided power (pd) envelope} $\Env^{\pd}: \DAlg_{R}^{\Delta^{1}_{\vee}}\rightarrow\DAlg_{R}^{\Delta^{1}_{\vee},\pd}$ as the left adjoint of the forgetful functor $\DAlg_{R}^{\Delta^{1}_{\vee},\pd} \rightarrow \DAlg_{R}^{\Delta^{1}_{\vee}}$. By commutation with sifted colimits and conservativity, it follows that the forgetful functors $\DAlg_{R}^{\Delta^{1}_{\vee}} \rightarrow \DAlg_{R}^{\Delta^{1}_{\vee}}$ and $\DAlg_{R}^{\Delta^{1}_{\vee},\pd} \rightarrow \Mod_{R}^{\Delta^{1}_{\vee}}$ are monadic.
\end{defn}

\begin{prop}
The derived divided power envelope functor $\Env^{\pd}: \DAlg_{R}^{\Delta^{1}_{\vee}}\rightarrow\DAlg_{R}^{\Delta^{1}_{\vee},\pd}$ sends the full subcategory $\DAlg_{R,\geq 0}^{\Delta^{1}_{\vee}} \subset \DAlg_{R}^{\Delta^{1}_{\vee}}$ to $ \DAlg_{R,\geq 0}^{\Delta^{1}_{\vee},\pd}\subset \DAlg_{R}^{\Delta^{1}_{\vee},\pd}$
\end{prop}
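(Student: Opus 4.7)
The plan is to reduce, via a bar resolution, to connectivity of the free divided power Smith ideal algebra on compact projective generators, and then to verify this last point by a direct classical computation. The forgetful functors $U\colon \DAlg_R^{\Delta^1_\vee} \to \Mod_R^{\Delta^1_\vee}$ and $U^{\pd}\colon \DAlg_R^{\Delta^1_\vee,\pd} \to \Mod_R^{\Delta^1_\vee}$ are both monadic, with free algebra monads $T = \LSym_R^{\Delta^1_\vee}$ and $T^{\pd} = \LSym_R^{\Delta^1_\vee,\pd}$, respectively. The formal adjunction chain yields $\Env^{\pd}(T(X)) \simeq T^{\pd}(X)$ for every $X \in \Mod_R^{\Delta^1_\vee}$. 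Applying the standard bar resolution $A \simeq |T^{\bullet+1}(UA)|$ and using that $\Env^{\pd}$ is a left adjoint, one obtains
\[
\Env^{\pd}(A) \simeq |T^{\pd}(T^{\bullet}(UA))|.
\]
If $A$ is connective, then $UA$ is connective in the geometric $t$-structure on $\Mod_R^{\Delta^1_\vee}$; since $T$ is built via Construction \ref{derived_alg_contexts} from the additive subcategory $\Mod_{R,0}^{\Delta^1_\vee}$ of connective compact projective generators, it preserves the connective part, so every $T^{\bullet}(UA)$ is connective.

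It remains to show that $T^{\pd}$ also preserves connective objects. Because $T^{\pd}$ is a sifted-colimit-preserving monad and connective objects are generated under sifted colimits by $\Mod_{R,0}^{\Delta^1_\vee}$, it suffices to evaluate $T^{\pd}$ on the two generators $(\Id\colon R \to R)$ and $(0 \to R)$. For the second, the free pd Smith ideal algebra is the trivial Smith ideal in the polynomial ring, $T^{\pd}(0 \to R) = (0 \to R[x])$. For the first, whose image under $T$ is the Smith ideal $((x) \to R[x])$, I would identify $T^{\pd}(\Id\colon R \to R)$ with the classical pair $(R\langle x\rangle^+ \to R\langle x\rangle)$, where $R\langle x\rangle$ is the divided power polynomial ring from Example \ref{free_nonunital_pd} and $R\langle x\rangle^+$ is its augmentation ideal equipped with its canonical non-unital divided power structure. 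Both are discrete, hence connective. Finite direct sums of generators pass to tensor products of divided power polynomial rings, which remain discrete.

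With both monads preserving connectivity, the simplicial diagram $n \mapsto T^{\pd}(T^n(UA))$ lies degreewise in $\DAlg_{R,\geq 0}^{\Delta^1_\vee,\pd}$. Its geometric realization is computed in $\Mod_R^{\Delta^1_\vee}$ because the forgetful functor $\DAlg_R^{\Delta^1_\vee,\pd} \to \Mod_R^{\Delta^1_\vee}$ preserves sifted colimits (by Proposition \ref{envelopes_coenvelopes_prop} combined with the fact that $\DAlg_R^{\Delta^1_\vee} \to \Mod_R^{\Delta^1_\vee}$ preserves them), and sifted colimits of connective objects in $\Mod_R^{\Delta^1_\vee}$ remain connective. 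Hence $\Env^{\pd}(A)$ is connective. The main obstacle is the explicit identification of $T^{\pd}$ on the generator $(\Id\colon R \to R)$ with the classical divided power polynomial construction: this requires unwinding the pullback definition of $\DAlg_R^{\Delta^1_\vee,\pd}$ from Definition \ref{divided_power_pairs} and verifying the resulting universal property in the discrete case.
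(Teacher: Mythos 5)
Your argument is correct in outline, but it takes a genuinely different route from the paper. The paper's proof is purely formal: it passes to right adjoints, so the claim that $\Env^{\pd}$ preserves connective Smith ideals becomes the claim that the connective cover $(\tau_{\geq 0}I \rightarrow \tau_{\geq 0}A)$ of a divided power pair is again a divided power pair, which is exactly what Construction \ref{connective_cover_of_pd} provides (the connective cover on non-unital derived pd algebras commutes with the forgetful functor). That argument needs no computation of free objects and works uniformly. Your route instead resolves $A$ by the bar construction, uses $\Env^{\pd}\circ\LSym_{R}^{\Delta^{1}_{\vee}}\simeq T^{\pd}$ and sifted-colimit preservation of the forgetful functors, and reduces to connectivity of $T^{\pd}$ on the generators; what you buy is an explicit description (the classical divided power polynomial pair $(R\langle x\rangle^{+}\rightarrow R\langle x\rangle)$), but the price is that the identification you flag as the "main obstacle" is precisely the content of Proposition \ref{pd_envelope_of_cp}, and your step converting finite direct sums of generators into tensor products of pd polynomial rings silently uses that the forgetful functor sends coproducts of free pd algebras to tensor products, i.e. Proposition \ref{commutation_with_coproducts} (or the formula of Proposition \ref{free_pd_formula}) — results the paper only establishes after this proposition, though their proofs do not use it, so there is no circularity, only a reordering. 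If you want to keep your approach, the cleanest patch is to invoke Proposition \ref{free_pd_formula} directly: it exhibits the free pd envelope on any connective arrow as $\LGamma_{R}(Y)\otimes_{\LSym_{R}(Y)}\LSym_{R}(X)\rightarrow\LSym_{R}(Z)$ with $X,Y,Z$ connective, so connectivity of $T^{\pd}$ on all of $\Mod^{\Delta^{1}_{\vee}}_{R,\geq 0}$ is immediate and the generator-by-generator analysis becomes unnecessary; still, the paper's right-adjoint argument remains the shorter and more robust one.
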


\begin{proof}
By passing to right adjoints, the statement is equivalent to the statement that for a divided power pair $(I \rightarrow A)$, the connective cover $(\tau_{\geq 0}A, \tau_{\geq 0}I)$ is again a divided power pair. This comes down to the connective cover $\tau_{\geq 0}I$ being defined as a non-unital divided power $\tau_{\geq 0} A$-algebra. This is the content of Construction \ref{connective_cover_of_pd}.
\end{proof}

We gave a rather abstract definition of the $\infty$-category of divided power Smith ideals, without using any explicit compact projective generators. As a result, it is apriori unclear what free algebras in this theory look like. The next Proposition affirms that nothing "surprising" happens.

\begin{prop}\label{pd_envelope_of_cp}
There are equivalences of divided power Smith ideals:

\begin{itemize}

\item $\Env^{\pd} \LSym_{\mathbb{Z}}^{\Delta^{1}_{\vee}} (\Id:\mathbb{Z} \rightarrow \mathbb{Z}) \simeq  \mathbb{Z}\langle x\rangle^{+} \rightarrow \mathbb{Z}\langle x\rangle ,$

\item  $\Env^{\pd}  \LSym_{\mathbb{Z}}^{\Delta^{1}_{\vee}}  (0\rightarrow \mathbb{Z} ) \simeq (0) \rightarrow \mathbb{Z}[x],$
\end{itemize}
where $\mathbb{Z}\langle x \rangle$ is as defined in Example \ref{free_nonunital_pd}.

\end{prop}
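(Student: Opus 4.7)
The plan is to prove each equivalence by exhibiting both sides as corepresentations of the same functor on $\DAlg^{\Delta^{1}_{\vee},\pd}_{\mathbb{Z}}$ and then invoking Yoneda. As a first step, I would compute the free Smith ideal algebras using the symmetric monoidal equivalence $\cofib$ of Definition \ref{derived_ideals}: since $\cofib(\Id:\mathbb{Z}\to\mathbb{Z}) \simeq (\mathbb{Z}\to 0)$ and $\cofib(0\to\mathbb{Z}) \simeq (\mathbb{Z}\xrightarrow{\Id}\mathbb{Z})$, Proposition \ref{Z[[x]]} together with the example preceding it (and passage to fibers) yields $\LSym^{\Delta^{1}_{\vee}}_{\mathbb{Z}}(\Id:\mathbb{Z}\to\mathbb{Z}) \simeq ((x)\to \mathbb{Z}[x])$ and $\LSym^{\Delta^{1}_{\vee}}_{\mathbb{Z}}(0\to\mathbb{Z}) \simeq (0\to\mathbb{Z}[x])$.

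The second equivalence is then immediate. Since the zero non-unital algebra admits a unique (trivial) pd structure, a map of pd Smith ideals $(0\to\mathbb{Z}[x]) \to (J\to B)$ is simply a ring map $\mathbb{Z}[x]\to B$; hence $(0\to\mathbb{Z}[x])$ corepresents $(J\to B)\mapsto \Omega^{\infty}B$. On the other side, by the composite adjunction $\Env^{\pd}\LSym^{\Delta^{1}_{\vee}}\dashv \forget$, the object $\Env^{\pd}\LSym^{\Delta^{1}_{\vee}}(0\to\mathbb{Z})$ corepresents $\Map_{\Mod^{\Delta^{1}_{\vee}}}((0\to\mathbb{Z}), -) \simeq \Omega^{\infty}B$, since a square with zero upper-left corner is determined by its right vertical. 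Yoneda gives the equivalence; alternatively, the insertion functor $\ins^{0}$ of Construction \ref{insertion_adjunction} lifts canonically to $\DAlg^{\Delta^{1}_{\vee},\pd}_{\mathbb{Z}}$ and retains its left adjoint $\gr^{0}$, so that $\Env^{\pd}\LSym^{\Delta^{1}_{\vee}}(0\to\mathbb{Z}) \simeq \ins^{0}(\mathbb{Z}[x]) = (0\to\mathbb{Z}[x])$.

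For the first equivalence, I would construct an alternative left adjoint producing $(\mathbb{Z}\langle x\rangle^{+}\to \mathbb{Z}\langle x\rangle)$ on input $\mathbb{Z}$ and compare. Lift the functor $\aug$ of Construction \ref{ideals_nonunitals} to the pd setting by $\widetilde{\aug}(A^{+}) := (A^{+}\to \mathbb{Z}\oplus A^{+})$, where the non-unital pd $\mathbb{Z}$-algebra structure on $A^{+}$ extends to a non-unital pd $(\mathbb{Z}\oplus A^{+})$-algebra structure via the self-multiplication action---the required compatibility with the enlarged scalars reduces to the divided power axiom $\gamma_{n}(rm)=r^{n}\gamma_{n}(m)$ for $r,m\in A^{+}$. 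An adjunction chase through the pullback description of $\DAlg^{\Delta^{1}_{\vee},\pd}_{\mathbb{Z}}$ in Definition \ref{divided_power_pairs} shows that $\widetilde{\aug}$ is left adjoint to $(J\to B)\mapsto J$ (regarded as a non-unital pd $\mathbb{Z}$-algebra by restriction). Composing with the free-algebra adjunction $\LGamma^{+}\dashv \forget$ of Definition \ref{LGamma}, the composite left adjoint $\Mod_{\mathbb{Z}}\to \DAlg^{\Delta^{1}_{\vee},\pd}_{\mathbb{Z}}$ sends $\mathbb{Z}\mapsto \widetilde{\aug}(\LGamma^{+}(\mathbb{Z}))=\widetilde{\aug}(\mathbb{Z}\langle x\rangle^{+})=(\mathbb{Z}\langle x\rangle^{+}\to \mathbb{Z}\langle x\rangle)$, using $\mathbb{Z}\oplus \mathbb{Z}\langle x\rangle^{+}\simeq \mathbb{Z}\langle x\rangle$ as augmented rings. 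Thus $(\mathbb{Z}\langle x\rangle^{+}\to \mathbb{Z}\langle x\rangle)$ corepresents $(J\to B)\mapsto \Omega^{\infty}J$. Meanwhile $\Env^{\pd}\LSym^{\Delta^{1}_{\vee}}(\Id:\mathbb{Z}\to\mathbb{Z})$ also corepresents $(J\to B)\mapsto \Omega^{\infty}J$, as any commutative square from $(\mathbb{Z}\xrightarrow{\Id}\mathbb{Z})$ into $(J\to B)$ is determined by its left vertical $\mathbb{Z}\to J$. Yoneda concludes.

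The principal obstacle is the rigorous construction of the pd-lifted augmentation functor $\widetilde{\aug}$ and the verification of its adjunction property. This requires threading the pullback description of $\DAlg^{\Delta^{1}_{\vee},\pd}_{\mathbb{Z}}$ through the augmentation adjunction of Construction \ref{ideals_nonunitals} while tracking pd data, and confirming coherence of the extension of scalars from $\mathbb{Z}$ to $\mathbb{Z}\oplus A^{+}$ for the divided power structure. These are essentially diagram chases in the pullback of $\infty$-categories, but deserve care given the interplay between the Smith ideal and $\{0,1\}$-pair perspectives.
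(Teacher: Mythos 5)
Your argument is correct and is essentially the paper's own proof: the paper likewise identifies $\Env^{\pd}\LSym^{\Delta^{1}_{\vee}}_{\mathbb{Z}}(\Id:\mathbb{Z}\to\mathbb{Z})$ with the image of $\LGamma^{+}_{\mathbb{Z}}(\mathbb{Z})=\mathbb{Z}\langle x\rangle^{+}$ under the pd-lifted augmentation functor $A^{+}\mapsto(A^{+}\to\mathbb{Z}\oplus A^{+})$, verifying the comparison by passing to right adjoints, which is exactly your corepresentability/Yoneda formulation, and the second bullet is handled by the same adjunction bookkeeping. One small nit: in your parenthetical alternative for the second equivalence it is the \emph{right} adjoint $\ev^{0}$ of the lifted $\ins^{0}$ (cf.\ Remark \ref{commutation_with_gr_ev}), not its left adjoint $\gr^{0}$, that produces the identification, though your main argument there does not depend on this.
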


\begin{proof}
We will prove the first statement only, as the proof of the second is similar. The proof goes by observing that the objects $\Env^{\pd} \LSym_{\mathbb{Z}}^{\Delta^{1}_{\vee}} (\Id:\mathbb{Z} \rightarrow \mathbb{Z})$ lifts to the $\infty$-category $\DAlg_{R}^{\nonu,\pd}$ of non-unital divided power algebras by means of the following commutative diagram

$$
\xymatrix{   \Mod_{\mathbb{Z}}^{\Delta^{1}_{\vee}} \ar[rrr]^-{\Env^{\pd} \LSym_{\mathbb{Z}}^{\Delta^{1}_{\vee}}}&&& \DAlg_{\mathbb{Z}}^{\Delta^{1}_{\vee}.\pd} \\
\Mod_{\mathbb{Z}} \ar[u] \ar[rrr]_-{\LGamma^{+}_{\mathbb{Z}}}&&& \DAlg^{\{0,1\},\vee,\pd}_{\mathbb{Z}}  \ar[u]_-{\aug} ,}
$$
where the left vertical functor sends $X \in \Mod_{\mathbb{Z}} $ to $(\Id: X \rightarrow X) \in \Mod_{\mathbb{Z}}^{\Delta^{1}_{\vee}}$, and the right vertical functor sends a non-unital derived divided power algerba $A^{+}$ to the pd Smith ideal $A^{+} \rightarrow \mathbb{Z}\oplus A^{+}$. Commutativity of this diagram follows by passing to right adjoints, as the right adjoints of horizontal functors are forgetful functors, the right adjoint of the left vertical is the functor $\ev^{1}$ evaluating at the source of the arrow, and the right adjoint of the right vertical is the functor $(\Fil \rightarrow \Gr)$ which takes a Smith ideal $(I \rightarrow A)$ to the underlying non-unital derived pd algebra of $I$. As we know that the free non-unutal divided power algebra on one generator is $\mathbb{Z}\langle x \rangle^{+}$, and the corresponding pd Smith ideal is $\mathbb{Z}\langle x \rangle^{+} \rightarrow \mathbb{Z}\langle x \rangle $, the statement follows.
\end{proof}

\begin{prop}\label{free_pd_formula}
Let $X \rightarrow Z$ be an object of $\Mod_{R,\geq 0}^{\Delta^{1}}$, and $Y =\fib(X\rightarrow Z)$. Let $\LSym_{R}(X) \rightarrow \LSym_{R}(Z)$ be the free object in $\DAlg_{R}^{\Delta^{1}}$ on $X\rightarrow Z$. There is an equivalence

$$
\Env^{\pd} \Bigl(\LSym_{R}(X) \rightarrow \LSym_{R}(Z) \Bigr)\simeq  \LGamma_{R}(Y) \underset{\LSym_{R}( Y)}{\otimes} \LSym_{R} (X)   \rightarrow \LSym_{R}(Z) 
$$
as objects of $\DAlg_{R}^{\Delta^{1}}$.
\end{prop}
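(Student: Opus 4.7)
The plan is to verify that the arrow $\mathcal{E} := \bigl(\LGamma_R(Y) \underset{\LSym_R(Y)}{\otimes} \LSym_R(X) \to \LSym_R(Z)\bigr)$, equipped with a suitable divided power structure on its Smith-ideal fiber, satisfies the universal property defining $\Env^{\pd}(\LSym_R(X) \to \LSym_R(Z))$ in $\DAlg_R^{\Delta^1_\vee, \pd}$. The cofiber sequence $Y \to X \to Z$ and the fact that $\LSym_R$ is a left adjoint yield the pushout presentation $\LSym_R(Z) \simeq R \otimes_{\LSym_R(Y)} \LSym_R(X)$ in $\DAlg_R$; since derived tensor product is exact, the fiber of the natural arrow in $\mathcal{E}$ is $\mathcal{E}^+ := \LGamma_R^+(Y) \otimes_{\LSym_R(Y)} \LSym_R(X)$, which inherits a non-unital derived divided power structure as the base change of the non-unital pd $\LGamma_R(Y)$-algebra $\LGamma_R^+(Y)$ along $\LGamma_R(Y) \to \mathcal{E}^0 := \LGamma_R(Y) \otimes_{\LSym_R(Y)} \LSym_R(X)$.

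I would then fix a test pd arrow $(C \to D) \in \DAlg_R^{\Delta^1,\pd}$ with fiber $J := \fib(C \to D)$ and unpack $\Map_{\DAlg_R^{\Delta^1,\pd}}(\mathcal{E}, (C \to D))$. Since $\mathcal{E}^0 \simeq \LGamma_R(Y) \sqcup_{\LSym_R(Y)} \LSym_R(X)$ is a pushout in $\DAlg_R$, a derived $R$-algebra map $\mathcal{E}^0 \to C$ is equivalently a pair $(\alpha_1: \LGamma_R(Y) \to C, \alpha_2: \LSym_R(X) \to C)$ compatible over $\LSym_R(Y)$; the unitalization adjunction $R \oplus (-): \DAlg_R^{\nonu} \leftrightarrows \DAlg_R$ translates $\alpha_1$ into a non-unital derived $R$-algebra map $\tilde{\alpha}_1: \LGamma_R^+(Y) \to C$, while the free algebra adjunction translates $\alpha_2$ into an $R$-module map $X \to C$ and the target component of the morphism into a map $Z \to D$. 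The commutativity of the outer square in $\DAlg_R^{\Delta^1}$ forces the composition $\LGamma_R^+(Y) \to C \to D$ to be null, so $\tilde{\alpha}_1$ factors through a map $\LGamma_R^+(Y) \to J$.

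The remaining input is the divided power compatibility, requiring the induced fiber map $\mathcal{E}^+ \to J$ to be a map of non-unital derived pd $\mathcal{E}^0$-algebras. Since $\mathcal{E}^+ \simeq \LGamma_R^+(Y) \otimes_{\LGamma_R(Y)} \mathcal{E}^0$ is the base change of $\LGamma_R^+(Y)$ from $\LGamma_R(Y)$ to $\mathcal{E}^0$, an $\mathcal{E}^0$-linear multiplicative map out of $\mathcal{E}^+$ is determined by its restriction to $\LGamma_R^+(Y)$; the necessary $\LGamma_R(Y)$-linearity of this restriction is automatic from the commutativity of the fiber square with $\mathcal{E}^0 \to C$, since the map $J \to C$ intertwines the multiplication on $J$ with the $C$-action restricted along it. The pd compatibility therefore reduces to requiring $\LGamma_R^+(Y) \to J$ be a map of non-unital derived pd $R$-algebras, which by the freeness of $\LGamma_R^+(Y)$ on $Y$ (Definition \ref{LGamma}) is simply the datum of an $R$-module map $Y \to J$. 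Assembling, a map $\mathcal{E} \to (C \to D)$ is equivalently a triple of module maps $X \to C$, $Z \to D$, $Y \to J$ subject to the compatibilities $Y \to X \to C = Y \to J \to C$ and $X \to C \to D = X \to Z \to D$, which is precisely $\Map_{\Mod_R^{\Delta^1_\vee}}((Y \to X), (J \to C)) \simeq \Map_{\Mod_R^{\Delta^1}}((X \to Z), (C \to D))$ by Proposition \ref{push_out_pointwise_eq}, and thence $\Map_{\DAlg_R^{\Delta^1}}((\LSym_R(X) \to \LSym_R(Z)), (C \to D))$ by the free algebra adjunction.

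The main obstacle is the pd-compatibility reduction: articulating precisely why an $\mathcal{E}^0$-linear non-unital pd map $\mathcal{E}^+ \to J$ is equivalent to a non-unital pd $R$-algebra map $\LGamma_R^+(Y) \to J$. One can approach this either via a general base change adjunction $-\otimes_{\LGamma_R(Y)} \mathcal{E}^0 \dashv (-)|_{\LGamma_R(Y)}$ for non-unital derived pd algebras (which follows from the $\SSeq^{\gen,\pd}$-action framework of Construction \ref{pd_derived_action_on_deriveds} being compatible with base change of the ground ring), or more concretely by observing that $\mathcal{E}^+$ is generated as an $\mathcal{E}^0$-module by the image of $\LGamma_R^+(Y)$ and verifying the $\LGamma_R(Y)$-linearity of the restricted map automatically from compatibility with $\mathcal{E}^0 \to C$ as sketched above; the latter avoids invoking a general base change principle and seems cleanest in this setting.
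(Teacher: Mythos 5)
Your proposal is correct in outline but takes a genuinely different route from the paper. The paper first shows that $\epsilon\colon \LGamma_{R}(Y)\otimes_{\LSym_{R}(Y)}\LSym_{R}(X)\rightarrow \LSym_{R}(Z)$ is a divided power map by writing the fiber sequence $Y\rightarrow X\rightarrow Z$ as a sifted colimit of split sequences and identifying the fiber explicitly in the split case; the universal property of $\Env^{\pd}$ then produces a comparison map out of the envelope, which is checked to be an equivalence by resolving $X\rightarrow Z$ by finitely generated free modules and reducing, via base change, to the generators computed in Proposition \ref{pd_envelope_of_cp}. You instead verify the universal property directly by decomposing the mapping space out of the candidate object; this avoids having to check that a natural transformation is an equivalence on generators and makes visible where each piece of data ($X\rightarrow C$, $Z\rightarrow D$, $Y\rightarrow J$) comes from, at the price of needing finer control of mapping spaces in $\DAlg^{\Delta^{1}_{\vee},\pd}_{R}$. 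In essence your computation says that the candidate is the pushout $\aug\bigl(\LGamma^{+}_{R}(Y)\bigr)\sqcup_{\ins^{0}\LSym_{R}(Y)}\ins^{0}\LSym_{R}(X)$ in $\DAlg^{\Delta^{1}_{\vee},\pd}_{R}$ (its underlying Smith ideal is the claimed one because the forgetful functor preserves colimits, Proposition \ref{commutation_with_coproducts}), after which the mapping space splits by the adjunctions $\aug\dashv(\Fil\rightarrow\Gr)$ and $\ins^{0}\dashv\ev^{0}$ together with freeness of $\LGamma^{+}_{R}(Y)$ on $Y$, giving exactly $\Map_{\Mod^{\Delta^{1}_{\vee}}_{R}}\bigl((Y\rightarrow X),(J\rightarrow C)\bigr)$.

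The step you flag at the end is indeed the soft spot, and of your two proposed repairs only the first is viable. The ``concrete'' alternative --- that $\mathcal{E}^{+}$ is generated as an $\mathcal{E}^{0}$-module by the image of $\LGamma^{+}_{R}(Y)$, with the $\LGamma_{R}(Y)$-linearity ``automatic'' --- is an element-level argument: in the derived setting linearity and pd-compatibility are structure rather than properties, so generation statements neither determine maps nor let you check anything ``on elements.'' The base-change/restriction adjunction for $\DAlg^{\nonu,\pd}$ (which does follow from the $\SSeq^{\gen,\pd}$-linearity of base change built into Construction \ref{pd_derived_action_on_deriveds}) is the correct tool; alternatively, the pushout presentation above packages the same adjunction and also settles a point you pass over silently, namely that the pd structure obtained on $\mathcal{E}^{+}$ by base change really lifts the Smith-ideal (non-unital $\mathcal{E}^{0}$-algebra) structure on $\fib(\epsilon)$ --- without this, $\mathcal{E}$ is not yet an object of $\DAlg^{\Delta^{1}_{\vee},\pd}_{R}$. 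With either repair your argument goes through; note that the paper's route sidesteps all of this, using only preservation of sifted colimits by the forgetful functor and the explicit computation on free generators.
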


\begin{proof}
Note that there is a commutative diagram

$$
\xymatrix{   \LGamma_{R}(Y)\underset{\LSym_{R}(Y)}{\otimes} \LSym_{R}(X)  \ar[d]_-{f} \ar[r]& \LSym_{R}(Z) \ar[d]^-{\Id}\\
\LSym_{R}(X) \ar[r]& \LSym_{R}(Z),   }
$$
where the map $f$ is induced by the Norm map $\LGamma_{R}(Y) \rightarrow \LSym_{R}(Y)$. To produce a natural map from the derived divided power envelope, we need to show that the fiber of the map $$ \epsilon: \LGamma_{R}(Y)\underset{\LSym_{R}(Y)}{\otimes} \LSym_{R}(X)  \rightarrow \LSym_{R}(Z) $$ has the structure of a divided power Smith ideal. This can be done by observing that formation of the map $\epsilon$ is functorial in the fiber sequence $Y \rightarrow X \rightarrow Z$ and commutes with sifted colimits, and we ran resolve the fiber sequence $Y \rightarrow X \rightarrow Z$ as a sifted colimit of split fiber sequences of the form $Y_{i} \rightarrow Y_{i}\oplus Z_{i} \rightarrow Z_{i}$. It then suffices to show that the map $$\LGamma_{R}(Z_{i}) \underset{\LSym_{R}(Z_{i}) }{\otimes} \LSym_{R}(Y_{i} \oplus Z_{i}) \rightarrow \LSym_{R}(Y_{i}) $$ is naturally a divided power map. But we can identify it with the map $ \LGamma_{R}(Z_{i}) \otimes \LSym_{R}(Y_{i}) \rightarrow \LSym_{R}(Y_{i})$ which is the tensor product of maps $\LGamma_{R}(Z_{i}) \rightarrow R$ and $\Id: \LSym_{R}(Y_{i}) \rightarrow \LSym_{R}(Y_{i})$, so the fiber of the map $ \LGamma_{R}(Z_{i}) \otimes \LSym_{R}(Y_{i}) \rightarrow \LSym_{R}(Y_{i})$ is $\LGamma_{R}^{\geq 1}(Z_{i}) \otimes \LSym_{R}(Y_{i})$ which is a non-unital divided power algebra. Therefore, the map $\epsilon$ is a divided power map, and therefore there exists a map $$\xymatrix{\Env^{\pd} \Bigl(\LSym_{R}(X) \rightarrow \LSym_{R}(Z) \Bigr) \ar[r]& \LGamma_{R}(Y)\underset{\LSym_{R}(Y)}{\otimes} \LSym_{R}(X)  \rightarrow \LSym_{R}(Z) }$$ by the universal property of derived divided power envelope. To check that this is an equivalence, since $X$ and $Z$ are connective, we can resolve the map $X\rightarrow Z$ by finitely generated free $R$-modules, and by base change deduce the statement from Remark \ref{pd_envelope_of_cp}.
\end{proof}

\begin{prop}\label{commutation_with_coproducts}
The forgetful functor $\DAlg_{R}^{\Delta^{1}_{\vee},\pd} \rightarrow \DAlg_{R}^{\Delta^{1}_{\vee}} $ commutes with all colimits.
\end{prop}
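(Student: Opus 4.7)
The plan is to combine the known preservation of sifted colimits (Proposition \ref{envelopes_coenvelopes_prop}) with an explicit computation of coproducts on free generators via Proposition \ref{free_pd_formula}. Since sifted colimits are already preserved, and the initial object $(0\rightarrow R)$ is trivially preserved, it suffices to show preservation of binary coproducts. Both $\DAlg_{R}^{\Delta^{1}_{\vee},\pd}$ and $\DAlg_{R}^{\Delta^{1}_{\vee}}$ are monadic over $\Mod_{R}^{\Delta^{1}_{\vee}}$, so every object on either side is a sifted colimit (via the bar resolution of its monad) of free objects. Because binary coproducts commute with sifted colimits in any presentable $\infty$-category, a standard double sifted colimit argument reduces the claim to the following: for any $V_{1},V_{2}\in \Mod_{R}^{\Delta^{1}_{\vee}}$, the forgetful of the coproduct $\Env^{\pd}\LSym_{R}^{\Delta^{1}_{\vee}}(V_{1})\coprod \Env^{\pd}\LSym_{R}^{\Delta^{1}_{\vee}}(V_{2})$, computed in $\DAlg_{R}^{\Delta^{1}_{\vee},\pd}$, coincides with the coproduct in $\DAlg_{R}^{\Delta^{1}_{\vee}}$ of their forgetful functors.

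Since $\Env^{\pd}\circ\LSym_{R}^{\Delta^{1}_{\vee}}$ and $\LSym_{R}^{\Delta^{1}_{\vee}}$ are left adjoints, they convert direct sums of inputs into coproducts in their target categories. So the coproduct of two free pd Smith ideals is the free pd Smith ideal on the sum; I must then show the forgetful functor of the latter agrees with the coproduct (in $\DAlg_{R}^{\Delta^{1}_{\vee}}$) of the individual forgetful functors. Passing through the symmetric monoidal equivalence $\cofib$ to work in $\DAlg_{R}^{\Delta^{1}}$, where coproducts are computed pointwise as tensor products, write $V_{i}=(X_{i}\rightarrow Z_{i})$ and $Y_{i}=\fib(X_{i}\rightarrow Z_{i})$. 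By Proposition \ref{free_pd_formula}, the forgetful of $\Env^{\pd}\LSym_{R}^{\Delta^{1}}(V_{1}\oplus V_{2})$ is
$$\bigl(\LGamma_{R}(Y_{1}\oplus Y_{2})\otimes_{\LSym_{R}(Y_{1}\oplus Y_{2})} \LSym_{R}(X_{1}\oplus X_{2}) \rightarrow \LSym_{R}(Z_{1}\oplus Z_{2})\bigr).$$
The identity $\LSym_{R}(A\oplus B)\simeq \LSym_{R}(A)\otimes_{R}\LSym_{R}(B)$ holds as coproducts in derived commutative algebras are tensor products, and the analogous identity $\LGamma_{R}(A\oplus B)\simeq \LGamma_{R}(A)\otimes_{R}\LGamma_{R}(B)$ follows from Proposition \ref{pd_nonu_lims_colims} applied to the non-unital pd algebras $\LGamma^{+}_{R}(A)$ and $\LGamma^{+}_{R}(B)$, together with unraveling $\LGamma_{R}=R\oplus \LGamma^{+}_{R}$. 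These identities factor both the tensor factor and the base of the relative tensor product, realizing the displayed arrow as the pointwise tensor product of the individual arrows for $V_{1}$ and $V_{2}$, which is exactly the coproduct in $\DAlg_{R}^{\Delta^{1}}$.

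The main obstacle to be addressed is that Proposition \ref{free_pd_formula} is stated only for connective $(X\rightarrow Z)\in \Mod_{R,\geq 0}^{\Delta^{1}}$, whereas the reduction above requires the identity for general $V_{i}$. This is handled by observing that both sides of the comparison, viewed as functors $\Mod_{R}^{\Delta^{1}_{\vee}}\times \Mod_{R}^{\Delta^{1}_{\vee}}\rightarrow \DAlg_{R}^{\Delta^{1}_{\vee}}$, preserve sifted colimits in each variable and their filtered truncations preserve finite totalizations (the relevant monads being filtered by excisively polynomial functors, as in Construction \ref{LSym_monad} and its pd analogue). Since these two classes of operations together extend any excisively polynomial filtered monad uniquely from its restriction to connective free modules, the identity verified on connective inputs propagates to arbitrary $V_{i}$, completing the proof.
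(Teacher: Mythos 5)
Your argument is correct and takes essentially the same route as the paper's proof: preservation of sifted colimits from Proposition \ref{envelopes_coenvelopes_prop} combined with a check of the binary-coproduct comparison on free objects via the formula of Proposition \ref{free_pd_formula}, in the style of Raksit's Proposition 4.2.27. The paper only verifies the comparison on free objects built from the compact projective generators $(0 \rightarrow R)$ and $(\Id: R \rightarrow R)$ and leaves the passage to general (non-connective) inputs implicit, so your explicit factorization through $\LSym_{R}(A\oplus B)\simeq \LSym_{R}(A)\otimes\LSym_{R}(B)$, $\LGamma_{R}(A\oplus B)\simeq \LGamma_{R}(A)\otimes\LGamma_{R}(B)$ and the right-left extension step is just a more detailed rendering of the same argument.
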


\begin{proof}
To show that it commutes with finite coproducts. Here again, we argue as in \cite[Proposition 4.2.27]{R}. Using formulas of Proposition \ref{free_pd_formula}, we see that the maps $$\xymatrix{   \Env^{\pd} \bigl(\LSym_{R}^{\Delta^{1}_{\vee}} (  0: 0 \rightarrow X  ) \bigr) \otimes  \Env^{\pd} \bigl(\LSym^{\Delta^{1}_{\vee}}_{R} (  \Id: Z \rightarrow  Z ) \bigr) \ar[r]&   \Env^{\pd} \LSym_{ R}^{\Delta^{1}_{\vee}} \bigl(    (0: 0 \rightarrow X) \oplus (\Id: Z \rightarrow Z)  \bigr)    }$$
are equivalences for all $X, Z \in \Mod_{R}^{\fg,\free}$. From this, we conclude that $\Env^{\pd}\LSym_{R}^{\Delta^{1}_{\vee}}$ transforms directs sums of compact projective generators of $\Mod_{R,\geq 0}^{\Delta^{1}_{\vee}} $ to tensor products, and commutation of the forgetful functor with coproducts follows. 
\end{proof}

\begin{defn}\label{envelopes_coenvelopes}
It follows from Proposition \ref{commutation_with_coproducts} that there is an adjunction

$$
\xymatrix{ \DAlg_{R}^{\Delta^{1}_{\vee},\pd}  \ar[rrr]^-{\forget } &&& \ar@/_1.5pc/[lll]_{\Env^{\pd}  }    \ar@/^1.5pc/[lll]^{\coEnv^{\pd}}  \DAlg_{R}^{\Delta^{1}_{\vee}},  }
$$
where we call the left adjoint the \textbf{derived divided power (pd) envelope}, and the right adjoint \textbf{derived divided power (pd) coenvelope}. 
\end{defn}

\begin{rem}\label{commutation_with_gr_ev}
The adjunction of Construction \ref{insertion_adjunction} lifts to an adjunction

$$
\xymatrix{ \DAlg_{R} \ar[rrr]^-{\ins^{0} } &&& \ar@/_1.5pc/[lll]_{\gr^{0} }    \ar@/^1.5pc/[lll]^{\ev^{0}} \DAlg_{R}^{\Delta^{1}_{\vee},\pd} .}
$$
In particular, there is a commutative square

$$
\xymatrix{ \DAlg_{R}\ar[d]_-{\Id} \ar[r]^-{\ins^{0}}& \DAlg_{R}^{\Delta^{1}_{\vee},\pd} \ar[d]^-{\forget}\\
\DAlg_{R} \ar[r]_-{\ins^{0}}& \DAlg_{R}^{\Delta^{1}_{\vee}} .     }
$$
Passing to left and right adjoints, we obtain commutative squares

$$
\xymatrix{ \DAlg_{R}  & \ar[l]_-{\gr^{0}} \DAlg_{R}^{\Delta^{1}_{\vee},\pd}  \;\;\; \;\;\;\;\;\;\;\;\; \\                 
\DAlg_{R} \ar[u]^-{\Id}  & \ar[l]^-{\gr^{0} }  \DAlg_{R}^{\Delta^{1}_{\vee}}  \ar[u]_-{\Env^{\pd}},     }
\xymatrix{ \DAlg_{R}  & \ar[l]_-{\ev^{0}} \DAlg_{R}^{\Delta^{1}_{\vee},\pd}  \\                 
\DAlg_{R} \ar[u]^-{\Id}  & \ar[l]^-{\ev^{0} }\DAlg_{R}^{\Delta^{1}_{\vee}}  \ar[u]_-{\coEnv^{\pd}} .  }
$$

\end{rem}

\begin{rem}\label{envelope_connective}
Let $ \DAlg_{R,\geq 0}^{\Delta^{1}_{\vee},\pd} \subset \DAlg_{R}^{\Delta^{1}_{\vee},\pd}  $ be the inclusion of the full subcategory of all (divided power) Smith ideals consisting of objects $(I \rightarrow A)$ with both $A$ and $I$ being connective. Then there is a commutative diagram

$$
\xymatrix{ \DAlg_{R,\geq 0}^{\Delta^{1}_{\vee}}  \ar[d] \ar[r]^-{\Env^{\pd}}& \DAlg_{R,\geq 0}^{\Delta^{1}_{\vee},\pd}  \ar[d]\\
\DAlg_{R}^{\Delta^{1}_{\vee}}  \ar[r]_-{\Env^{\pd}}& \DAlg_{R}^{\Delta^{1}_{\vee},\pd} ,   }
$$
where the vertical functors are inclusions. In other words, divided power envelopes preserve connective Smith ideals. Commutativity of the diagram follows by passing to right adjoints as the right adjoint of inclusion is connective cover which manifestly commutes with forgetful functors. 
\end{rem}

We let $\DAlg^{\pd}_{A/} $ be the $\infty$-category of divided power maps $A \rightarrow B$ from $A$, and $\DAlg^{\pd}_{A/}$ be the $\infty$-category of divided power maps $C\rightarrow A$ to $A$. Concretely, they are defined as fiber products:

$$
\xymatrix{   \DAlg^{\pd}_{R//A,\geq 0} \ar[d] \ar[r]& \DAlg_{R,\geq 0}^{\Delta^{1},\pd} \ar[d]^-{\ev^{1}}\\
\{A\} \ar[r]& \DAlg_{R,\geq 0}  .} 
\xymatrix{ \DAlg^{\pd}_{A/,\geq 0} \ar[d] \ar[r]& \DAlg_{R,\geq 0}^{\Delta^{1},\pd} \ar[d]^-{\ev^{0}}\\
\{A\} \ar[r]& \DAlg_{R,\geq 0} }
 $$

\begin{prop} 
There exist adjunctions

$$
\xymatrix{   \DAlg_{A/}^{\pd} \ar[rrr]^-{\forget } &&& \ar@/^1.1pc/[lll]^{\coEnv_{A/}^{\pd}}  \DAlg_{A/}    \;\; ,  }\;\;\;\;
\xymatrix{   \DAlg_{/A}^{\pd} \ar[rrr]_-{\forget } &&& \ar@/_1.1pc/[lll]_{\Env_{/A}^{\pd}}  \DAlg_{/A}.       }
$$

The functor $\coEnv^{\pd}_{A/}$ preserves connective objects. If $B \rightarrow A \in \DAlg_{A/,\geq 0}$ is surjective on $\pi_{0}$, then the divided power envelope $\Env^{\pd}_{/A}(B)$ is connective again. 

\end{prop}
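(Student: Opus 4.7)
The plan is to deduce the proposition from the ambient adjunctions $\Env^{\pd}\dashv \forget \dashv \coEnv^{\pd}$ on Smith ideals established in Definition \ref{envelopes_coenvelopes}, combined with the commutation relations of Remark \ref{commutation_with_gr_ev}. The first step is to interpret the slice categories as fibers: $\DAlg_{A/}$ and $\DAlg^{\pd}_{A/}$ identify, via the $\fib$/$\cofib$ equivalence between arrows and Smith ideals, with the fibers over $\{A\}$ of $\ev^{0}: \DAlg^{\Delta^{1}_{\vee}}_{R}\to \DAlg_{R}$ and $\ev^{0}: \DAlg^{\Delta^{1}_{\vee},\pd}_{R}\to \DAlg_{R}$, while $\DAlg_{/A}$ and $\DAlg^{\pd}_{/A}$ identify with the corresponding fibers of $\gr^{0}$. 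Remark \ref{commutation_with_gr_ev} gives natural equivalences $\ev^{0}\circ \coEnv^{\pd}\simeq \ev^{0}$ and $\gr^{0}\circ \Env^{\pd}\simeq \gr^{0}$, so $\coEnv^{\pd}$ preserves the $\ev^{0}$-fiber over $A$ and $\Env^{\pd}$ preserves the $\gr^{0}$-fiber over $A$. Because the units and counits of the ambient adjunctions project to the identity of $A$ under $\ev^{0}$ (respectively $\gr^{0}$), they restrict to units and counits of functors between fibers, yielding the two desired adjunctions $\forget\dashv \coEnv^{\pd}_{A/}$ and $\Env^{\pd}_{/A}\dashv \forget$.

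Next, for the connectivity assertion for $\coEnv^{\pd}_{A/}$, the key observation is that the forgetful functor $\forget: \DAlg^{\Delta^{1}_{\vee},\pd}_{R}\to \DAlg^{\Delta^{1}_{\vee}}_{R}$ preserves all limits and all colimits (by Propositions \ref{envelopes_coenvelopes_prop} and \ref{commutation_with_coproducts}) and, since it does not alter the underlying pair of $R$-modules $(I,A)$, it is moreover $t$-exact. In particular $\forget$ is left $t$-exact, so by the standard relation between adjoints and $t$-structures its right adjoint $\coEnv^{\pd}$ is right $t$-exact, hence preserves connective Smith ideals. Restricting to the fiber over $\{A\}$ and translating back to the arrow picture, this shows that for any $(A\to B)\in \DAlg_{A/,\geq 0}$ the image $\coEnv^{\pd}_{A/}(A\to B)$ is represented by a pd map with connective Smith ideal, whence its target is itself connective.

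Finally, the envelope statement follows immediately from Remark \ref{envelope_connective}. Given a $\pi_{0}$-surjective map $B\to A$ between connective derived $R$-algebras, the fiber $I:=\fib(B\to A)$ is connective, so the associated Smith ideal $(I\to B)$ belongs to $\DAlg^{\Delta^{1}_{\vee}}_{R,\geq 0}$. Remark \ref{envelope_connective} then guarantees $\Env^{\pd}(I\to B)\in \DAlg^{\Delta^{1}_{\vee},\pd}_{R,\geq 0}$, and restricting to the $\gr^{0}$-fiber over $A$ this is exactly the assertion that $\Env^{\pd}_{/A}(B\to A)$ is connective. The main subtlety that I would need to address carefully is pinning down the precise $t$-structure convention on each slice, since the $\pi_{0}$-surjectivity hypothesis in statement~(3) is natural precisely under the Smith-ideal-connective convention coming from the geometric $t$-structure on $\DAlg^{\Delta^{1}}_{R}$; once this is fixed, all three parts reduce to already-established properties of the ambient forgetful functor and its two adjoints.
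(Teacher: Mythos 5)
Your first and third paragraphs are essentially the paper's (very terse) argument: the adjunctions are obtained by restricting the ambient adjunctions $\Env^{\pd}\dashv \forget \dashv \coEnv^{\pd}$ to the fibers of $\gr^{0}$ and $\ev^{0}$ using the commutation squares of Remark \ref{commutation_with_gr_ev}, and the connectivity of $\Env^{\pd}_{/A}(B)$ for a $\pi_{0}$-surjective $B\to A$ is exactly the statement of Remark \ref{envelope_connective} applied to the connective Smith ideal $(\fib(B\to A)\to B)$. That part is fine.

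The genuine gap is in your second paragraph, on the claim that $\coEnv^{\pd}_{A/}$ preserves connective objects. The ``standard relation between adjoints and $t$-structures'' that you invoke goes the other way: for an adjunction $F\dashv G$, $F$ is right $t$-exact (preserves connectives) if and only if $G$ is left $t$-exact (preserves coconnectives). So from ($t$-)exactness of the forgetful functor $\forget:\DAlg^{\Delta^{1}_{\vee},\pd}_{R}\to \DAlg^{\Delta^{1}_{\vee}}_{R}$ you may formally conclude that its \emph{left} adjoint $\Env^{\pd}$ preserves connective Smith ideals and that its \emph{right} adjoint $\coEnv^{\pd}$ preserves \emph{coconnective} ones; there is no formal implication ``$\forget$ left $t$-exact $\Rightarrow$ $\coEnv^{\pd}$ right $t$-exact,'' and that is precisely the statement you need. (Note also that these are categories of algebras, not stable $\infty$-categories, so the $t$-exactness yoga has to be run at the level of the underlying Smith-ideal modules in any case.) Preservation of connectivity by the coenvelope is a genuine assertion about the functor $I\mapsto I^{\sharp}$ of Remark \ref{pd_nonunital}: one must argue that for a connective non-unital derived $A$-algebra $I$ the coenvelope $I^{\sharp}$ (equivalently, the right adjoint constructed at the connective level via Construction \ref{connective_cover_of_pd}, compared with the non-connective one) is again connective, and then pass to the cofiber to get connectivity of the target of $\coEnv^{\pd}_{A/}(A\to B)$. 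Your proposal supplies no such argument, so this part of the proposition is not established by the adjunction formalism you cite.
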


\begin{proof}
Both statement follow from Remark \ref{commutation_with_gr_ev}.
\end{proof}

\begin{prop}
 For a derived pair $I \rightarrow A$, the divided power coenvelope $\coEnv^{\pd}(I \rightarrow A)$ is equivalent to the pair $I^{\sharp} \rightarrow A$, where $I^{\sharp}$ is as in Remark \ref{pd_nonunital}. 
\end{prop}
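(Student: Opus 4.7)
The plan is to verify that $(I^{\sharp} \rightarrow A)$ satisfies the universal property defining the coenvelope $\coEnv^{\pd}(I \rightarrow A)$. Recall from Remark \ref{pd_nonunital} (extended to the non-connective setting via Definition \ref{pd_over_nonconnective}) that the $\sharp$ construction is the right adjoint of the forgetful functor $U_A: \DAlg^{\nonu, \pd}_A \rightarrow \DAlg^{\nonu}_A$. Given the Smith ideal $(I \rightarrow A)$, the non-unital derived $A$-algebra structure on $I$ produces a non-unital derived pd $A$-algebra $I^{\sharp}$ together with a counit map $I^{\sharp} \rightarrow I$, and composing with $I \rightarrow A$ yields a canonical pd pair $(I^{\sharp} \rightarrow A)$. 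It suffices, by the Yoneda lemma, to construct a natural equivalence
\[
\Map_{\DAlg^{\Delta^1_\vee, \pd}_R}((J \rightarrow B), (I^{\sharp} \rightarrow A)) \simeq \Map_{\DAlg^{\Delta^1_\vee}_R}(\forget(J \rightarrow B), (I \rightarrow A))
\]
for every pd pair $(J \rightarrow B)$.

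Unwinding the pull-back description of Definition \ref{divided_power_pairs}, a point of the left-hand side consists of a map $B \rightarrow A$ in $\DAlg_R$ together with a pd map of non-unital pd $B$-algebras $J \rightarrow I^{\sharp}|_B$, where $I^{\sharp}|_B$ denotes the restriction of scalars of $I^{\sharp}$ along $B \rightarrow A$. Likewise, a point of the right-hand side consists of a map $B \rightarrow A$ together with a non-unital $B$-algebra map $J \rightarrow I|_B$. Fixing the underlying map $B \rightarrow A$, it therefore suffices to produce a natural equivalence
\[
\Map_{\DAlg^{\nonu, \pd}_B}(J, I^{\sharp}|_B) \simeq \Map_{\DAlg^{\nonu}_B}(U_B(J), I|_B).
\]

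The main technical point is the base-change identification $I^{\sharp}|_B \simeq (I|_B)^{\sharp}$; once established, the desired equivalence follows from the adjunction $U_B \dashv (-)^{\sharp}$ over $B$. This identification follows from the observation that extension of scalars commutes with the forgetful functor, i.e.\ $U_A(X \otimes_B A) \simeq U_B(X) \otimes_B A$ for any non-unital pd $B$-algebra $X$ (both sides have the same underlying non-unital $A$-algebra). Chaining the restriction--extension adjunctions gives
\begin{align*}
\Map_{\DAlg^{\nonu, \pd}_B}(X, I^{\sharp}|_B) & \simeq \Map_{\DAlg^{\nonu, \pd}_A}(X \otimes_B A, I^{\sharp}) \\
& \simeq \Map_{\DAlg^{\nonu}_A}(U_A(X \otimes_B A), I) \\
& \simeq \Map_{\DAlg^{\nonu}_A}(U_B(X) \otimes_B A, I) \\
& \simeq \Map_{\DAlg^{\nonu}_B}(U_B(X), I|_B) \\
& \simeq \Map_{\DAlg^{\nonu, \pd}_B}(X, (I|_B)^{\sharp}),
\end{align*}
and the Yoneda lemma delivers the required identification.

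The main obstacle will be confirming that the $\sharp$ construction is natural with respect to change of base; as the calculation above shows, this reduces to the essentially formal fact that the forgetful functor from non-unital pd algebras to non-unital algebras commutes with base change, a consequence of the functoriality in the base ring of the derived pd monad $\LGamma^{+}$ established in Construction \ref{pd_derived_action_on_deriveds} and Definition \ref{pd_over_nonconnective}.
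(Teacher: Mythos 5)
Your overall strategy (reduce to the fiberwise coenvelope adjunction $U_B \dashv (-)^{\sharp}$ over a fixed map $B \rightarrow A$, plus compatibility of $(-)^{\sharp}$ with change of base) is the right idea — the paper in fact states this proposition without any proof, so there is nothing to compare against — but as written there are two genuine gaps. First, your unwinding of the mapping spaces is incorrect: a morphism of (pd) Smith ideals $(J\rightarrow B)\rightarrow (I\rightarrow A)$ lying over $f\colon B\rightarrow A$ is \emph{not} just a non-unital $B$-algebra map $J\rightarrow I|_{B}$; it also contains the coherence identifying the composite $J\rightarrow I\rightarrow A$ with $J\rightarrow B\rightarrow A$. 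Already in the discrete case (Proposition \ref{pairs}) this is a real constraint: take $B=A$, $J=B$ the unit ideal, and the zero map $J\rightarrow I$ — it is a map of non-unital algebras but not a map of Smith ideals unless $A=0$. So "fixing the underlying map $B\rightarrow A$, it suffices to produce $\Map_{\DAlg^{\nonu,\pd}_{B}}(J,I^{\sharp}|_{B})\simeq\Map_{\DAlg^{\nonu}_{B}}(U_{B}J,I|_{B})$" does not literally establish the universal property; both mapping spaces fiber over $\Map_{\DAlg_{R}}(B,A)$ with fibers carrying this extra anchor-compatibility datum, and you must show your comparison respects it. Second, and relatedly, you never establish that $(I^{\sharp}\rightarrow I\rightarrow A)$ is an object of $\DAlg_{R}^{\Delta^{1}_{\vee},\pd}$ at all: in the Smith-ideal formalism the non-unital multiplication on the ideal is forced by the anchor map and the $A$-action, so one has to check coherently that the pd structure on $I^{\sharp}$ lifts precisely this induced structure; asserting that "composing with $I\rightarrow A$ yields a canonical pd pair" assumes the essential point.

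Both gaps are repairable, and the repair is essentially the fibered version of your own argument: show that $\forget_{\pd}\colon \DAlg_{R}^{\{0,1\},\vee,\pd}\rightarrow\DAlg_{R}^{\{0,1\},\vee}$ is a map of coCartesian fibrations over $\DAlg_{R}$ preserving coCartesian (base-change) edges — this is exactly your lemma $U_{A}(X\otimes_{B}A)\simeq U_{B}(X)\otimes_{B}A$, which is true, but your justification ("both sides have the same underlying non-unital $A$-algebra") is circular as phrased; it should be deduced from $\LGamma^{+}_{A}(M\otimes_{B}A)\simeq\LGamma^{+}_{B}(M)\otimes_{B}A$ (Construction \ref{pd_derived_action_on_deriveds}, Definition \ref{pd_over_nonconnective}) together with the fact that the forgetful functor preserves the bar-construction colimits computing extension of scalars (Proposition \ref{pd_nonu_lims_colims}), with some care over a non-connective base. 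Given this, the fiberwise right adjoints $(-)^{\sharp}$ assemble into a right adjoint relative to $\DAlg_{R}$, acting as the identity on the algebra component; finally one passes this relative adjoint through the pullback square of Definition \ref{divided_power_pairs}, checking that the induced square of mapping spaces (forgetting pd structures versus composing with the counit $I^{\sharp}\rightarrow I$) is Cartesian — this last step is where the anchor-map compatibility is handled once and for all, since the anchor of $I^{\sharp}$ is by construction the composite through the counit.
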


\begin{defn}
In the particular case $I = A$ considered as the tautological Smith ideal $\Id: A \rightarrow A$, the divided power ideal $I^{\sharp} $ will be called \textbf{divided power (pd) radical of $R$}.
\end{defn}

We will finish this subsection with the following easy observation relating our approach to derived divided power Smith ideals with the $\infty$-category of animated pd pairs of Z.Mao \cite{Mao21}.

\begin{prop}\label{connective_and_Mao}
Let $\C^{0}$ be the full subcategory of all divided power Smith ideals consisting of finite tensor products of the objects $(0) \subset \mathbb{Z}[x]$ and $\mathbb{Z}\langle x \rangle^{+} \rightarrow \mathbb{Z}\langle x \rangle$ (see Proposition \ref{pd_envelope_of_cp}). Then there is an equivalence of $\infty$-categories

$$
\DAlg^{\Delta^{1}_{\vee},\pd}_{\mathbb{Z},\geq 0} \simeq \mathcal{P}_{\Sigma}( \C^{0}  ).
$$
\end{prop}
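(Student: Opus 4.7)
The strategy is to exhibit $\C^{0}$ as a small full subcategory of compact projective generators of $\DAlg^{\Delta^{1}_{\vee},\pd}_{\mathbb{Z},\geq 0}$ closed under finite coproducts, and then invoke the universal property of $\mathcal{P}_{\Sigma}$.

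First I would record the relevant monadicity: the forgetful functor $U: \DAlg^{\Delta^{1}_{\vee},\pd}_{\mathbb{Z},\geq 0} \to \Mod^{\Delta^{1}_{\vee}}_{\mathbb{Z},\geq 0}$ is monadic, being the composite of the forgetful functors $\DAlg^{\Delta^{1}_{\vee},\pd}_{\mathbb{Z},\geq 0} \to \DAlg^{\Delta^{1}_{\vee}}_{\mathbb{Z},\geq 0}$ (from Proposition \ref{envelopes_coenvelopes_prop}, after restriction to connective objects via Remark \ref{envelope_connective}) and $\DAlg^{\Delta^{1}_{\vee}}_{\mathbb{Z},\geq 0} \to \Mod^{\Delta^{1}_{\vee}}_{\mathbb{Z},\geq 0}$ (Definition \ref{derived_ideals}). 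The composite monad $\T := \Env^{\pd} \circ \LSym^{\Delta^{1}_{\vee}}_{\mathbb{Z}}$ preserves sifted colimits, since both factors are left adjoints to sifted-colimit-preserving forgetful functors. The base $\infty$-category $\Mod^{\Delta^{1}_{\vee}}_{\mathbb{Z},\geq 0}$ is generated under sifted colimits by the additive subcategory $\Mod^{\Delta^{1}_{\vee}}_{\mathbb{Z},0}$ of finite direct sums of the two compact projective generators $(0 \to \mathbb{Z})$ and $(\Id:\mathbb{Z}\to\mathbb{Z})$; indeed, under the symmetric monoidal equivalence $\cofib$ of Proposition \ref{push_out_pointwise_eq}, these correspond to the analogous generators in $\Mod^{\Delta^{1}}_{\mathbb{Z},\geq 0}$. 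Thus $\Mod^{\Delta^{1}_{\vee}}_{\mathbb{Z},\geq 0}\simeq \mathcal{P}_{\Sigma}(\Mod^{\Delta^{1}_{\vee}}_{\mathbb{Z},0})$.

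Next I would identify $\C^{0}$ with $\T(\Mod^{\Delta^{1}_{\vee}}_{\mathbb{Z},0})$. By Proposition \ref{pd_envelope_of_cp}, the two generators of $\Mod^{\Delta^{1}_{\vee}}_{\mathbb{Z},0}$ map under $\T$ exactly to $(0)\subset \mathbb{Z}[x]$ and $\mathbb{Z}\langle x \rangle^{+} \to \mathbb{Z}\langle x\rangle$; by Proposition \ref{commutation_with_coproducts}, $\T$ sends finite direct sums to coproducts (i.e., the tensor products appearing in the definition of $\C^{0}$). Each object of $\C^{0}$ is compact and projective in $\DAlg^{\Delta^{1}_{\vee},\pd}_{\mathbb{Z},\geq 0}$: for $X\in \Mod^{\Delta^{1}_{\vee}}_{\mathbb{Z},0}$, the functor $\Map(\T X,-)\simeq \Map(X, U-)$ commutes with sifted colimits, since $X$ is compact projective in the base and $U$ preserves sifted colimits. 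Moreover, since $U$ is conservative (monadicity) and the generators $X$ jointly detect equivalences in $\Mod^{\Delta^{1}_{\vee}}_{\mathbb{Z},\geq 0}$, the collection $\C^{0}$ jointly detects equivalences in $\DAlg^{\Delta^{1}_{\vee},\pd}_{\mathbb{Z},\geq 0}$.

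Finally I would invoke the universal property of $\mathcal{P}_{\Sigma}$: the inclusion $\C^{0}\hookrightarrow\DAlg^{\Delta^{1}_{\vee},\pd}_{\mathbb{Z},\geq 0}$ extends uniquely to a sifted-colimit-preserving functor $\Phi:\mathcal{P}_{\Sigma}(\C^{0})\to \DAlg^{\Delta^{1}_{\vee},\pd}_{\mathbb{Z},\geq 0}$. Fully faithfulness of $\Phi$ follows from compact projectivity of $\C^{0}$ together with the universal property; essential surjectivity follows because any $A\in \DAlg^{\Delta^{1}_{\vee},\pd}_{\mathbb{Z},\geq 0}$ is the geometric realization of its bar resolution $|\T^{\bullet+1}U A|_{\bullet}$, and $UA\in \Mod^{\Delta^{1}_{\vee}}_{\mathbb{Z},\geq 0}$ is itself a sifted colimit of objects in $\Mod^{\Delta^{1}_{\vee}}_{\mathbb{Z},0}$. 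The main subtlety lies not in the formal argument but in the two explicit computational ingredients of the previous paragraph — Propositions \ref{pd_envelope_of_cp} and \ref{commutation_with_coproducts} — which identify the free divided power Smith ideals on the compact generators and ensure that coproducts of such are again of the same form; once these are in hand, the equivalence follows from the standard formalism of non-abelian derived functors.
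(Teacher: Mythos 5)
Your proposal is correct and takes essentially the same route as the paper's proof: monadicity of $\DAlg^{\Delta^{1}_{\vee},\pd}_{\mathbb{Z},\geq 0}$ over $\Mod^{\Delta^{1}_{\vee}}_{\mathbb{Z},\geq 0}$ with a sifted-colimit-preserving monad identifies the compact projective generators with the free objects on finite direct sums of $(0\rightarrow\mathbb{Z})$ and $(\Id:\mathbb{Z}\rightarrow\mathbb{Z})$, and these are identified with $\C^{0}$ via Proposition \ref{pd_envelope_of_cp} together with the fact that coproducts of free objects are tensor products (Proposition \ref{commutation_with_coproducts}). Your write-up only differs in spelling out the standard $\mathcal{P}_{\Sigma}$ generation formalism in more detail than the paper does.
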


\begin{proof}
Indeed, by monadicity of $\DAlg^{\Delta^{1}_{\vee},\pd}_{\mathbb{Z},\geq 0}$ over $\Mod_{\mathbb{Z},\geq 0}^{\Delta^{1}_{\vee}}$ and preservation of sifted colimits by the monad, it follows that compact projective generators of $\DAlg^{\Delta^{1}_{\vee},\pd}_{\mathbb{Z},\geq 0}$ are the free objects on finite coproducts of $0 \rightarrow \mathbb{Z}$ and $\Id: \mathbb{Z} \rightarrow \mathbb{Z}$ (the latter being compact projective generators of $\Mod_{\mathbb{Z},\geq 0}^{\Delta^{1}_{\vee}}$). Since coproducts in $\DAlg^{\Delta^{1}_{\vee},\pd}_{\mathbb{Z},\geq 0}$ are the same as tensor products, the statement follows from the computation of Proposition \ref{pd_envelope_of_cp}.
\end{proof}

\subsubsection{Filtered divided power algebras.}

In this subsection we will define filtered divided power algebras. A typical derived filtered divided power algebra $F^{\geq \star}A$ looks like a filtered derived algebra together with a divided power structure on the "filtered Smith ideal" $F^{\geq 1}A:=\fib( F^{\geq \star} A \rightarrow \gr^{0}A)$. To make this precise, a necessary step is to work out the definition of filtered derived power non-unital algebras over a general derived ring. As an intermediate step for this, we will introduce an auxilary construction in the world of ordinary filtered derived rings.

\begin{construction}\label{split_filtrations}
Let $\Fil^{\geq 0}\DAlg_{\mathbb{Z}}$ be the $\infty$-category of filtered derived rings. Given any object $F^{\geq \star} A$, the derived ring $F^{\geq 0}A$ acts on all higher stages of the filtration $F^{\geq i}A$. Assume that the map $F^{\geq 0}A \rightarrow \gr^{0}A $ admits a derived algebra splitting $\gr^{0}A \rightarrow F^{\geq 0}A$. Then $F^{\geq 0}A$ becomes an augmented derived ring $F^{\geq 0}A \simeq \gr^{0} A \oplus F^{\geq 1}A$ endowed with a filtered augmentation ideal $F^{\geq 1}A$, on which $\gr^{0}A$ acts via the section $\gr^{0}A \rightarrow F^{\geq 0}A$. Let $\Fil^{\geq 0}\DAlg^{\aug}_{\mathbb{Z}}$ be the $\infty$-category of filtered derived rings $F^{\geq 0}A$ endowed with a splitting $\gr^{0}A \rightarrow F^{0}A$. Sending $F^{\geq \star}A$ to the pair $(\gr^{0},\Fil^{\geq 1}A)$, we get an equivalence of this $\infty$-category and the $\infty$-category of pairs $(B,F^{\geq 1}B)$ where $B$ is a derived ring, and $F^{\geq 1}B$ is a positively filtered non-unital derived $B$-algebra. The inverse functor associates to a pair $(B,F^{\geq 1}B)$ the augmented filtered algebra $... \rightarrow F^{\geq 1}B \rightarrow B\oplus F^{\geq 1}B$. When $R$ is any connective derived ring, the left hand side can be redefined as the $\infty$-category of under objects

$$
\Fil^{\geq 0}\DAlg_{R}^{\aug}:= (\Fil^{\geq 0}\DAlg_{\mathbb{Z}}^{\aug})_{R/},
$$
where $R$ is considered as a filtered ring inserted in degree $0$\footnote{Note that if we consider $R$ as a filtered ring with constant filtration, then its associated graded ring is $0$. Therefore, any filtered ring receiving a map from $R$ considered with constant filtration, would need to have associated graded ring $0$ too, and therefore constant filtration is not the right approach.}.
\end{construction}

\begin{construction}\label{free_filtered_divided_power_algebra}

Let $R$ be a derived commutative ring. 

\begin{itemize}

\item Assume $R=\mathbb{Z}$. Then the $\infty$-category $\Fil\Mod_{\mathbb{Z}}$ has a derived filtered non-unital divided power algebra $\LGamma^{+,\geq \star}_{\mathbb{Z}}: \Fil\Mod_{\mathbb{Z}} \rightarrow \Fil\Mod_{\mathbb{Z}}$ using Construction \ref{derived_syms_from_geometry} and Construction \ref{LSym_on_fils_from_t_structure}. We let $\Fil\DAlg^{\nonu,\pd}:=\Alg_{\LGamma_{\mathbb{Z}}^{+,\geq \star}}(\Fil\Mod_{\mathbb{Z}}) $.  The subcategory $\Fil^{\geq 1}\Mod_{\mathbb{Z}} \subset \Fil\Mod_{\mathbb{Z}}$ is closed under formation of free $\LGamma_{\mathbb{Z}}^{+,\geq \star }$-algebras. Therefore, there is an $\infty$-category $$\Fil^{\geq 1}\DAlg^{\pd}_{\mathbb{Z}}:= \Fil\DAlg^{\pd}_{\mathbb{Z}}\underset{\Fil\Mod_{\mathbb{Z}}}{\times} \Fil^{\geq 1}\Mod_{\mathbb{Z}}  $$

of \textbf{derived filtered non-unital divided power $R$-algebras}, which is monadic over $\Fil^{\geq 1}\Mod_{\mathbb{Z}}$, and the monad right-left extended from the subcategory $\Ind(\Fil^{\geq 1}\Mod_{\mathbb{Z},0})$ by Remark \ref{right_left_extended_LSym_on_fils}.

\item If $R$ is not necessarily discrete, or even connective, we can define the $\infty$-category $\Fil^{\geq 1}\DAlg^{\pd}_{A}$ using Construction \ref{split_filtrations}. There is a forgetful functor $\Fil^{\geq 1}\DAlg^{\pd}_{\mathbb{Z}} \rightarrow \Fil^{\geq 1}\DAlg_{\mathbb{Z}}$, and a forgetful functor $ \Fil^{\geq 0}\DAlg^{\aug}_{\mathbb{Z}} \rightarrow \Fil^{\geq 1}\DAlg_{\mathbb{Z}}$.  Then we define the $\infty$-category of \textbf{augmented filtered derived divided power rings} as the pull-back

$$
\xymatrix{  \Fil^{\geq 0}\DAlg_{\mathbb{Z}}^{\pd,\aug} \ar[d] \ar[r] & \Fil^{\geq 0}\DAlg_{\mathbb{Z}}^{\aug} \ar[d] \\
\Fil^{\geq 1}\DAlg_{\mathbb{Z}}^{\pd} \ar[r]& \Fil^{\geq 1}\DAlg_{\mathbb{Z}}, }
$$
i.e. an augmented filtered divided power ring is an augmented filtered derived ring endowed with a divided power structure on the filtered augmentation ideal. Let $\Fil^{\geq 0}\Mod_{\mathbb{Z}}^{\aug}$ be the $\infty$-category of \textbf{augmented filtrations}, i.e. object $F^{\geq \star} \in \Fil^{\geq 0}\Mod_{\mathbb{Z}}$ endowed with a splitting of the map $F^{\geq 0} \rightarrow \gr^{0}_{F}$. We think of objects of $\Fil^{\geq 0}\Mod_{\mathbb{Z}}^{\aug}$ as pairs $(X, F^{\geq 1}X)$. The $\infty$-category $\Fil^{\geq 0}\DAlg_{\mathbb{Z}}^{\pd,\aug}$ is monadic over $\Fil^{\geq 0}\Mod^{\aug}_{\mathbb{Z}}$. Let $\xymatrix{  (\LSym, \LGamma^{\geq \star}): \Fil^{\geq 0}\Mod_{\mathbb{Z}}^{\aug} \ar[r]& \Fil^{\geq 0}\Mod_{\mathbb{Z}}^{\aug}     } $ be the resulting monad. It satisfies the formula $$(\LSym, \LGamma^{\geq \star}) (X, F^{\geq 1}X) \simeq (\LSym_{\mathbb{Z}}(X), \LGamma^{\geq \star}_{\LSym_{\mathbb{Z}}(X)}  ( F^{\geq 1}X \otimes \LSym_{\mathbb{Z}}(X))   ) .$$

 By definition, the fiber of the forgetful functor $\ev^{0}: \Fil^{\geq 0}\DAlg_{\mathbb{Z}}^{\pd,\aug} \rightarrow \DAlg_{\mathbb{Z}} $ over some $A$ is the $\infty$-category of \textbf{non-unital filtered derived divided power $A$-algebras}\footnote{Compare this with Definition \ref{pd_over_nonconnective}.}. We denote $\LGamma_{A}^{\geq \star}: \Fil^{\geq 1}\Mod_{A} \rightarrow \Fil^{\geq 1}\Mod_{A}$ he free algebra monad. Similarly, if $R$ is a connective derived ring, we can consider $R$ as a filtered derived ring inserted in degree $0$, and let $$\Fil^{\geq 0}\DAlg_{R}^{\pd,\aug}:= \Fil^{\geq 0}\DAlg_{\mathbb{Z}}^{\pd,\aug} \underset{\Fil^{\geq 0}\DAlg_{\mathbb{Z}}^{\aug} }{\times} \Fil^{\geq 0}\DAlg_{R}^{\aug}. $$

Similarly, we have an $\infty$-category of \textbf{graded derived divided power rings} $\Gr^{\geq 0}\DAlg^{\pd}_{R}$ whose fiber over any $A\in \DAlg_{R}$ is the $\infty$-category of \textbf{non-unital graded derived divided power $A$-algebras}, and the free algebra monad is denoted as $\LGamma^{+,\bullet}_{A}: \Gr^{\geq 0}\Mod_{A} \rightarrow \Gr^{\geq 1}\Mod_{A}. $

\end{itemize}

\end{construction}

For future purposes, we specifically fix the following notation.

\begin{notation}\label{free_adjunction_for_gradeds}
Let $\DAlg_{R}\Mod$ be the $\infty$-category of pairs $(A,M)$, where $A\in \DAlg_{R}$ and $M \in \Mod_{A}$. The functor of \textbf{free graded divided power algebra on a module}  is the functor:

$$\xymatrix{ \DAlg_{R}\Mod \ar[rr]^-{\LGamma^{\bullet}}&& \Gr^{\geq 0}\DAlg_{R}^{\pd}    }$$ 
which sends a pair $(A,M) \in \DAlg_{R}\Mod$ to $(A,\LGamma^{+,\bullet}_{A}(M))$

The right adjoint of this functor is the functor $$\xymatrix{\Gr^{\geq 0}\DAlg_{R}^{\pd} \ar[rrr]^-{ (-)^{\{0,1\}}\circ \: \forget_{\pd}}&&& \DAlg_{R}\Mod.}$$

In the same vein, we define a \textbf{free graded derived algebra on a module} $\LSym^{\bullet}_{A}(M)$ for any pair $(A,M) \in \DAlg_{R}\Mod$.
\end{notation}

\begin{ex}
The free filtered non-unital divided power algebra $\Gamma^{\geq 1}_{\mathbb{Z}}(M)$ on a finitely gerated free $\mathbb{Z}$-module $M$ (put in filtered weight $1$) has filtration with stages $$\Gamma_{\mathbb{Z}}^{\geq n}(M):=\bigoplus_{i=n}^{\infty} \Gamma^{i}_{\mathbb{Z}}(M),$$ where $n\geq 1$. Concretely, the $n$-th stage of filtration consists of elements of the form: 

$$
\Gamma^{ \geq n}_{\mathbb{Z}}(M) = \{ \sum_{i_{1}+...i_{k}\geq n} \gamma^{i_{1}}(x_{1})... \gamma^{i_{k}}(x_{k}) | x_{1},...,x_{k} \in M, k \geq 1\}.
$$

\end{ex}

\begin{construction}\label{splitting_fils}
We define a functor $\Fil^{\geq 0}\DAlg_{R} \rightarrow \Fil^{\geq 0}\DAlg_{R}^{\aug}$ by sending an object $F^{\geq \star}A$ to the augmented filtered derived algebra $... \rightarrow F^{\geq 1} A \rightarrow F^{\geq 0}A \oplus F^{\geq 1}A $ so that the map $F^{\geq 0} A\oplus F^{\geq 1} A \rightarrow F^{\geq 0} A $ has a (zero) section\footnote{This is analogous to the functor $(\Fil \rightarrow \Gr):\DAlg^{\Delta^{1}_{\vee}}_{R} \rightarrow \DAlg^{\{0,1\},\vee}_{R} \simeq \DAlg^{\Delta^{1}_{\vee},\aug}_{R} $ which sends a Smith ideal $(I\rightarrow A)$ to the augmented Smith ideal $(I \rightarrow A\oplus I)$.}. 
\end{construction}

\begin{defn}\label{fil_pds}
 We define an $\infty$-category $\Fil^{\geq 0} \DAlg^{\pd}_{R}$  of \textbf{filtered derived divided power (pd) algebras}  as the fiber square

$$
\xymatrix{\Fil^{\geq 0} \DAlg^{\pd}_{R} \ar[d] \ar[rr]&& \Fil^{\geq 0}\DAlg_{R} \ar[d]\\
\Fil^{\geq 0}\DAlg_{R}^{\pd,\aug}  \ar[rr]&& \Fil^{\geq 0}\DAlg^{\aug}_{R},  }
$$
where the lower horizontal functor is the forgetful functor, and the right vertical functor is as defined in Construction \ref{splitting_fils}.
\end{defn}

\begin{rem}\label{triv_obs}
Before the next construction, we make a trivial observation. The functor $\ev^{[0,1]}: \Fil^{\geq 0}\DAlg_{R} \rightarrow \DAlg^{\Delta^{1}_{\vee}}_{R}$ constructed in Construction \ref{ev[0,1]_for_derived_rings} lifts to a functor $$\ev^{[0,1]}: \Fil^{\geq 0}\DAlg_{R}^{\aug} \xymatrix{\ar[r]&} \DAlg^{\Delta^{1}_{\vee}, \aug}_{R} \simeq \DAlg_{R}^{\{0,1\},\vee}$$ as for any object $F^{\geq \star}A \in \Fil^{\geq 0}\DAlg_{R}^{\aug}$, the Smith ideal $F^{\geq 1}A \rightarrow F^{0}A $ has a splitting.
\end{rem}

\begin{construction}\label{fils_to_nonus}
We will construct a functor $\ev^{[0,1]}: \Fil^{\geq 0}\DAlg_{R}^{\pd,\aug} \rightarrow \DAlg^{\{0,1\},\vee,\pd}_{R}$ for any connective derived commutative ring $R$, lifting the functor $\ev^{[0,1]} : \Fil^{\geq 0}\DAlg^{\aug}_{R} \rightarrow \DAlg^{\{0,1\},\vee}_{R}$ provided by Remark \ref{triv_obs}. This is sufficient to do in the case $A=\mathbb{Z}$. In this case, the derived divided power algebra monad is right-left extended. We observe that there is a commutative diagram

$$
\xymatrix{    \Ind(\Fil^{\geq 0}\Mod^{\aug}_{\mathbb{Z},0}) \ar[d]_-{(\LSym, \LGamma^{\geq \star})} \ar[r]^-{\ev^{[0,1]}} &\Ind( \Mod_{\mathbb{Z},0}^{\{0,1\},\vee}) \ar[d]^-{(\LSym, \LGamma^{+})} \\
 \Ind(\Fil^{\geq 0}\Mod^{\aug}_{\mathbb{Z},0}) \ar[r]_-{\ev^{[0,1]}}&\Ind( \Mod_{\mathbb{Z},0}^{\{0,1\},\vee})     }
$$
Therefore, by right-left extension we obtain a functor $\ev^{[0,1]}: \Fil^{\geq 0}\DAlg_{\mathbb{Z}}^{\pd,\aug} \rightarrow \DAlg^{\{0,1\},\vee,\pd}_{\mathbb{Z}}. $
\end{construction}

For a commutative algebra $A$ with a divided power ideal $I\subset A$, we can construct the filtration by ideals $... \subset I^{[3]} \subset I^{[2]} \subset I \subset A$ where $I^{[n]}$ is spanned by elements $x_{1}^{[i_{1}]} ... x_{n}^{[i_{n}]}$ with $i_{1}+...+i_{n}\geq n$. The associated graded algebra for this filtration is the free graded divided power algebra $\Gamma_{A/I}(I/I^{2})=\bigoplus_{n\geq 0} I^{[n]}/I^{[n+1]}$.

\begin{construction}\label{generalized_adic}
The functor $\ev^{[0,1]}$ lifts to a limit preserving functor $\ev^{[0,1]}: \Fil^{\geq 0}\DAlg^{\pd} \rightarrow \DAlg_{R}^{\Delta^{1}_{\vee},\pd}$ on the $\infty$-categories of derived divided power algebras. For that, using Definitions \ref{fil_pds} and \ref{divided_power_pairs}, we need to show that the functor $\ev^{1}: \Fil^{\geq 1}\DAlg^{\pd}_{A} \rightarrow \DAlg^{\nonu}_{A}$ lifts to a functor $\ev^{1}: \Fil^{\geq 1}\DAlg^{\pd}_{A} \rightarrow \DAlg^{\pd,\nonu}_{A}$ naturally for any $A\in \DAlg_{R}$, which is done in Construction \ref{fils_to_nonus}. We let the functor of \textbf{derived divided power (pd) adic filtration}
$$
\xymatrix{\adic_{\pd}: \DAlg_{R}^{\Delta^{1}_{\vee},\pd} \ar[r]& \Fil^{\geq 0}\DAlg^{\pd}_{R}  }
$$
to be left adjoint of $\ev^{[0,1]}: \Fil^{\geq 0}\DAlg^{\pd}_{R} \rightarrow \DAlg_{R}^{\Delta^{1}_{\vee},\pd}$.

Similarly, there is a functor $\ev^{[0,1]}: \Fil^{\geq 0}\DAlg_{R} \rightarrow \DAlg_{R}^{\Delta^{1}_{\vee}}$ and its left adjoint \textbf{derived adic filtration} functor $\adic: \DAlg_{R}^{\Delta^{1}_{\vee}} \rightarrow \Fil^{\geq 0}\DAlg_{R}$.

\end{construction}

In the next section we will compute associated gradeds of the (pd) adic filtration. We will also identify the first truncation of the (pd) adic filtraton with the universal square-zero extension.

\subsection{Two-step filtrations and square-zero extensions.}

In this subsection we will develop the formalism of square-zero extensions from the point of view of two-step filtrations. Our motivation for the constructions of this subsection comes from the following situation. Let $\LOmega^{\geq \star}_{A/R}$ be the derived De Rham cohomology of $A$ over $R$. We can truncate it by filtered pieces of weights $\geq 2$ to get a "derived square-zero extension" of $A$ by the shift of the cotangent complex $\LL_{A/R}[-1]$. It is expectable that this square-zero extension is controlled by the \emph{universal derivation}. The formalism developed in this subsection gives a simple proof of this fact which has the advantage of being simple to generalize to other settings.

\begin{defn}
Let $\{0,1\}$ be the subset of $\mathbb{Z}^{\ds}$ consisting of two objects $0,1 \in \mathbb{Z}$, considered as a discrete category. Similarly, let $(0<1)$ be the full subcategory of $\mathbb{Z}_{\leq *}$ spanned by these two objects. Fix a connective derived commutative ring $R$.

\begin{itemize}

\item We define the $\infty$-category of \textbf{two-step graded} objects  as follows: $\Gr^{\{0,1\}}(\Mod_{R}):= \Fun(\{0,1\}^{\op},\Mod_{R})$. An object of this $\infty$-category is the same as a pair of objects $(X^{0}, X^{1})$. This $\infty$-category is a localization of the $\infty$-category $\Gr^{\geq 0}\Mod_{R}$. We let $(-)^{\{0,1\}}: \Gr^{\geq 0}\Mod_{R} \rightarrow \Gr^{\{0,1\}}\Mod_{R}$ to be the localization functor. It sends a positively graded object $\{X^{i}\}_{i\geq 0} \in \Gr^{\geq 0} \Mod_{R}$ to the two-step graded object $(X^{0},X^{1}) \in \Gr^{\{0,1\}}\Mod_{R}$. The $\infty$-category $\Gr^{\{0,1\}}\Mod_{R}$ has a symmetric monoidal structure such that the localization functor is symmetric monoidal.

\item Similarly, we define the $\infty$-category of \textbf{two-step filtrations} $\Fil^{[0,1]}(\Mod_{R}):= \Fun((0< 1)^{\op}, \Mod_{R})$. An object of this $\infty$-category is the same as a map $X^{1}\rightarrow X^{0}$ in $\Mod_{R}$. As with two-step graded objects, this $\infty$-category is a localization of $\Fil^{\geq 0} \Mod_{R}$. The localization functor $(-)^{\leq 1}: \Fil^{\geq 0}\Mod_{R} \rightarrow \Fil^{[0,1]}\Mod_{R} $ is given by the formula

$$
(... \rightarrow X^{2} \rightarrow X^{1} \rightarrow X^{0})^{\leq 1} \simeq (X^{1}/X^{2} \rightarrow X^{0}/X^{2}).
$$

The $\infty$-category $\Fil^{[0,1]}\Mod_{R}$ has a symmetric monoidal structure making the localization functor symmetric monoidal. 

\end{itemize}
\end{defn}

Let $\DAlg_{R}\Mod$ be the $\infty$-category of pairs $(A,M)$ with $A\in \DAlg_{R}$ and $M \in \Mod_{A}$, and $\Gr^{\{0,1\}}\Mod_{R}\subset \Gr^{\geq 0}\Mod_{R}$ be the full subcategory of non-negatively filtered $R$-module spectra consisting of objects concentrated in two degrees. There is a symmetric monoidal structure on $\Gr^{\{0,1\}}\Mod_{R}$ such that the functor $(-)^{\{0,1\}}: \Gr^{\geq 0}\Mod_{R} \rightarrow \Gr^{\{0,1\}}\Mod_{R}$, left adjoint to the inclusion, is symmetric monoidal. The following observation is due to A.Raksit, see \cite{R}. 

\begin{prop}
There is an equivalence of $\infty$-categories $$\xymatrix{\Gr^{\{0,1\}}\DAlg_{R} \simeq  \DAlg_{R} \Mod.}$$
\end{prop}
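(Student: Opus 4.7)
The plan is to present both sides as monadic over the common base $\Mod_R \times \Mod_R \simeq \Gr^{\{0,1\}}\Mod_R$ (by evaluation at the two gradings), and to identify the two monads. First I would construct a natural comparison functor $\Phi: \Gr^{\{0,1\}}\DAlg_R \to \DAlg_R\Mod$ sending a derived commutative algebra $(A, M)$ in $\Gr^{\{0,1\}}\Mod_R$ to the pair consisting of the degree-$0$ derived algebra $A$ together with the module $M$, whose $A$-action comes from the degree-$1$ component of the multiplication map, i.e.\ from the map $(A \otimes M \oplus M \otimes A)^{1} = A \otimes M \to M$ extracted from $(A, M) \otimes (A, M) \to (A, M)$ in the Day-convolution tensor product on $\Gr^{\{0,1\}}\Mod_R$. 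By construction, $\Phi$ commutes with the forgetful functors to $\Gr^{\{0,1\}}\Mod_R \simeq \Mod_R \times \Mod_R$.

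Next I would check that both forgetful functors are monadic. For the left-hand side, $\Gr^{\{0,1\}}\DAlg_R$ is defined as derived commutative algebras in the localization $\Gr^{\{0,1\}}\Mod_R$ of $\Gr^{\geq 0}\Mod_R$; since the localization is a symmetric monoidal left adjoint, the forgetful functor is conservative, preserves limits, and preserves sifted colimits, so by Barr–Beck–Lurie is monadic. For $\DAlg_R\Mod \to \Mod_R \times \Mod_R$, conservativity is obvious, limits are computed pointwise, and sifted colimits are preserved because sifted colimits in $\DAlg_R$ are computed in $\Mod_R$ and module categories interact well with algebra colimits (base change identifies $M \otimes_{A_i} A \to M$ for sifted systems).

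The main computation is to match the two free-algebra monads on the pair $(X^0, X^1) \in \Mod_R \times \Mod_R$. On the $\DAlg_R\Mod$ side, the free object is evidently $(\LSym_R X^0, \, \LSym_R X^0 \otimes X^1)$: the free derived algebra on $X^0$ together with the free module on $X^1$. On the $\Gr^{\{0,1\}}\DAlg_R$ side, the free object is $\LSym_R\!\bigl(X^0(0) \oplus X^1(1)\bigr)^{\{0,1\}}$. Using the splitting $\LSym_R(V \oplus W) \simeq \LSym_R V \otimes \LSym_R W$ and noting that $\LSym^n_R(X^1(1))$ is concentrated in grading $n$, truncation to $\{0,1\}$ retains exactly $\LSym_R X^0 \otimes (R \oplus X^1(1))$, giving the same pair $(\LSym_R X^0, \LSym_R X^0 \otimes X^1)$. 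Thus $\Phi$ carries free objects to free objects.

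Finally I would conclude by a monadic-comparison argument: since $\Phi$ is conservative, preserves limits and sifted colimits, and matches free algebras on generators compatibly with the unit and multiplication, the induced map of monads on $\Mod_R \times \Mod_R$ is an equivalence, and hence $\Phi$ is an equivalence. The main obstacle is the last verification, namely that $\Phi$ intertwines the monad \emph{structure maps} (not merely the underlying endofunctors); the cleanest route is to express both monads as algebras in $\End^\Sigma(\Mod_R \times \Mod_R)$ arising from compatible free–forgetful adjunctions, and then use that the natural transformation is already an equivalence on the compact projective generators where both monads are right-left extended.
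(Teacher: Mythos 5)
Your proposal is correct and is essentially the argument the paper has in mind: the paper states this proposition without proof (deferring to Raksit), but its immediately following proposition, $\LSym_{R}^{\{0,1\}}(M^{0}\oplus M^{1})\simeq \LSym_{R}(M^{0})\oplus M^{1}$, proved by observing that the symmetric monoidal truncation kills all weight-$\geq 2$ pieces and hence all higher symmetric powers of the weight-$1$ summand, is exactly your key free-algebra computation. Your monadic scaffolding over $\Mod_{R}\times\Mod_{R}$ is the standard way to promote that computation to the stated equivalence, and the two points you flag (coherently extracting the $A$-action on $M$ from the Day-convolution multiplication, and checking the comparison respects the monad structure, via right-left extension from the compact projective generators) are indeed the only places requiring care and do go through.
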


\begin{rem}
As the functor $\Fil^{[0,1]}\Mod_{R} \rightarrow \Fil^{\geq 0}\Mod_{R}$ (respectively, $\Gr^{\{0,1\}}\Mod_{R} \rightarrow \Gr^{\geq 0}\Mod_{R}$) is an inclusion, we can say that a derived commutative algebra in $\Fil^{[0,1]}\Mod_{R} $ (respectively, in $\Gr^{\{0,1\}}\Mod_{R}$) is the same as a derived commutative algebra in $\Fil^{\geq 0}\Mod_{R}$ ($\Gr^{\geq 0}\Mod_{R}$) whose underlying linear object is in $\Fil^{[0,1]}\Mod_{R} $ ($\Gr^{\{0,1\}}\Mod_{R}$). We will use notations $\Fil^{[0,1]}\DAlg_{R}$ for the $\infty$-category of \textbf{two-step filtered derived algebras} and $\Gr^{\{0,1\}}\DAlg_{R}$ for the $\infty$-category of \textbf{two-step graded derived algebras} respectively.
\end{rem}

We mentioned an equivalence $\Gr^{\{0,1\}}\DAlg_{R} \simeq \DAlg_{R}\Mod $ before. Let us now compute the free two-step graded derived commutative algebra $\LSym_{R}^{\{0,1\}}(M^{0}\oplus M^{1})$ on an object $M^{0}\oplus M^{1} \in \Gr^{\{0,1\}}\Mod_{R}$. For a derived commutative ring $A$ and an $A$-module, we let $A\oplus M$ be the derived commutative algebra in $\Gr^{\{0,1\}}\Mod_{R}$ formed by taking a trivial square zero extension of $A$ by $M$ put in weight 1. 

\begin{prop}\label{killing_higher_powers}
There is an equivalence of derived commutative rings in $\Gr^{\{0,1\}}\Mod_{R} $:

$$
\LSym_{R}^{\{0,1\}}(M^{0}\oplus M^{1}) \simeq \LSym_{R}(M^{0}) \oplus M^{1}.
$$

\end{prop}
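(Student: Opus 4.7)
The plan is to exploit the fact that the localization $(-)^{\{0,1\}}\colon \Gr^{\geq 0}\Mod_{R}\to\Gr^{\{0,1\}}\Mod_{R}$ is a symmetric monoidal left adjoint, since it is the Bousfield localization that kills all graded pieces of weight $\geq 2$. Being strong symmetric monoidal, it carries derived commutative algebras to derived commutative algebras and intertwines the free-algebra monads; in particular there is a natural equivalence
\[
\LSym^{\{0,1\}}_{R}\bigl((-)^{\{0,1\}}\bigr)\;\simeq\;(-)^{\{0,1\}}\circ\LSym^{\bullet}_{R}.
\]
So it suffices to compute the free graded derived commutative algebra $\LSym^{\bullet}_{R}(M^{0}\oplus M^{1})$ on $M^{0}\oplus M^{1}$ (viewed as the graded object with $M^{0}$ placed in weight $0$ and $M^{1}$ in weight $1$) and then take its weight-$\leq 1$ truncation.

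First I would use that $\LSym^{\bullet}_{R}\colon \Gr^{\geq 0}\Mod_{R}\to \Gr^{\geq 0}\DAlg_{R}$ is a left adjoint from a (cocartesian) symmetric monoidal $\infty$-category into $\Gr^{\geq 0}\DAlg_{R}$, hence transforms coproducts into tensor products:
\[
\LSym^{\bullet}_{R}(M^{0}\oplus M^{1})\;\simeq\;\LSym^{\bullet}_{R}(M^{0})\otimes \LSym^{\bullet}_{R}(M^{1}).
\]
Here $\LSym^{\bullet}_{R}(M^{0})$ is concentrated in weight $0$ and equals $\LSym_{R}(M^{0})$, while $\LSym^{\bullet}_{R}(M^{1})=\bigoplus_{n\geq 0}\LSym^{n}_{R}(M^{1})$ has its $n$-th symmetric power sitting in weight $n$ by the Day-convolution formula for the graded tensor product.

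Next I would apply the truncation $(-)^{\{0,1\}}$. Using the formula \eqref{graded_tensor_product} for tensor products in $\Gr^{\geq 0}\Mod_{R}$, the weights $0$ and $1$ of $\LSym_{R}(M^{0})\otimes\LSym^{\bullet}_{R}(M^{1})$ are, respectively,
\[
\LSym_{R}(M^{0})\otimes R\;\simeq\;\LSym_{R}(M^{0})\qquad\text{and}\qquad \LSym_{R}(M^{0})\otimes M^{1},
\]
since all higher symmetric powers of $M^{1}$ are killed by the truncation. Under the equivalence $\Gr^{\{0,1\}}\DAlg_{R}\simeq \DAlg_{R}\Mod$, the resulting two-step graded algebra is precisely the pair $(\LSym_{R}(M^{0}),\,\LSym_{R}(M^{0})\otimes_{R} M^{1})$, which is exactly the trivial square-zero extension $\LSym_{R}(M^{0})\oplus M^{1}$ in the notation of the statement.

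The only genuine check is that $(-)^{\{0,1\}}$ is symmetric monoidal with respect to the inherited Day-convolution structures — which is immediate since it is smashing localization at the $\otimes$-ideal generated by objects concentrated in weights $\geq 2$ — and that the multiplication produced by the free-algebra construction is identified with the square-zero multiplication under $\Gr^{\{0,1\}}\DAlg_{R}\simeq \DAlg_{R}\Mod$; this is forced because no tensor product of two weight-$1$ classes survives in $\Gr^{\{0,1\}}$, so the only nontrivial piece of multiplication is the $\LSym_{R}(M^{0})$-action on the weight-$1$ part, which is the defining property of the trivial square-zero extension.
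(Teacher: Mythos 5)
Your proposal is correct and follows essentially the same route as the paper: both identify $\LSym^{\{0,1\}}_{R}(M^{0}\oplus M^{1})$ with the weight-$\leq 1$ truncation of the free graded algebra $\LSym^{\bullet}_{R}(M^{0}\oplus M^{1})$ and then observe that the $n$-th derived symmetric power of the weight-$1$ piece $M^{1}$ sits in weight $n$, so the truncation kills everything but $\LSym_{R}(M^{0})$ and the weight-$1$ summand, forcing the square-zero multiplication. The extra decomposition $\LSym^{\bullet}_{R}(M^{0})\otimes\LSym^{\bullet}_{R}(M^{1})$ and the explicit remark on symmetric monoidality of the localization are just more detailed versions of steps the paper takes for granted.
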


\begin{proof}

Indeed, let $\LSym_{R}^{\bullet }(M^{0} \oplus M^{1})$ be the free graded derived commutative ring on $(M^{0},M^{1})$ considered as an object of $\Gr^{\geq 0} \Mod_{R}$. There is a map of derived commutative algebras $\LSym^{\bullet}_{R}(M^{0}\oplus M^{1}) \rightarrow \LSym_{R} (M^{0}) \oplus M^{1}$ obtained by moding out all higher symmetric powers of $M^{1}$. Applying symmetric monoidal truncation functor $(-)^{\leq 1}:\Gr^{\geq 0}\Mod_{R} \rightarrow \Gr^{\{0,1\}}\Mod_{R}$, we obtain a map $\LSym_{R}^{\{0,1\} } (M^{0}\oplus M^{1}) \simeq \LSym^{\bullet}_{R}(M^{0}\oplus M^{1})^{\leq 1} \rightarrow \LSym_{R} (M^{0}) \oplus M^{1}$. By definition, truncation functor kills all pieces in weights $\geq 2$. Since $n$-th derived symmetric power of an object in weight $1$ is in weight $n$, it follows that truncation of a free derived commutative algebra on an object $M^{0}\oplus M^{1}$ kills all higher symmetric powers of $M^{1}$ of degree $n\geq 2$. Therefore, the constructed map is an equivalence.
\end{proof}

The following remark enables us to define a symmetric monoidal $\infty$-category of $\mathbb{D}_{-}$-module objects in $\Gr^{\{0,1\}}\Mod_{R}.$

\begin{construction}\label{D_-_in_0,1}
The algebra $\mathbb{D}_{-}$ has the structure of a \emph{derived cocommutative bialgebra} in $\Gr^{\{0,1\}}\Mod_{R}$. To see this, take the free graded associative algebra $\T(R(1) [-1])$ on the shift of the object $R(1)[-1] \in \Gr^{\geq 0} \Mod_{R}$. It has a structure of a derived cocommutative bialgebra in $\Gr^{\geq 0} \Mod_{R}$. Applying symmetric monoidal truncation functor $(-)^{\leq 1}$, wee see that the trivial square zero extension algebra $\T(R (1)[-1])^{\leq 1} \simeq R \oplus R(1)[-1] \simeq \mathbb{D}_{-}$ gets the structure of a derived cocommutative bialgebra in $\Gr^{\{0,1\}}\Mod_{R}$. 
\end{construction}

\begin{defn}
Using Construction \ref{D_-_in_0,1}, we define an $\infty$-category of \textbf{differential two-step graded modules} $$\DG^{\{0,1\}}_{-}\Mod_{R} := \Mod_{\mathbb{D}_{-}} (\Gr^{\{0,1\}} \Mod_{R})$$ as $\mathbb{D}_{-}$-modules in $\Gr^{\{0,1\}}\Mod_{R}$ with the symmetric monoidal structure coming from the cocommutative bialgebra structure on 
$\mathbb{D}_{-}$. In particular, the forgetful functor $\DG^{\{0,1\}}_{-} \Mod_{R} \rightarrow \Gr^{\{0,1\}} \Mod_{R}$ is symmetric monoidal. We will also consider the $\infty$-category 
$$\DG^{\{0,1\}}\DAlg_{R} :=\DAlg(\DG^{\{0,1\}}_{-} \Mod_{R})$$ of derived commutative algebra objects in this symmetric monoidal $\infty$-category, which is the same as derived differential graded algebras concentrated in degrees $0,1$. These are referred to as \textbf{derived differential two-step graded algebras in $\Mod_{R}$}.
\end{defn}

The filtered cousine of differential two-step graded algebras is the $\infty$-category $\Fil^{[0,1]}\DAlg_{R}$ of non-negatively filtered derived algebras which are concentrated in degrees $0,1$. We can also understand them as derived commutative algebras in the symmetric monoidal $\infty$-category $\Fil^{[0,1]}\Mod_{R}$ (with the symmetric monoidal structure making truncation functor $(-)^{\leq 1}: \Fil^{\geq 0}\Mod_{R} \rightarrow \Fil^{[0,1]}\Mod_{R}$ symmetric monoidal. Note that the functor $\gr:\Fil^{\geq 0}\Mod_{R} \rightarrow \DG_{-}^{\geq 0} \Mod_{R} $ restricts to a symmetric monoidal functor $\gr: \Fil^{[0,1]}\Mod_{R} \rightarrow \DG_{-}^{\{0,1\}} \Mod_{R} $.

\begin{prop}\label{two_step_fils_via_differential}
The functor $$\xymatrix{\gr: \Fil^{[0,1]}\Mod_{R} \ar[r]^-{\sim} & \DG_{-}^{\{0,1\}}  \Mod_{R}}$$  is an equivalence of symmetric monoidal $\infty$-categories.
\end{prop}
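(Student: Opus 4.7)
The approach is to identify both sides with familiar arrow-like categories, produce an explicit inverse to $\gr$, and then promote the underlying equivalence to a symmetric monoidal one via a descent argument from the equivalence $\gr : \widehat{\Fil\Mod}_R \simeq \DG_{-}\Mod_R$ of Theorem \ref{FilMod_and_DGMod}. I will begin by unpacking the data on both sides. An object of $\Fil^{[0,1]}\Mod_R$ is simply an arrow $X^{\geq 1} \to X^{\geq 0}$ in $\Mod_R$. An object of $\DG_{-}^{\{0,1\}}\Mod_R = \Mod_{\mathbb{D}_{-}}(\Gr^{\{0,1\}}\Mod_R)$ is a pair $(Y^0, Y^1)$ together with a $\mathbb{D}_{-}$-action. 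Since $\mathbb{D}_{-} = R \oplus R[-1](1)$ and the ambient tensor product in $\Gr^{\{0,1\}}\Mod_R$ annihilates weights $\geq 2$, the action reduces to a single map $\alpha : Y^0 \to Y^1[1]$, and all higher associativity coherences are automatic for the same weight reason.

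In these terms, $\gr$ sends $(X^{\geq 1} \to X^{\geq 0})$ to $(X^{\geq 0}/X^{\geq 1}, X^{\geq 1})$ with $\alpha$ being the boundary map of the cofiber sequence $X^{\geq 1} \to X^{\geq 0} \to X^{\geq 0}/X^{\geq 1}$. The candidate inverse $\Psi$ sends $(Y^0, Y^1, \alpha)$ to the arrow $Y^1 \to \fib(\alpha : Y^0 \to Y^1[1])$, obtained by rotating the fiber sequence $\fib(\alpha) \to Y^0 \to Y^1[1]$. Both round-trips $\Psi \circ \gr$ and $\gr \circ \Psi$ follow from standard rotations of fiber/cofiber sequences in the stable $\infty$-category $\Mod_R$. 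As an alternative, one can observe that $\Fil^{[0,1]}\Mod_R$ is the full subcategory of $\widehat{\Fil\Mod}_R$ spanned by objects supported in weights $0, 1$ (these are automatically complete), $\DG_{-}^{\{0,1\}}\Mod_R$ is the analogous full subcategory of $\DG_{-}\Mod_R$, and both subcategories correspond to one another under the equivalence of Theorem \ref{FilMod_and_DGMod}; this yields the underlying equivalence essentially for free.

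The main obstacle is promoting this to a symmetric monoidal equivalence, because the relevant symmetric monoidal structures are not the ones inherited from the full subcategory inclusions but rather the ones induced from $\Fil^{\geq 0}\Mod_R$ and $\Gr^{\geq 0}\Mod_R$ via the symmetric monoidal truncations $(-)^{\leq 1}$ and $(-)^{\{0,1\}}$. To handle this, I would consider the symmetric monoidal composition
\[
\Fil^{\geq 0}\Mod_R \xrightarrow{\gr} \DG_{-}^{\geq 0}\Mod_R \xrightarrow{(-)^{\{0,1\}}} \DG_{-}^{\{0,1\}}\Mod_R,
\]
note that $\gr$ is weight-preserving, so this composition annihilates all information in weights $\geq 2$, and invoke the universal property of the symmetric monoidal Bousfield localization $(-)^{\leq 1} : \Fil^{\geq 0}\Mod_R \to \Fil^{[0,1]}\Mod_R$ to descend the composition to a symmetric monoidal functor $\Fil^{[0,1]}\Mod_R \to \DG_{-}^{\{0,1\}}\Mod_R$. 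This descended functor is readily identified with $\gr$ and, combined with the underlying equivalence established above, yields the desired symmetric monoidal equivalence.
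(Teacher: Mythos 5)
Your proposal is correct and, at its core, takes the same route as the paper: the paper's proof simply restricts the symmetric monoidal equivalence $\gr: \widehat{\Fil\Mod}_{R} \simeq \DG_{-}\Mod_{R}$ of Theorem \ref{FilMod_and_DGMod}, using that two-step filtrations are automatically complete, which is exactly your "alternative" argument for the underlying equivalence. The additional step you supply — descending the composite $\Fil^{\geq 0}\Mod_{R} \xrightarrow{\gr} \DG_{-}^{\geq 0}\Mod_{R} \xrightarrow{(-)^{\{0,1\}}} \DG_{-}^{\{0,1\}}\Mod_{R}$ through the symmetric monoidal localization $(-)^{\leq 1}$ to handle the fact that the two-step subcategories are not closed under the ambient tensor products — is a sound and welcome elaboration of a point the paper's one-line proof leaves implicit.
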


\begin{proof}
Indeed, this follows from the fact that $\gr:\widehat{\Fil^{\geq 0} \Mod } \rightarrow \DG_{-}^{\geq 0}  \Mod$ is an equivalence of symmetric monoidal $\infty$-categories, and the fact that any two-step filtration is automatically complete.
\end{proof}

\begin{cor}\label{two_step_fils_via_differential_1}
The functor $\gr$ lifts to an equivalence of $\infty$-categories

$$
\xymatrix{\gr: \Fil^{[0,1]}\DAlg_{R} \ar[r]^-{\sim} & \DG_{-}^{\{0,1\}}\DAlg_{R}.    }
$$
\end{cor}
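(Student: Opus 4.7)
The plan is to deduce this corollary from the symmetric monoidal equivalence of Proposition \ref{two_step_fils_via_differential} by invoking the functoriality of the $\DAlg$ construction with respect to suitable symmetric monoidal equivalences. In the paper's framework, $\DAlg$ in a target symmetric monoidal $\infty$-category $\C$ is defined as algebras over the $\LSym$-monad, which is built from the $\SSeq^{\gen}$-action on $\C$ inherited from a derived algebraic context structure (Construction \ref{derived_alg_contexts}). So to pass from a symmetric monoidal equivalence to an equivalence of $\DAlg$, one needs to check that this auxiliary data is preserved by $\gr$.

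Concretely, I would first verify that $\Fil^{[0,1]}\Mod_R$ and $\DG_-^{\{0,1\}}\Mod_R$ carry compatible derived algebraic context structures inherited from $\Fil^{\geq 0}\Mod_R$ and $\DG_-^{\geq 0}\Mod_R$ respectively via the symmetric monoidal truncations $(-)^{\leq 1}$ and $(-)^{\{0,1\}}$. The compact projective generators on the filtered side are $R$ and $R(1)$ placed in weight $0$ and $1$; under $\gr$ these correspond, via the shear equivalence of Proposition \ref{shear_1}, to the generators of the negative-$t$-structure heart of $\DG_-^{\{0,1\}}\Mod_R$. On the level of monads, the $\LSym$-monad on a two-step category is obtained from the $\LSym$-monad on the ambient non-negatively filtered/graded category by truncating, which amounts to killing higher symmetric powers of the weight-$1$ piece, exactly as in Proposition \ref{killing_higher_powers}.

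Second, I would check that $\gr$ intertwines the two $\LSym$-monads. Since $\gr$ is a symmetric monoidal equivalence, its push-forward carries the $\LSym$-monad on $\Fil^{[0,1]}\Mod_R$ to some monad on $\DG_-^{\{0,1\}}\Mod_R$; the content is to identify this with the intrinsically constructed $\LSym$-monad there. By the functoriality clause of Construction \ref{derived_alg_contexts} (which is natural in right $t$-exact symmetric monoidal functors carrying chosen generators to chosen generators), this identification reduces to matching generators, which is provided by the shear. Once the two $\LSym$-monads correspond under $\gr$, Construction \ref{monads_adjunctions}(3) applied to the resulting diagram in $\Mnd(\cat)_{\lax}$ — or equivalently Proposition \ref{limits_in_Mod_A_lax} for the trivial indexing diagram — produces an equivalence on the associated $\infty$-categories of algebras $\Fil^{[0,1]}\DAlg_R \simeq \DG_-^{\{0,1\}}\DAlg_R$, covering $\gr$ at the level of underlying objects.

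The main obstacle is the second step: while the $\SSeq^{\gen}$-actions on the two-step filtered and two-step dg sides were constructed separately, their comparison under $\gr$ ultimately rests on the décalage/shear equivalence. In practice, I expect this to reduce to verifying the match on the additive category of generators, where the relevant isomorphism $\LGamma^n_R(M[-1]) \simeq \LLambda^n_R(M)[-n]$ of Proposition \ref{décalage_LSym} does the job, so the remaining work is bookkeeping rather than substantive new input.
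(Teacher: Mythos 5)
Your high-level plan---intertwine the two $\LSym$-monads under $\gr$ and then invoke the monadic formalism---is in the spirit of how the paper handles such comparisons (compare Proposition \ref{two_step_algebras_pullback}), but the two mechanisms you lean on do not work as stated. First, the generator matching: under $\gr$ the filtered generators $R$ and $R(1)$ of $\Fil^{[0,1]}\Mod_{R}$ are sent to the \emph{unshifted} trivial $\mathbb{D}_{-}$-modules $R$ and $R(1)$, which lie in the heart of the neutral $t$-structure on $\DG_{-}^{\{0,1\}}\Mod_{R}$, not the negative one; no shear enters, and the d\'ecalage isomorphism of Proposition \ref{décalage_LSym} plays no role in the two-step setting, because after truncation every power of the weight-one piece in weights $\geq 2$ vanishes---this is exactly Proposition \ref{killing_higher_powers} (and its filtered analogue), which is what reduces both free algebra functors to square-zero extensions. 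Second, and more seriously, your reduction ``identify the transported monad with the intrinsically constructed $\LSym$-monad by matching generators'' is only conclusive if you already know that the intrinsic monad on $\DG_{-}^{\{0,1\}}\Mod_{R}$ is right-left extended from the generator subcategory you chose; but the paper never equips $\DG_{-}^{\{0,1\}}\Mod_{R}$ with a derived algebraic context in the sense of Construction \ref{derived_alg_contexts} (its derived algebras are accessed through the pull-back descriptions \ref{DG_0,1_vs_splittings} and \ref{DG_0,1_algebras_via_derivations}), so that identification is precisely the content of the corollary, and your argument asserts it rather than proves it.

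The intended derivation is much shorter and needs no monad comparison. Two-step filtrations are automatically complete, so $\Fil^{[0,1]}\DAlg_{R}$ is the full subcategory of $\widehat{\Fil^{\geq 0}\DAlg}_{R}$ spanned by algebras whose underlying filtered module lies in $\Fil^{[0,1]}\Mod_{R}$; on the dg side the convention used elsewhere in the paper (Definition \ref{main_definition_01}, and the pull-back square defining $\DG^{\geq 0}_{-}\DAlg^{\pd}_{R}$ at the start of the section on the derived De Rham complex) is that dg algebra categories are obtained from filtered ones by transport of structure along the equivalence of Theorem \ref{FilMod_and_DGMod}. With that reading, the corollary follows by restricting the resulting equivalence on algebras to the full subcategories of objects whose underlying modules are concentrated in weights $0,1$, which correspond to each other under $\gr$ by Proposition \ref{two_step_fils_via_differential}. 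If you prefer to take the intrinsic reading of $\DAlg(\DG_{-}^{\{0,1\}}\Mod_{R})$ seriously, the comparison should be run through the lax-equalizer pull-back \ref{DG_0,1_algebras_via_derivations} (as the paper does in proving Proposition \ref{dg_and_der}), not through a count of negative-heart generators.
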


The category of discrete two-step graded commutative algebras $\Fil^{[0,1]}\CAlg_{R,\heartsuit}$ can be seen to be equivalent to the full subcategory of the category of algebras with a Smith ideal spanned by objects $I \rightarrow A$ for which the multiplication map $I\otimes_{A} I \rightarrow I$ is zero. In other words, this is the category of "generalized square-zero extensions". It is a classical fact in derived algebraic geometry that square-zero extensions are controlled by derivations. The equivalence of Corollary \ref{two_step_fils_via_differential_1} suggests that the right hand side should have a description in terms of derivations, and the left hand-side in terms of derived square-zero Smith ideals. Below we will show that this is indeed the case.

In what follows, we fix a connective derived commutative ring $R$. Recall from Construction \ref{ideals_nonunitals} that there is a lax symmetric monoidal structure on the functor $(\Fil \rightarrow \Gr): \Mod_{R}^{\Delta^{1}_{\vee}} \rightarrow \Mod_{R}^{\{0,1\}, \vee}$.  We observe the following proposition.

\begin{construction}\label{taut_inclusion}
The tautological functor $\Gr^{\{0,1\}}\Mod_{R} \rightarrow \Mod_{R}^{\{0,1\},\vee} $ has a lax symmetric monoidal structure. We can realize it as a composition of two lax symmetric monoidal functors. First, we use lax symmetric monoidal inclusion $\Gr^{\{0,1\}}\Mod_{R} \hookrightarrow \Gr^{\geq 0}\Mod_{R}$. Next, we consider the functor $\ev^{\{0,1\}}: \Gr^{\geq 0}\Mod_{R} \xymatrix{\ar[r]&} \Mod_{R}^{\{0,1\},\vee} $ defined as a composition of lax symmetric monoidal functors 

$$
\xymatrix{\Gr^{\geq 0}\Mod_{R} \ar[r]^-{\ev^{\{0,1\}}} \ar[d]_-{\spl} & \Mod^{\{0,1\},\vee}_{R}    \\
\Fil^{\geq 0}\Mod_{R} \ar[r]_-{\ev^{[0,1]}} & \Mod_{R}^{\Delta^{1}_{\vee}}    \ar[u]_-{(\Fil \rightarrow \Gr)} . }
$$

The composition of the inclusion $\Gr^{\{0,1\}}\Mod_{R} \hookrightarrow \Gr^{\geq 0}\Mod_{R}$ with $\ev^{\{0,1\}}$ thus has a lax symmetric monoidal structure.
\end{construction}

\begin{construction}

We will construct a commutative square in the $\infty$-category $\CAlg(\Pr^{L})_{\lax}$ of symmetric monoidal $\infty$-categories and lax symmetric monoidal functors

\begin{equation}\label{commutative_square_of_laxs}
\xymatrix{  \Fil^{[0,1]}\Mod_{R} \ar[d] \ar[r]& \Mod_{R}^{\Delta^{1}_{\vee}} \ar[d]^-{  (\Fil \rightarrow \Gr) }\\
\Gr^{\{0,1\}}\Mod_{R} \ar[r]_-{\ev^{\{0,1\}}} & \Mod_{R}^{\{0,1\},\vee}.  }
\end{equation}

The upper horizontal functor $\Fil^{[0,1]}\Mod_{R} \xymatrix{\ar[r]&} \Mod_{R}^{\Delta^{1}_{\vee}}$ is defined as the lax symmetric monoidal composition $$\xymatrix{\Fil^{[0,1]}\Mod_{R} \hookrightarrow \Fil^{\geq 0}\Mod_{R} \ar[r]^-{\ev^{[0,1]}}& \Mod_{R}^{\Delta^{1}_{\vee}} }.$$ 

The left vertical functor is obtained by restricting the forgetful functor $(\Fil \rightarrow \Gr): \Fil^{\geq 0}\Mod_{R} \xymatrix{ \ar[r]&} \Gr^{\geq 0}\Mod_{R}$ to two-step filtered objects. By construction, the square \ref{commutative_square_of_laxs} fits into the larger commutative diagram 

$$
\begin{tikzcd}
  & \Fil^{\geq 0}\Mod_{R} \arrow{rd}{\ev^{[0,1]}} \arrow{ddd}{(\Fil \rightarrow \Gr)} &\\
 \Fil^{[0,1]}\Mod_{R} \arrow[hook]{ru} \ar{d}[swap]{(\Fil \rightarrow \Gr)} && \Mod_{R}^{\Delta^{1}_{\vee}}\arrow{d}{(\Fil \rightarrow \Gr)}\\
\Gr^{\{0,1\}}\Mod_{R} \arrow[hook]{rd} && \Mod_{R}^{\{0,1\},\vee}   \\
& \Gr^{\geq 0}\Mod_{R} \arrow{ru}[swap]{\ev^{\{0,1\}}} ,&      
\end{tikzcd}
$$
where both the left and the right halves are commutative diagrams of symmetric monoidal $\infty$-categories and lax symmetric monoidal functors. Therefore, the initial square diagram is a commutative diagram of lax symmetric monoidal functors.
\end{construction}

\begin{prop}\label{pull_back_for_fil_0,1}
The commutative diagram \ref{commutative_square_of_laxs} is a pull-back square in $\CAlg(\Pr^{L})_{\lax}$.
\end{prop}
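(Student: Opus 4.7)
The plan is to reduce the claim to a pull-back check on the underlying $\infty$-categories. As noted in the discussion preceding Proposition~\ref{limits_in_Cat_lax}, the forgetful functor $\CAlg(\cat)_{\lax} \to \cat$ sending $(p: \C^{\otimes} \to \Fin_*)$ to $\C$ is conservative and preserves all limits; the same principle applies at the presentable level. Therefore, to show that the square \eqref{commutative_square_of_laxs} is a pull-back in $\CAlg(\Pr^{L})_{\lax}$, it is enough to verify that its image in $\cat$ is a pull-back.

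Next, I would compute the underlying pullback $\mathcal{P} := \Mod_{R}^{\Delta^{1}_{\vee}} \times_{\Mod_{R}^{\{0,1\},\vee}} \Gr^{\{0,1\}}\Mod_{R}$ explicitly. An object of $\mathcal{P}$ is a triple $((X^{1} \to X^{0}), (Y^{0}, Y^{1}), \alpha)$ where $\alpha$ is an equivalence in $\Mod_{R}^{\{0,1\},\vee}$ between the two images. By Construction~\ref{ideals_nonunitals}, we have $(\Fil \to \Gr)(X^{1} \to X^{0}) = (X^{1}, X^{0})$. By Construction~\ref{taut_inclusion}, $\ev^{\{0,1\}}$ is the composite $\Gr^{\geq 0}\Mod_{R} \xrightarrow{\spl} \Fil^{\geq 0}\Mod_{R} \xrightarrow{\ev^{[0,1]}} \Mod_{R}^{\Delta^{1}_{\vee}} \xrightarrow{(\Fil \to \Gr)} \Mod_{R}^{\{0,1\},\vee}$, where $\spl$ is the right-adjoint section of $\gr$: it sends $(Y^{0},Y^{1})$ to the filtered object with $F^{\geq 0} = Y^{0}$, $F^{\geq 1} = Y^{1}$, and zero transition map. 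Tracing through yields $\ev^{\{0,1\}}(Y^{0},Y^{1}) = (Y^{1}, Y^{0})$, so the equivalence $\alpha$ amounts to individual equivalences $Y^{0} \simeq X^{0}$ and $Y^{1} \simeq X^{1}$.

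Therefore the $(Y^{0},Y^{1})$-components of an object of $\mathcal{P}$ are determined (up to contractible choice) by $(X^{1} \to X^{0})$ alone. An analogous analysis of morphisms in $\mathcal{P}$ shows that the natural comparison functor $\Fil^{[0,1]}\Mod_{R} \to \mathcal{P}$, sending $(X^{1} \to X^{0}) \mapsto ((X^{1} \to X^{0}), (X^{0}, X^{1}), \can)$, is fully faithful and essentially surjective, hence an equivalence. Combined with the first paragraph, this proves the claim.

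The only real subtlety is correctly tracing the definition of $\ev^{\{0,1\}}$ and identifying $\spl$ as the right-adjoint (zero-transition) section of $\gr$ rather than the left-adjoint (direct-sum) section; without this, the two paths around the square would not match up at the level of underlying objects. Once this identification is pinned down, the rest of the verification is formal.
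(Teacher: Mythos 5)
Your proposal is correct and takes essentially the paper's route: reduce to the underlying $\infty$-categories using conservativity and limit-preservation of the forgetful functor from symmetric monoidal $\infty$-categories with lax functors, and then check the underlying square is cartesian — where the paper simply observes that both horizontal functors are equivalences of underlying $\infty$-categories, which your explicit description of the pullback re-derives by hand. One minor slip: the zero-transition functor you take for $\spl$ is the right adjoint of $\gr$ (restriction along $R[t]\to R$ in the Rees picture) but not a section of $\gr$, since $\gr$ of it gives $Y^{n}\oplus Y^{n+1}[1]$ in degree $n$; this does not affect your identification $\ev^{\{0,1\}}(Y^{0},Y^{1})\simeq (Y^{1},Y^{0})$ or the argument.
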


\begin{proof}
Indeed, by Proposition \ref{limits_in_Cat_lax} this can be checked on the level of underlying $\infty$-categories. Since the lower horizontal functor and the upper horizontal functor are both equivalences, it follows that the diagram is a pull-back diagram. 
\end{proof}

\begin{observation}
Assume $R$ is a discrete ring. Then the $\infty$-categories appearing in the diagram \ref{commutative_square_of_laxs} have an action of $\SSeq^{\gen}$ and all the functors are lax $\SSeq^{\gen}$-linear. Indeed, in the diagram \ref{commutative_square_of_laxs} all functors are $t$-exact, and the whole diagram is derived from the diagram

$$
\xymatrix{  \Fil^{[0,1]}\Mod_{R,0} \ar[d] \ar[r]& \Mod_{R}^{\Delta^{1}_{\vee},0} \ar[d]^-{  (\Fil \rightarrow \Gr) }\\
\Gr^{\{0,1\}}\Mod_{R,0} \ar[r]_-{\ev^{\{0,1\}}} & \Mod_{R,0}^{\{0,1\},\vee} }
$$
of small additive categories. In the latter diagram all functors are lax symmetric monoidal, and are lax equivariant with respect to the $\SSeq^{\gen}_{0}$-action. Then the desired observation follows from Example \ref{obtaining_diagrams} by applying the functor $\Mod: \cat^{\add,\poly} \rightarrow \cat^{\St,\Sigma}$. To summarize, the diagram \ref{commutative_square_of_laxs} is a commutative diagram in $\Mod_{\SSeq^{\gen}}(\cat)_{\lax}$, and the same argument as in Proposition \ref{pull_back_for_fil_0,1} shows that this in fact a pull-back square in $\Mod_{\SSeq^{\gen}}(\cat)_{\lax}$. 
\end{observation}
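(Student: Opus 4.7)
The plan is to exhibit the four $\infty$-categories in \ref{commutative_square_of_laxs} as objects of $\Mod_{\SSeq^{\gen}}(\cat)_{\lax}$, check the four functors are lax $\SSeq^{\gen}$-equivariant, and then leverage Proposition \ref{main_proposition_on_monads} (or equivalently the fact that the forgetful functor $\Mod_{\SSeq^{\gen}}(\cat)_{\lax} \to \cat$ preserves limits) to transport the pullback property of Proposition \ref{pull_back_for_fil_0,1} into this richer setting.

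First I would produce the $\SSeq^{\gen}$-actions uniformly via the derived algebraic context machinery of Construction \ref{derived_alg_contexts}. For each of $\Fil^{[0,1]}\Mod_R$, $\Gr^{\{0,1\}}\Mod_R$, $\Mod_R^{\Delta^{1}_{\vee}}$, and $\Mod_R^{\{0,1\},\vee}$, there is a natural compatible $t$-structure (the standard one for the filtered and graded versions, the geometric $t$-structure for the arrow and pair versions) and a small additive subcategory of compact projective generators built from direct sums of the structure modules $R(0), R(1)$ in the graded and filtered cases, and from direct sums of $(R \to 0)$, $(\Id: R \to R)$, and their two-component analogues in the Smith ideal and pair cases. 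A direct inspection of the pushout symmetric monoidal structure of Construction \ref{Day_arrows}, together with the tensor product formulas for two-step filtrations and gradings, shows that each of these additive subcategories is closed under the action of $\SSeq^{\gen}_0$, so Construction \ref{derived_alg_contexts} endows all four $\infty$-categories with a sifted-colimit-preserving $\SSeq^{\gen}$-action.

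Next I would verify that each of the four functors in the diagram is lax $\SSeq^{\gen}$-equivariant. All four are right $t$-exact, and by inspection each sends the chosen generating additive subcategory of its source into the one on its target: $\ev^{[0,1]}$ crushes higher filtration stages, $(\Fil \to \Gr)$ throws away only the compatibility datum of a map, and the inclusion $\Fil^{[0,1]}\Mod_{R} \hookrightarrow \Mod_R^{\Delta^{1}_{\vee}}$ as well as $\Gr^{\{0,1\}}\Mod_R \to \Mod_R^{\{0,1\},\vee}$ are manifestly compatible with generators. Invoking \cite[Proposition 3.48]{BCN21}, which is the engine of Construction \ref{pd_derived_action}, this small-scale lax equivariant diagram of additive $\infty$-categories with additively polynomial functors promotes along $\Mod : \cat^{\add,\poly} \to \cat^{\St,\Sigma}$ to a diagram in $\Mod_{\SSeq^{\gen}}(\cat^{\St,\Sigma})_{\lax}$, which is the desired lift of \ref{commutative_square_of_laxs}.

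Finally, to conclude that the lifted square is a pullback, I would invoke Proposition \ref{main_proposition_on_monads} in the special case where the monad is trivial, or more directly the observation that the forgetful functor $\Mod_{\SSeq^{\gen}}(\cat)_{\lax} \to \cat$ is computed by evaluating at the unique object of $\mathbf{B}\SSeq^{\gen}$ and hence preserves all limits; since Proposition \ref{pull_back_for_fil_0,1} already exhibits the underlying square as a pullback in $\cat$, the lifted square is a pullback in $\Mod_{\SSeq^{\gen}}(\cat)_{\lax}$. The main obstacle will be the bookkeeping in the middle step: verifying lax equivariance on the additive level reduces to tracking how $\Sigma_n$-invariants of tensor powers of generators behave under $(\Fil \to \Gr)$ and $\ev^{\{0,1\}}$, and this must be done carefully because the pushout symmetric monoidal structure of Construction \ref{Day_arrows} mixes summands across components in a way that is easy to mishandle if one works only with the underlying objects and not with their full $\Sigma_n$-equivariant structure.
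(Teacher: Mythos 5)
Your proposal is correct and takes essentially the same route as the paper: you realize the square \ref{commutative_square_of_laxs} as derived, via $\Mod: \cat^{\add,\poly} \rightarrow \cat^{\St,\Sigma}$ (Example \ref{obtaining_diagrams}), from the lax $\SSeq^{\gen}_{0}$-equivariant square of additive subcategories of compact projective generators, and then deduce the pullback property from Proposition \ref{pull_back_for_fil_0,1}. The only point to be careful about is the last step: limit-preservation of the forgetful functor $\Mod_{\SSeq^{\gen}}(\cat)_{\lax} \rightarrow \cat$ alone is not enough, and one also needs its conservativity (or, as you note, Proposition \ref{main_proposition_on_monads}) to promote the underlying pullback of Proposition \ref{pull_back_for_fil_0,1} to a pullback in $\Mod_{\SSeq^{\gen}}(\cat)_{\lax}$, which is exactly the argument the paper reuses.
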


The following proposition follows from the previous remark and Proposition \ref{main_proposition_on_monads}.

\begin{prop}\label{two_step_algebras_pullback}
Let $R$ be a discrete ring. The commutative square

$$
\xymatrix{\Fil^{[0,1]}\DAlg_{R} \ar[r] \ar[d] & \DAlg_{R}^{\Delta^{1}_{\vee}} \ar[d]\\
\Gr^{\{0,1\}}\DAlg_{R} \ar[r]&    \DAlg_{R}^{\{0,1\},\vee}  }
$$
induced by the commutative square \ref{commutative_square_of_laxs}, is a pull-back of $\infty$-categories.
\end{prop}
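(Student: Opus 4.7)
The plan is to recognize the square in question as the image, under the formation of $\LSym$-algebras, of the pull-back square \ref{commutative_square_of_laxs}, and then invoke the general monadic pull-back principle of Proposition \ref{main_proposition_on_monads}. Concretely, in each of the four $\infty$-categories $\Fil^{[0,1]}\Mod_{R}$, $\Mod_{R}^{\Delta^{1}_{\vee}}$, $\Gr^{\{0,1\}}\Mod_{R}$ and $\Mod_{R}^{\{0,1\},\vee}$, the $\infty$-category of derived commutative algebras has been defined (via the constructions in Section 2.3 and the push-out symmetric monoidal structures of Construction \ref{Day_arrows}) as algebras for the $\LSym$-monad obtained by right-left extending the free commutative algebra functor from the additive subcategory of finite direct sums of the distinguished compact projective generators. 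In each case this monad is the one induced from the non-unital commutative operad $\mathcal{O} \in \Alg(\SSeq^{\gen})$ of Remark \ref{how_to_obtain_pd_operads} via the $\SSeq^{\gen}$-action on the ambient symmetric monoidal $\infty$-category.

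The first step is to verify that the square \ref{commutative_square_of_laxs} is a pull-back not merely in $\CAlg(\Pr^{L})_{\lax}$ (which is Proposition \ref{pull_back_for_fil_0,1}) but in fact in the $(\infty,1)$-category $\Mod_{\SSeq^{\gen}}(\cat)_{\lax}$. This is done exactly as in the observation preceding the statement: restrict to the additive subcategories $\Fil^{[0,1]}\Mod_{R,0}$, $\Mod^{\Delta^{1}_{\vee}}_{R,0}$, etc., check that all four functors in the square are $t$-exact, preserve the distinguished generating additive subcategories, and are lax $\SSeq^{\gen}_{0}$-equivariant (this uses only that the generating subcategories are closed under the operations $(V,X) \mapsto (V(n) \otimes X^{\otimes n})_{\Sigma_{n}}$ defined in Construction \ref{pd_derived_action}), and then apply $\Mod\colon \cat^{\add,\poly} \to \cat^{\St,\Sigma}$ followed by Example \ref{obtaining_diagrams}.

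The second step is to feed this into Proposition \ref{main_proposition_on_monads}, taking $\A = \SSeq^{\gen}$, $\T = \mathcal{O}$ the non-unital commutative operad, and the diagram of $\SSeq^{\gen}$-modules given by \ref{commutative_square_of_laxs}. Proposition \ref{main_proposition_on_monads} then produces the desired pull-back square
\[
\xymatrix{\Alg_{\LSym}(\Fil^{[0,1]}\Mod_{R}) \ar[d] \ar[r] & \Alg_{\LSym}(\Mod_{R}^{\Delta^{1}_{\vee}}) \ar[d] \\
\Alg_{\LSym}(\Gr^{\{0,1\}}\Mod_{R}) \ar[r] & \Alg_{\LSym}(\Mod_{R}^{\{0,1\},\vee}),}
\]
and it remains only to identify each of these four $\infty$-categories with $\Fil^{[0,1]}\DAlg_{R}$, $\DAlg_{R}^{\Delta^{1}_{\vee}}$, $\Gr^{\{0,1\}}\DAlg_{R}$ and $\DAlg_{R}^{\{0,1\},\vee}$ respectively, and the induced functors with the forgetful ones appearing in the statement. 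This identification is direct from the definitions: in each of the four cases the $\infty$-category of derived commutative algebras is the $\infty$-category of algebras over exactly the monad produced by Construction \ref{derived_alg_contexts} applied to $\mathcal{O}$ and the given $\SSeq^{\gen}$-action.

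The main obstacle is the second step, and it is essentially already done for us: one needs to know that the monad arising from the action of $\mathcal{O}$ on each of the four modules really is the $\LSym$-monad whose algebras define the relevant $\infty$-category of derived algebras. This is immediate for $\Mod_{R}^{\Delta^{1}_{\vee}}$ and $\Gr^{\{0,1\}}\Mod_{R}$, and for the remaining two cases the push-out symmetric monoidal structure of Construction \ref{Day_arrows} is designed precisely so that the corresponding $\LSym$-monad is right-left extended from the pointwise-free constructions on the generating additive subcategories, matching the monads defining $\Fil^{[0,1]}\DAlg_{R}$ and $\DAlg_{R}^{\{0,1\},\vee}$. The hypothesis that $R$ is discrete enters only to ensure the existence of the requisite $t$-structures and compact projective generators needed to apply Construction \ref{derived_alg_contexts} uniformly across the square.
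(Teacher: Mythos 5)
Your argument is essentially the paper's own proof: the paper deduces the proposition from exactly the two ingredients you use, namely the preceding observation that the square \ref{commutative_square_of_laxs} is a pull-back in $\Mod_{\SSeq^{\gen}}(\cat)_{\lax}$ (obtained from the diagram of generating additive subcategories via $\Mod\colon \cat^{\add,\poly}\to\cat^{\St,\Sigma}$ and Example \ref{obtaining_diagrams}), followed by an application of Proposition \ref{main_proposition_on_monads}. The only slip is your choice of $\T=\mathcal{O}$, the \emph{non-unital} commutative operad of Remark \ref{how_to_obtain_pd_operads}, whose algebras would be non-unital derived algebras; one should instead take the algebra object in $\SSeq^{\gen}$ inducing the unital $\LSym$-monad, and with that substitution your proof coincides with the paper's.
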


\begin{rem}\label{two_step_algebras_pullback_nondiscrete}
Let $R$ be a connective derived ring. Then we have $\Fil^{[0,1]}\DAlg_{R} \simeq (\Fil^{[0,1]}\DAlg_{\mathbb{Z}})_{R/}$, where $R$ is considered as a two-step filtered derived algebra by inserting it in degree $0$. Consequently, the pull-back diagram of Proposition \ref{two_step_algebras_pullback} holds true in this case as well.
\end{rem}

\begin{construction}
Let $\LSym^{\geq 1}: \Mod_{R} \rightarrow \Mod_{R}$ be the non-unital derived symmetric algebra functor. It has the structure of a monad such that the $\infty$-category of $\LSym^{\geq 1}$-algebras is equivalent to the $\infty$-category $\DAlg^{\nonu}_{R}$ of non-unital derived commutative algebras in $\Mod_{R}$. There is an augmentation map $\LSym^{\geq 1} \rightarrow \Id$ which kills all pieces of $\LSym^{\geq 1}$ in degrees greater than $1$. This map gives rise to an adjunction

$$
\xymatrix{  \DAlg^{\{0,1\},\vee}_{R}  \ar@/^1.1pc/[rrr]^-{\LL} &&& \ar@/^1.1pc/[lll]^-{\triv}  \Gr^{\{0,1\}}\DAlg_{R} \simeq \DAlg_{R}\Mod ,}
$$
where the right adjoint is the \textbf{trivial algebra functor} and the left adjoint is the functor of taking the \textbf{cotangent fiber}. For an $\LSym^{\geq 1}$-algebra $A$, the data of descending $A$ to an $\Id$-algebra along the map $\LSym^{\geq 1} \rightarrow \Id$ will be called a \textbf{trivialization data} on $A$.
\end{construction}

\begin{defn}\label{Smith_sqzero}
Let $R$ be a connective derived commutative ring. We define the $\infty$-category of \textbf{square-zero Smith ideals} as the pull-back

$$
\xymatrix{   \DAlg_{R}^{\Delta^{1}_{\vee},\sqzero} \ar[d] \ar[r]& \DAlg_{R}^{\Delta^{1}_{\vee}} \ar[d]^-{(\Fil \rightarrow \Gr)}\\
\DAlg_{R}\Mod \ar[r]_-{\triv} &\DAlg_{R}^{\{0,1\},\vee}   . }
$$

\end{defn}

\begin{prop}\label{Smith_sqzer_vs_two_step}
There is an equivalence of $\infty$-categories $$\Fil^{[0,1]}\DAlg_{R} \simeq \DAlg^{\Delta^{1}_{\vee},\sqzero}_{R}. $$
\end{prop}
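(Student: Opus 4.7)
The plan is to exhibit both sides as pullback squares with matching data, and appeal to Proposition \ref{two_step_algebras_pullback} (together with Remark \ref{two_step_algebras_pullback_nondiscrete} to reduce from the discrete case). Recall that Proposition \ref{two_step_algebras_pullback} identifies $\Fil^{[0,1]}\DAlg_{R}$ with the pullback
\[
\xymatrix{\Fil^{[0,1]}\DAlg_{R} \ar[r] \ar[d] & \DAlg_{R}^{\Delta^{1}_{\vee}} \ar[d]^-{(\Fil\to\Gr)}\\
\Gr^{\{0,1\}}\DAlg_{R} \ar[r]_-{\ev^{\{0,1\}}}&    \DAlg_{R}^{\{0,1\},\vee}   ,}
\]
while Definition \ref{Smith_sqzero} exhibits $\DAlg_{R}^{\Delta^{1}_{\vee},\sqzero}$ as the pullback along $\triv: \DAlg_{R}\Mod \to \DAlg_{R}^{\{0,1\},\vee}$ with the same right-hand vertical functor. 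Since Raksit's equivalence gives a canonical identification $\Gr^{\{0,1\}}\DAlg_{R} \simeq \DAlg_{R}\Mod$, it suffices to prove that under this identification the bottom arrow $\ev^{\{0,1\}}$ coincides with $\triv$.

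To verify this, I would unwind the underlying lax symmetric monoidal functor $\ev^{\{0,1\}}: \Gr^{\{0,1\}}\Mod_{R} \to \Mod_{R}^{\{0,1\},\vee}$ of Construction \ref{taut_inclusion} on objects: a two-step graded pair $(X^{0},X^{1})$ is sent to the pair $(X^{1},X^{0})$, viewed as an object of the push-out symmetric monoidal category. On tensor products, the graded rule gives $(X^{0}\otimes Y^{0},\, X^{0}\otimes Y^{1}\oplus X^{1}\otimes Y^{0})$, since the weight-$2$ summand $X^{1}\otimes Y^{1}$ is killed in $\Gr^{\{0,1\}}\Mod_{R}$, whereas the push-out tensor product carries the extra summand $X^{1}\otimes Y^{1}$; the lax structure map is precisely the inclusion that omits this extra summand. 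Consequently, when one passes to derived commutative algebras, the multiplication $M \otimes M \to M$ on the weight-$1$ component in the $\{0,1\},\vee$ side must factor through the image of the lax constraint, i.e.\ through $0$. In other words, for a pair $(A,M) \in \DAlg_{R}\Mod$ corresponding via Proposition \ref{killing_higher_powers} to the trivial square-zero algebra $A \oplus M$ in $\Gr^{\{0,1\}}\DAlg_{R}$, the image in $\DAlg_{R}^{\{0,1\},\vee}$ is exactly the pair $(A,M)$ with $M$ regarded as a non-unital $A$-algebra with zero multiplication; this is the definition of $\triv$.

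With this identification in hand, the two pullback diagrams defining $\Fil^{[0,1]}\DAlg_{R}$ and $\DAlg_{R}^{\Delta^{1}_{\vee},\sqzero}$ have identical spans, so their pullbacks are canonically equivalent. The main subtlety is the second paragraph: one must carefully check that the lax monoidal comparison map restricts to zero on the $M\otimes M$ factor after passing to algebra structures, so that the commutative-algebra lift of $\ev^{\{0,1\}}$ really lands in trivial square-zero extensions rather than some more general non-unital structure. Once this is established, the desired equivalence is formal from the universal property of pullbacks in $\Pr^{L}$ (or in $\cat_{\infty}$).
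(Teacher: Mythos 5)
Your proposal is correct and takes essentially the same route as the paper, which likewise deduces the statement from Propositions \ref{pull_back_for_fil_0,1} and \ref{two_step_algebras_pullback} (with Remark \ref{two_step_algebras_pullback_nondiscrete} handling non-discrete $R$) and then identifies the resulting pullback square with the one in Definition \ref{Smith_sqzero}; your second paragraph simply spells out the unwinding step identifying the bottom functor with $\triv$, which the paper leaves implicit. The only cosmetic slip is that the lax structure map kills the $X^{1}\otimes Y^{1}$ summand via a projection rather than an inclusion, which does not affect the argument.
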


\begin{proof}
Indeed, in the case of $R$ being discrete, it follows from Proposition \ref{pull_back_for_fil_0,1} and Proposition \ref{two_step_algebras_pullback}. Unwinding the definitions, we see that this recovers the pull-back square of Definition \ref{Smith_sqzero}. If $R$ is not discrete, see Remark \ref{two_step_algebras_pullback_nondiscrete}.
\end{proof}

To unravel an algebra-geometric description of the $\infty$-category $\DG_{-}^{\{0,1\}}\DAlg_{R}$, we begin with some remark concerning the underlying stable $\infty$-category $\DG_{-}^{\{0,1\}}\Mod_{R}$ for a connective derived ring $R$.

\begin{rem}\label{free_alg_=sqzero_alg}
The free associative algebra in $\Gr^{\{0,1\}}\Mod_{R}$ is given by the formula $\T^{\{0,1\}} (X) \simeq \T(X)^{\leq 1}$, we can interpret $\mathbb{D}_{-}$ as the \emph{free associative algebra} in $\Gr^{\{0,1\}}\Mod_{R}$ on the object $R[-1](1) \in \Gr^{\{0,1\}}\Mod_{R}$. Consequently, an object $M \in \DG^{\{0,1\}}\Mod_{R}$ is the same as an object $M \in \Gr^{\{0,1\}}\Mod_{R}$ together with a map $M(1)[-1] \simeq M \otimes R [-1](1) \rightarrow M$ in $\Gr^{\{0,1\}}\Mod_{R}$ (with square-zero condition being futile for degree reasons). By duality, the latter is equivalent to giving a map $M \rightarrow M\otimes R[1](-1) \simeq M(-1)[1]$. If $M$ has graded components $M^{0}, M^{1}$, then computing the tensor product in $\Gr^{\{0,1\}}\Mod_{R}$ wee see that $M(-1)[1]$ is equivalent to the object $M^{1}[1]$ put in weight $0$. This is in turn equivalent to giving a splitting $M^{0} \rightarrow M^{0} \oplus M^{1}[1]$ of the projection $M^{0} \oplus M^{1}[1] \rightarrow M^{0}$. Therefore, we have the following computation of the mapping space as the fiber product

$$
\xymatrix{  \Map_{\DG^{\{0,1\}}\Mod_{R}} (M , M(-1)[1])  \ar[d] \ar[r]& \Map_{\Mod_{R}} (M^{0}, M^{0} \oplus M^{1}[1]) \ar[d]\\
  \{\Id_{M^{0}}\} \ar[r]& \Map_{\Mod_{R}} (M^{0},M^{0}). }
$$

\end{rem}

\begin{construction}\label{DG_0,1_vs_linear_derivations}
Consider the functor $\oplus [\bullet] : \Gr^{\{0,1\}}\Mod_{R} \rightarrow \Mod_{R}$ obtained by composing the inclusion $\Gr^{\{0,1\}} \Mod_{R} \hookrightarrow \Gr^{\geq 0}\Mod_{R}$ with the shear functor $[\bullet] : \Gr^{\{0,1\}}\Mod_{R} \rightarrow \Gr^{\{0,1\}}\Mod_{R}$ followed by the direct sum totalisaion $\oplus: \Gr^{\geq 0}\Mod_{R} \rightarrow \Mod_{R}$. In other words, the functor $\oplus [\bullet]: \Gr^{\{0,1\}}\Mod_{R} \rightarrow \Mod_{R}$ sends $(X^{0},X^{1}) $ to $X^{0} \oplus X^{1}[1]$. The lemma below show that this functor is lax symmetric monoidal. The observation of Remark \ref{free_alg_=sqzero_alg} implies that $\DG_{-}^{\{0,1\}}\Mod_{R}$ sits in pull-back of $\infty$-categories

\begin{equation}\label{DG_0,1_vs_splittings}
\xymatrix{\DG^{\{0,1\}}_{-}\Mod_{R} \ar[d] \ar[r]& \LEq\bigl( \ev^{0}, \oplus [\bullet] )\ar[d] \\
\Mod_{R} \ar[r]& \Mod_{R}^{\B\mathbb{N}} ,}
\end{equation}
where $\LEq\bigl( \ev^{0}, \oplus [\bullet] )$ is the $\infty$-category of two-step graded objects $(X^{0},X^{1})$ endowed with a map $ \delta: X^{0} \rightarrow X^{0} \oplus X^{1}[1]$, the vertical functor sends $(X^{0},X^{1}, \delta)$ to $X^{0}$ endowed with the composite map $X^{0} \rightarrow X^{0}\oplus X^{1} [1] \rightarrow X^{0}$, and the horizontal functors considers any object of $\Mod_{R}$ with identity map. 

The lax equalizer in the upper right corner of diagram \ref{DG_0,1_vs_splittings} has a symmetric monoidal structure by Proposition \ref{Lax_equalizer_equivalence}, and all functors involved in the diagram are symmetric monoidal. Consequently, the pull-back diagram \ref{DG_0,1_vs_splittings} is a pull-back of symmetric monoidal $\infty$-categories.
\end{construction}

\begin{Lemma}
The functor $\oplus [\bullet]: \Gr^{\{0,1\}}\Mod_{R} \rightarrow \Gr^{\{0,1\}}\Mod_{R}$ has a lax symmetric monoidal structure.
\end{Lemma}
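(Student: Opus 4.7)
The plan is to factor $\oplus[\bullet]$ as a composite of functors each carrying a natural (lax) symmetric monoidal structure, namely
$$\Gr^{\{0,1\}}\Mod_R \xrightarrow{j} \Gr^{\geq 0}\Mod_R \xrightarrow{[\bullet]} \Gr^{\geq 0}\Mod_R \xrightarrow{\oplus} \Mod_R,$$
where $j$ is the full inclusion, the middle copy of $\Gr^{\geq 0}\Mod_R$ is equipped with the Koszul sign rule symmetric monoidal structure, and the rightmost copy carries the Day convolution.

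The crucial preliminary observation will be that on $\Gr^{\{0,1\}}\Mod_R$ the restriction of the Day convolution and the restriction of the Koszul sign rule coincide as symmetric monoidal structures. Indeed, the two tensor products have identical underlying graded objects, and their braidings differ only by the sign $(-1)^{pq}$ on each summand $X^p\otimes Y^q$ of weight $p+q$. For $(p,q)$ with $p+q\leq 1$ one has $pq=0$, so the braidings agree in weights $0$ and $1$; the discrepancy occurs only at $(p,q)=(1,1)$, which lives in weight $2$, and this is precisely the range discarded by the localization $(-)^{\{0,1\}}\colon \Gr^{\geq 0}\Mod_R\rightarrow \Gr^{\{0,1\}}\Mod_R$. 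Consequently $(-)^{\{0,1\}}$ remains a symmetric monoidal localization when the source is taken with the Koszul structure, and its right adjoint $j$ inherits a canonical lax symmetric monoidal structure.

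The remaining two factors are already (strictly) symmetric monoidal. The shear $[\bullet]$ is a symmetric monoidal equivalence from the Koszul structure to the Day convolution by Proposition \ref{shear_1}(2). The direct sum $\oplus\colon \Gr^{\geq 0}\Mod_R \rightarrow \Mod_R$ is symmetric monoidal because tensor products in a stable presentable $\infty$-category commute with arbitrary direct sums in each variable, giving the natural equivalence
$$\oplus(X)\otimes \oplus(Y) \simeq \bigoplus_{p,q\geq 0} X^p\otimes Y^q \simeq \bigoplus_{n\geq 0} \bigoplus_{p+q=n} X^p\otimes Y^q \simeq \oplus(X\otimes Y).$$
Composing these three structures produces the desired lax symmetric monoidal enhancement of $\oplus[\bullet]$.

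The only substantive point is the initial observation, which confines the Koszul sign discrepancy to precisely the range killed by truncation; once that is in place the remainder is simply an assembly of (lax) symmetric monoidal functors already recorded in the paper.
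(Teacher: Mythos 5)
Your proof is correct and follows essentially the same route as the paper: both arguments rest on the observation that the Koszul sign discrepancy lives only in weight $2$ and is therefore killed by the truncation to degrees $\{0,1\}$, combined with Proposition \ref{shear_1}(2) for the shear and the (lax) symmetric monoidality of the inclusion and of $\oplus$. The only cosmetic differences are that you shear in $\Gr^{\geq 0}\Mod_R$ after including (rather than shearing within $\Gr^{\{0,1\}}\Mod_R$ first) and that you justify the monoidality of $\oplus$ by distributivity of the tensor product over direct sums instead of the identification with the forgetful functor from $\mathbb{G}_m$-representations; both are equally valid.
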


\begin{proof}
The functor $\oplus: \Gr^{\{0,1\}}\Mod_{R} \rightarrow \Mod_{R}$ is a composition of the lax symmetric monoidal inclusion $\Gr^{\{0,1\}}\Mod_{R} \hookrightarrow \Gr\Mod_{R}$ followed by the direct sum totalisaion functor $\oplus: \Gr^{\geq 0}\Mod_{R} \rightarrow \Mod_{R}$. The latter has a symmetric monoidal structure as we can identify it with the forgetful functor from the $\infty$-category of $\mathbb{G}_{m}$-representations in $\Mod_{R}$ to $\Mod_{R}$. Therefore, the composition has a lax symmetric monoidal structure.

It remains to check that the shear $[\bullet]: \Gr^{\{0,1\}}\Mod_{R} \rightarrow \Gr^{\{0,1\}}\Mod_{R}$ has a symmetric monoidal structure. Recall from Construction \ref{shear_1} that the shear on all non-negatively graded modules $[\bullet]: \Gr^{\geq 0}\Mod_{R} \rightarrow \Gr^{\geq 0}\Mod_{R}$ is lax symmetric monoidal with respect to Koszul sign rule symmetric monoidal structure on the target. But the symmetric monoidal structure on $\Gr^{\{0,1\}}\Mod_{R}$ induced from the Koszul sign rule symmetric monoidal structure on $\Gr^{\geq 0}\Mod_{R}$ is equivalent to the one induced by the usual symmetric monoidal structure on $\Gr^{\geq 0}\Mod_{R}$, which is seen in the initial case $R=\mathbb{Z}$ by identifying the tensor product in the heart of the negative $t$-structure.
\end{proof}

\begin{rem}
Let $R$ be a connective derived ring. Using Construction \ref{DG_0,1_vs_linear_derivations} and Proposition \ref{limits_in_Cat_lax}, and the same logic as in Proposition \ref{two_step_algebras_pullback} and Remark \ref{two_step_algebras_pullback_nondiscrete}, it follows that $\DG_{-}^{\{0,1\}}\DAlg_{R}$ is a pull-back of $\infty$-categories

\begin{equation}\label{DG_0,1_algebras_via_derivations}
\xymatrix{\DG^{\{0,1\}}_{-}\DAlg_{R} \ar[d] \ar[r]& \LEq(\ev^{0}, \oplus [\bullet]: \DAlg_{R}\Mod \rightarrow \DAlg_{R}) \ar[d] \\
\DAlg_{R} \ar[r]& \DAlg^{\B\mathbb{N}}_{R} .}    
\end{equation}

Moreover, we have that the functor $\xymatrix{\DAlg_{R} \Mod \simeq \Gr^{\{0,1\}}\DAlg_{R} \ar[r]^-{\oplus [\bullet]}& \DAlg_{R}}$ is equivalent to the functor sending $(A,M)$ to the trivial square-zero extension $A\oplus M[1]$. The implication of the pull-back diagram \ref{DG_0,1_algebras_via_derivations} is that the data of an object $A^{\bullet} \in \DG_{-}^{\{0,1\}}\DAlg_{R}$ is the same as the data of an object $(A,M) \in \DAlg_{R}\Mod$ together with a derived algebra map $A \rightarrow A \oplus M[1]$ splitting the projection.
\end{rem}

We will now define an appropriate $\infty$-category of derived algebras endowed with a derivation which gives an equivalent description of the $\infty$-category $\DG_{-}^{\{0,1\}}\Mod_{R}$.

\begin{construction}\label{DAlgDer}

Let $A$ be a derived commutative algebra in $\Mod_{R}$, and $M$ an $A$-module in $\Mod_{R}$. A \textbf{derivation of $A$ with values in $M[1]$} is the data of a map $\LL_{A/R} \rightarrow M[1]$, where $\LL_{A/R}$ is the cotangent complex of $A$ over $R$. Let $\DAlg_{R}\Mod^{\Delta^{1}}$ be the $\infty$-category of pairs $(A,f:M \rightarrow N)$, where $M,N \in \Mod_{A}$ and $f: M\rightarrow N$ is a map. Let $\ev^{0}: \DAlg_{R}\Mod^{\Delta{1}} \rightarrow \DAlg_{R}\Mod$ be the functor sending $A,f:M \rightarrow N)$ to the pair $(A,M)$, and $\LL[-1]: \DAlg_{R}\rightarrow \DAlg_{R}\Mod $ be the functor sending $A$ to $(A, \LL_{A}[-1])$. The \textbf{$\infty$-category of (shifted) derivations} is defined as the pull-back

$$
\xymatrix{  \DAlg_{R}\Der^{[1]} \ar[d] \ar[r]&   \DAlg_{R}\Mod^{\Delta^{1}}\ar[d]^-{\ev^{0}}\\
\DAlg_{R} \ar[r]_-{\LL[-1]} & \DAlg_{R} \Mod  .}
$$

\end{construction}

\begin{rem}\label{derivation_as_splittings}
Equivalently, a derivation of $A$ with values in $M[1]$ is a map of derived commutative $R$-algebras $\delta: A\rightarrow A\oplus M[1]$ which splits the projection map $A\oplus M[1] \rightarrow A $. The equivalence of this two notions of derivation follows from the natural equivalence of mapping spaces

$$
\Map_{\Mod_{A}}(\LL_{A},M[1]) \simeq \Map_{\DAlg_{R//A}}( A, A\oplus M[1]).
$$
\end{rem}

Remark \ref{derivation_as_splittings} and Construction \ref{DG_0,1_algebras_via_derivations} imply the following proposition.

\begin{prop}\label{dg_and_der}
There is an equivalence of $\infty$-categories

$$
\xymatrix{   \DG_{-}^{\{0,1\}}\DAlg_{R} \ar[r]^-{\sim} & \DAlg_{R} \Der^{[1]}.     }
$$
\end{prop}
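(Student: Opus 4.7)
The plan is to exhibit both $\infty$-categories as equivalent to an intermediate $\infty$-category $\mathcal{E}$ whose objects are triples $(A, M, \delta)$ consisting of $(A,M) \in \DAlg_{R}\Mod$ together with an $R$-algebra map $\delta: A \to A\oplus M[1]$ splitting the projection $A\oplus M[1] \to A$. Both sides of the desired equivalence will be matched with $\mathcal{E}$ by direct unpacking of their defining pullbacks.

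First, I would identify $\DG_{-}^{\{0,1\}}\DAlg_{R}$ with $\mathcal{E}$. Unwinding the pullback square \ref{DG_0,1_algebras_via_derivations}, an object of $\DG_{-}^{\{0,1\}}\DAlg_{R}$ consists of $(A,M) \in \DAlg_{R}\Mod$ together with a map $\delta: A \to A \oplus M[1]$ in $\DAlg_{R}$ whose composition with the projection $A \oplus M[1] \to A$ is homotopic to $\Id_{A}$: this is exactly what the fiber product against $\DAlg_{R}^{\B\mathbb{N}}$ along the identity and $\oplus[\bullet]$ enforces (this observation was already made in the remark following \ref{DG_0,1_algebras_via_derivations}). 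This matches the data defining $\mathcal{E}$, yielding $\DG_{-}^{\{0,1\}}\DAlg_{R} \simeq \mathcal{E}$.

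Second, I would identify $\DAlg_{R}\Der^{[1]}$ with $\mathcal{E}$. By Construction \ref{DAlgDer}, an object of $\DAlg_{R}\Der^{[1]}$ is a pair $(A, N)$ with $N\in \Mod_{A}$ equipped with a map $f: \LL_{A/R}[-1] \to N$, equivalently a derivation $\LL_{A/R} \to N[1]$. The universal property of the cotangent complex, recalled in Remark \ref{derivation_as_splittings}, gives a natural equivalence
\[
\Map_{\Mod_{A}}(\LL_{A/R}, N[1]) \simeq \Map_{\DAlg_{R/\!/A}}(A, A\oplus N[1]),
\]
which, promoted to a fibered equivalence over $\DAlg_{R}\Mod$, identifies $\DAlg_{R}\Der^{[1]}$ with $\mathcal{E}$. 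Composing the two equivalences gives the claimed $\DG_{-}^{\{0,1\}}\DAlg_{R} \simeq \DAlg_{R}\Der^{[1]}$.

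The main obstacle is upgrading the pointwise mapping-space equivalence above to a genuine equivalence of $\infty$-categories fibered over $\DAlg_{R}\Mod$, i.e.\ ensuring that the identification of derivations with algebra sections of the trivial square-zero extension is functorial as the base algebra $A$ and module $M$ vary simultaneously. This is a standard instance of the relative adjunction between trivial square-zero extensions and the cotangent complex functor, established in the $\infty$-categorical deformation theory of \cite{HA} and used extensively in \cite{R}; with that relative adjunction in hand, the equivalence of fibrations follows by straightening.
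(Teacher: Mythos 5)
Your proposal is correct and matches the paper's own argument, which simply combines Construction \ref{DG_0,1_algebras_via_derivations} (identifying objects of $\DG_{-}^{\{0,1\}}\DAlg_{R}$ with pairs $(A,M)$ plus an algebra splitting $A \to A\oplus M[1]$) with Remark \ref{derivation_as_splittings} (identifying such splittings with derivations $\LL_{A/R}\to M[1]$). Your extra attention to making the mapping-space identification functorial over $\DAlg_{R}\Mod$ is a point the paper leaves implicit, but it is the same route.
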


The importance of Proposition \ref{dg_and_der} stems from the following construction.

\begin{construction}\label{cotangent_adjunction}
There is an adjunction

$$
\xymatrix{  \DAlg_{R}   \ar@/^1.1pc/[rrr]^-{\LL[-1]} &&& \ar@/^1.1pc/[lll]^-{ A  \leftarrow  (A,M,d) }  \DAlg_{R} \Der^{[1]} ,}
$$
where the left adjoint sends a derived commutative algebra $A$ to the universal derivation $(A, \LL_{A}[-1], \Id: \LL_{A} \rightarrow \LL_{A})$.

Using the equivalence $\DAlg_{R}\Der^{[1]}\simeq \DG_{-}^{\{0,1\}} \DAlg_{R}$, we obtain that the left adjoint of the functor $\ev^{0}: \DG^{\{0,1\}}\DAlg_{R} \rightarrow \DAlg_{R}$ sends a derived commutative algebra $A$ to the object $A\oplus \LL_{A/R}[-1]$ endowed with $\mathbb{D}_{-}$-action coming from the universal derivation $d: A \rightarrow \LL_{A/R}$.
\end{construction}

Summarizing all the results we obtained in this section, we arrive at the following theorem.

\begin{Theor}\label{sqzero_as_der_1}
There is an equivalence of $\infty$-categories

$$
\xymatrix{\gr: \DAlg_{R}^{\Delta^{1}_{\vee},\sqzero} \ar[r]^-{\sim} & \DAlg_{R} \Der^{[1]}       ,  }
$$
such that the underlying $\DG^{\{0,1\}}\Mod_{R}$-object of $\gr(I \rightarrow A)$ is the triple $(A/I, I, \delta)$, where $\delta: A/I \rightarrow I[1]$ is the boundary map for the fiber sequence $I \rightarrow A \rightarrow A/I$.
\end{Theor}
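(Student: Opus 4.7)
The plan is to assemble the desired equivalence as a composite of three equivalences already established in the text, and then trace through what this composite does on objects. Concretely, I would invoke, in order: Proposition~\ref{Smith_sqzer_vs_two_step}, which identifies $\DAlg_{R}^{\Delta^{1}_{\vee},\sqzero}$ with the $\infty$-category $\Fil^{[0,1]}\DAlg_{R}$ of two-step filtered derived algebras; Corollary~\ref{two_step_fils_via_differential_1}, which says that $\gr$ lifts to an equivalence $\Fil^{[0,1]}\DAlg_{R} \simeq \DG_{-}^{\{0,1\}}\DAlg_{R}$; and Proposition~\ref{dg_and_der}, which rewrites $\DG_{-}^{\{0,1\}}\DAlg_{R}$ as the $\infty$-category $\DAlg_{R}\Der^{[1]}$ of derived algebras equipped with a shifted derivation. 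Composing these gives the asserted equivalence, and the notation $\gr$ in the theorem is consistent because the middle step is literally the associated-graded functor.

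The remaining content is the identification of the underlying $\DG_{-}^{\{0,1\}}$-object. Unwinding the first equivalence, a square-zero Smith ideal $(I \rightarrow A)$ is, by Definition~\ref{Smith_sqzero}, a derived Smith ideal whose underlying non-unital algebra $I$ is trivialized, i.e.\ the multiplication $I \otimes_{A} I \to I$ is null; by Proposition~\ref{Smith_sqzer_vs_two_step} this is the same data as the two-step filtered derived algebra $(I \hookrightarrow A) \in \Fil^{[0,1]}\DAlg_{R}$, with $A$ in filtration weight $0$ and $I$ in filtration weight $1$. Applying $\gr$ produces a graded object whose weight $0$ piece is $A/I = \cofib(I \to A)$ and whose weight $1$ piece is $I$. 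The $\mathbb{D}_{-}$-action, i.e.\ the map $\gr^{0} \to \gr^{1}[1]$, is by construction of the $\gr$ functor in the filtered setting the boundary map $\delta : A/I \to I[1]$ of the cofiber sequence $I \to A \to A/I$, as recalled before Definition~\ref{D_-}.

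Finally, under the equivalence $\DG_{-}^{\{0,1\}}\DAlg_{R} \simeq \DAlg_{R}\Der^{[1]}$ of Proposition~\ref{dg_and_der}, this $\mathbb{D}_{-}$-action on $(A/I, I)$ is by Remark~\ref{derivation_as_splittings} the same data as a derivation $\delta : \LL_{(A/I)/R} \to I[1]$, or equivalently a derived algebra splitting $A/I \to (A/I) \oplus I[1]$. Tracing the identifications confirms this derivation is precisely the one classifying the square-zero extension $I \to A \to A/I$, as expected from classical deformation theory.

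The main obstacle I anticipate is purely one of bookkeeping: making sure the symmetric monoidal structures, and the monad actions built on top of them, line up correctly under the three equivalences, so that the composite $\gr$ really does act on objects as described rather than up to some hidden twist. Since Propositions~\ref{pull_back_for_fil_0,1}, \ref{two_step_algebras_pullback}, \ref{two_step_fils_via_differential}, and \ref{dg_and_der} have already been established as symmetric monoidal (and $\SSeq^{\gen}$-equivariant) equivalences, the compatibilities reduce to tracking a single object through the three steps, which is exactly what the identification of $(A/I, I, \delta)$ above accomplishes.
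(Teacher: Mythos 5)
Your proposal is correct and follows essentially the same route as the paper, whose proof is exactly the composite of Proposition~\ref{Smith_sqzer_vs_two_step}, Corollary~\ref{two_step_fils_via_differential_1} and Proposition~\ref{dg_and_der}. Your object-level tracing of $(I\rightarrow A)$ to the triple $(A/I, I, \delta)$ is the content the paper records separately in Remark~\ref{from_dg_to_derivations}, so nothing is missing.
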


\begin{proof}
This follows from Propositions \ref{two_step_fils_via_differential_1}, \ref{Smith_sqzer_vs_two_step} and \ref{dg_and_der}.
\end{proof}

\begin{rem}\label{from_dg_to_derivations}
Let us give a more explicit description of the functors involved in the equivalence of Theorem \ref{sqzero_as_der_1}. Let $(B,M,\delta) \in \DAlg_{R}\Der^{[1]}$. We define a new derived commutative algebra with a map to $B$ as a pull-back

$$
\xymatrix{  A \ar[d] \ar[r] &B \ar[d]^-{(\Id, \delta)} \\
B \ar[r]_-{(\Id,0)}& B\oplus M[1]. }
$$
We have an equivalence $\fib(A \rightarrow B) \simeq M$, moreover this is an equivalence of non-unital $A$-algebras, where $M$ is endowed with trivial non-unital algebra structure. Therefore, we can consider $ \{ ... \rightarrow 0 \rightarrow M \rightarrow A\}$ as a filtered objects concentrated in degrees $[0,1]$.

The inverse functor $ \Fil^{[0,1]}\DAlg_{R} \rightarrow \DAlg_{R}\Der^{[1]}$ can be described as follows. Assume $(I \rightarrow A) \in \Fil^{[0,1]}\DAlg_{R}. $ The left $A$-module structure on $I$ canonically descends to an $A/I$-module structure as all multiplication maps $I^{\otimes^{n}} \rightarrow I$ are naturally null-homotopic for degree reasons. Consider the tensor product $A/I \underset{A}{\otimes} A/I$. The unit $A/I \rightarrow \LSym_{A/I}(I)$ induces a map 

$$
\xymatrix{  A/I \underset{A}{\otimes} A/I \ar[r]& A/I \underset{\LSym_{A/I}(I) }{\otimes} A/I  \ar[r]^-{\sim} & \LSym_{A/I}(I[1]) \ar[r] & A/I \oplus I[1],   }
$$
which defines two different $A/I$-algebra structures on $A/I \oplus I[1]$, coming from the left and the right action of $A/I$ on $A/I \underset{A}{\otimes} A/I$. The right action gives a non-trivial derivation $\delta: A/I \rightarrow A/I \oplus I[1]$ such that the map $A/I \rightarrow I[1]$ is the extension class of the two-step filtration $I \rightarrow A$. 
\end{rem}

We will now unravel some relations between the cotangent complex of the map $A\rightarrow A/I$ and the Smith ideal $I \rightarrow A$. 

\begin{construction}\label{L_and_I}
We will also consider an adjunction

$$
\xymatrix{   \DAlg_{R}^{\Delta^{1}}    \ar@/^1.1pc/[rrr]^-{\LL[-1]} &&& \ar@/^1.1pc/[lll]^-{\triv[1]} \DAlg_{R}\Mod ,    }
$$
where the right adjoint is the functor of \textbf{trivial square-zero extension} $(A,M) \longmapsto (A \rightarrow A\oplus M[1])$, and the left adjoint sends an arrow $(A\rightarrow B)$ to the pair $(B, \LL_{B/A}[-1]) \in\DAlg_{R}\Mod$. By means of the equivalence $\DAlg_{R}^{\Delta^{1}_{\vee}} \simeq \DAlg_{R}^{\Delta^{1}}$, we obtain a functor $ \xymatrix{\LL[-1]\circ \cofib: \DAlg_{R}^{\Delta^{1}_{\vee}} \simeq \DAlg_{R}^{\Delta^{1}} \ar[r]^-{\LL} & \DAlg_{R}\Mod }$. The right adjoint of $\LL[-1]\circ \cofib $ identifies with the composite functor $\xymatrix{  \DAlg_{R}\Mod \ar[r]_-{\triv[1]} & \DAlg_{R}^{\Delta^{1}} \ar[r]_-{\fib}^-{\sim} & \DAlg_{R}^{\Delta^{1}_{\vee}}  } $ which sends a pair $(A,M)$ to the Smith ideal $$\xymatrix{ \bigl( \fib( A \ar[r]^-{(\Id,0)}& A\oplus M) \rightarrow A\bigr) \simeq \bigl( M[-1] \ar[r]^-{0}& A\bigr) ,}$$
and this functor is equivalent to the functor $$\xymatrix{   \DAlg_{R}\Mod \simeq \Gr^{\{0,1\}}\DAlg_{R} \ar[rr]^-{\spl^{[0,1]}}&& \DAlg_{R}^{\Delta^{1}_{\vee}}     } .$$

 \end{construction}
 
 This construction makes transparent the folliowing proposition:

\begin{prop}
Let $(I\rightarrow A)$ be a Smith ideal corresponding to a map $A\rightarrow B \simeq A/I$. There is an equivalence of graded derived algebras

$$
\gr^{\bullet} \adic  (I\rightarrow A) \simeq \LSym^{\bullet}_{B}(\LL_{(B/A} [-1]).
$$
\end{prop}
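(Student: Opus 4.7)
The plan is to exhibit both sides as left adjoint functors $\DAlg_R^{\Delta^1_\vee} \to \Gr^{\geq 0}\DAlg_R$, to construct a canonical comparison map, and then to verify it is an equivalence on compact projective generators. The left-hand side $\gr^\bullet \circ \adic$ is by construction the composition of a left adjoint ($\adic$ from Construction \ref{generalized_adic}) with the cocontinuous symmetric monoidal functor $\gr^\bullet$. The right-hand side factors through derived modules as
\[
\DAlg_R^{\Delta^1_\vee} \xrightarrow{\LL[-1] \circ \cofib} \DAlg_R\Mod \xrightarrow{\LSym^\bullet} \Gr^{\geq 0}\DAlg_R ,
\]
where the first functor (sending $(I \to A)$ to $(A/I, \LL_{(A/I)/A}[-1])$) is the left adjoint from Construction \ref{L_and_I} (via the equivalence $\cofib:\DAlg_R^{\Delta^1_\vee} \simeq \DAlg_R^{\Delta^1}$), and the second is the free graded derived algebra functor of Notation \ref{free_adjunction_for_gradeds} in its non-divided-power variant; so the composition is again a left adjoint.

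To construct the comparison map, I would apply the symmetric monoidal truncation $(-)^{\{0,1\}}: \Gr^{\geq 0}\DAlg_R \to \Gr^{\{0,1\}}\DAlg_R \simeq \DAlg_R\Mod$ to $\gr^\bullet \adic(I \to A)$. The resulting two-step object agrees with $\gr\bigl(\gr^{[0,1]} \adic(I \to A)\bigr)$, where $\gr^{[0,1]}\adic(I \to A) \in \Fil^{[0,1]}\DAlg_R$ is the two-step filtered truncation; under the equivalence $\Fil^{[0,1]}\DAlg_R \simeq \DAlg_R\Der^{[1]}$ of Theorem \ref{sqzero_as_der_1} together with Construction \ref{cotangent_adjunction} and the explicit description in Remark \ref{from_dg_to_derivations}, this is precisely the pair $(A/I, \LL_{(A/I)/A}[-1])$. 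The freeness adjunction of Notation \ref{free_adjunction_for_gradeds} then lifts this canonically to a map
\[
\LSym_{A/I}^\bullet \bigl(\LL_{(A/I)/A}[-1]\bigr) \longrightarrow \gr^\bullet \adic(I \to A)
\]
of graded derived $R$-algebras.

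To verify this map is an equivalence, I would reduce to compact projective generators of $\DAlg_{R,\geq 0}^{\Delta^1_\vee}$, which by the non-pd analogue of Proposition \ref{connective_and_Mao} are finite tensor products of $\LSym_R^{\Delta^1_\vee}(0 \to R) \simeq (0 \to R[x])$ and $\LSym_R^{\Delta^1_\vee}(\Id_R) \simeq ((x) \to R[x])$. On $(0 \to R[x])$ both sides give $R[x]$ placed in degree $0$ (since $A = B = R[x]$ and $\LL_{B/A} = 0$); on $((x) \to R[x])$, the left-hand side is the associated graded of the usual $(x)$-adic filtration on $R[x]$, namely $R[x]$ with standard grading, while on the right-hand side $B = R$ and $\LL_{R/R[x]} \simeq R[1]$ give $\LSym_R^\bullet(R) \simeq R[x]$ with $x$ in grading $1$. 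The main obstacle is the extension of the verification to finite tensor products of generators, since the monoidal structure on $\DAlg_R^{\Delta^1_\vee}$ is the push-out structure of Construction \ref{Day_arrows} rather than pointwise: one must check that both $\gr^\bullet \circ \adic$ and $\LSym^\bullet \circ (\LL[-1] \circ \cofib)$ are strong symmetric monoidal and that the comparison map respects these structures, which follows from the strong monoidality of each constituent left adjoint together with the standard sum formula $\LL_{(B_1 \otimes B_2)/(A_1 \otimes A_2)} \simeq (\LL_{B_1/A_1} \otimes B_2) \oplus (B_1 \otimes \LL_{B_2/A_2})$ for the cotangent complex of a tensor product.
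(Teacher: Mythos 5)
Your strategy works in outline and your two generator computations are correct, but you take a much longer route than the paper, and one step does not cover the stated generality. The paper's proof is purely formal: the right adjoint of $\gr^{\bullet}\circ\adic$ and the right adjoint of $(I\to A)\mapsto \LSym^{\bullet}_{B}(\LL_{B/A}[-1])$ are literally the same composite functor (a graded derived algebra is sent to the split Smith ideal built from its degree $0$ and $1$ pieces, whichever way one traverses the square of split-inclusion and evaluation functors, with Construction \ref{L_and_I} identifying the adjoint of $\spl^{[0,1]}$), so the two left adjoints agree by uniqueness of adjoints --- no comparison map, no generators, no monoidality. Note that your own construction already contains this: the functorial identification of $(\gr^{\bullet}\adic(I\to A))^{\{0,1\}}$ with $(B,\LL_{B/A}[-1])$ is exactly Construction \ref{L_and_I} in disguise, and once you have it you are one adjunction-uniqueness observation away from the full statement, which renders the entire verification step unnecessary.

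Concerning that verification step as you wrote it: first, reducing to the compact projective generators of $\DAlg^{\Delta^{1}_{\vee}}_{R,\geq 0}$ and their sifted colimits only proves the equivalence for \emph{connective} Smith ideals, whereas the proposition concerns arbitrary $(I\to A)$ in $\DAlg_{R}^{\Delta^{1}_{\vee}}$; the non-connective case would need an extra argument (for instance the finite-totalization/excisive-filtration device the paper uses to compare $\LSym^{\Delta^{1}}_{R}$ with the pointwise free functor), or simply the formal right-adjoint comparison above. Second, the ``main obstacle'' you flag is not the real one: both composites are left adjoints and hence preserve finite coproducts, and a natural transformation between coproduct-preserving functors that is an equivalence on each factor is an equivalence on the coproduct, so neither strong monoidality of $\adic$ nor the K\"unneth formula for $\LL$ is needed. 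Finally, your assertion that $\gr^{\bullet}\adic((x)\to R[x])$ is the classical associated graded should not be taken from the classical adic filtration (in the paper that identification is \emph{deduced} from this proposition); argue instead that $\adic\,\LSym^{\Delta^{1}_{\vee}}_{R}(\Id_{R})$ is the free filtered derived algebra on the two-step filtered module $R\xrightarrow{\Id}R$ and that $\gr$ intertwines the free algebra monads, giving $\LSym^{\bullet}_{R}(R(1))\simeq R[x]$ with $x$ in weight $1$.
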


\begin{proof}
Indeed, it follows from commutativity of the diagram 

$$
\xymatrix{  \DAlg_{R}^{\Delta^{1}_{\vee}} && \ar[ll]_-{\ev^{[0,1]}} \Fil^{\geq 0}\DAlg_{R} \\
\DAlg_{R} \Mod \ar[u]^-{\spl^{[0,1]}} && \ar[ll]^-{\ev^{[0,1]}} \Gr^{\geq 0}\DAlg_{R} \ar[u]_-{\spl}  }
$$
by passing to left adjoints, and using Construction \ref{L_and_I}.
\end{proof}

\begin{rem}

Using the same notations as before, we let Let $I^{n}:=\Fil^{n} \adic (I\rightarrow A)$ be the $n$-th stage of the divided power adic filtration corresponding to this ideal. Then we have fiber sequences
 
 $$
I^{n+1} \rightarrow I^{n} \rightarrow \LSym^{n}_{B} (\LL_{B/A}[-1]).
 $$
  In particular, for $n=1$, we get $\LL_{B/A}  \simeq \cofib(I^{2} \rightarrow I) [1]$, and we get a derivation $\LL_{B/A} \rightarrow I[1]$.

We also have an equivalence of two-step filtered objects $$\adic(I \rightarrow A)^{\leq 1} \simeq \Bigl(\frac{I}{I^{2}} \rightarrow \frac{A}{I^{2}}\Bigr),$$
which implies an equivalence of graded objects $$\gr^{\{0,1\}}\; \adic(I\rightarrow A) \simeq \Bigl(\frac{I}{I^{2} } , \frac{A}{I^{2} } / \frac{I}{I^{2} }\Bigr)\simeq \Bigl(\frac{I}{I^{2} }, \frac{A}{I} \Bigr)  ,$$
and the $\mathbb{D}_{-}$-action $A/I \rightarrow I/I^{2}[1] \simeq \LL_{B/A}$ matches with the universal derivation for the map $A\rightarrow B$.
 \end{rem}
 
 \begin{ex}
 Let us specify to the case when $I\rightarrow A$ is a regular ideal in a discrete algebra $A$. A standard argument shows that the cotangent complex $\LL_{B /A}\simeq I/I^{2}[1]$, and hence is a shift of the compact projective $A/I$-module $I/I^{2}\simeq I \otimes_{A} A/I$. It follows by induction from the fiber sequences
 
 $$
 I^{n+1} \rightarrow I^{n} \rightarrow \Sym^{n}_{A/I} (I/I^{2})
 $$
 that all $A$-modules $I^{n}$ are discrete, and therefore the object $\adic(I \rightarrow A)$ identifies with the classical adic filtration of the ideal $I$. Taking divided power envelopes, we get that $\adic^{\pd} \; \Env^{\pd}(I \rightarrow A)$ is the divided power adic filtration on the divided power envelope $\Env^{\pd}(I \rightarrow A)$.
 \end{ex}

In the same way, we have:

\begin{prop}\label{gr_of_adic_1}
For any $I\rightarrow A \in \DAlg_{R}^{\Delta^{1}_{\vee},\pd}$ with $B=A/I$, there is an equivalence of graded derived divided power algebras

$$
\gr^{\bullet} \adic_{\pd}  (I\rightarrow A) \simeq \LGamma^{\bullet}_{B}(\LL_{B/A} [-1]).
$$
\end{prop}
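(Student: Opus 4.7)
The plan is to mirror the proof of the immediately preceding non-pd proposition, installing divided powers on each of the four functors. Specifically, I would exhibit the commutative square of right adjoints
\[
\xymatrix{\DAlg_{R}^{\Delta^{1}_{\vee},\pd} && \ar[ll]_-{\ev^{[0,1]}} \Fil^{\geq 0}\DAlg_{R}^{\pd} \\
\DAlg_{R}\Mod \ar[u]^-{\spl^{[0,1]}_{\pd}} && \ar[ll]^-{\ev^{[0,1]}} \Gr^{\geq 0}\DAlg_{R}^{\pd} \ar[u]_-{\spl},}
\]
where $\spl^{[0,1]}_{\pd}$ sends a pair $(B,M) \in \DAlg_R\Mod$ to the pd Smith ideal $(M[-1] \xrightarrow{0} B)$ with the unique trivial non-unital pd structure on $M[-1]$; $\spl$ is the right adjoint of $\gr^{\bullet}$ on graded pd algebras; and the horizontal arrows are the pd evaluation functors of Construction \ref{generalized_adic} together with its graded counterpart. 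Commutativity reduces on underlying Smith ideals to the square used in the previous proof, with pd structures trivially matching because the only pd data surviving in weight $1$ under either composition is the trivial one (higher pd operations land in strictly higher weight).

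Passing to left adjoints yields a commutative square whose top arrow is $\adic_{\pd}$, right vertical is $\gr^{\bullet}$, bottom horizontal is the free graded pd algebra on a module functor $\LGamma^{\bullet}$ of Notation \ref{free_adjunction_for_gradeds}, and left vertical is the pd analogue of Construction \ref{L_and_I}, sending $(I \rightarrow A)$ to $(A/I, \LL_{A/I/A}[-1])$ with the ordinary derived cotangent complex. Chasing the outer rectangle produces the required equivalence
\[
\gr^{\bullet} \adic_{\pd}(I \rightarrow A) \simeq \LGamma^{\bullet}_B(\LL_{B/A}[-1]).
\]

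The main technical step I expect is the identification of the left adjoint to $\spl^{[0,1]}_{\pd}$ with this cotangent-fiber functor. I would argue by factoring $\spl^{[0,1]}_{\pd}$ as $\DAlg_R\Mod \xrightarrow{\spl^{[0,1]}} \DAlg_R^{\Delta^1_\vee,\sqzero} \hookrightarrow \DAlg_R^{\Delta^1_\vee,\pd}$ via Proposition \ref{Smith_sqzer_vs_two_step}, the left adjoint of the first arrow being exactly Construction \ref{L_and_I}. The remaining check that the inclusion of square-zero Smith ideals into pd Smith ideals contributes no correction at the level of $\gr^{\bullet} \adic_{\pd}$ can be verified directly on the compact projective generators of Proposition \ref{connective_and_Mao}, for which both sides reduce to explicit free graded pd algebras on finitely generated free modules placed in degree $1$; the general case then follows by extending along sifted colimits, since both $\adic_{\pd}$ and $\LGamma^{\bullet}$ preserve them.
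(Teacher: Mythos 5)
Your overall plan faithfully mirrors what the paper intends (the paper proves the non-pd proposition by the square of right adjoints and passing to left adjoints, and disposes of the pd case with ``in the same way''), but the step you yourself single out as the main technical point is exactly where the argument breaks. The left adjoint of $\spl^{[0,1]}_{\pd}\colon \DAlg_{R}\Mod \rightarrow \DAlg_{R}^{\Delta^{1}_{\vee},\pd}$ is \emph{not} $(I\rightarrow A)\longmapsto (B,\LL_{B/A}[-1])$: a map of pd Smith ideals into a trivial square-zero pd ideal must annihilate not only all products but also all divided powers $\gamma_{n}$, $n\geq 2$, so this left adjoint is a ``pd cotangent fiber'' (pd-indecomposables of $I$ relative to $A$), which differs from $\LL_{B/A}[-1]$ in general; equivalently, the passage from square-zero Smith ideals (Proposition \ref{Smith_sqzer_vs_two_step}) to pd Smith ideals does contribute a correction. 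Moreover, your proposed verification on the compact projective generators of Proposition \ref{connective_and_Mao} would detect this rather than confirm it: on the generator $(\mathbb{Z}\langle x\rangle^{+}\rightarrow \mathbb{Z}\langle x\rangle)$ of Proposition \ref{pd_envelope_of_cp}, with $A=\mathbb{Z}\langle x\rangle$ and $B=\mathbb{Z}$, the left-hand side $\gr^{\bullet}\adic_{\pd}$ is the free graded pd algebra on a rank-one free module in weight $1$, whereas $\LGamma^{\bullet}_{\mathbb{Z}}(\LL_{\mathbb{Z}/\mathbb{Z}\langle x\rangle}[-1])$ is not of this form: for a surjection of discrete rings one has $\pi_{1}\LL_{B/A}\simeq I/I^{2}$, and here $I/I^{2}$ contains torsion (e.g.\ $\gamma_{2}(x)$ is a nonzero $2$-torsion class, since $x^{2}=2\gamma_{2}(x)$ lies in $I^{2}$ but $\gamma_{2}(x)$ does not). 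So already in weight $1$ the two functors you are comparing have different $\pi_{0}$ on a generator, and the ``reduce to generators and extend along sifted colimits'' step cannot close the argument; it would instead prove the two sides unequal.

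The repair is to run the square in the form in which the statement is actually used downstream (Theorem \ref{properties_of_LOmega} via Remark \ref{adic_and_deRham}): take the top-left corner to be the non-pd category $\DAlg_{R}^{\Delta^{1}_{\vee}}$, the top horizontal functor to be $\forget\circ\ev^{[0,1]}$ from $\Fil^{\geq 0}\DAlg^{\pd}_{R}$, the left vertical to be the non-pd $\spl^{[0,1]}$ (whose left adjoint is Construction \ref{L_and_I} on the nose), and the bottom horizontal to be the right adjoint of the free graded pd algebra functor of Notation \ref{free_adjunction_for_gradeds}. Passing to left adjoints then yields $\gr^{\bullet}\adic_{\pd}\Env^{\pd}(I\rightarrow A)\simeq \LGamma^{\bullet}_{B}(\LL_{B/A}[-1])$ for an ordinary Smith ideal $(I\rightarrow A)$ with $B=A/I$, which is precisely what is needed for $\LOmega^{\bullet}_{A/R}\simeq\LGamma^{\bullet}_{A}(\LL_{A/R}[-1])$; for an arbitrary pd Smith ideal the same formal argument identifies $\gr^{\bullet}\adic_{\pd}(I\rightarrow A)$ with $\LGamma^{\bullet}_{B}$ of the pd cotangent fiber, and one should either phrase the proposition that way or restrict to pd envelopes, rather than with $\LL_{B/A}$ formed over the divided power algebra $A$ itself.
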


\section{Derived De Rham complex and derived crystalline cohomology.}

\subsection{Divided powers and derived De Rham complex.}

We consider the $\infty$-category $\Fil^{\geq 0} \DAlg^{\pd}_{R}$ introduced in Definition \ref{fil_pds}, and the $\infty$-category $\widehat{\Fil^{\geq 0}\DAlg^{\pd}}_{R} $ consisting of complete objects. We also let the $\infty$-category of \textbf{differential (non-negatively) graded derived divided power algebras} $\DG^{\geq 0}_{-}\DAlg^{\pd}$ be defined as the pull-back

$$
\xymatrix{  \DG^{\geq 0}_{-}\DAlg_{R}^{\pd} \ar[d] \ar[r]& \widehat{\Fil^{\geq 0}\DAlg^{\pd}}_{R} \ar[d]\\
\DG_{-}^{\geq 0}\Mod_{R} \ar[r]^-{\sim}_-{\gr}&  \widehat{\Fil^{\geq 0}\Mod_{R}}.}
$$
The functor $\ev^{0}: \DG_{-}^{\geq 0} \DAlg_{R}^{\pd} \rightarrow \DAlg_{R} $ preserves all limits, and therefore has a left adjoint. Consequently, so does the functor $\gr^{0}: \Fil^{\geq 0}\DAlg^{\pd}_{R} \rightarrow \DAlg_{R}$. 

\begin{defn}

We will define two notions of derived De Rham cohomology, one in terms of filtered derived divided power algebras, and another in terms of differential graded derived divided power algebras. 

\begin{enumerate}

 \item We define the \textbf{Hodge-filtered derived De Rham cohomology} $$\xymatrix{\LOmega^{\geq \star}_{-/R}: \DAlg_{R} \ar[rr] && \Fil^{\geq 0} \DAlg^{\pd}_{R}}$$  as the left adjoint of the functor $\gr^{0}$. The composition with the completion functor $\Fil^{\geq 0}\DAlg^{\pd}_{R} \rightarrow \widehat{\Fil^{\geq 0}\DAlg^{\pd}}_{R} $ is the \textbf{Hodge completed derived De Rham cohomology} $\widehat{\LOmega}^{\geq \star}_{-/R}: \DAlg_{R} \rightarrow \widehat{\Fil^{\geq 0}\DAlg^{\pd}}_{R}. $

 \item We also define the \textbf{derived De Rham complex} $$\xymatrix{\LOmega^{\bullet}_{-/R} : \DAlg_{R} \ar[rr]&& \DG^{\geq 0}_{-}\DAlg_{R}^{\pd}}  $$  as the composition of $\LOmega^{\geq \star}$ with the functor $\gr: \Fil^{\geq 0} \Mod_{R} \rightarrow \DG_{-}^{\geq 0}\Mod_{R}$. Alternatively, it is the left adjoint of the functor $\ev^{0}: \DG_{-}^{\geq 0} \DAlg_{R}^{\pd} \rightarrow \DAlg_{R}$.
 
 \item Letting $R$ vary, we obtain the functor of \textbf{relative Hodge-filtered derived De Rham cohomology} on the $\infty$-category of arrows $$\xymatrix{  \LOmega^{\geq \star }_{-/-} : \DAlg^{\Delta^{1}}_{\mathbb{Z}} \ar[rr]&& \Fil^{\geq 0} \DAlg^{\pd}_{\mathbb{Z}}   , } $$
 
 as well as its graded cousin, the \textbf{relative derived De Rham complex} $ \LOmega^{\bullet}_{-/-}$, and the complete version \textbf{relative Hodge completed derived De Rham cohomology} $ \widehat{\LOmega}_{-/-}^{\geq \star}$.

\end{enumerate}

\end{defn}

\begin{rem}\label{adic_and_deRham}
The composition 

$$
\xymatrix{  \DAlg_{\mathbb{Z}}^{\Delta^{1}} \ar[r]_-{\sim}^-{\fib} & \DAlg_{\mathbb{Z}}^{\Delta^{1}_{\vee}}  \ar[r]^-{\Env^{\pd}} & \DAlg_{\mathbb{Z}}^{\Delta^{1}_{\vee},\pd} \ar[r]^-{\adic_{\pd}} & \Fil^{\geq 0} \DAlg_{\mathbb{Z}}^{\pd}         }
$$
is equivalent to $\LOmega^{\geq \star}_{-/-}$. Indeed, it suffices to check that the right adjoints of both functors match. The right adjoint of this composition sends $F^{\geq \star}A$ to the arrow $A \rightarrow A/F^{1}A \simeq \gr^{0}A$. But the right adjoint of $\LOmega^{\geq \star}_{-/-}$ also sends $F^{\ast}A \longmapsto (A\rightarrow \gr^{0}A) $, and the claim follows by uniqueness of the left adjoint. 
\end{rem}

\begin{ex}
Let $A\rightarrow A/I$ be a surjective arrow whose kernel is an ideal generated by a \emph{regular} sequence $I=(t_{1},t_{2},...,t_{n})$. Then it follows from Remark \ref{adic_and_deRham} that the derived De Rham cohomology $\LOmega^{\geq \star}_{(A/I) /A}$ gets identified with the algebra $A\langle x_{1}, x_{2},... , x_{n} \rangle / (x_{1}-t_{1}, ..., x_{n}-t_{n})$ endowed with the divided power filtration.
\end{ex}

\begin{Theor}\label{properties_of_LOmega}

For any derived commutative $R$-algebra $A$, there are natural equivalences:
\begin{itemize}
\item $ \LOmega^{\bullet}_{A/R} \simeq \LGamma^{\bullet}_{A}(\LL_{A/R}[-1]) $ of graded derived divided power algebras;
\item $\LOmega^{\{0,1\}}_{A/R} \simeq A \oplus \LL_{A/R}[-1](1)$ of differential two-step graded derived algebras, with the $\mathbb{D}_{-}$-action on the latter given by the universal derivation $d: A \rightarrow \LL_{A/R}$.

\end{itemize}
\end{Theor}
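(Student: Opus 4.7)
The plan is to establish the second equivalence first and then deduce the first from it via the adic-pd description of derived De Rham cohomology. For the second equivalence, the key observation is that the pd-structure on any filtered derived pd-algebra supported in filtration degrees $[0,1]$ is forced to be trivial: the requirement $\gamma_n(x) \in F^{\geq n}$ together with $F^{\geq 2}=0$ forces $\gamma_n = 0$ for $n\geq 2$, so the forgetful functor $\Fil^{[0,1]}\DAlg^{\pd}_R \to \Fil^{[0,1]}\DAlg_R$ is an equivalence. Composing this with Corollary~\ref{two_step_fils_via_differential_1}, Theorem~\ref{sqzero_as_der_1}, and Proposition~\ref{dg_and_der} yields an equivalence $\Fil^{[0,1]}\DAlg^{\pd}_R \simeq \DAlg_R\Der^{[1]}$ under which $\gr^0$ corresponds to the forgetful $(B,M,\delta)\mapsto B$. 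By Construction~\ref{cotangent_adjunction}, the left adjoint of the latter sends $A$ to the universal derivation $(A,\LL_{A/R}[-1],d)$. Since $\LOmega^{\leq 1}_{A/R}$ is by definition the left adjoint of $\gr^0$ restricted to two-step filtered objects, the universal property identifies its two-step graded truncation with the universal derivation, yielding $\LOmega^{\{0,1\}}_{A/R} \simeq A\oplus \LL_{A/R}[-1](1)$ with $\mathbb{D}_-$-action given by $d$.

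For the first equivalence I use the adic-pd description: by Remark~\ref{adic_and_deRham}, $\LOmega^{\geq\star}_{A/R}\simeq \adic_{\pd}(\Env^{\pd}(\fib(R\to A)\to R))$. Setting $(I^{\pd}\to R^{\pd}):=\Env^{\pd}(\fib(R\to A)\to R)$, the commutation of $\Env^{\pd}$ with $\gr^0$ noted in Remark~\ref{commutation_with_gr_ev} gives $R^{\pd}/I^{\pd}\simeq A$, and Proposition~\ref{gr_of_adic_1} then produces an equivalence of graded derived pd-algebras
\[
\gr^{\bullet}\LOmega^{\geq\star}_{A/R}\simeq \LGamma^{\bullet}_A\bigl(\LL_{A/R^{\pd}}[-1]\bigr).
\]
To replace $\LL_{A/R^{\pd}}$ by $\LL_{A/R}$ I compare weight-one components with the already-established second equivalence: the formula above gives $\gr^1\LOmega^{\geq\star}_{A/R}\simeq \LL_{A/R^{\pd}}[-1]$, while the second part gives $\gr^1\LOmega^{\geq\star}_{A/R}\simeq \LL_{A/R}[-1]$, both as $A$-modules. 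Matching produces a canonical $A$-linear equivalence $\LL_{A/R^{\pd}}\simeq \LL_{A/R}$, and functoriality of $\LGamma^{\bullet}_A(-[-1])$ in its module argument promotes this to the desired equivalence $\LOmega^{\bullet}_{A/R}\simeq \LGamma^{\bullet}_A(\LL_{A/R}[-1])$ of graded derived pd-algebras.

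The most delicate step, which is where I expect to need the most bookkeeping, is verifying that the two identifications of $\gr^1\LOmega^{\geq\star}_{A/R}$ — one from Proposition~\ref{gr_of_adic_1} and one from the universal derivation computation — are compatible not merely as equivalences of $R$-modules but as $A$-modules, so that the induced equivalence $\LL_{A/R^{\pd}}\simeq \LL_{A/R}$ is natural in $A$. Both identifications ultimately arise from the universal property of $\LOmega^{\geq\star}_{A/R}$, so compatibility should hold; but actually verifying this requires tracing the $A$-action through the chain of equivalences $\Fil^{[0,1]}\DAlg^{\pd}_R \simeq \DG_-^{\{0,1\}}\DAlg_R \simeq \DAlg_R\Der^{[1]}$ used in the first part, as well as through the pd-adic filtration on $\Env^{\pd}$, and this is the main technical burden of the argument.
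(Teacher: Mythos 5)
Your proof of the second equivalence is essentially the paper's: the paper also identifies $\LOmega^{\{0,1\}}_{-/R}$ as a composite of left adjoints landing in $\DG^{\{0,1\}}_{-}\DAlg^{\pd}_{R}\simeq \DAlg_{R}\Der^{[1]}$ (using, as you do, that divided powers carry weight‑$1$ elements into weights $\geq 2$ and hence die after truncation, the pd analogue of Proposition \ref{killing_higher_powers}), and then invokes Construction \ref{cotangent_adjunction}. That part is fine.

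The proof of the first equivalence has a genuine gap. You apply Proposition \ref{gr_of_adic_1} literally to the pd Smith ideal $(I^{\pd}\rightarrow R^{\pd})=\Env^{\pd}(\fib(R\rightarrow A)\rightarrow R)$, obtaining $\gr^{\bullet}\LOmega^{\geq\star}_{A/R}\simeq \LGamma^{\bullet}_{A}(\LL_{A/R^{\pd}}[-1])$, and then try to trade $\LL_{A/R^{\pd}}$ for $\LL_{A/R}$ by matching weight‑one pieces. But the bridging equivalence $\LL_{A/R^{\pd}}\simeq \LL_{A/R}$ is false, so the two identifications of $\gr^{1}$ you want to match cannot both hold; the problem is not the naturality bookkeeping you flag at the end, but that the first identification (the literal application of Proposition \ref{gr_of_adic_1} with ambient algebra $R^{\pd}$) is the wrong one. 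Concretely, take $R=\mathbb{Z}[x]\rightarrow A=\mathbb{Z}$, $x\mapsto 0$. By Proposition \ref{pd_envelope_of_cp} the pd envelope has underlying algebra $R^{\pd}=\mathbb{Z}\langle x\rangle$ with augmentation ideal $\overline{I}=\mathbb{Z}\langle x\rangle^{+}$. Then $\LL_{A/R}\simeq \mathbb{Z}[1]$, whereas $\pi_{1}\LL_{\mathbb{Z}/\mathbb{Z}\langle x\rangle}\simeq \overline{I}/\overline{I}^{2}$, which contains the $2$-torsion class of $\gamma_{2}(x)$ (since $x\cdot x=2\gamma_{2}(x)\in\overline{I}^{2}$), so $\LL_{A/R^{\pd}}\not\simeq\LL_{A/R}$. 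Consistently, the associated graded of the pd-adic filtration on $\mathbb{Z}\langle x\rangle$ is $\Gamma_{\mathbb{Z}}(\mathbb{Z}\cdot x)\simeq \LGamma_{\mathbb{Z}}(\LL_{\mathbb{Z}/\mathbb{Z}[x]}[-1])$ and not $\LGamma_{\mathbb{Z}}(\LL_{\mathbb{Z}/\mathbb{Z}\langle x\rangle}[-1])$: the cotangent complex that appears is the one over the \emph{original} base, not over the envelope.

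The way the paper uses Proposition \ref{gr_of_adic_1} (and the way it should be read) is as the pd analogue of the unnumbered proposition preceding it: an identification of the whole composite $\gr^{\bullet}\circ\adic_{\pd}\circ\Env^{\pd}$ evaluated on a Smith ideal $(I\rightarrow R)$, proved by passing to right adjoints, where the right adjoint is the split square-zero-type functor of Construction \ref{L_and_I} and Notation \ref{free_adjunction_for_gradeds} landing back over the original algebra. With that reading, the first equivalence is immediate from Remark \ref{adic_and_deRham}: $\LOmega^{\bullet}_{A/R}\simeq\gr\,\adic_{\pd}\Env^{\pd}(\fib(R\rightarrow A)\rightarrow R)\simeq\LGamma^{\bullet}_{A}(\LL_{A/R}[-1])$, and no comparison of cotangent complexes over two different bases (nor any input from the second equivalence) is needed. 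To repair your argument you should prove this adjunction-level statement directly rather than attempt to deduce it from the envelope-relative formula.
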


\begin{proof}
Let $I = \fib(R \rightarrow A)$. The the first equivalence follows from Remark \ref{adic_and_deRham} and Propositions \ref{gr_of_adic_1}:

$$
\LOmega_{A/R}^{\bullet} \simeq \gr\adic_{\pd} \Env^{\pd}  (R,I) \simeq \LGamma^{\bullet}_{A}(\LL_{A/R}[-1]).
$$

For the second equivalence, consider the universal property of the object $\LOmega^{\{0,1\}}_{A/R}$. For a fixed $R$, the functor $\LOmega^{\{0,1\}}_{-/R}$ is the composition of left adjoints

$$
\xymatrix{   \DAlg_{R} \ar[rr]^-{\LOmega^{\bullet}_{-/R}} &&    \DG^{\geq 0}_{-} \DAlg^{\pd}_{R} \ar[rr]^-{(-)^{\{0,1\}}  } &&   \DG^{\{0,1\}}_{-}\DAlg^{\pd}_{R} \simeq \DAlg_{R}\Der^{[1]}.  }
$$
The right adjoint of the composition is the functor that forgets the derivation. Therefore, the composite left adjoint is given by the functor $\LL_{-/R}[-1]$ of Construction \ref{cotangent_adjunction}.
\end{proof}

\begin{cor}\label{DeRham_of_smooth}
Let $R\rightarrow A$ be a smooth map of discrete commutative algebras. Then the derived De Rham complex $\LOmega_{A/R}^{\bullet}$ is discrete in the negative $t$-structure on $\DG_{-}\Mod_{R}$.
\end{cor}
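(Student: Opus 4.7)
The plan is to combine the identification of the derived De Rham complex as a divided power algebra on the shifted cotangent complex (Theorem~\ref{properties_of_LOmega}) with the décalage isomorphism (Proposition~\ref{décalage_LSym}), and then to read off the $t$-amplitude weight by weight.

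First I would recall that since $R\to A$ is smooth, the cotangent complex $\LL_{A/R}$ is (equivalent to) the finitely generated projective $A$-module $\Omega^{1}_{A/R}$ concentrated in degree $0$. By Theorem~\ref{properties_of_LOmega}, the underlying graded object of $\LOmega^{\bullet}_{A/R}$ is
\[
\LOmega^{\bullet}_{A/R} \simeq \LGamma^{\bullet}_{A}\bigl(\LL_{A/R}[-1]\bigr),
\]
so it suffices to analyze the $n$-th graded piece $\LGamma^{n}_{A}(\LL_{A/R}[-1])$ for each $n\geq 0$.

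Next I would apply Proposition~\ref{décalage_LSym} to obtain an equivalence
\[
\LGamma^{n}_{A}\bigl(\LL_{A/R}[-1]\bigr) \simeq \LLambda^{n}_{A}(\LL_{A/R})[-n].
\]
Because $\LL_{A/R}=\Omega^{1}_{A/R}$ is projective in degree $0$, the derived exterior power $\LLambda^{n}_{A}(\Omega^{1}_{A/R})$ coincides with the classical exterior power $\Omega^{n}_{A/R}$, which is again a projective $A$-module in degree $0$. Therefore the $n$-th graded piece of $\LOmega^{\bullet}_{A/R}$ is equivalent to $\Omega^{n}_{A/R}[-n]$, a discrete $A$-module placed in homological degree $-n$.

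Finally I would match this with the definition of the negative $t$-structure. The heart $\Gr\Mod_{R,\heartsuit^{-}}$ of the negative $t$-structure on $\Gr\Mod_{R}$ consists precisely of graded objects whose $n$-th piece lies in $\Mod_{R,[-n,-n]}$, so the computation above shows that $\LOmega^{\bullet}_{A/R}$ lies in the heart of the negative $t$-structure on $\Gr\Mod_R$. Since the $t$-structure on $\DG_{-}\Mod_{R}=\Mod_{\mathbb{D}_{-}}(\Gr\Mod_{R})$ is obtained by transferring the negative $t$-structure along the forgetful functor to $\Gr\Mod_{R}$, discreteness of the underlying graded object is equivalent to discreteness in $\DG_{-}\Mod_{R}$, which gives the claim.

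The proof is essentially mechanical once the key inputs are in place; the only step requiring any care is verifying that the derived exterior power agrees with the classical one for a projective module, but this is standard (by reduction to the free case and left Kan extension). No significant obstacle is anticipated. A natural follow-up, to be addressed separately, is that this discrete graded algebra, together with its inherited differential, reproduces the classical De Rham complex $(\Omega^{\bullet}_{A/R}, d_{\dR})$ as a commutative differential graded algebra, which is the content of Proposition~\ref{first_comparison}.
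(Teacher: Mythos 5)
Your proposal is correct and follows essentially the same route as the paper: identify the graded pieces via Theorem \ref{properties_of_LOmega}, apply the décalage equivalence of Proposition \ref{décalage_LSym}, and use projectivity of $\Omega^{1}_{A/R}$ to place $\LGamma^{n}_{A}(\LL_{A/R}[-1])\simeq \Omega^{n}_{A/R}[-n]$ in homological degree $-n$. Your extra remark that discreteness in $\DG_{-}\Mod_{R}$ can be checked on the underlying graded object is a harmless elaboration of the paper's observation that the negative $t$-structure on $\DG_{-}\Mod_{R}$ is transferred from $\Gr\Mod_{R}$.
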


\begin{proof}
Indeed, this follows from the décalage isomorphism of Proposition \ref{décalage_LSym}: $\LGamma^{n}_{A} (\LL_{A/R}[-1]) \simeq \LLambda^{n}_{A}(\LL_{A/R})[-n]$. The cotangent complex is equivalent to the projective $A$-module of Kahler differentials $\LL_{A/R} \simeq \Omega^{1}_{A/R}$. Therefore, the derived exterior powers $\LLambda^{n}_{A}(\Omega^{1}_{A/R})$ are discrete too, and the $n$-th graded piece of the De Rham complex, $\LGamma^{n}_{A}(\LL_{A}[-1]) \simeq \Lambda^{n}_{A} (\Omega^{1}_{A/R})[-n]$ sits in homological degree $-n$.
\end{proof}

To relate our derived De Rham complex with the classical De Rham complex in the smooth case, we use the \emph{shear equivalence}.

\begin{prop}
Let $R\rightarrow A$ be a discrete map of smooth commutative rings. Then the image $\LOmega_{A/R}^{\bullet} [\bullet] $ under the shear equivalence is a discrete commutative algebra in $\Gr\Mod_{R,\heartsuit}$ (with respect to the Koszul sign rule symmetric monoidal structure). There is an equivalence $$\Omega_{A/R}^{\bullet}[\bullet] \simeq \Lambda_{A}^{\bullet}(\Omega^{1}_{A/R}),$$ and the $\mathbb{D}_{-}$-action on $\LOmega_{A/R}^{\bullet}$ translates into the De Rham differential on $\Lambda^{\bullet}_{A}(\Omega^{1}_{A/R})$.
\end{prop}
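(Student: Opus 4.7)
The plan is to combine Theorem~\ref{properties_of_LOmega}, the d\'ecalage isomorphism (Proposition~\ref{décalage_LSym}), and the shear equivalence (Proposition~\ref{shear_1}), and then pin down the differential by invoking the two-step truncation identified in part~(2) of Theorem~\ref{properties_of_LOmega}.

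First I would read off the underlying graded object. By Theorem~\ref{properties_of_LOmega}, $\LOmega^{\bullet}_{A/R} \simeq \LGamma^{\bullet}_{A}(\LL_{A/R}[-1])$ as graded derived divided power $R$-algebras. Since $R \to A$ is smooth, $\LL_{A/R} \simeq \Omega^{1}_{A/R}$ is a finitely generated projective $A$-module, so Proposition~\ref{décalage_LSym} gives $\LGamma^{n}_{A}(\LL_{A/R}[-1]) \simeq \LLambda^{n}_{A}(\Omega^{1}_{A/R})[-n] \simeq \Lambda^{n}_{A}(\Omega^{1}_{A/R})[-n]$, with the last equivalence because the derived exterior power of a projective module is discrete and agrees with the classical one. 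Since the shear takes weight $n$ in homological degree $-n$ to weight $n$ in homological degree $0$, this already shows that $\LOmega^{\bullet}_{A/R}[\bullet]$ is discrete and weightwise isomorphic to $\Lambda^{\bullet}_{A}(\Omega^{1}_{A/R})$.

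Next I would upgrade this to an equivalence of graded $R$-algebras. Here I apply Corollary~\ref{décalage_for_algebras} to $X = \Omega^{1}_{A/R}$: it gives an equivalence of derived graded $A$-algebras
\[
\LGamma_{A}(\Omega^{1}_{A/R}[-1])[\bullet] \;\simeq\; \LLambda_{A}(\Omega^{1}_{A/R})
\]
where the right-hand side is considered with the Koszul sign rule symmetric monoidal structure. Combined with the previous paragraph, this yields $\LOmega^{\bullet}_{A/R}[\bullet] \simeq \Lambda^{\bullet}_{A}(\Omega^{1}_{A/R})$ as graded commutative $R$-algebras in the heart of the standard $t$-structure on $\Gr\Mod_{R}$ equipped with the Koszul sign rule.

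For the differential, I would argue as follows. The $\mathbb{D}_{-}$-action on $\LOmega^{\bullet}_{A/R}$ induces, after the shear, a degree $+1$ derivation $d$ on the graded algebra $\Lambda^{\bullet}_{A}(\Omega^{1}_{A/R})$ (the Leibniz rule is automatic, because $\mathbb{D}_{-}$ is a cocommutative Hopf algebra in the sense of Remark~\ref{LGamma_D_-}, and the shear intertwines the Koszul sign rule with the standard symmetric monoidal structure by Proposition~\ref{shear_1}(2)). Because $\Lambda^{\bullet}_{A}(\Omega^{1}_{A/R})$ is generated as a graded $R$-algebra by $A$ in degree $0$ and $\Omega^{1}_{A/R}$ in degree $1$, the derivation $d$ is determined by its restriction to $A$; but this restriction is precisely the map $A \to \Omega^{1}_{A/R}$ read off from the two-step truncation $\LOmega^{\{0,1\}}_{A/R} \simeq A \oplus \LL_{A/R}[-1](1)$ provided by part~(2) of Theorem~\ref{properties_of_LOmega}, where the $\mathbb{D}_{-}$-action is the universal derivation $d \colon A \to \Omega^{1}_{A/R}$. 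By the universal property of the classical De~Rham differential, this forces $d$ to coincide with $d_{\dR}$.

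The step I expect to require the most care is verifying that the $\mathbb{D}_{-}$-action really does restrict compatibly to the two-step truncation and that the Leibniz rule survives the shear with the correct Koszul signs; this is conceptually clean but relies on tracking symmetric monoidal structures across Proposition~\ref{shear_1}, Corollary~\ref{décalage_for_algebras}, and the identification $\DG_{-}^{\{0,1\}}\DAlg_{R} \simeq \DAlg_{R}\Der^{[1]}$ of Proposition~\ref{dg_and_der}.
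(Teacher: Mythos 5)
Your proposal is correct and follows essentially the same route as the paper: d\'ecalage together with the shear identifies the underlying graded algebra with $\Lambda^{\bullet}_{A}(\Omega^{1}_{A/R})$ (your use of Corollary \ref{décalage_for_algebras} is just a packaged form of the weightwise d\'ecalage argument the paper runs), and the differential is pinned down in degree one by part (2) of Theorem \ref{properties_of_LOmega} together with the uniqueness of the De Rham differential. One caveat on precision: a degree-one derivation of $\Lambda^{\bullet}_{A}(\Omega^{1}_{A/R})$ is \emph{not} determined by its restriction to $A$ alone, as you assert in passing — one also needs $d^{2}=0$ (supplied by the $\mathbb{D}_{-}$-action, $\mathbb{D}_{-}$ being a trivial square-zero extension) to recover its values on $\Omega^{1}_{A/R}$ via the Leibniz rule; your appeal to the universal property of $d_{\dR}$ is what implicitly carries this, exactly as in the paper's proof.
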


\begin{proof}
Indeed, by Corollary \ref{DeRham_of_smooth}, properties 1) and 2) of the shear equivalence from Construction \ref{shear}, the algebra $  \Omega^{\bullet}_{A/R}[\bullet] $ is discrete in the neutral $t$-structure on $\Gr\Mod_{R}$ and has the structure of a graded supercommutative algebra. In weight $1$, we have $(\Omega_{A/R}^{\bullet}[\bullet])^{1} \simeq \Omega^{1}_{A/R}$. Therefore, we have a map from the free graded supercommutative algebras $\Lambda^{\bullet}_{A}(\Omega_{A/R}^{1}) \rightarrow \LOmega_{A/R}^{\bullet}[\bullet]$. By décalage isomorphism, the map  $\Lambda^{n}(\Omega^{1}_{A/R}) \rightarrow \LGamma^{n}(\Omega_{A/R}^{1}[-1])[\bullet]$  is an equivalence for any $n$ (see the proof of Proposition \ref{décalage_LSym}), and therefore, the map $\Lambda^{\bullet}_{A}(\Omega_{A/R}^{1}) \rightarrow \LOmega_{A/R}^{\bullet}[\bullet]$ is an equivalence. The claim about De Rham differential follows from property 3) of Construction \ref{shear}, second part of Theorem \ref{properties_of_LOmega} saying that the $\mathbb{D}_{-}$-action on $\Omega^{\bullet}_{A/R}$ is the universal derivation $d: A \rightarrow \Omega^{1}_{A/R}$ in degree $1$, and the classical fact that De Rham differential on $\Lambda^{\bullet}_{A}(\Omega_{A/R}^{1})$ is the unique square-zero map which restricts to the universal derivation $d: A \rightarrow \Omega_{A/R}^{1}$ in degree $1$.
\end{proof}

\subsection{Cosimplicial techniques.}

In this subsection we will revisit some well-known cosimplicial techniques for computing derived De Rham cohomology. Similar formulas were obtained by similar methods in the paper \cite{Bh12(1)}, albeit formulated in a different language. These methods will be used for the theorem of comparison of the Hodge completed derived De Rham cohomology with the Hodge completed stacky De Rham cohomology cohomology in the paper \cite{Magidson2}.

\begin{construction}
Let $R\rightarrow A$ be a map in $\DAlg_{\mathbb{Z}}$. Then the Bar construction of $A$ over $R$ is the cosimplicial diagram:

$$
\xymatrix{ \overset{\bullet}{ \underset{R}{\bigotimes}} \;A := \bigl (  A  \ar@<-.5ex>[r] \ar@<.5ex>[r] & A\otimes_{R} A \ar@<-1.0ex>[r] \ar[r]  \ar@<1.0ex>[r] & A\otimes_{R} A\otimes_{R} A  \ar@<-1.5ex>[r] \ar@<-0.5ex>[r] \ar@<0.5ex>[r]  \ar@<1.5ex>[r] & ... \bigr)   }
$$

Let us apply this construction to the map $\LOmega_{A/R} \rightarrow A. $ We obtain a cosimplicial diagram 

$$
\xymatrix{  \overset{\bullet}{ \underset{\LOmega^{\geq \star}_{A/R}}{\bigotimes}} A := \bigl (  A  \ar@<-.5ex>[r] \ar@<.5ex>[r] & A\underset{\LOmega^{\geq \star}_{A/R}}{\bigotimes} A \ar@<-1.0ex>[r] \ar[r]  \ar@<1.0ex>[r] &A\underset{\LOmega^{\geq \star}_{A/R}}{\bigotimes} A\underset{\LOmega^{\geq \star}_{A/R}}{\bigotimes} A  \ar@<-1.5ex>[r] \ar@<-0.5ex>[r] \ar@<0.5ex>[r]  \ar@<1.5ex>[r] & ... \bigr)   }
$$
of filtered derived $R$-algebras. Taking $\gr^{0}$, we obtain for any $n$:

$$
\gr^{0} (   \underset{\LOmega^{\geq \star}_{A/R}}{\bigotimes^{n}} A) \simeq \underset{\gr^{0} \LOmega^{\star}_{A/R}}{\bigotimes^{n}} A \simeq \underset{A}{\bigotimes^{n}} \;A \simeq A,
$$
and using commutativity of $\gr^{0}$ with limits it follows that $$\gr^{0}\circ \Tot ( \underset{\LOmega^{\geq \star}_{A/R}}{\bigotimes^{\bullet}} A  ) \simeq \Tot \circ \gr^{0} ( \underset{\LOmega^{\geq \star}_{A/R}}{\bigotimes^{\bullet}} A) \simeq A.$$ In particular, it follows that the fiber of the map $\Tot ( \overset{\bullet}{ \underset{\LOmega^{\geq \star}_{A/R}}{\bigotimes}} A ) \rightarrow A $ has a natural divided power structure.  By universal property of Hodge filtered derived De Rham cohomology, we obtain a filtered divided power map

$$
 \epsilon:  \LOmega^{\geq \star}_{A/R} \rightarrow \Tot (\underset{\LOmega^{\geq \star}_{A/R}}{\bigotimes^{\bullet}} A   )   
$$

\end{construction}

\begin{Lemma}
The map $\epsilon $ is an equivalence on completions.
\end{Lemma}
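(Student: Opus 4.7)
The plan is to show $\widehat{\epsilon}$ is an equivalence by checking it becomes one after applying the associated graded functor. By Theorem \ref{FilMod_and_DGMod}, $\gr$ restricts to an equivalence of symmetric monoidal $\infty$-categories $\gr: \widehat{\Fil^{\geq 0}\Mod}_R \xrightarrow{\sim} \DG^{\geq 0}_{-}\Mod_R$, and this equivalence lifts to the corresponding $\infty$-categories of derived pd algebras; in particular it is conservative on complete filtered pd algebras. So it is enough to show that the image of $\widehat{\epsilon}$ under $\gr$ is an equivalence of graded derived pd algebras.

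For the source, Theorem \ref{properties_of_LOmega} gives $\gr\,\LOmega^{\geq \star}_{A/R} \simeq \LGamma^{\bullet}_A(\LL_{A/R}[-1]) \simeq \LOmega^{\bullet}_{A/R}$. For the target, I would argue as follows: Hodge completion is a left adjoint to the inclusion $\widehat{\Fil^{\geq 0}\DAlg}^{\pd}_R \hookrightarrow \Fil^{\geq 0}\DAlg^{\pd}_R$, so it commutes with the filtered tensor products in the Bar construction; moreover $\widehat{\Fil^{\geq 0}\DAlg}^{\pd}_R$ is closed under limits in $\Fil^{\geq 0}\DAlg^{\pd}_R$, so the Hodge-completed totalization is an honest $\Tot$ of complete objects. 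Combining this with symmetric monoidality of $\gr$ yields
\begin{equation*}
\gr\,\widehat{\Tot}\!\Bigl(A \underset{\LOmega^{\geq \star}_{A/R}}{\otimes^{\bullet}} A\Bigr) \;\simeq\; \Tot\!\Bigl(A \underset{\LOmega^{\bullet}_{A/R}}{\otimes^{\bullet}} A\Bigr)
\end{equation*}
in graded derived pd algebras, reducing the problem to showing that the Bar comparison map $\LOmega^{\bullet}_{A/R} \to \Tot(A \otimes^{\bullet}_{\LOmega^{\bullet}_{A/R}} A)$ is an equivalence of graded derived pd algebras.

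The right-hand side is the totalization of the \v{C}ech nerve of the augmentation $\LOmega^{\bullet}_{A/R} \to A \simeq \gr^0\,\LOmega^{\bullet}_{A/R}$, which in general computes the derived completion at the augmentation ideal $I := \LOmega^{\geq 1,\bullet}_{A/R}$. The key point is that we are working in positively graded derived algebras: since $I^k$ lies in graded weights $\geq k$, in each fixed weight $n$ the quotient map $\LOmega^{\bullet}_{A/R} \to \LOmega^{\bullet}_{A/R}/I^k$ is already an equivalence once $k > n$, so the graded $I$-adic completion agrees with $\LOmega^{\bullet}_{A/R}$ itself. The main obstacle I anticipate is a rigorous identification of the Bar totalization with this graded $I$-adic completion, upgraded from $\mathbb{E}_\infty$ to derived pd algebras. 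I expect this to be handled by reducing to the universal case $A = R[x_1, \ldots, x_d]$ via sifted-colimit commutation of both functors in $A$, in which case $\LOmega^{\bullet}_{A/R}[\bullet] \simeq \Lambda^\bullet_A(\Omega^1_{A/R})$ by the décalage of Corollary \ref{décalage_for_algebras}, and the Bar comparison becomes classical Koszul duality between exterior and divided power algebras, respecting the pd structure by construction.
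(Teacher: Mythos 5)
Your first move coincides with the paper's: pass to the associated graded, using that $\gr$ is conservative on complete objects and commutes with the totalization (each $\gr^n$ is a cofiber, hence also a fiber, of evaluations, so it preserves limits), and identify $\gr$ of the source as $\LGamma^{\bullet}_{A}(\LL_{A/R}[-1])$. The gap is in how you then handle the graded Bar comparison. The assertion that the \v{C}ech/Bar totalization ``in general computes the derived completion at the augmentation ideal'' is not a general fact: the totalization of the cobar construction computes a descent-type (nilpotent) completion, and identifying it with the $I$-adic one is essentially the content of the lemma, so your observation that the graded adic completion is trivial in positive weights buys nothing until that identification is made. Your fallback -- reduce to $A=R[x_{1},\dots,x_{d}]$ ``via sifted-colimit commutation of both functors in $A$'' and then quote Koszul duality -- hides the same problem: the target is an infinite totalization, and limits do not commute with sifted colimits, so the reduction is not legitimate without a weight-wise convergence or coskeletality bound, which you have not supplied and which is exactly where the analytic content of the statement lives.

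For comparison, the paper closes precisely this gap by a direct computation: the graded Bar terms are identified as $\LGamma_{A}(\LL_{A/R}^{\oplus \bullet})$, i.e. $\LGamma_{A}$ applied to the \v{C}ech conerve of $0\rightarrow \LL_{A/R}$, whose totalization is $\LL_{A/R}[-1]$, and the comparison map is then an equivalence because the weight-$n$ functors $\LGamma^{n}$ commute with such finite totalizations (\cite[Proposition 3.37]{BM19}). That commutation result is the missing convergence input in your plan; if you insert it (or prove a weight-wise bound showing the relevant Tot-towers stabilize, which would also justify your sifted-colimit reduction and the polynomial-case Koszul duality argument), your outline can be completed, but as written the key step is unproved.
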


\begin{proof}

Passing to associated gradeds, it suffices to show that the map

\begin{equation}\label{totalization_map}
\LGamma_{A}(\LL_{A/R}[-1])   \xymatrix{\ar[r]&} \Tot (\underset{\LGamma_{A}(\LL_{A/R}[-1])}{\bigotimes^{\bullet}} A    )
\end{equation}
arising as the Cech nerve of the map $\LGamma_{A}(\LL_{A/R}[-1]) \rightarrow A$ is an equivalence. There is an equivalence 

$$
\underset{\LGamma_{A}(\LL_{A/R}[-1])}{\bigotimes^{n}} A   \simeq \LGamma_{A}(\LL_{A/R}^{\oplus^{n}}),
$$
and we can identify the map \ref{totalization_map} with the map obtained by applying the functor $\LGamma_{A/R}(-)$ to the cosimplicial diagram:

$$
\LL_{A/R}[-1] \simeq \xymatrix{ \Tot\bigl (  0  \ar@<-.5ex>[r] \ar@<.5ex>[r] & \LL_{A/R} \ar@<-1.0ex>[r] \ar[r]  \ar@<1.0ex>[r] & \LL_{A/R}^{\oplus^{2}} \ar@<-1.5ex>[r] \ar@<-0.5ex>[r] \ar@<0.5ex>[r]  \ar@<1.5ex>[r] & ... \bigr)   .}
$$

Consequently, to prove that the map \ref{totalization_map} is a graded equivalence, it suffices to show that the map $$\LGamma^{n}_{A/R}(\LL_{A/R}[-1]) \simeq \LGamma^{n}_{A/R}\bigl( \Tot( \LL_{A/R}^{\oplus^{\bullet}}  ) \bigr) \xymatrix{  \ar[r]&  }   \Tot\bigl(  \LGamma_{A/R}^{n}(\LL_{A/R}^{\oplus^{\bullet}})    \bigr)$$
is an equivalence for any $n$. This follows by commutativity of the derived divided power functors with finite totalizations, see \cite[Proposition 3.37]{BM19}.
\end{proof}

\begin{Lemma}\label{tensor_formula_for_DeRham}
For any $n\geq 1$, there is an equivalence

$$
\underset{\LOmega^{\geq \star}_{A/R}}{\bigotimes^{n}} A \simeq  \LOmega^{\geq \star}_{A/ \bigotimes^{n} A},
$$
where $A$ is considered as algebra over the tensor product $\bigotimes^{n} A$ via the multiplication map $\bigotimes^{n} A\rightarrow A$. 
\end{Lemma}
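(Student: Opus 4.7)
The plan is to establish the equivalence via Yoneda, by matching the functors of points represented by each side as presheaves on $\Fil^{\geq 0}\DAlg^{\pd}_{R}$. The argument relies on two adjunctions at the level of $\Fil^{\geq 0}\DAlg^{\pd}_{R}$: the defining one $\LOmega^{\geq \star}_{-/R} \dashv \gr^{0}\colon \Fil^{\geq 0}\DAlg^{\pd}_{R} \rightleftarrows \DAlg_{R}$, and the auxiliary one $i^{0} \dashv \ev^{0}$, where $i^{0}\colon \DAlg_{R} \to \Fil^{\geq 0}\DAlg^{\pd}_{R}$ inserts a derived $R$-algebra in filtered degree $0$ with zero pd ideal. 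The counit $\epsilon\colon \LOmega^{\geq \star}_{A/R} \to i^{0}A$ corresponds under $\LOmega \dashv \gr^{0}$ to $\mathrm{id}_{A}$; this is precisely the map that exhibits each copy of $A$ in $\underset{\LOmega^{\geq \star}_{A/R}}{\bigotimes^{n}} A$ as a filtered pd algebra over $\LOmega^{\geq \star}_{A/R}$.

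First I would compute the left-hand side. Since tensor product over $\LOmega^{\geq \star}_{A/R}$ is the coproduct in $\Fil^{\geq 0}\DAlg^{\pd}_{\LOmega^{\geq \star}_{A/R}/}$, a map from $\underset{\LOmega^{\geq \star}_{A/R}}{\bigotimes^{n}} A$ to some $G^{\geq \star}$ equipped with a structure map $\eta\colon \LOmega^{\geq \star}_{A/R} \to G^{\geq \star}$ (which under $\LOmega \dashv \gr^{0}$ encodes a distinguished $\tilde\eta\colon A \to \gr^{0}G^{\geq \star}$) is an $n$-tuple of maps $i^{0}A \to G^{\geq \star}$ over $\LOmega^{\geq \star}_{A/R}$. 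Using $i^{0} \dashv \ev^{0}$ and the compatibility with $\eta$, each such map amounts to a derived ring map $\phi_{i}\colon A \to G^{\geq 0}$ whose reduction $A \xrightarrow{\phi_{i}} G^{\geq 0} \to \gr^{0}G^{\geq \star}$ agrees with $\tilde\eta$. Assembling the $\phi_{i}$ via the universal property of coproducts in $\DAlg_{R}$, the datum becomes a single map $\bigotimes^{n}_{R}A \to G^{\geq 0}$ subject to the factorization condition that $\bigotimes^{n}_{R}A \to G^{\geq 0} \to \gr^{0}G^{\geq \star}$ equals the composite $\bigotimes^{n}_{R}A \xrightarrow{\mathrm{mult}} A \xrightarrow{\tilde\eta} \gr^{0}G^{\geq \star}$.

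Next, for the right-hand side, the defining adjunction applied over the base $\bigotimes^{n}_{R}A$ gives that maps $\LOmega^{\geq \star}_{A/\bigotimes^{n}A} \to G^{\geq \star}$ in $\Fil^{\geq 0}\DAlg^{\pd}_{\bigotimes^{n}A}$ correspond to maps $A \to \gr^{0}G^{\geq \star}$ of derived $\bigotimes^{n}_{R}A$-algebras. Unwinding, this is a map $\bigotimes^{n}_{R}A \to G^{\geq 0}$ (equipping $G^{\geq \star}$ with a $\bigotimes^{n}A$-structure) plus a map $\tilde\eta\colon A \to \gr^{0}G^{\geq \star}$, subject to the compatibility that $\bigotimes^{n}_{R}A \xrightarrow{\mathrm{mult}} A \xrightarrow{\tilde\eta} \gr^{0}G^{\geq \star}$ agrees with $\bigotimes^{n}_{R}A \to G^{\geq 0} \to \gr^{0}G^{\geq \star}$. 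This is word for word the same description as for the left-hand side, so Yoneda delivers the desired equivalence.

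The main obstacle is carrying out this bookkeeping carefully in the $\infty$-categorical slice setting, where mapping spaces must be taken as homotopy fibers over spaces of structure maps rather than as strict sets. Beyond this, the proof is essentially formal, using only universal properties of adjoints and coproducts together with the canonical adjunctions. Naturality of the entire Yoneda argument in $G^{\geq \star}$ ensures that the resulting identification is an equivalence of filtered pd algebras and not merely of underlying objects.
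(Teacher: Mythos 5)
Your strategy is viable and is essentially a by-hand verification of what the paper obtains formally, but as written the Yoneda step has a real gap: your two functor-of-points computations are not carried out over the same category, so they are not "word for word the same description." For the left-hand side you compute maps in the under-category of $\LOmega^{\geq \star}_{A/R}$, so the datum $\tilde\eta\colon A \to \gr^{0}G^{\geq \star}$ is \emph{fixed} and the map $\alpha\colon \bigotimes^{n}_{R}A \to G^{\geq 0}$ varies; for the right-hand side you compute maps in $\Fil^{\geq 0}\DAlg^{\pd}_{\bigotimes^{n}A}$, so the structure map $\alpha$ is \emph{fixed} and $\tilde\eta$ varies. These are fibers of one and the same total space of triples $(\alpha, \tilde\eta, \text{homotopy } q\circ\alpha \simeq \tilde\eta\circ\mathrm{mult})$ over two different bases, and identifying fibers over different bases does not by itself identify the corepresenting objects. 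Moreover Yoneda requires an actual natural comparison map, which your sketch never produces. Both defects are repairable: either compute the \emph{absolute} mapping spaces in $\Fil^{\geq 0}\DAlg^{\pd}_{R}$ (both sides then yield the space of triples above, after contracting the redundant choice of $\tilde\eta$ on the left), or first build the canonical map by observing that the arrow-category maps $(A\to A)\to(\bigotimes^{n}_{R}A\to A)$ given by the $n$ inclusions, together with $(R\to A)\to(\bigotimes^{n}_{R}A\to A)$, produce compatible maps $\ins^{0}A \simeq \LOmega^{\geq\star}_{A/A} \to \LOmega^{\geq\star}_{A/\bigotimes^{n}A}$ under $\LOmega^{\geq\star}_{A/R}$, hence a map from the iterated pushout, and then check it is an equivalence on mapping spaces. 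You should also justify the auxiliary adjunction $\ins^{0}\dashv \ev^{0}$ at the level of filtered pd algebras (the paper only records it for modules and for pd Smith ideals), though this is routine.

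For comparison, the paper's proof avoids all of this bookkeeping: the relative functor $\LOmega^{\geq\star}_{-/-}$ on the arrow category is the composite $\adic_{\pd}\circ\Env^{\pd}\circ\fib$ (Remark \ref{adic_and_deRham}), hence a left adjoint and colimit-preserving; since the arrow $(\bigotimes^{n}_{R}A \to A)$ is an iterated pushout of $(A\to A)$ over $(R\to A)$ in $\DAlg^{\Delta^{1}}$, one gets $\LOmega^{\geq\star}_{A/\bigotimes^{n}A}\simeq A\otimes_{\LOmega^{\geq\star}_{A/R}}\cdots\otimes_{\LOmega^{\geq\star}_{A/R}}A$ directly, using $\LOmega^{\geq\star}_{A/A}\simeq A$ and induction. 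Your argument, once patched, proves the same colimit-preservation statement in this special case by unwinding adjunctions; the paper's route gets it for free from the universal-property description of $\LOmega^{\geq\star}_{-/-}$, which is the cleaner way to package the computation.
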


\begin{proof}
Indeed, more generally for a map of the form $A\underset{R}{\otimes}B \rightarrow C$, there is  a natural map

$$
\xymatrix{ \LOmega^{\geq \star}_{C/A} \underset{\LOmega^{\geq \star}_{C/R}}{\otimes} \LOmega^{\star}_{C/B} \ar[r]& \LOmega^{\geq \star}_{C/A\underset{R}{\otimes} B}    }
$$
which is an equivalence. This follows from commutativity with colimits of the relative De Rham cohomology functor $\LOmega_{-/-}: \DAlg^{\Delta^{1}}_{R} \rightarrow \Fil^{\geq 0}\DAlg^{\pd}_{R}$:

$$\LOmega^{\geq \star}_{A\underset{C}{\otimes} B \rightarrow R} \simeq \LOmega^{\geq \star}_{A \rightarrow R \underset{C\rightarrow R}{\otimes} B \rightarrow R} \simeq \LOmega^{\geq \star}_{A\rightarrow R} \underset{\LOmega^{\geq \star}_{C\rightarrow R} }{\otimes} \LOmega^{\geq \star}_{B \rightarrow R} .$$ Applying this equivalence to the map $A\otimes_{R} A \rightarrow A $ and using $\LOmega_{A/A} \simeq A$, we get the claim by induction.
\end{proof}

Let $R \rightarrow A$ be a map of derived commutative rings, and denote $$I_{n}:=\fib( A\otimes_{R}A\otimes ... \otimes_{R}A \rightarrow A)$$ the fiber of the $n$-times multiplication map. The collection of all $I_{n}$ for all $n$ organize into a cosimplicial Smith ideal in the cosimplicial algerba $A^{\otimes^{\bullet}_{R}}$. Let $\widehat{\Env}^{\pd}_{\otimes^{n}_{R} A} (I_{n})$ be the completed divided power envelope of $\otimes_{R}^{n}A$ at the ideal $I_{n}$. There are equivalences $\widehat{\LOmega}_{A/\otimes_{R}^{n}A} \simeq \widehat{\Env}^{\pd}_{\otimes^{n}_{R} A} (I_{n}) $ for each $n$, commuting with cosimplicial maps. This implies the following:

\begin{prop}\label{totalisaion_formula}
There is an equivalence of complete filtered derived divided power algebras

$$
\xymatrix{\widehat{\LOmega}^{\geq \star}_{A/R}\simeq \Tot \bigl (  A  \ar@<-.5ex>[r] \ar@<.5ex>[r] & \widehat{\Env}^{\pd}_{\otimes^{2}_{R} A}(I_{2}) \ar@<-1.0ex>[r] \ar[r]  \ar@<1.0ex>[r] & \widehat{\Env}^{\pd}_{\otimes^{3}_{R} A}(I_{3})  \ar@<-1.5ex>[r] \ar@<-0.5ex>[r] \ar@<0.5ex>[r]  \ar@<1.5ex>[r] & ... \bigr)}
$$
\end{prop}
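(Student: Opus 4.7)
The plan is to assemble the statement from the two preceding lemmas together with Remark \ref{adic_and_deRham}, and then carefully commute Hodge completion past the cosimplicial totalization. First I would invoke the previous lemma: the natural map $\epsilon: \LOmega^{\geq\star}_{A/R}\to \Tot(\underset{\LOmega^{\geq\star}_{A/R}}{\bigotimes\nolimits^\bullet} A)$ becomes an equivalence after Hodge completion, so
$$\widehat{\LOmega}^{\geq\star}_{A/R}\;\simeq\; \widehat{\Tot\bigl(\underset{\LOmega^{\geq\star}_{A/R}}{\bigotimes\nolimits^\bullet} A\bigr)}.$$
By Lemma~\ref{tensor_formula_for_DeRham}, the $n$-th level of the cosimplicial object is equivalent to $\LOmega^{\geq\star}_{A/\otimes^n_R A}$. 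By Remark~\ref{adic_and_deRham}, for the surjection $\otimes^n_R A\twoheadrightarrow A$ with fiber $I_n$, the Hodge-filtered derived De Rham cohomology is the derived divided power adic filtration on the derived pd envelope $\Env^{\pd}(I_n\to \otimes^n_R A)$; in particular, after completion it is identified with $\widehat{\Env}^{\pd}_{\otimes^n_R A}(I_n)$. This identification is natural in the cosimplicial variable, so these identifications assemble into a cosimplicial map of filtered divided power algebras.

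It remains to interchange the completion with the totalization, i.e.\ to show that the canonical map
$$\widehat{\Tot\bigl(\LOmega^{\geq\star}_{A/\otimes^\bullet_R A}\bigr)}\;\longrightarrow\; \Tot\bigl(\widehat{\Env}^{\pd}_{\otimes^\bullet_R A}(I_\bullet)\bigr)$$
is an equivalence. The inclusion $\widehat{\Fil^{\geq 0}\Mod}_R\hookrightarrow \Fil^{\geq 0}\Mod_R$ is a right adjoint (completion being its left adjoint) and hence preserves limits, so the right-hand side is already complete; the map above exists by the universal property of completion applied to the obvious map $\Tot\LOmega^{\geq\star}_{A/\otimes^\bullet_R A}\to \Tot\widehat{\LOmega}^{\geq\star}_{A/\otimes^\bullet_R A}$. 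This extends to an equivalence of derived divided power algebras because the forgetful functor from $\Fil^{\geq 0}\DAlg^{\pd}_R$ to $\Fil^{\geq 0}\Mod_R$ creates limits and the pd structure on both sides is the canonical one coming from the pd envelope identification.

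I expect the main obstacle to be precisely the commutation of completion with the totalization. To handle it I would pass to associated gradeds, where the complete filtered category is an equivalence with $\DG_{-}\Mod_R$ and limits are transparent. Since completion is invisible to $\gr$, on either side one obtains $\gr^n$ equal to the totalization (in $\Mod_R$, where finite limits and finite colimits commute appropriately) of $\LGamma^{n}_A(\LL_{A/\otimes^\bullet_R A}[-1])$, and by the transitivity sequence for cotangent complexes this cosimplicial diagram of $A$-modules is
$$\LL_{A/\otimes^\bullet_R A}\;\simeq\;\LL_{A/R}^{\oplus(\bullet-1)},$$
whose totalization recovers $\LL_{A/R}[-1]$. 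Commutation of the functors $\LGamma^{n}_A$ with such finite coconnective totalizations is exactly the input used in the preceding lemma (via \cite[Proposition 3.37]{BM19}), and it yields the desired graded equivalence. Since both sides are complete filtered derived divided power algebras and the comparison map is an equivalence on $\gr$, it is an equivalence, completing the proof.
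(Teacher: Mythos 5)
Your argument is correct and follows essentially the same route the paper intends: the proposition is assembled from the lemma that $\epsilon$ is an equivalence on completions, Lemma \ref{tensor_formula_for_DeRham}, and the identification of $\widehat{\LOmega}^{\geq \star}_{A/\otimes^{n}_{R}A}$ with $\widehat{\Env}^{\pd}_{\otimes^{n}_{R}A}(I_{n})$ via Remark \ref{adic_and_deRham}, and your interchange of completion with $\Tot$ (checked on associated gradeds, where $\gr$ commutes with totalizations because cofibers are shifted fibers in a stable $\infty$-category) simply makes explicit a step the paper leaves implicit. The only slip is a suspension in the cotangent-complex bookkeeping: transitivity gives $\LL_{A/\otimes^{n}_{R}A}\simeq \LL_{A/R}^{\oplus(n-1)}[1]$, i.e.\ $\LL_{A/\otimes^{n}_{R}A}[-1]\simeq \LL_{A/R}^{\oplus(n-1)}$ with totalization $\LL_{A/R}[-1]$ as needed, whereas your unshifted identification would make the graded comparison come out as $\LGamma^{n}_{A}(\LL_{A/R}[-2])$; the fix is immediate and is exactly the computation already carried out in the proof of the preceding lemma.
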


\begin{rem}
Note that when $R\rightarrow A$ is a map of connective derived commutative rings, then all the ideals $I_{n}$ are connective, and hence so are the divided power envelopes $\Env^{\pd}_{\otimes^{2}_{R} A}(I_{n})$. The formula of Proposition \ref{totalisaion_formula} tells that the (non-connective general) object $\widehat{\LOmega}_{A/R}^{\geq \star}$ can be computed as a totalisaion of connective objects.
\end{rem}

\subsection{Derived crystalline cohomology.}

As we have seen in the previous subsection, derived De Rham cohomology of $A$ can be thought as the universal filtered divided power thickening of $A$. Crystalline cohomology is also a universal divided power thickening of $A$, but with one crucial difference: in crystalline cohomology theory we require compatibility of divided powers with divided powers on $\mathbb{Z}_{p}$. More generally, crystalline cohomology can be defined relatively to any divided power algebra $I \rightarrow A$: given a derived $A/I$-algebra $R$, there exists a universal derived divided power $(I \rightarrow A)$-algebra $\R\Gamma_{\crys}(R/A)$ which has $R$ as the zeroth graded piece. While the idea is classical, we learnt from B.Antieau a formulation of the universal property of the crystalline cohomology theory in the spirit of the paper of A.Raksit \cite{R}.

\begin{defn}\label{pd_crys}
Let $(I \rightarrow A)$ be a connective derived divided power algebra, and denote $\overline{A}:=A/I$. Consider the $\infty$-category $\DAlg^{\Delta^{1},\pd}_{A}$ of derived divided power maps $(R\rightarrow \overline{R})$ under $(A \rightarrow \overline{A})$. There is a functor $\ev^{0}: \DAlg^{\Delta^{1},\pd}_{A} \rightarrow \DAlg_{\overline{A}}$ sending an object $(R \rightarrow \overline{R})$ to the derived $\overline{A}$-algebra $\overline{R}\in \DAlg_{\overline{A}}$. This functor preserves all limits, and hence admits a right adjoint. We define \textbf{derived crystalline cohomology (relative to $(A\rightarrow \overline{A})$}:

$$
\R\Gamma_{\crys}(-/(A\rightarrow \overline{A})): \xymatrix{ \DAlg_{\overline{A}} \ar[r]& \DAlg_{A}^{\Delta^{1},\pd}    }
$$
as the left adjoint of $\ev^{0}$. 

This definition can be given in families, i.e. letting the base $(I \rightarrow A)$ vary. We define the \textbf{absolute crystalline site} $\mathbb{Z}^{\crys}$ as the $\infty$-category of pairs $$\mathbb{Z}^{\crys}:=\{\:\:(A \rightarrow \overline{A}, R) \:\: | \: (A \rightarrow \overline{A}) \in \DAlg^{\Delta^{1},\pd}_{\mathbb{Z}}\:\:, \:\:R \in \DAlg_{\overline{A}}\}. $$ The functor $\ev^{0}: \DAlg^{\Delta^{1},\pd}_{\mathbb{Z}} \rightarrow \mathbb{Z}^{\crys}$ sending a derived divided power map $(A \rightarrow \overline{A})$ to the pair $(A\rightarrow \overline{A}, \overline{A})$ admits a left adjoint

$$
\R\Gamma_{\crys}: \xymatrix{ \mathbb{Z}^{\crys} \ar[r]& \DAlg_{\mathbb{Z}}^{\Delta^{1},\pd}    }
$$
whose value on the pair $(A\rightarrow \overline{A}, R)$ is $\R\Gamma_{\crys}(R/(A \rightarrow \overline{A}))$. When the base divided power ideal is clear from the definition, we simply denote it as $\R\Gamma_{\crys}(R/A)$.
\end{defn}

\begin{ex}\label{identity}
Let $A$ be any derived ring, and consider the identity map $(\Id: A \rightarrow A)$ as the base derived divided power algebra. Then the $\infty$-category $\DAlg_{(A\rightarrow A)/}^{\Delta^{1},\pd} $ is equivalent to the $\infty$-category $\DAlg_{A}^{\Delta^{1},\pd}$ of derived divided power maps over $A$, and the functor $\R\Gamma_{\crys}(-/(\Id: A \rightarrow A))$ identifies with the derived De Rham cohomology functor $\LOmega_{-/A}: \DAlg_{A} \rightarrow \DAlg^{\Delta^{1},\pd}_{A}$.
\end{ex}

\begin{ex}
Let $A=\mathbb{Z}/(p^{n})$ endowed with the pd-ideal $(p) \subset \mathbb{Z}_{p}/(p^{n})$ so that $\overline{\mathbb{Z}/(p^{n})}=\mathbb{F}_{p}$. In this case, for a discrete $\mathbb{F}_{p}$-algebra $R$, the derived divided power algebra $\R\Gamma_{\crys}(R/\mathbb{Z}/(p^{n}))$ is related to the (mod $p^{n}$) classical crystalline cohomology theory of $\mathbb{F}_{p}$-algebras (the precise statement will be explained below). In particular, taking the limit as $n$ goes to infinity, we obtain a $p$-complete derived divided power algebra

$$
\R\Gamma^{\wedge}_{\crys}(R/\mathbb{Z}_{p})=\underset{\leftarrow}{\lim} \:\R\Gamma_{\crys}(R/\mathbb{Z}/(p^{n})) \in \DAlg_{\mathbb{Z}_{p}}^{\Delta^{1},\pd,\wedge}.
$$
This should recover the classical crystalline cohomology of $R$ relative to the ring of $p$-adic integers.
\end{ex}

Let us now discuss what properties we expect derived crystalline cohomology theory to satisfy. First, given a pd base $(A \rightarrow \overline{A})$ and a derived $\overline{A}$-algebra $R$, the base change $\R\Gamma_{\crys}(R/(A \rightarrow \overline{A}))\otimes_{A} \overline{A}$ should be equivalent to the derived De Rham cohomology $\LOmega_{R/\overline{A}}$.

Another property is that for a derived $\overline{A}$-algebra $\overline{R}$, whenever we have a lift $R$ of $\overline{R}$ to a derived $A$-algebra, we can compute the crystalline cohomology of $\overline{R}$ relative to $A$ in terms as the derived De Rham cohomology of $R$. 

To formulate the last property, let us introduce some notation. Let $(A \rightarrow \overline{A})$ be a base derived divided power map, and $\R\Gamma_{\crys}(-/(A\rightarrow \overline{A}))$ be as in Definition \ref{pd_crys}. Let $$(\mathcal{A} \rightarrow \overline{A}):=\Env^{\pd}(A \rightarrow \overline{A}) \simeq \LOmega_{\overline{A}/A}$$ be the derived divided power envelope of $(A\rightarrow \overline{A})$. By adjunction, the pd structure on $(A \rightarrow \overline{A})$ gives a natural pd map $( \mathcal{A} \rightarrow \overline{A} ) \rightarrow (A \rightarrow \overline{A})$. Denote $\LOmega_{-/A}: \DAlg_{\overline{A}} \rightarrow \DAlg_{\mathcal{A}}^{\Delta^{1},\pd}$ be the derived De Rham cohomology functor of derived $\overline{A}$-algebras, taken relatively to $A$ and endowed with a natural $\mathcal{A}$-linear structure. Then the base change $\LOmega_{R/A} \otimes_{\mathcal{A}} A$ should compute derived crystalline cohomology $\R\Gamma_{\crys}(R/(A\rightarrow \overline{A}))$ for any derived $\overline{A}$-algebra $R$.

All these properties are almost immediate in our setting, as we will see now. The main ingredient is the following base-change property.

\begin{prop}\label{base_change}
Assume $(A \rightarrow \overline{A}) \rightarrow (B \rightarrow \overline{B})$ is a map of derived divided power bases. Then the following diagram commutes

$$
\xymatrix{   \DAlg_{\overline{A}} \ar[d]_-{-\otimes_{\overline{A}} \overline{B}   }  \ar[rrr]^-{\R\Gamma_{\crys}(-/(A \rightarrow \overline{A})) }&&&  \DAlg_{(A\rightarrow \overline{A})/}^{\Delta^{1},\pd} \ar[d]^-{-\otimes_{(A\rightarrow \overline{A})} (B \rightarrow \overline{B})    }\\
\DAlg_{\overline{B}} \ar[rrr]_-{\R\Gamma_{\crys}(-/(B \rightarrow \overline{B})) }&&& \DAlg_{(B \rightarrow \overline{B})/}^{\Delta^{1},\pd}.      }
$$
\end{prop}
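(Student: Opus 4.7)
My plan is to establish the base-change compatibility by passing to right adjoints, where the identification becomes essentially tautological, and then invoking uniqueness of left adjoints.

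Both compositions in the square are left adjoints, so it suffices to exhibit a natural equivalence of their right adjoints. The right adjoint of $\R\Gamma_{\crys}(-/(A\rightarrow\overline{A}))$ is, by definition, the functor $\ev^{0}:\DAlg_{(A\rightarrow\overline{A})/}^{\Delta^{1},\pd}\to\DAlg_{\overline{A}}$ sending a derived divided power map $(R\to\overline{R})$ under $(A\to\overline{A})$ to its terminal object $\overline{R}$, regarded as a derived $\overline{A}$-algebra via the canonical map $\overline{A}\to\overline{R}$. The right adjoint of the base change $-\otimes_{(A\to\overline{A})}(B\to\overline{B})$ along the map of pd bases is the forgetful (restriction) functor $\DAlg_{(B\to\overline{B})/}^{\Delta^{1},\pd}\to\DAlg_{(A\to\overline{A})/}^{\Delta^{1},\pd}$. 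Similarly, the right adjoint of $-\otimes_{\overline{A}}\overline{B}$ is restriction of scalars along $\overline{A}\to\overline{B}$.

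The plan is to chase the right adjoints around the diagram. Starting with $(S\to\overline{S})\in\DAlg_{(B\to\overline{B})/}^{\Delta^{1},\pd}$: going first via forgetful to $\DAlg_{(A\to\overline{A})/}^{\Delta^{1},\pd}$ and then via $\ev^{0}$ yields $\overline{S}$ viewed as an $\overline{A}$-algebra through the composite $\overline{A}\to\overline{B}\to\overline{S}$. Going first via $\ev^{0}$ to $\DAlg_{\overline{B}}$ and then restricting along $\overline{A}\to\overline{B}$ yields the same $\overline{S}$ with the same $\overline{A}$-algebra structure. Both identifications are manifestly natural in $(S\to\overline{S})$, so the two right adjoint functors agree as functors $\DAlg_{(B\to\overline{B})/}^{\Delta^{1},\pd}\to\DAlg_{\overline{A}}$. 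By uniqueness of left adjoints, the square of left adjoints commutes up to natural equivalence, which is the claim.

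The only step that requires a moment of care is verifying that all four functors in the square are honest right/left adjoints in the $\infty$-categorical sense, i.e.\ that base change of pd bases $-\otimes_{(A\to\overline{A})}(B\to\overline{B})$ is well-defined with forgetful right adjoint. This is standard from the monadicity of $\DAlg^{\Delta^{1},\pd}_{(-)/}$ over the relevant module $\infty$-categories together with Proposition \ref{envelopes_coenvelopes_prop}, and the existence of left adjoints follows from the adjoint functor theorem applied to the limit-preserving forgetful/$\ev^{0}$ functors. Given these presentability considerations, there is no genuine obstacle: the content of the proposition is a formal consequence of the fact that both $\ev^{0}$ and restriction of scalars compose in the evident compatible way on either route through the square.
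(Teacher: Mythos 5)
Your argument is correct and is exactly the paper's: the paper's proof reads simply "Follows by passing to right adjoints," and your proposal spells out that one-line argument (identifying the right adjoints as $\ev^{0}$ and the restriction/forgetful functors, checking the right-adjoint square commutes on objects naturally, and invoking uniqueness of left adjoints). No substantive difference in approach.
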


\begin{proof}
Follows by passing to right adjoints.
\end{proof}

The following statement follows from the base-change property and summarizes the main computational properties of derived crystalline cohomology.

\begin{cor}\label{DeRham_crys_1} 

Assume $(A \rightarrow \overline{A})$ is a base derived divided power algebra, $I=\fib(A\rightarrow \overline{A})$ and $(\mathcal{I} \rightarrow \mathcal{A})$ as defined above.

\begin{enumerate}

\item The composite functor $$\xymatrix{ \DAlg_{\overline{A}} \ar[rr]^-{\R \Gamma_{\crys}(-/(A\rightarrow \overline{A}))} && \DAlg^{\Delta^{1},\pd}_{(A\rightarrow \overline{A})/} \ar[rr]^-{-\otimes_{A} \overline{A}} && \DAlg^{\Delta^{1},\pd}_{\overline{A}}   } $$

is equivalent to derived De Rham cohomology $\LOmega_{-/\overline{A}}$ relative to $\overline{A}$.

\item There is a commutative diagram of functors

$$
\xymatrix{ \DAlg_{\overline{A}} \ar[dd]_-{\LOmega_{-/A}} \ar[rr]^-{\R \Gamma_{\crys}(-/A)} && \DAlg^{\Delta^{1},\pd}_{A}\\\\
 \DAlg^{\Delta^{1},\pd}_{\mathcal{A}} \ar[rruu]_-{-\otimes_{\mathcal{A}} A}   .} 
$$

\item Consider the map $(0 \rightarrow A) \rightarrow (I \rightarrow A)$ of derived divided power algebras. There is a commutative triangle of functors

$$
\xymatrix{ \DAlg_{A} \ar[d]_-{\LOmega_{-/A}} \ar[rr]^-{-\underset{A}{\otimes} \overline{A}} && \DAlg_{\overline{A}} \ar[d]^-{\R \Gamma_{\crys}(-/(A\rightarrow \overline{A}))}\\
 \DAlg^{\Delta^{1},\pd}_{(\Id: A \rightarrow A)/} \ar[rr]_-{-\underset{(0\rightarrow A)}{\otimes} ( A \rightarrow \overline{A})} &&  \DAlg^{\Delta^{1},\pd}_{(A \rightarrow \overline{A})/}   .} 
$$ 

\end{enumerate}
\end{cor}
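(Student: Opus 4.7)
The overall strategy is to derive all three parts from the base-change property (Proposition \ref{base_change}), combined with Example \ref{identity}, which identifies $\R\Gamma_{\crys}(-/(\Id: A \to A))$ with derived De Rham cohomology $\LOmega_{-/A}$. Each part then reduces to applying base change along a suitably chosen map of pd bases.

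For part (1), I would apply Proposition \ref{base_change} to the map of pd bases $(A \to \overline{A}) \to (\overline{A} \xrightarrow{\Id} \overline{A})$, noting that the target has zero fiber and hence trivial pd structure. Base change yields
$$\R\Gamma_{\crys}(R/(A\to\overline{A})) \underset{(A\to\overline{A})}{\otimes} (\overline{A}\to\overline{A}) \simeq \R\Gamma_{\crys}(R/(\overline{A}\xrightarrow{\Id}\overline{A})) \simeq \LOmega_{R/\overline{A}},$$
using Example \ref{identity} at the end. Part (3) is symmetric in spirit: apply Proposition \ref{base_change} to the map $(A\xrightarrow{\Id} A) \to (A\to\overline{A})$, and again use Example \ref{identity} to recognize $\R\Gamma_{\crys}(-/(A\xrightarrow{\Id} A))$ as $\LOmega_{-/A}$.

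Part (2) requires an auxiliary identification, namely that for any $R \in \DAlg_{\overline{A}}$ the natural comparison supplies an equivalence $\R\Gamma_{\crys}(R/(\mathcal{A}\to\overline{A})) \simeq \LOmega_{R/A}$ inside $\DAlg^{\Delta^{1},\pd}_{(\mathcal{A}\to\overline{A})/}$. Granted this, part (2) follows by applying Proposition \ref{base_change} to the map $(\mathcal{A}\to\overline{A}) \to (A\to\overline{A})$ provided by the universal property of the pd envelope. For the identification itself, I would proceed by universal property: applying $\LOmega_{-/A}$ to the $A$-algebra map $\overline{A}\to R$ yields a pd $A$-algebra map $\mathcal{A} = \LOmega_{\overline{A}/A} \to \LOmega_{R/A}$ covering $\overline{A}\to R$, which promotes $\LOmega_{R/A}$ to an object of $\DAlg^{\Delta^{1},\pd}_{(\mathcal{A}\to\overline{A})/}$. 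Given a competitor $(S\to\overline{S})$ under $(\mathcal{A}\to\overline{A})$ with an $\overline{A}$-algebra map $R\to\overline{S}$, the universal property of $\LOmega_{R/A}$ as initial pd $A$-algebra with $0$-th piece $R$ supplies a unique pd $A$-algebra map $\LOmega_{R/A}\to S$; compatibility with the prescribed $\mathcal{A}$-action on $S$ is then forced, since both the composite $\mathcal{A}\to \LOmega_{R/A}\to S$ and the original map $\mathcal{A}\to S$ are pd $A$-algebra maps with the same projection $\overline{A}\to\overline{S}$, so the universal property of $\mathcal{A} = \LOmega_{\overline{A}/A}$ forces them to coincide.

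The main technical obstacle will be the bookkeeping in part (2): one has to disentangle carefully the ``over $A$'' and ``under $(\mathcal{A}\to\overline{A})$'' universal properties, and verify that all identifications hold at the level of (filtered) pd algebras rather than merely underlying rings, which is the level at which both $\LOmega$ and $\R\Gamma_{\crys}$ are initially defined as left adjoints.
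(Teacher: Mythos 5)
Your proposal follows the paper's proof exactly: all three parts are deduced from Proposition \ref{base_change} together with Example \ref{identity}, applied to the maps of pd bases $(A\to\overline{A})\to(\Id:\overline{A}\to\overline{A})$, $(\mathcal{A}\to\overline{A})\to(A\to\overline{A})$, and $(0\to A)\to(I\to A)$ (the last being your $(\Id:A\to A)\to(A\to\overline{A})$ written in arrow rather than Smith-ideal notation). The auxiliary identification $\R\Gamma_{\crys}(-/(\mathcal{A}\to\overline{A}))\simeq \LOmega_{-/A}$ that you spell out for part (2) is left implicit in the paper's one-line proof, and your universal-property argument for it (using $\mathcal{A}\simeq\LOmega_{\overline{A}/A}$ to force compatibility of the two maps $\mathcal{A}\to S$) is correct.
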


\begin{proof}
For the first, apply Proposition \ref{base_change} and Example \ref{identity} for the map $(A \rightarrow \overline{A}) \rightarrow (\Id: \overline{A} \rightarrow \overline{A})$. For the second, use the map $( \mathcal{A} \rightarrow \overline{A} ) \rightarrow (A \rightarrow \overline{A})$. For the third, the map $(0 \rightarrow A) \rightarrow (I \rightarrow A)$.
\end{proof}

\begin{rem}

Let us unwind the third statement of Corollary. Let $R$ be a derived $A$-algebra and $\overline{R}:=R\otimes_{A} \overline{A}$. Consider the  derived De Rham complex of $R$ taken relatively to $A$, $\LOmega_{R/A}$. This has the structure of a derived divided power algebra whose zeroth graded piece is $R$. Now endow $\LOmega^{\wedge}_{R/A}$ with a different derived pd ideal defined as the fiber of the composite map $$\LOmega_{R/A} \rightarrow R \rightarrow \overline{R} . $$ This gives a derived divided power $(A \rightarrow \overline{A})$-algebra structure on $\LOmega_{R/A}$, and the assignment $R \longmapsto \LOmega_{R/A} $ determines a functor $\LOmega_{-/A}: \DAlg_{A} \rightarrow \DAlg^{\Delta^{1},\pd}_{(A\rightarrow \overline{A})}$. Then the statement is that there is an equivalence of derived divided power $(A\rightarrow \overline{A})$-algebras

$$
\Omega_{R/A} \simeq  \R\Gamma_{\crys}(\overline{R}/(A\rightarrow \overline{A})).
$$

\end{rem}

\begin{ex}\label{DeRham_crys_2}
Let $\overline{A}[x]$ be the polynomial algebra on one generator over $\overline{A}$. It has a essentially unique lift $A[x]$ over $A$. It follows from  the third part of Corollary \ref{DeRham_crys_1} that the crystalline cohomology of $\overline{A}[x]$ is equivalent to the De Rham cohomology of $A[x]$:

$$
\R\Gamma_{\crys}(\overline{A}[x]/(A\rightarrow \overline{A})) \simeq \LOmega_{A[x]/A}.
$$
\end{ex}

\begin{rem}
Let $\mathbb{Z}^{\crys}_{\geq 0} \subset \mathbb{Z}^{\crys}$ be the full subcategory consisting of pairs $(A \rightarrow \overline{A},R)$ where all derived rings are connective. Using Proposition \ref{connective_and_Mao}, this $\infty$-category is equivalent to the \textbf{derived crystalline context}, the notion defined in the paper \cite{Mao21}. In particular, it is equivalent to the sifted completion of a small category $\mathbb{Z}^{\crys,\poly}$ consisting of pairs $(A \rightarrow \overline{A}, R)$ where the map $(A\rightarrow \overline{A})$ has the form $\mathbb{Z}\langle x_{1},...,x_{n}\rangle \otimes \mathbb{Z}[y_{1},...,y_{m}] \rightarrow \mathbb{Z}[y_{1},...,y_{m}]$ and $R$ is a polynomial $\overline{A}$-algebra. Zhouhang Mao defined derived crystalline cohomology as a functor $\mathbb{Z}^{\crys}_{\geq 0} \rightarrow \DAlg_{\mathbb{Z}} $\footnote{to be more precise, he defines a functor with values in $\mathbb{E}_{\infty}$-rings, but it is clear from the construction that it lifts to a functor with values in derived rings} as the left Kan extension of the classical crystalline cohomology functor $\mathbb{Z}^{\crys,\poly} \rightarrow \DAlg_{\mathbb{Z}}$. The resulting functor commutes with all colimits. As the classical crystalline cohomology of polynomial algebras can be computed as the De Rham cohomology of a essentially unique lift, it follows from Example \ref{DeRham_crys_2} that our approach to derived crystalline cohomology recovers the approach of Zhouhang Mao.
\end{rem}

In the rest of this subsection, we treat in more details the particular case of the pd base $(p) \subset \mathbb{Z}_{p}$. In particular, we will explain how to compute the crystalline cohomology using a lift to $\mathbb{Z}_{p}$, and relate derived crystalline cohomology relative to $\mathbb{Z}_{p}$ with the formula for the De Rham stack in characteristic $p$.

By abuse of notation, let us denote $\R\Gamma^{\wedge}_{\crys}(-/\mathbb{Z}_{p}): \DAlg_{\mathbb{F}_{p}} \rightarrow \DAlg_{\mathbb{Z}_{p}}$ the functor sending a derived commutative $\mathbb{F}_{p}$-algebra $A$ to its $p$-completed derived crystalline cohomology understood merely as a derived commutative $\mathbb{Z}_{p}$-algebra. In other words, this is the composition

$$
\xymatrix{  \DAlg_{\mathbb{F}_{p}} \ar[rr]^-{\R\Gamma_{\crys}^{\wedge}(-/\mathbb{Z}_{p})} && \DAlg_{\mathbb{Z}_{p}}^{\Delta^{1},\pd,\wedge} \ar[rr]^-{\ev^{0}}&& \DAlg^{\wedge}_{\mathbb{Z}_{p}}.    }
$$

Below we will explain how to compute the right adjoint of this functor.

\begin{defn}\label{pdred}
Recall that the right adjoint of the forgetful functor $\ev^{0}: \DAlg^{\Delta^{1}_{\vee},\pd}_{\mathbb{Z}} \rightarrow \DAlg_{\mathbb{Z}}$ is the functor sending a derived commutative ring $R$ to the derived divided power Smith ideal $(R^{\sharp}\rightarrow R)$, where $R^{\sharp}$ is the pd radical of $R$. For any derived commutative algebra $R$, we define the \textbf{pd-reduction} of $R$ by the formula $R_{\pdred}:=R/R^{\sharp}$.
\end{defn}

\begin{prop}
There is an adjunction 

$$
\xymatrix{ \DAlg_{\mathbb{F}_{p}} \ar@/^1.1pc/[rrr]^-{\R\Gamma^{\wedge}_{\crys}} &&&   \ar@/^1.1pc/[lll]^{(-)_{\pdred}}  \DAlg^{\wedge}_{\mathbb{Z}_{p}}    }
$$
\end{prop}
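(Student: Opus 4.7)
The strategy is to exhibit $\R\Gamma^{\wedge}_{\crys}: \DAlg_{\mathbb{F}_{p}} \to \DAlg^{\wedge}_{\mathbb{Z}_{p}}$ as a composition of two left adjoints, and then identify the composite of their right adjoints with $(-)_{\pdred}$. Using the Smith ideal convention (so the pd base $(\mathbb{Z}_{p} \to \mathbb{F}_{p})$ corresponds to the pd Smith ideal $(p \to \mathbb{Z}_{p})$ with cofiber $\mathbb{F}_{p}$), the functor factors as
\[
\DAlg_{\mathbb{F}_{p}} \xrightarrow{\R\Gamma^{\wedge}_{\crys}(-/\mathbb{Z}_{p})} \DAlg^{\Delta^{1}_{\vee}, \pd, \wedge}_{(p \to \mathbb{Z}_{p})/} \xrightarrow{\ev^{0}} \DAlg^{\wedge}_{\mathbb{Z}_{p}},
\]
where the first arrow is, by Definition \ref{pd_crys} (in the $p$-complete setting), the left adjoint of the functor $\gr^{0}: (I \to S) \mapsto S/I$.

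The key input is to show that $\ev^{0}$ in the second arrow admits a right adjoint $B \mapsto (B^{\sharp} \to B)$. In the non-pd setting, $\ev^{0}: \DAlg^{\Delta^{1}_{\vee}}_{R} \to \DAlg_{R}$ has right adjoint $B \mapsto (B \to B)$ (the tautological Smith ideal), since a Smith ideal map $(I \to A) \to (B \to B)$ is precisely a ring map $A \to B$. Composing this with the adjunction $\forget \dashv \coEnv^{\pd}$ from Definition \ref{envelopes_coenvelopes}, and recalling that $\coEnv^{\pd}(B \to B) = (B^{\sharp} \to B)$, yields the asserted right adjoint in the pd setting. Two compatibility checks then remain: (i) this assignment lands in the under-$(p \to \mathbb{Z}_{p})$ subcategory, which follows by applying functoriality of $(-)^{\sharp}$ to the structure map $\mathbb{Z}_{p} \to B$, producing the canonical pd map $(p \to \mathbb{Z}_{p}) = (\mathbb{Z}_{p}^{\sharp} \to \mathbb{Z}_{p}) \to (B^{\sharp} \to B)$; and (ii) $p$-completeness is preserved, which holds because $B$ is $p$-complete by hypothesis and $B/B^{\sharp}$ is an $\mathbb{F}_{p}$-algebra, hence automatically $p$-complete.

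Composing adjunctions, the right adjoint of $\R\Gamma^{\wedge}_{\crys}$ is the composite of right adjoints
\[
B \; \longmapsto \; \gr^{0}(B^{\sharp} \to B) \; = \; B/B^{\sharp} \; = \; B_{\pdred},
\]
which is exactly the claimed functor. The main delicate point is check (i) above: verifying that the pd radical assignment genuinely factors through the relevant comma category. This essentially reduces to the computation $\mathbb{Z}_{p}^{\sharp} = (p) \subset \mathbb{Z}_{p}$; once this is in hand, functoriality of $\coEnv^{\pd}$ transports the pd structure on $(p)$ to any derived $\mathbb{Z}_{p}$-algebra $B$, so that $p \in B^{\sharp}$ and $(B^{\sharp} \to B)$ lives canonically over $(p \to \mathbb{Z}_{p})$.
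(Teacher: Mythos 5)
Your proposal is correct and is exactly the argument the paper intends (the paper states the proposition without a separate proof, having just recorded in Definition \ref{pdred} that $B \longmapsto (B^{\sharp} \rightarrow B)$ is right adjoint to the underlying-ring functor): one composes the defining adjunction of $\R\Gamma^{\wedge}_{\crys}(-/(\mathbb{Z}_{p}\rightarrow \mathbb{F}_{p}))$ with the underlying-ring/pd-radical adjunction and identifies the composite right adjoint as $B \longmapsto \gr^{0}(B^{\sharp}\rightarrow B) \simeq B_{\pdred}$. Your compatibility check (i) is also fine, though it does not really require the computation $\mathbb{Z}_{p}^{\sharp}\simeq (p)$: the canonical map $((p)\rightarrow \mathbb{Z}_{p}) \rightarrow (B^{\sharp}\rightarrow B)$ is already supplied by the $\coEnv^{\pd}$-adjunction (equivalently, by the unit when passing the adjunction to under-categories), since a pd Smith ideal map into $\coEnv^{\pd}(\Id_{B})$ is just a ring map $\mathbb{Z}_{p}\rightarrow B$.
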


\begin{rem}
Let $\Spec(\R\Gamma^{\wedge}_{\crys}(A/\mathbb{Z}_{p})): \DAlg^{\wedge}_{\mathbb{Z}_{p},\geq 0} \rightarrow \Spc$ be the spectrum of the non-connective derived ring $\R\Gamma^{\wedge}_{\crys}(A/\mathbb{Z}_{p}))$. Then the functor of points formula for $\Spec(\R\Gamma^{\wedge}_{\crys}(A/\mathbb{Z}_{p}))$ is

$$
\Spec(\R\Gamma^{\wedge}_{\crys}(A/\mathbb{Z}_{p})) (R) \simeq  \Map_{\DAlg^{\wedge}_{\mathbb{Z}_{p}}}(\R\Gamma_{\crys}^{\wedge}(A/\mathbb{Z}_{p}),R)  \simeq  \Map_{\DAlg_{\mathbb{F}_{p}}}(A,R_{\pdred}).
$$
It follows that $\Spec(\R\Gamma^{\wedge}_{\crys}(A/\mathbb{Z}_{p})) \simeq X_{\crys},$ where crystallization of $X=\Spec A$ is as defined in \cite{BL22(2)} and \cite{Dr20}.
\end{rem}

\begin{bibdiv}
\addcontentsline{toc}{section}{\protect\numberline{}References}

\begin{biblist}

\bib{Bh12(1)}{article}{ 
author={Bhargav Bhatt},
title={Completions and Derived De Rham cohomology},
eprint={https://arxiv.org/abs/1207.6193 },
label = {Bh12(1)}
}

\bib{Bh12(2)}{article}{ 
author={Bhargav Bhatt},
title={p-adic derived De Rham cohomology},
eprint={https://arxiv.org/pdf/1204.6560.pdf},
label = {Bh12(2)}
}

\bib{Bh23}{article}{
title={Prismatic $F$-gauges},
author={Bhargav Bhatt},
eprint={https://www.math.ias.edu/~bhatt/teaching/mat549f22/lectures.pdf }
label = {Bh23}
}

\bib{BCN21}{article}{
title = {PD Operads and Explicit Partition Lie Algebras},
author = {Lukas Brantner},
author = {Ricardo Campos}
author = {Joost Nuiten}
eprint = {https://arxiv.org/abs/2104.03870}
label = {BCN21} }

\bib{BI22}{article}{
      author={Robert R. Bruner},
      author={Daniel C. Isacsen},
       title={Jeff Smith's theory of ideals},
      eprint={https://arxiv.org/abs/2208.07941},
      label ={BI22}
}

\bib{BL22(1)}{article}{
title={Absolute prismatic cohomology},
author={Bhargav Bhatt},
author={Jacob Lurie},
eprint={ https://arxiv.org/abs/2201.06120}
label = {BL22(1)}
}

\bib{BL22(2)}{article}{
title={Prismatization of $p$-adic formal schemes},
author={Bhargav Bhatt},
author={Jacob Lurie},
eprint={https://arxiv.org/abs/2201.06124}
label = {BL22(2)}
}

\bib{BM19}{article}{
title = {Deformation theory and partition Lie algebras},
author = {Lukas Brantner},
author = {Akhil Mathew},
eprint = { https://arxiv.org/abs/1904.07352}
label = {BM19}
}

\bib{BS22}{article}{ 
author={Bhargav Bhatt},
author={Peter Scholze}
title={Prisms and prismatic cohomology},
journal={Annals of Mathematics, Volume 196 (2022), Issue 3},
series={Princeton University&The Institute of Advanced Studies}
label = {BS22}}

\bib{Dr21}{article}{
      author={Vladimir Drinfeld},
       title={On the notion of ring groupoid},
      eprint={https://arxiv.org/abs/2104.07090},
      label = {Dr21}
}

\bib{Dr20}{article}{
title={Prismatization},
author={Vladimir Drinfeld},
eprint={https://arxiv.org/abs/2005.04746} 
label = {Dr20}   }

\bib{Fr}{article}{
title =  {On homotopy of simplicial algebras over an operad}, 
author = {Benoit Fresse},
journal = {Transactions of the American Mathematical Society, Vol. 352, No. 9},
year = {2010}
label = {Fr10}
}

\bib{FrG12}{article}{
title =  {Chiral Koszul duality}, 
author = {John Francis},
author = {Dennis Gaitsgory}
journal = {Selecta Mathematica, Vol.18},
year = {2012}
label = {FG12}
}

\bib{GaitsRozI}{book}{
   title =     {A Study in Derived Algebraic Geometry, Volume I: Correspondences and Duality},
   author =    {Dennis Gaitsgory},
   author =    {Nick Rozenblyum},
   publisher = {American Mathematical Society},
   isbn =      {1470435691,9781470435691},
   year =      {2017},
   series =    {Mathematical Surveys and Monographs},
   eprint =    {http://www.math.harvard.edu/~gaitsgde/GL/}
   label = {GR17(I)}
}

\bib{GR2}{book}{
   title =     {A Study in Derived Algebraic Geometry, Volume II: Deformations, Lie Theory and Formal Geometry},
   author =    {Dennis Gaitsgory},
   author =    {Nick Rozenblyum},
   publisher = {American Mathematical Society},
   isbn =      {1470435705,9781470435707},
   year =      {2017},
   series =    {Mathematical Surveys and Monographs},
   eprint =    {http://www.math.harvard.edu/~gaitsgde/GL/}
   label = {GR17(II)}}

\bib{Gr}{article}{
title =  {Crystals and De Rham cohomology of schemes}, 
author = {Alexander Grothendieck},
journal = {Advanced studies in pure mathematics, vol. 3},
year = {1968}
label = {Gr68}
}

\bib{H2}{article}{
title = {Derived $\delta$-Rings and Relative Prismatic Cohomology},
author = {Adam Holeman},
eprint = {https://arxiv.org/abs/2303.17447}
label = {H22}
}

\bib{H}{article}{
title = {On Lax transformations, adjunctions and monads in $(\infty,2)$-categories},
author = {Rune Haugseng},
eprint = {https://arxiv.org/abs/2303.17447}
label = {H20}
}

\bib{HA}{article}{
      author={Jacob Lurie},
       title={Higher Algebra},
       date={2017},
      eprint={http://www.math.harvard.edu/~lurie/papers/HA.pdf},
      label = {HA}
}

\bib{HTT}{book}{
   title =     {Higher Topos Theory},
   author =    {Jacob Lurie},
   publisher = {Princeton University Press},
   isbn =      {9780691140490,9781400830558},
   year =      {2009},
   eprint =    {http://www.math.harvard.edu/~lurie/papers/HTT.pdf},
   label = {HTT}
}

\bib{I1}{book}{
   title =     {Complexe Cotangent et Deformations, Vol.1},
   author =    {Luc Illusie},
   publisher = {Springer},
   isbn =      {3540056866},
   year =      {1971}
   label = {Ill71(1)}}

\bib{I2}{book}{
   title =     {Complexe Cotangent et Deformations, Vol.2},
   author =    {Luc Illusie},
   publisher = {Springer},
   isbn =      {3540056866},
   year =      {1971}
   label = {Ill71(2)}}

   \bib{Magidson}{article}{
title={Differential graded Cartier modules and De Rham-Witt complex (in preparation)},
author={Magidson, Kirill},
eprint={}   
label = {dRW}
 }
 
    \bib{Magidson2}{article}{
title={Divided powers and De Rham cohomology II: Geometric part},
author={Magidson, Kirill},
eprint={}   
label = {pdII}
 }
 
 \bib{Mao21}{article}{
      author={Zhouhang Mao},
       title={Revisiting derived crystalline cohomology},
      eprint={https://arxiv.org/abs/2107.02921},
      label = {Mao21}
}

\bib{Maul21}{article}{
title={The geometry of filtrations},
author={Tasos Maulinos},
journals = {Bulletin of the London Mathematical Society, 2021},
eprint={},   
label = {Maul21}
 }
 
 \bib{ML}{book}{
   title =     {Categories for the working mathematician},
   author =    {Sanders Mac Lane}
   publisher = {Springer},
   year =      {1978}
   series = {Graduate Texts in Mathematics (GTM, volume 5)}
   label = {MacL78}}
 
 \bib{L}{article}{
title={Rotation invariance in Algebraic K-theory},
author={Jacob Lurie},
eprint={https://www.math.ias.edu/~lurie/papers/Waldhaus.pdf},   
label = {L14}
 }

 \bib{NS}{article}{
title = {On topological cyclic homology},
author = {Thomas Nikolaus},
author = {Peter Scholze},
journal = {Acta Math. 221(2): 203-409},
year = {2018},
label = {NS18}
 }

 \bib{Pr}{article}{
title = {Thom-Sebastiani and duality for matrix factorizations},
author = {Anatoly Preygel}
eprint = {https://arxiv.org/pdf/1101.5834.pdf}
label = {Pr11}
}

\bib{R}{article}{
title = {Hochshild homology and the derived de Rham complex revisited},
author = {Arpon Raksit}
eprint = {https://arxiv.org/abs/2007.02576 }
label = {R20}
}

\bib{Stacks}{article}{
      author={Authors},
       title={Stacks Project},
      eprint={https://stacks.math.columbia.edu},
      label = {SP}
}

\end{biblist}
\end{bibdiv}

\end{document}